\definecolor{colorlinks}{RGB}{0, 24, 168}
\definecolor{colorcites}{RGB}{124, 10, 2}
\theoremstyle{plain}
\newtheorem{theorem}{Theorem}
\newtheorem{proposition}{Proposition}[section]
\newtheorem{lemma}[proposition]{Lemma}
\newtheorem{corollary}[proposition]{Corollary}
\newtheorem{question}{Question}
\newtheorem{claim}{Claim}
\newtheorem*{remark}{Remark}
\theoremstyle{definition}
\newtheorem*{definition}{Definition}
\newcommand{\calC}{\mathcal{C}}
\newcommand{\calD}{\mathcal{D}}
\newcommand{\calE}{\mathcal{E}}
\newcommand{\calF}{\mathcal{F}}
\newcommand{\calG}{\mathcal{G}}
\newcommand{\bbE}{\mathbb{E}}
\newcommand{\bbN}{\mathbb{N}}
\newcommand{\bbP}{\mathbb{P}}
\newcommand{\bbR}{\mathbb{R}}
\newcommand{\bbT}{\mathbb{T}}
\newcommand{\bbZ}{\mathbb{Z}}
\newcommand{\Z}{\mathbb{Z}}
\newcommand{\dist}{\operatorname{dist}}
\newcommand{\R}{\mathbb{R}}
\newcommand{\eps}{\varepsilon}
\renewcommand{\int}{\mathrm{in}}
\title{On the transition between the disordered and antiferroelectric phases of the $6$-vertex model}
\author{Alexander Glazman\thanks{Universit{\"a}t Innsbruck, Innsbruck, Austria. \url{alexander.glazman@uibk.ac.at}} \and Ron Peled\thanks{School of Mathematical Sciences, Tel Aviv University, Israel. \url{peledron@tauex.tau.ac.il}}}
\date{October 6, 2023}
\begin{document}

\maketitle

\begin{abstract}
The symmetric six-vertex model with parameters~$a,b,c>0$ is expected to exhibit different behavior in the regimes $a+b<c$ (antiferroelectric), $|a-b|<c\leq a+b$ (disordered) and $|a-b|>c$ (ferroelectric). In this work, we study the way in which the transition between the regimes $a+b=c$ and $a+b<c$ manifests.

When~$a+b<c$, we show that the associated height function is localized and its extremal periodic Gibbs states can be parametrized by the integers in such a way that, in the~$n$-th state, the heights~$n$ and~$n+1$ percolate while the connected components of their complement have diameters  with exponentially decaying tails.
When~$a+b=c$, the height function is delocalized.

The proofs rely on the Baxter--Kelland--Wu coupling between the six-vertex and the random-cluster models and on recent results for the latter.
An interpolation between free and wired boundary conditions is introduced by modifying cluster weights.
Using triangular lattice contours ($\mathbb{T}$-circuits), we describe another coupling for height functions that in particular leads to a novel proof of the delocalization at~$a=b=c$.

Finally, we highlight a spin representation of the six-vertex model and obtain a coupling of it to the Ashkin--Teller model on~$\bbZ^2$ at its self-dual line~$\sinh 2J = e^{-2U}$.
When~$J<U$, we show that each of the two Ising configurations exhibits exponential decay of correlations while their product is ferromagnetically ordered.
\end{abstract}

\tableofcontents

\section{Introduction}

The six-vertex model is a classical model in statistical mechanics, which was initially introduced by Pauling~\cite{Pau35} in 1935 to study the structure of ice in three dimensions. A two-dimensional version as well as ferroelectric (Slater~\cite{Sla41}) and antiferroelectric (Rys~\cite{Rys63}) variants were later introduced, see~\cite[Chapter 8]{Bax82}, \cite{LieWu72}, and~\cite{Res10} for introductory texts. In this work we discuss the two-dimensional six-vertex model, whose configurations are orientations of edges of the square grid (graphically indicated by arrows on the edges) which satisfy the ice rule: at each vertex, there are exactly two outgoing and two incoming arrows, yielding six possible local configurations. The local possibilities are assigned nonnegative weights~$a_1$, $a_2$, $b_1$, $b_2$, $c_1$, $c_2$ as in Figure~\ref{fig:6v-arrow-config} and, in finite domains, the probability of each arrow configuration is proportional to the product of local weights (see Section~\ref{sec:six vertex results}).

\begin{figure}
	\begin{center}
		\includegraphics[width=\textwidth]{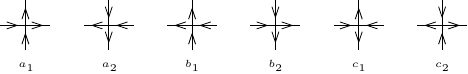}
	\end{center}
	\caption{The six possible arrow orientations at a vertex which satisfy the ice-rule (exactly two outgoing and two incoming arrows) and their associated weights.}
	\label{fig:6v-arrow-config}
\end{figure}

The model is typically studied under the assumption that the weights are invariant to reversal of all arrows, that is, $a_1 = a_2$, $b_1 = b_2$, $c_1 = c_2$ (zero external electric field), with the three weights termed $a,b,c$. Following Yang--Yang~\cite{YanYan66}, Lieb~\cite{Lie67c, Lie67a, Lie67b, lieb1967exact} and Sutherland~\cite{sutherland1967exact} who found an expression for the free energy using the Bethe ansatz~\cite{Bet31}, it is predicted that the behaviour of the model is governed by the value of
\begin{equation*}
  \Delta:= \tfrac{a^2+b^2 - c^2}{2ab}
\end{equation*}
with the following distinguished regimes:
\begin{itemize}
  \item $\Delta > 1$ (equivalently, $c<|a-b|$): the \emph{ferroelectric phase}, closely related to the stochastic six-vertex model; see~\cite{BorCorGor16} and references therein.
  \item $-1\le \Delta<1$ (equivalently, $|a-b|<c\le a+b$): the \emph{disordered phase}. The case $\Delta=0$, termed the \emph{free fermion point}, enjoys additional integrability properties; see, e.g.,~\cite{Ken00,Dub11}. The uniform model $a=b=c$ is called \emph{square ice} and has $\Delta = 1/2$.
  \item $\Delta < -1$ (equivalently, $c>a+b$): the \emph{antiferroelectric phase}; see~\cite{DumGagHar16b, DumGagHar16} for recent rigorous confirmation of some of the predictions regarding this case and exposition of the Bethe ansatz.
\end{itemize}
The degenerate case~$c=0$ is known as the corner percolation model~\cite{Pet08}.

The goal of this work is to discuss the behavior of natural probabilistic observables in the different regimes. Our focus here is on the antiferroelectric phase ($a+b<c$) and on the boundary ($a+b=c$) between the antiferroelectric and disordered phases (see~\cite{BorCorGor16, CorGhoSheTsa20, aggarwal2018current} for recent progress on the ferroelectric phase). Our main results are:
\begin{itemize}
    \item Height function of the six-vertex model: The variance of the height function with flat boundary conditions is bounded when $a+b<c$ and grows as a logarithm of the distance to the boundary of the domain when~$a+b=c$.

        The translation-invariant, under parity-preserving translations, extremal Gibbs states are characterized. When $a+b=c$ no such states exist. When $a+b<c$, the states are parameterized by the integers, with the $n$-th state having unique infinite connected components (in diagonal connectivity) for heights~$n$ and~$n+1$, while all other heights together form connected components whose diameters have exponentially decaying tails.
    \item Spin representation of the six-vertex model: A spin representation introduced by Rys~\cite{Rys63}, of a mixed Ashkin--Teller type, is analyzed. Constant boundary conditions for each of the two Ising spins induce order in both spins when $a+b<c$ while leading to a disordered state when $a+b=c$.
    \item Gibbs states of the six-vertex model: The Gibbs states which exhibit infinitely many disjoint oriented circuits of alternating vertical and horizontal edges surrounding the origin (flat states) are classified. There are exactly two such extremal states when~$a+b<c$, and in each the orientation of each edge has a non-uniform distribution. There is a unique such state when~$a+b=c$.
    \item Self-dual Ashkin--Teller model on $\bbZ^2$: We introduce a coupling of the six-vertex model in the regime $a=b$ (the F-model), the symmetric Ashkin--Teller model on the self-dual curve $\sinh 2J = e^{-2U}$ and an associated graphical representation. It is then proved that when $J<U$ each of the two Ising configurations exhibits exponential decay of correlations while their product is ferromagnetically ordered. This is in contrast to the behavior for $J=U$ (the critical 4-state Potts model) and is in agreement with predictions in the physics literature.
\end{itemize}

The basic tool underlying the results is the Baxter--Kelland--Wu coupling~\cite{BaxKelWu76}. In the regime $a+b\le c$ it gives a probabilistic coupling of the six-vertex model with the critical FK model with parameter $q = 4\Delta^2$ (the coupling extends as a complex measure to other values of the parameters). This allows the transfer of known results on the FK model to the six-vertex setting, with the most relevant fact being that the phase transition in the FK model is of first order when $q>4$ and of second order when $q=4$. Our results on the fluctuations of the height function follow from this correspondence in a relatively direct manner. However, the characterization of Gibbs measures for the height function when $a+b<c$ requires additional tools. The challenge is to show that the model with flat boundary conditions (with values $0$ and $1$) converges in the thermodynamic limit irrespective of the sequence of domains used to exhaust $\Z^2$. This is achieved via a new technique involving $\bbT$-circuits, triangular lattice contours suitably embedded in $\bbZ^2$, combined with a careful analysis of the percolative properties of the level sets of the height function (see overview in Section~\ref{sec:overview for a=b=1}). Our understanding of the Gibbs states of the spin representation and of the six-vertex model itself are derived as a consequence, with the additional tool of an FKG inequality for the \emph{marginal} of the spin configuration on one of the sublattices which is established in the regime $a,b\le c$. Lastly, as mentioned above, our results for the Ashkin--Teller model are based on a coupling of it with the six-vertex model with $a=b$ and an associated graphical representation that we introduce; a coupling which extends previously discussed duality statements between the models. The graphical representation is proved to satisfy the FKG inequality in the regime $2a,2b\le c$ which is instrumental in deducing the exponential decay of correlations in each of the Ising configurations when $J<U$.

We add that some of the ideas that we develop in the current article have already
found further applications. In particular, the technique of $\bbT$-circuits that we introduce
(Definition 6.3) allows to give a short proof for the delocalisation of the height function in the
uniform case $a = b = c = 1$ (square ice), as we sketch in Section \ref{sec:open questions}. Unlike the original
proof \cite{ChaPelSheTas18}, our argument does not rely on Sheffield’s seminal work \cite{She05}. Also, our extension of the
Baxter--Kelland--Wu coupling to the six-vertex model with the same weights in the bulk
and on the boundary (Theorem \ref{thm:coupling}, part 1) was used in the recent short proof by Ray and Spinka
of the discontinuity of the phase transition in the Potts and random-cluster models
when q > 4, originally proven in  \cite{DumGagHar16} via the Bethe Ansatz.

\begin{figure}
	\begin{center}
		\includegraphics[width=\textwidth]{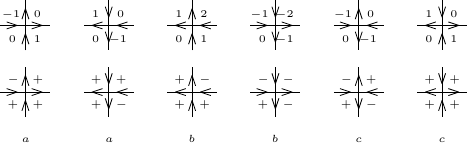}
	\end{center}
	\caption{The height and spin representation of the six-vertex model together with the weights assigned to each vertex. \emph{Top:} The height representation in the four faces around a vertex, normalized to equal $0$ at the lower left face. Going clockwise around the vertex, the height increases (decreases) by $1$ if the arrow points outward (inward).
\emph{Bottom:} The spin representation is derived from the height function by setting the spin state at each face to $+1$ ($-1$) if the height modulo four equals $0,1$ ($2,3$). The spin states may also be derived directly from the edge orientations, up to a global spin flip: The correspondence is indicated in the figure when the lower left face is even and has state $+1$. The case that the lower left face is odd and has state $+1$ is obtained from the case in the figure by switching the two configurations corresponding to each of the weights $a,b,c$. The case that the lower left face has state $-1$ is obtained from these cases by a global spin flip.
}
	\label{fig:6v-hom-config}
\end{figure}

\section{Results}\label{sec:results}
In this section we describe our results in detail.

We use the following definitions: The square lattice~$\bbZ^2$ is embedded in $\R^2$ so that its edges are parallel to the coordinate axes and its faces are centered at points with integer coordinates. Let $F(\bbZ^2)$ denote the set of faces of~$\bbZ^2$. A face centered at~$(i,j)$ is called \emph{even} if~$i+j$ is even, and it is called \emph{odd} if~$i+j$ is odd. For $u,v\in F(\bbZ^2)$ we write $|u-v|$ for the $\ell^1$ distance between the centers of $u$ and $v$. A finite subgraph~$\calD\subset\bbZ^2$ is called a \emph{domain} if there exists a simple cyclic path~$P$ in $\bbZ^2$ such that~$\calD$ coincides with the part of~$\bbZ^2$ surrounded by~$P$, including~$P$ itself. The path~$P$ is then termed the boundary of~$\calD$ and is denoted by~$\partial\calD$. For~$N\in \bbN$, let~$\Lambda_N$ be the domain whose vertices border the faces centered at all pairs of integers~$(i,j)$ that satisfy~$|i\pm j| \leq N-1$. As is common, we write \emph{cluster} for connected component. By \emph{parity-preserving translations} we mean translations by vectors in $\{(i,j)\in\Z^2\colon i+j\text{ is even}\}$. By \emph{all translations} we mean translations by vectors in $\Z^2$.

\subsection{Height function representation}
\label{sec:results-heights}

A function~$h\colon F(\bbZ^2)\to \bbZ$ is called a \emph{height function} if it satisfies that (see Figure~\ref{fig:height-function-0-1}):
\begin{itemize}
	\item for any two adjacent faces~$u,v$, $|h(u) - h(v)|=1$;
	\item for any face~$u$, the parity of~$h(u)$ is the same as the parity of~$u$.
\end{itemize}
Gradients of height functions are in a natural bijection with six-vertex configurations as described in Figure~\ref{fig:6v-hom-config} (see also Figure~\ref{fig:height-function-0-1}). Thus, a height function determines a unique six-vertex configuration and a six-vertex configuration determines a height function up to the global addition of an even integer.

\begin{figure}
	\begin{center}
		\includegraphics[scale=1.4, page=1]{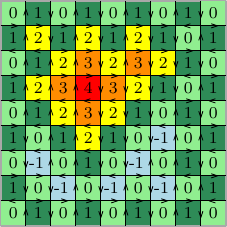}\quad\quad\quad
		\includegraphics[scale=1.4, page=2]{heights.pdf}
	\end{center}
	\caption{\emph{Left:} Height function with~$0,1$ boundary conditions. \emph{Right:} Ashkin--Teller-type spin representation of this height function ($++$ boundary conditions).}
	\label{fig:height-function-0-1}
\end{figure}

Given a height function $t$, the finite-volume height-function measure with parameters~$a,b,c>0$ on a domain~$\calD$ with boundary conditions~$t$ is supported on height functions that coincide with~$t$ at all faces outside of~$\calD$ and is defined by
\[
	{\sf HF}_{\calD,a,b,c}^{t}(h) = \tfrac{1}{Z_{{\sf hf},\calD,a,b,c}^t}\cdot a^{n_1(h)+n_2(h)}b^{n_3(h)+n_4(h)}c^{n_5(h)+n_6(h)},
\]
where~$Z_{{\sf hf},\calD,a,b,c}^t$ is a normalizing constant and~$n_i(h)$ is the number of vertices of~$\calD$ that are of type~$i$ according to the correspondence described in Figure~\ref{fig:6v-hom-config} (up to an additive constant).

For integer $n$ and domain~$\calD$, let~${\sf HF}_{\calD,a,b,c}^{n,n+1}$ be the height-function measure on~$\calD$ with boundary conditions given by a function that takes values in $\{n,n+1\}$ (each face according to its parity); see Figure~\ref{fig:height-function-0-1}. Note that if $f$ is sampled from ${\sf HF}_{\calD,a,b,c}^{0,1}$ then $f+2n$ is distributed as ${\sf HF}_{\calD,a,b,c}^{2n,2n+1}$ and $-f+2n$ is distributed as ${\sf HF}_{\calD,a,b,c}^{2n-1,2n}$.

In the next theorem, we show that the variance of the height at a fixed face is uniformly bounded when~$a+b<c$ and logarithmic in the distance to the boundary when~$a+b=c$.

\begin{theorem}[Fluctuations]
\label{thm:var}
Let~$\calD$ be a domain, let~$a,b,c > 0$ and let~$h^{0,1}_{\calD,a,b,c}$ be sampled from~${\sf HF}_{\calD,a,b,c}^{0,1}$. Then there exist~$c_1, C_1, C_2>0$, depending only on~$a,b,c$, such that for every face~$u$ of~$\calD$,
\begin{align}
\label{eq:var-c-greater-than-2}
\mathrm{Var}(h^{0,1}_{\calD,a,b,c}(u)) &\leq C_2,& &\text{ if } a+b<c,
\\
\label{eq:var-c-2}
c_1\log \mathrm{dist}(u,\bbZ^2\setminus\calD) \leq \mathrm{Var}(h^{0,1}_{\calD,a,b,c}(u))&\leq C_1\log  \mathrm{dist}(u,\bbZ^2\setminus\calD) ,&  &\text{ if } a+b=c.
\end{align}
\end{theorem}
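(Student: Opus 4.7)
The plan is to transfer the theorem from the six-vertex height function to the critical random-cluster (FK) model via the Baxter--Kelland--Wu (BKW) coupling mentioned in the introduction. A direct computation of $\Delta = (a^2+b^2-c^2)/(2ab)$ gives $\Delta = -1$ when $a+b=c$ and $\Delta < -1$ when $a+b<c$. Under BKW these are coupled to the critical FK model with cluster weights $q = 4\Delta^2 = 4$ and $q > 4$ respectively, and the recently established dichotomy for the order of the FK phase transition---continuous for $q \le 4$, first-order for $q > 4$---drives the contrast between \eqref{eq:var-c-greater-than-2} and \eqref{eq:var-c-2}.

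The first step is to express the variance in terms of level lines. Because $|h(x)-h(y)|=1$ on adjacent faces, there is a standard identity $h(u) - h(u_0) = \sum_\gamma \sigma_\gamma$, where the sum ranges over dual loops $\gamma$ at half-integer heights separating $u$ from a reference face $u_0 \in \partial\calD$, and $\sigma_\gamma = \pm 1$ according to whether the height steps up or down as one crosses $\gamma$ outwards. With the $0,1$ boundary condition this gives the deterministic bound $|h(u) - \E h(u)| \le 2 N(u)$, where $N(u)$ is the number of nested level lines around $u$, so upper-bounding the variance reduces to tail estimates on $N(u)$. For the matching lower bound in the log case, I would resample the configuration inside each loop (which, conditionally, is again a six-vertex measure in a smaller domain with the appropriate boundary heights) to argue that the signs $\sigma_\gamma$ are essentially i.i.d.\ unbiased coin flips.

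In the regime $a+b<c$ (so $q>4$), discontinuity of the FK transition yields exponential decay of the FK two-point function at criticality with either free or wired boundary conditions. Via BKW, each level line $\gamma$ around $u$ corresponds to an FK cluster (or dual cluster) surrounding $u$, so the outermost such loop has exponentially small probability to exit the ball of radius $r$ about $u$. Iterating on nested dyadic annuli gives an exponential tail for $N(u)$ uniformly in $\calD$, and the bound \eqref{eq:var-c-greater-than-2} follows. In the regime $a+b=c$ (so $q=4$), continuity of the transition together with RSW-type crossing estimates at criticality provide, at each dyadic scale $2^k \le \mathrm{dist}(u,\bbZ^2\setminus\calD)$, a circuit of loops in the annulus of inner radius $2^k$ and outer radius $2^{k+1}$ around $u$ with probability bounded away from $0$ uniformly in $k$. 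Combined with quasi-independence across disjoint annuli, this produces of order $\log \mathrm{dist}(u,\bbZ^2\setminus\calD)$ nested level lines around $u$. A first-moment (union) bound then gives the upper bound in \eqref{eq:var-c-2}, while a second-moment computation on $\sum_\gamma \sigma_\gamma$, using the unbiasedness of the signs, yields the lower bound.

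The main obstacle is the lower bound in the $q=4$ case. One must ensure that the signs $\sigma_\gamma$ on the nested level lines do not conspire to cancel: this needs a genuine conditional independence statement, asserting that, given the unsigned loop ensemble produced by BKW, the $\sigma_\gamma$'s can be flipped independently with essentially unbiased laws. This is delicate because the six-vertex weights on the loops are not symmetric under $\sigma_\gamma \mapsto -\sigma_\gamma$ unless $a=b$, so one must average over the boundary flip somehow or argue that the bias vanishes on the scale of a single loop. The second source of difficulty is verifying the quasi-multiplicativity and the RSW inputs at the borderline value $q=4$, which lies at the endpoint of the regime where the FK RSW theory is available; once these ingredients are in place, the variance estimate itself is a standard second-moment computation.
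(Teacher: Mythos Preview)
Your plan is essentially the paper's: BKW coupling, height as a signed sum over nested cluster/dual-cluster boundaries, then RSW at $q=4$ and exponential decay at $q>4$. Two points are worth sharpening.

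First, your ``main obstacle'' is not one. In the BKW coupling the height, conditioned on the random-cluster configuration $\eta$, is a random walk that at each nested loop goes $\pm 1$ with probabilities $e^{\mp\lambda}/\sqrt{q}$, where $\cosh\lambda=-\Delta$. When $a+b=c$ one has $\Delta=-1$, hence $\lambda=0$, and the signs are \emph{exactly} conditionally i.i.d.\ fair coins given $\eta$, regardless of whether $a=b$. The asymmetry $a\neq b$ enters only through the random-cluster side (different horizontal and vertical edge probabilities $p_c^h,p_c^v$), not through the sign law. This collapses both bounds in \eqref{eq:var-c-2} to the single estimate $\mathrm{Var}(h(u))=\bbE[N_u]$ (up to the additive constant from $\bbE[h(u)]\in[0,1]$), and $\bbE[N_u]\asymp\log\mathrm{dist}(u,\partial\calD)$ is exactly what RSW delivers. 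For $q>4$ the same random-walk identity gives $\bbE[h(u)^2]=\tfrac{4}{q}\bbE[N_u]+\tfrac{(e^\lambda-e^{-\lambda})^2}{q}\bbE[N_u^2]$, and both moments are bounded via the exponential tail of dual clusters under the wired measure.

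Second, you skip a genuine boundary issue. The BKW coupling with the honest $0,1$ boundary condition lands on a random-cluster model with a nonstandard boundary-cluster weight $q_b\neq 1,q$, so the known RSW/exponential-decay inputs do not apply verbatim. The paper handles this by a sandwiching argument: introduce a modified boundary vertex weight $c_b$ for the six-vertex side so that the coupled random-cluster model becomes the standard \emph{wired} one, and then use monotonicity of the height measures in $c_b$ (itself a consequence of the height-function FKG inequality) to trap ${\sf HF}_{\calD}^{0,1}$ between two such modified measures on nearby even/odd domains. Your sketch would go through once you insert this step; without it, invoking RSW directly is a gap.
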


We note that an analogue of~\eqref{eq:var-c-greater-than-2} is proven in~\cite{DumGagHar16} for periodic boundary conditions (i.e., when the height functions are defined on a torus). Analogues of~\eqref{eq:var-c-2} are known when: $c/a= c/b=\sqrt{2}$~\cite{Dub11,Ken00} (the free fermion point, by using its relation with the dimer model), $c/a= c/b \in (\sqrt{2}-\eps, \sqrt{2}+\eps)$~\cite{GiuMasTon17} (perturbation around the free fermion point, dimers with a small interaction), $a=b=c$~\cite{She05, ChaPelSheTas18, DumHarLasRauRay18} (uniform case, square ice). \footnote{See also very recent works where the delocalization \eqref{eq:var-c-2} was proven in the F-model ($a=b=1$): for $c\in [\sqrt{3},2]$ \cite{Lis20} and, more generally, for all $c\in [1,2]$ \cite{DumKarManOul20b}.}
Also, it was shown~\cite{Pel17} that high-dimensional versions of height functions have bounded variance.

A measure~${\sf HF}$ on height functions is called a \emph{Gibbs state for height functions} with parameters~$a,b,c>0$ if the following holds: Let $h$ be sampled from~${\sf HF}$. For any domain~$\calD$, conditioned on the values of $h$ on the faces outside of~$\calD$, the distribution of $h$ equals~${\sf HF}_{\calD,a,b,c}^{t}$, where $t$ is an arbitrary height function which agrees with $h$ outside of~$\calD$.
A Gibbs state is called \emph{extremal} if it has a trivial tail $\sigma$-algebra.

The next theorem characterizes the extremal Gibbs states for the height function which are invariant under parity-preserving translations.

\begin{theorem}[Gibbs states: height functions]\label{thm:heights-gibbs}
	1) Let~$a,b,c>0$ satisfy~$a+b<c$. For each integer $n$ and sequence of domains~$\{\calD_k\}$ increasing to $\bbZ^2$ the sequence of finite-volume measures ${\sf HF}_{\calD_k,a,b,c}^{n,n+1}$ converges to a Gibbs state~${\sf HF}_{a,b,c}^{n,n+1}$, which does not depend on~$\{\calD_k\}$. The limiting Gibbs states are extremal and invariant under parity-preserving translations, and each Gibbs state with these two properties equals ${\sf HF}_{a,b,c}^{n,n+1}$ for some integer $n$.
Moreover, the following properties are satisfied:
	\begin{itemize}
		\item Under~${\sf HF}_{a,b,c}^{n,n+1}$, clusters (in augmented connectivity) of heights different from~$n$ and~$n+1$ have diameters with exponential tail decay. Precisely, there exist $M,\alpha>0$ such that for all $N\in \bbN$,
		\begin{equation}\label{eq:exponential tail decay}
			 {\sf HF}_{a,b,c}^{n,n+1} (\exists \gamma\colon (0,0)\to \partial\Lambda_N \text{ s.t. } \forall k, \, h(\gamma_k)\notin\{n,n+1\} \text{ and } |\gamma_k - \gamma_{k+1}| \leq 2) \leq M e^{-\alpha N},
		\end{equation}
where we write $\gamma\colon (0,0)\to \partial\Lambda_N$ to mean a path in $F(\Z^2)$ starting at $(0,0)$ and ending at a face bordered by an edge of $\partial\Lambda_N$.
		\item ${\sf HF}_{a,b,c}^{n,n+1}$ is invariant under the operation $h(i,j) \mapsto 2n+1 - h(-i+1,j)$, whence
		\[
			{\sf HF}_{a,b,c}^{n,n+1}(h(0,0)+ h(1,0)) = 2n + 1.
		\]
		\item Each ${\sf HF}_{a,b,c}^{n,n+1}$ is positively associated and the stochastic ordering relation~${\sf HF}_{a,b,c}^{m,m+1} \prec {\sf HF}_{a,b,c}^{n,n+1}$ holds for~$m<n$.
	\end{itemize}
	2) Let~$a,b,c>0$ satisfy~$a+b=c$. Then there are no extremal Gibbs states for the height function which are invariant under parity-preserving translations.
\end{theorem}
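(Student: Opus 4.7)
The plan is to combine FKG monotonicity for height functions, the variance bounds of Theorem~\ref{thm:var}, and the $\bbT$-circuit technology advertised in the introduction.

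\textbf{Part 1 ($a+b<c$).} I first establish positive association and boundary monotonicity for $\mathsf{HF}^{n,n+1}_{\calD,a,b,c}$: the valid height functions form a distributive lattice under pointwise $\min/\max$, and the six-vertex weights lift to a log-supermodular nearest-neighbour interaction, so Holley's criterion gives FKG and monotonicity in boundary conditions. Since $\{n,n+1\}$ is the pointwise-minimal valid boundary configuration dominating level $n$, the sequence $\mathsf{HF}^{n,n+1}_{\calD_k}$ is stochastically monotone in $k$; combined with the tightness from \eqref{eq:var-c-greater-than-2} this yields a weak limit along any monotone exhausting sequence, together with the stochastic ordering $\mathsf{HF}^{m,m+1}\prec\mathsf{HF}^{n,n+1}$ and positive association of the limit. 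The hard step is removing the dependence on $\{\calD_k\}$ while simultaneously establishing the exponential tail~\eqref{eq:exponential tail decay} and extremality; this is where $\bbT$-circuits enter. These are triangular-lattice contours in $F(\bbZ^2)$ on which the height function takes only the values $n$ and $n+1$, so that the six-vertex configuration on them is rigid up to a discrete global flip. Using \eqref{eq:var-c-greater-than-2} and the Baxter--Kelland--Wu coupling with the random-cluster model at $q=4\Delta^2>4$ (where exponential decay of connectivities is now known), I would prove a Peierls-type estimate showing that, uniformly in the outer data, a $\bbT$-circuit around a given face $v$ exists in $\Lambda_{2R}\setminus\Lambda_R$ with probability at least $1-Me^{-\alpha R}$. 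Conditioning on the outermost such circuit around $v$ decouples the inside from the outside, delivering independence of the limit from $\{\calD_k\}$, triviality of the tail, and \eqref{eq:exponential tail decay} at once.

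Parity-preserving translation invariance of the limit is inherited from the symmetry of the boundary condition. The reflection identity is obtained from the symmetry $(i,j)\mapsto(-i+1,j)$ combined with $h\mapsto 2n+1-h$: the spatial reflection flips face parities while the additive shift flips height parities, restoring the parity match; the map preserves the six-vertex weights (since $a_1=a_2$, etc.) and the $\{n,n+1\}$-flat boundary, so uniqueness of the limit yields $\E[h(0,0)]+\E[h(1,0)]=2n+1$. For the classification, any extremal parity-preserving translation-invariant Gibbs state $\mu$ is, by ergodicity (from extremality) and a Birkhoff argument applied to $\mathbf{1}\{h(v)\le k\}$, concentrated on a typical level $n$; FKG then sandwiches it between the two adjacent constructed limits, and the exponential tail \eqref{eq:exponential tail decay} combined with a standard FKG coupling pins $\mu=\mathsf{HF}^{n,n+1}_{a,b,c}$.

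\textbf{Part 2 ($a+b=c$).} I argue by contradiction. Assume $\mu$ is an extremal parity-preserving translation-invariant Gibbs state. Conditioning on $h|_{\partial\Lambda_N}$ via the Gibbs property, and using FKG to sandwich the random boundary condition between two flat boundary conditions, the lower bound \eqref{eq:var-c-2} yields
\[
\mathrm{Var}_\mu\bigl(h(v)-h(0,0)\bigr)\;\geq\;c_1\log|v|.
\]
By translation invariance, if $\mathrm{Var}_\mu(h(0,0))<\infty$ then $\mathrm{Var}_\mu(h(v)-h(0,0))\leq 4\mathrm{Var}_\mu(h(0,0))$, an immediate contradiction. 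In the case $\mathrm{Var}_\mu(h(0,0))=\infty$, I would use $\bbT$-circuits again to construct, at every scale $N$, a coupling of $\mu$ with a flat-BC finite-volume measure that agrees on a large ball with positive probability, thereby transporting the log divergence into a contradiction with the existence of a valid $\bbZ$-valued distribution for $h(0,0)$ under $\mu$. The main obstacle throughout will be the quantitative $\bbT$-circuit decoupling---turning \eqref{eq:var-c-greater-than-2} (respectively \eqref{eq:var-c-2}) into uniform-in-boundary exponential decoupling of the interior---and, within Part~2, closing the infinite-variance branch, which is the most delicate point of the argument.
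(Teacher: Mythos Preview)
Your Part~1 argument breaks at the second sentence: the claim that $\mathsf{HF}^{n,n+1}_{\calD_k}$ is stochastically monotone in $k$ is false. If $\calD_k\subset\calD_{k+1}$, conditioning $\mathsf{HF}^{n,n+1}_{\calD_{k+1}}$ on the values along $\partial\calD_k$ gives a random boundary datum that can lie both above and below $\{n,n+1\}$, so FKG gives no comparison. This is not a technicality---it is precisely the difficulty the paper has to overcome. The paper's route is: (i) introduce a modified boundary weight $c_b=e^{\lambda/2}$ so that, via the BKW coupling with the wired random-cluster measure at $q>4$, the limits over \emph{even} domains and over \emph{odd} domains exist separately (Lemma~\ref{lem:therm-lim-heights-c-b}); (ii) prove these two limits coincide; (iii) sandwich the standard measure between them via Proposition~\ref{prop:monotone-c-b-heights}. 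Step~(ii) is where $\bbT$-circuits enter, and not in the way you describe.

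Your description of $\bbT$-circuits is also off. They are not contours where $h\in\{n,n+1\}$; they are circuits in a triangular-lattice connectivity on \emph{one} sublattice (say odd faces) where $h$ equals a \emph{single} value (say $1$). The point is not Peierls-style decoupling of inside from outside. The point is the bijection $h(i,j)\mapsto 1-h(i-1,j)$, which swaps the even and odd limiting measures and sends a $\bbT^\circ$-circuit $\calC$ of height $1$ to a $\bbT^\bullet$-circuit $\calC'=\calC+(1,0)$ of height $0$. The crucial combinatorial fact (Figure~\ref{fig:height-function-flip}) is that on the finite component of $\bbZ^2\setminus(\calC\cup\calC')$ the boundary values of the flipped function dominate those of the original, so FKG yields $\mathsf{HF}^{0,1}_{\mathrm{even}}\succeq\mathsf{HF}^{0,1}_{\mathrm{odd}}$; the reverse inequality is the easy one. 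Existence of such $\bbT^\circ$-circuits at all scales (Lemma~\ref{lem:T-circuits}) is proved not by a Peierls estimate but by a non-coexistence argument \`a la Duminil-Copin--Raoufi--Tassion applied to the sets $\{h\ge 1\}$ and $\{h\le -1\}$ on the self-dual triangular lattice.

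For Part~2, your finite-variance branch is essentially the paper's idea, but the paper avoids your infinite-variance branch altogether: it first runs the same $\bbT$-circuit/non-coexistence dichotomy as in Part~1 to show that any extremal invariant state must have infinitely many $\bbT$-circuits of some fixed height $2n$, hence is dominated by $\mathsf{HF}^{2n,2n+1}_{\calD}$ in finite volume, and then the logarithmic lower bound of Theorem~\ref{thm:var} gives the contradiction directly.
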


It is a straightforward consequence that, in the case $a+b<c$,~${\sf HF}_{a,b,c}^{n,n+1}$-a.s., there exist infinitely many disjoint level lines separating the heights $n$ and~$n+1$ surrounding the origin.

\begin{remark}
	It follows from~\cite[Chapter 8]{She05} that all ergodic height-function Gibbs states are in fact extremal (see also~\cite[Section 5]{ChaPelSheTas18} for a short proof in the uniform case~$a=b=c=1$). With this fact, Theorem~\ref{thm:heights-gibbs} gives a complete description of translation-invariant height-function Gibbs states when~$a+b\leq c$.
\end{remark}

\subsection{Spin representation (mixed Ashkin--Teller model)}
\label{sec:spin-rep}

A function~$\sigma\colon F(\bbZ^2)\to \{-1,1\}$ is called a \emph{spin configuration satisfying the ice rule} if around each vertex there is a pair of diagonally-adjacent faces on which $\sigma$ agrees. The set of all such functions is denoted $\calE_\mathsf{spin}(\bbZ^2)$. Spin configurations satisfying the ice rule, regarded up to a global spin flip, are in a natural bijection with six-vertex configurations as described in Figure~\ref{fig:6v-hom-config} and its caption (see also Figure~\ref{fig:height-function-0-1}). In other words, each $\sigma\in \calE_\mathsf{spin}(\bbZ^2)$ determines a unique six-vertex configuration while a six-vertex configuration determines a pair of spin configurations satisfying the ice rule, related by a global spin flip.

There is a direct correspondence between height functions $h$ and spin configurations $\sigma$ satisfying the ice rule, which is consistent with the bijections of these objects with six-vertex configurations and is defined by setting $\sigma(u) = 1$ if $h(u)\equiv 0,1\pmod 4$ and $\sigma(u)=-1$ if $h(u)\equiv 2,3\pmod 4$ (see Figure~\ref{fig:6v-hom-config} and~Figure~\ref{fig:height-function-0-1}). A height function thus determines a unique $\sigma\in \calE_\mathsf{spin}(\bbZ^2)$ while each $\sigma\in \calE_\mathsf{spin}(\bbZ^2)$ determines a height function up to the global addition of an integer divisible by 4.

The finite-volume spin measure with parameters~$a,b,c>0$ on a domain~$\calD$ with boundary condition~$\tau\in\calE_\mathsf{spin}(\bbZ^2)$ is supported on $\sigma\in\calE_\mathsf{spin}(\bbZ^2)$ that coincide with~$\tau$ at all faces outside of~$\calD$ and is defined by
\begin{equation}\label{eq:spin representation measure def}
	{\sf Spin}_{\calD,a,b,c}^{\tau}(\sigma) = \tfrac{1}{Z_{{\sf spin},\calD,a,b,c}^\tau}\cdot a^{n_1(\sigma)+n_2(\sigma)}b^{n_3(\sigma)+n_4(\sigma)}c^{n_5(\sigma)+n_6(\sigma)},
\end{equation}
where~$Z_{{\sf spin},\calD,a,b,c}^\tau$ is a normalizing constant and~$n_i(\sigma)$ is the number of vertices of~$\calD$ that are of type~$i$ according to the correspondence described in Figure~\ref{fig:6v-hom-config} and its caption. In particular, if $t$ is a height function which maps to $\tau$ under the modulo $4$ mapping described above then the measure ${\sf Spin}_{\calD,a,b,c}^{\tau}$ is the push-forward of the measure ${\sf HF}_{\calD,a,b,c}^{t}$ by this mapping. For $\alpha,\beta\in\{-,+\}$, we use the notation ${\sf Spin}_{\calD,a,b,c}^{\alpha\beta}$ to denote the measure ${\sf Spin}_{\calD,a,b,c}^{\tau}$ in which $\tau$ is the configuration having sign $\alpha$ on all even faces and having sign $\beta$ on all odd faces.

A measure~${\sf Spin}$ on~$\calE_\mathsf{spin}(\bbZ^2)$ is called a \emph{Gibbs state for the spin representation} with parameters~$a,b,c>0$ if the following holds: Let $\sigma$ be sampled from~${\sf Spin}$. For any domain~$\calD$, conditioned on the values of $\sigma$ on the faces outside of~$\calD$, the distribution of $\sigma$ equals~${\sf Spin}_{\calD,a,b,c}^{\tau}$, where $\tau\in \calE_\mathsf{spin}(\bbZ^2)$ is an arbitrary configuration which agrees with $\sigma$ outside of~$\calD$.
Let~$\calG_{a,b,c}^{\sf spin}$ denote the set of all extremal (i.e., tail trivial) Gibbs states that are invariant under parity-preserving translations.

In the next theorem, we study the Gibbs states of the spin representation. In the regime of the antiferroelectric phase~$a+b<c$, we construct four distinct measures (the push-forwards of the height measures~${\sf HF}_{a,b,c}^{n,n+1}$ for different values of~$n$ by the modulo $4$ mapping) and show that, under these measures, the correlations of spins at faces of the same parity are uniformly positive. In the regime~$a+b=c$, we construct a measure~${\sf Spin}_{a,b,c}\in \calG_{a,b,c}^{\sf spin}$ and show that it may be characterized by the absence of certain infinite clusters. In discussing clusters of even (or odd) faces we consider two faces of the same parity adjacent if they share a vertex.

\begin{theorem}[Gibbs states: spin representation]\label{thm:spins-gibbs}\mbox{}

	1) Let~$a,b,c>0$ satisfy~$a+b<c$. For each $\alpha,\beta\in\{-,+\}$ and sequence of domains~$\{\calD_k\}$ increasing to $\bbZ^2$ the sequence of finite-volume measures~${\sf Spin}_{\calD_k,a,b,c}^{\alpha\beta}$ converges to a Gibbs state~${\sf Spin}_{a,b,c}^{\alpha\beta}\in\calG_{a,b,c}^{\sf spin}$, which does not depend on~$\{\calD_k\}$. The four limiting measures are distinct. Moreover, the measure~${\sf Spin}_{\calD_k,a,b,c}^{\alpha\beta}$ satisfies the following properties:
  \begin{itemize}
    \item Samples from ${\sf Spin}_{a,b,c}^{\alpha\beta}$ exhibit a unique infinite cluster of even faces with sign $\alpha$ and a unique infinite cluster of odd faces with sign $\beta$, almost surely.
    \item For each even (odd) face $u$, ${\sf Spin}_{a,b,c}^{\alpha\beta}(\sigma(u))$ does not depend on $u$ (by invariance), is non-zero and of the same sign as $\alpha$ (as~$\beta$). In addition, for any~$u$ and~$v$ of the same parity,
        \[
        {\sf Spin}_{a,b,c}^{\alpha\beta}(\sigma^\circ(u)\sigma^\circ(v))\ge {\sf Spin}_{a,b,c}^{\alpha\beta}(\sigma^\circ(u))^2 > 0.
        \]
 \end{itemize}
	
	2) Let~$a,b,c>0$ satisfy~$a+b=c$. There exists a Gibbs state~${\sf Spin}_{a,b,c}$ for the spin representation with the following properties:
\begin{itemize}
   \item For any sequence of domains~$\{\calD_k\}$ increasing to $\bbZ^2$ and any~$\tau\in \calE_\mathsf{spin}(\bbZ^2)$ which is either constant on all even faces or constant on all odd faces, the sequence of finite-volume measures~${\sf Spin}_{\calD_k,a,b,c}^{\tau}$ converges to~${\sf Spin}_{a,b,c}$.
   \item The measure~${\sf Spin}_{a,b,c}$ is invariant under all translations and is extremal (in particular, ${\sf Spin}_{a,b,c}\in \calG_{a,b,c}^{\sf spin}$). In addition, it is invariant under a global sign flip applied on either all even faces or all odd faces. Consequently, ${\sf Spin}_{a,b,c}(\sigma(u))=0$ and ${\sf Spin}_{a,b,c}(\sigma(u)\sigma(v))=0$ for two faces $u$ and $v$ of different parity.
   \item There exist $C,\alpha>0$ so that for two faces $u,v$ of the same parity,
       \begin{equation*}
         0\le {\sf Spin}_{a,b,c}(\sigma(u)\sigma(v))\le \frac{C}{|u-v|^\alpha}.
       \end{equation*}
   \item Samples from~${\sf Spin}_{a,b,c}$ exhibit no infinite cluster of faces having the same parity and the same spin, almost surely.

       In contrast, each other element of~$\calG_{a,b,c}^{\sf spin}$ exhibits, almost surely, at least one infinite cluster of each of the four types~--- even $+1$, even $-1$, odd $+1$, odd $-1$.
  \end{itemize}
\end{theorem}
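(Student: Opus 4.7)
I would define $\mathsf{Spin}_{a,b,c}^{\alpha\beta}$ as the pushforward of $\mathsf{HF}_{a,b,c}^{n,n+1}$ under the mod-$4$ map of Figure~\ref{fig:6v-hom-config}, with $n=n(\alpha,\beta)\in\{0,1,2,3\}$ chosen so that boundary heights $n,n+1$ correspond to boundary spins $\alpha,\beta$ on the two sublattices. Convergence of $\mathsf{Spin}_{\calD_k,a,b,c}^{\alpha\beta}$, extremality, and parity-preserving translation invariance are then immediate from Theorem~\ref{thm:heights-gibbs}, since the mod-$4$ map is continuous in the local topology and commutes with parity-preserving translations. Distinctness of the four measures is read from the one-point marginals. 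The unique infinite monochromatic cluster comes from \eqref{eq:exponential tail decay}: the faces where $\sigma$ differs from its intended value lie in clusters of exponentially decaying diameter, hence are dominated by a subcritical Bernoulli site percolation, and a standard Peierls argument on the complement produces a unique infinite cluster of even faces with sign $\alpha$ (and symmetrically for odd faces with sign $\beta$). For the positive correlation inequality I would exploit the FKG property of the height measure asserted in Theorem~\ref{thm:heights-gibbs}: on the almost-sure event that $h(u),h(v)\in\{n,n+1\}$ the event $\{\sigma(u)=\alpha\}$ coincides with a monotone event in $h$ (e.g. $\{h(u)\le n\}$ when $n$ is even), and the exponentially small error is absorbed.

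\textbf{Part 2 ($a+b=c$).} A direct pushforward is unavailable here since no translation-invariant extremal Gibbs state exists for the height function. Instead I would invoke the Baxter--Kelland--Wu coupling with the critical random-cluster model at $q=4\Delta^2=4$, whose infinite-volume measure is unique and whose two-point function decays polynomially. In the coupling, the spin at each face is determined, up to independent $\pm1$ labels attached to individual loops (cluster boundaries), by the random-cluster configuration. I define $\mathsf{Spin}_{a,b,c}$ as the pushforward of the unique critical FK state paired with these independent loop labels. Invariance under a global sign flip on either sublattice is built into the symmetry of the loop coloring; it immediately yields $\mathsf{Spin}_{a,b,c}(\sigma(u))=0$ and the vanishing of the cross-parity two-point function. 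Polynomial decay for same-parity spins reduces to polynomial decay of the random-cluster two-point function at $q=4$, and the absence of infinite monochromatic same-parity clusters reduces to the absence of an infinite random-cluster cluster at criticality. Convergence of $\mathsf{Spin}_{\calD_k,a,b,c}^{\tau}$ with $\tau$ constant on one sublattice is proved by coupling through a common FK sample and using RSW-type bounds at $q=4$ to show that the boundary-sensitive portion of the spin does not reach the bulk; extremality and full translation invariance of the limit then follow from uniqueness.

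The hardest step, in my view, is the final assertion that every other element of $\calG_{a,b,c}^{\mathsf{spin}}$ exhibits infinite monochromatic clusters of all four types. My strategy would be to lift such a hypothetical state to the height representation via the mod-$4$ correspondence and analyze it using the $\bbT$-circuit machinery developed in the paper for the height function. The lift cannot be a translation-invariant height Gibbs state (none exists at $a+b=c$ by Theorem~\ref{thm:heights-gibbs}), and this non-invariance should be shown to force the simultaneous appearance of an infinite cluster of each sublattice and each sign. Rigorously ruling out intermediate Gibbs states --- neither the delocalized $\mathsf{Spin}_{a,b,c}$ nor a state with a single preferred sublattice--sign combination --- is likely the most delicate part of the argument.
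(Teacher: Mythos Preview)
Your approach to Part~1 matches the paper's up to the correlation inequality, via the mod-4 pushforward from Theorem~\ref{thm:heights-gibbs}. The gap is in your derivation of ${\sf Spin}_{a,b,c}^{\alpha\beta}(\sigma(u)\sigma(v))\ge {\sf Spin}_{a,b,c}^{\alpha\beta}(\sigma(u))^2$. The event $\{h(u),h(v)\in\{n,n+1\}\}$ is \emph{not} almost sure: under ${\sf HF}^{n,n+1}$ each fixed face takes a height outside $\{n,n+1\}$ with some fixed positive probability, so the ``error'' you propose to absorb is a constant, not something exponentially small in $|u-v|$. More fundamentally, the spin event $\{\sigma(u)=\alpha\}=\{h(u)\equiv 0,1\pmod 4\}$ is genuinely non-monotone in the height: for an even face with $n=0$, both $h(u)=-2$ and $h(u)=2$ give $\sigma(u)=-1$ while $h(u)=0$ gives $\sigma(u)=+1$. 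Positive association of the height function therefore does not yield positive association of the spins. The paper proceeds differently: it proves an independent FKG lattice condition for the marginal on~$\sigma^\bullet$ (Theorem~\ref{thm:fkg}), by expanding that marginal as a product of FK--Ising partition functions on the plus- and minus-subgraphs and applying the second Griffiths inequality. The strict positivity of the one-point function is likewise obtained not from heights but through an FK--Ising-type bond representation $\xi$ of the six-vertex model (Section~\ref{sec:fk-ising}), via the identity ${\sf Spin}^{++}(\sigma^\circ(u))={\sf FKIs}(u\leftrightarrow\infty)$ and the existence of an infinite $\xi$-cluster.

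For Part~2 your BKW-at-$q=4$ strategy is the right one and essentially the paper's. Two points of difference: convergence of ${\sf Spin}_{\calD_k}^{\tau}$ for general $\tau$ constant on one sublattice is handled in the paper by first sandwiching the $\sigma^\bullet$-marginal between those of ${\sf Spin}^{+-}$ and ${\sf Spin}^{++}$ using Theorem~\ref{thm:fkg}, deducing convergence of that marginal, and then running a circuit argument (find nested circuits of even pluses and even minuses, which force a constant-spin odd circuit between them) to recover the full spin. And the ``each other element of $\calG^{\sf spin}$'' claim is again argued via the spin-FKG of Theorem~\ref{thm:fkg} combined with Burton--Keane and non-coexistence input, rather than by lifting to heights; at $a+b=c$ the height lift of a spin Gibbs state is only well-defined up to an additive multiple of $4$, and since no translation-invariant height Gibbs state exists, your proposed route would require constructing and controlling a non-invariant height object, which is substantially more work than the direct spin-percolation argument.
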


The next theorem verifies the Fortuin--Kasteleyn--Ginibre (FKG) inequality (which  implies positive association~\cite{ForKasGin71}) for marginals of the spin representation in the regime $a,b\le c$ (all of the antiferroelectric phase and part of the disordered phase).
Denote by~$F^\bullet(\bbZ^2)$ and~$F^\circ(\bbZ^2)$ the set of even (resp. odd) faces of~$\bbZ^2$. Given~$\sigma\in\{-1,1\}^{F(\bbZ^2)}$ denote by~$\sigma^\bullet\in\{-1,1\}^{F^\bullet(\bbZ^2)}$ and by~$\sigma^\circ\in\{-1,1\}^{F^\circ(\bbZ^2)}$ the restrictions of~$\sigma$ to~$F^\bullet(\bbZ^2)$ and to~$F^\circ(\bbZ^2)$ respectfully. We endow $\{-1,1\}^{F^\bullet(\bbZ^2)}$ with the pointwise partial order: $\sigma^\bullet \geq \tau^\bullet$ if~$\sigma^\bullet(u) \geq \tau^\bullet(u)$ for all~$u\in F^\bullet(\bbZ^2)$.

\begin{theorem}[Positive association: spin representation]\label{thm:fkg}
	Let~$\calD$ be a domain and consider~$\tau\in \calE_{\sf spin}(\bbZ^2)$ that is equal to~$1$ at all odd faces outside of~$\calD$. Suppose that~$a,b,c > 0$ satisfy~$a,b\leq c$. Then the marginal of~${\sf Spin}_{\calD,a,b,c}^{\tau}$ on~$\sigma^\bullet$ satisfies the FKG lattice condition. In particular, for any increasing functions $f,g:\{-1,1\}^{F^\bullet(\bbZ^2)}\to\R$, one has
	\[
		{\sf Spin}_{\calD,a,b,c}^{\tau}(f(\sigma^\bullet)g(\sigma^\bullet)) \geq {\sf Spin}_{\calD,a,b,c}^{\tau}(f(\sigma^\bullet)) \cdot{\sf Spin}_{\calD,a,b,c}^{\tau}(g(\sigma^\bullet)).
	\]
\end{theorem}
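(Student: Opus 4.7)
The plan is to verify the FKG lattice condition for the $\sigma^\bullet$-marginal of ${\sf Spin}_{\calD,a,b,c}^{\tau}$ by coupling with an auxiliary bond field $\eta$, checking log-supermodularity of the joint $(\sigma^\bullet,\eta)$-law under a product order, and invoking the standard fact that the marginal of a measure satisfying the FKG lattice condition on a product space is again FKG.

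First, I write the vertex weight as $W(\alpha_v,\beta_v)$, with $\alpha_v=\sigma^\bullet_{NE(v)}\sigma^\bullet_{SW(v)}$, $\beta_v=\sigma^\circ_{NW(v)}\sigma^\circ_{SE(v)}$, and $W(+1,+1)=c$, $W(+1,-1)=a$, $W(-1,+1)=b$, $W(-1,-1)=0$ (the last value encoding the ice rule). The hypothesis $a,b\leq c$ guarantees that $W$ admits a one-parameter family of Edwards--Sokal-type decompositions with non-negative coefficients,
\[
W(\alpha,\beta)=q_I\,\mathbf{1}_{\alpha=+1\text{ or }\beta=+1}+q_E\,\mathbf{1}_{\alpha=+1}+q_O\,\mathbf{1}_{\beta=+1}+q_{EO}\,\mathbf{1}_{\alpha=\beta=+1},
\]
parameterised by $r=q_I\in[\max(0,a+b-c),\min(a,b)]$ via $q_E=a-r$, $q_O=b-r$, $q_{EO}=c-a-b+r$; the interval is non-empty precisely when $a,b\leq c$. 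Accordingly I introduce an auxiliary variable $\eta_v$ at each vertex of $\calD$ choosing one of four bond types: an ice bond imposing the ice rule, an even bond enforcing $\sigma^\bullet$-agreement at $v$, an odd bond enforcing $\sigma^\circ$-agreement, and a double bond enforcing both.

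Next, I sum out $\sigma^\circ$ using the boundary condition $\sigma^\circ\equiv +1$ outside $\calD$ inherited from $\tau$. The odd-agreement constraints (imposed directly by odd and double bonds, and induced at ice-bond vertices where $\sigma^\bullet$ disagrees) produce a random-cluster-type factor $2^{k^\circ_{\mathrm{free}}(\eta,\sigma^\bullet)}$ counting clusters in the odd sublattice graph $G^\circ$ of the resulting forced-agreement set that do not touch $\partial\calD$. The result is a coupled measure
\[
\mu(\sigma^\bullet,\eta)\propto\prod_{v}q_{\eta_v}\cdot \mathbf{1}_{\text{even-agreement constraints from }\eta}\cdot 2^{k^\circ_{\mathrm{free}}(\eta,\sigma^\bullet)},
\]
whose $\sigma^\bullet$-marginal is exactly the marginal of ${\sf Spin}_{\calD,a,b,c}^{\tau}$ we wish to analyse.

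The main technical step is then to verify the FKG lattice condition for $\mu$ under the natural product order on $(\sigma^\bullet,\eta)$ (the latter ordered by ``more bonds''). The random-cluster factor would be log-supermodular in $\eta$ alone by the FKG inequality for the random-cluster model with $q=2\geq 1$; the subtle point is its dependence on $\sigma^\bullet$ via ice-bond-induced constraints, which demands verifying that the induced forced-agreement set is jointly monotone in $(\sigma^\bullet,\eta)$. The per-vertex weights together with the even-agreement indicators give a local interaction between $\eta_v$ and $(\sigma^\bullet_{NE(v)},\sigma^\bullet_{SW(v)})$ whose log-supermodularity reduces to a finite vertex check, in which the assumption $a,b\leq c$ enters precisely through the non-negativity of $q_I,q_E,q_O,q_{EO}$. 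I expect the main obstacle to lie in handling this coupling cleanly, most naturally by splitting into the sub-regimes $c\leq a+b$ and $c\geq a+b$ (choosing $r=a+b-c$ and $r=0$ respectively, so that one of the four bond types is absent and the surviving decomposition has three terms); the two reductions coincide at $c=a+b$, making the argument consistent across the whole hypothesis $a,b\leq c$. Once FKG for $\mu$ is established, the FKG inequality for arbitrary increasing functions $f,g$ of $\sigma^\bullet$ follows from the classical Fortuin--Kasteleyn--Ginibre theorem applied to the marginal.
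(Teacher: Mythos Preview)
Your Edwards--Sokal decomposition is a natural idea, but the argument breaks down at precisely the step you flag as ``subtle'': the joint law $\mu(\sigma^\bullet,\eta)$ does \emph{not} satisfy the FKG lattice condition under any product order compatible with your bond types. Already the hard even-agreement indicator $\mathbf{1}[\eta_v\in\{\text{ice},\text{odd}\}\ \text{or}\ \sigma^\bullet_{NE(v)}=\sigma^\bullet_{SW(v)}]$ fails log-supermodularity as a function of the local variables: with $(\eta_v^E,\sigma^\bullet_{NE},\sigma^\bullet_{SW})$ ordered coordinatewise, the pair $a=(1,-,-)$ and $b=(0,+,-)$ both lie in the support (indicator~$1$) while $a\vee b=(1,+,-)$ does not (indicator~$0$). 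This is exactly why the joint Edwards--Sokal law for the ordinary Ising model is not log-supermodular in $(\sigma,\omega)$ either; Ising FKG is proved directly on the spins, not through the coupling. The further dependence of $k^\circ_{\mathrm{free}}$ on $\sigma^\bullet$ through the ice bonds is equally problematic, since the disagreement set $\{v:\sigma^\bullet_{NE(v)}\neq\sigma^\bullet_{SW(v)}\}$ is not monotone in $\sigma^\bullet$. Your splitting into the regimes $c\le a+b$ and $c\ge a+b$ eliminates one bond type but cures neither obstruction.

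The paper takes a genuinely different route: rather than seeking a joint FKG, it computes the $\sigma^\bullet$-marginal explicitly. Expanding $c=(c-1)+1$ introduces an FK--Ising bond field on the odd sublattice, and a duality then yields (in the normalisation $a=b=1$) the closed product formula
\[
\mathsf{Spin}_{\calD,c}^\tau(\sigma^\bullet)\ \propto\ (c+1)^{-|\theta(\sigma^\bullet)|}\,Z_{\mathrm{FK}}\!\bigl(P(\sigma^\bullet)\bigr)\,Z_{\mathrm{FK}}\!\bigl(M(\sigma^\bullet)\bigr),
\]
with $P(\sigma^\bullet),M(\sigma^\bullet)$ the induced graphs on the plus and minus even faces. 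The two-site lattice condition then decouples into separate inequalities for $P$ and for $M$, each reducing to a ratio of FK--Ising partition functions that can be read as the probability of a decreasing event; the inequality follows from positive association of the standard FK--Ising measure (with a small extra estimate when the two sites are adjacent). It is this duality, producing a factorisation into plus-graph and minus-graph contributions, that dissolves the non-monotone $\sigma^\bullet$-dependence your coupling cannot handle.
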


\begin{remark}
	The restriction to constant boundary conditions for the spins at odd faces is essential~--- the positive association may fail if the boundary conditions assign mixed signs to odd faces; see Fig. \ref{fig:fkg-counter-ex} and also~\cite[Remark 2.11]{GlaMan18} for a related setting.
\end{remark}

\begin{figure}
	\begin{center}
		\includegraphics[scale=1.4]{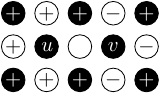}
		\caption{A counter-example for the positive association on even (black circles) sites under boundary conditions that have different spins at odd faces. Due to the ice-rule the probability to have $\sigma^{\bullet}(u) = \sigma^{\bullet}(v) = -1$ is zero. Since the other three spin configurations at $u,v$ all have strictly positive probabilities, the positive association does not hold.}
		\label{fig:fkg-counter-ex}
	\end{center}
\end{figure}

The FKG inequality established in Theorem~\ref{thm:fkg} should be put in analogy with a similar property established for other models: \cite[Proposition A.1]{Cha98} (XY model), \cite[Lemma 1]{ChaMcWin98} (Ashkin--Teller model), \cite[Proposition 8]{DumGlaPel17} and \cite[Theorem 2.6]{GlaMan18} (loop~$O(n)$ model). In particular, the result in~\cite{ChaMcWin98}, via the mapping between the spin representation and the standard Ashkin--Teller model described in Section~\ref{sec:proof-ashkin-teller} below (see also~\cite{HuaDenJacSal13}) allows to derive Theorem~\ref{thm:fkg} when $a/c = b/c \leq \frac{1}{\sqrt{2}}$ (but apparently not in the full range $a,b\le c$). Also, the proof of Theorem~\ref{thm:fkg} is closely related to that of~\cite[Theorem 2.6]{GlaMan18}.

The spin representation of the six-vertex model was considered already by Rys~\cite{Rys63}. In the terminology of~\cite{HuaDenJacSal13}, it can be called an \emph{infinite-coupling limit mixed Ashkin--Teller model}. The term `mixed' refers to the fact that the spin configurations~$\sigma^\bullet$ and~$\sigma^\circ$ are defined on two lattices that are dual to each other, while in the standard Ashkin--Teller model both spin configurations are defined on the same lattice (see Section~\ref{sec:ashkin-teller}). The term `infinite-coupling limit' refers, in our case, to the the ice rule constraint.

A follow-up work of Lis~\cite{Lis19} studies the case of two interacting Potts models (see also Owczarek and Baxter~\cite{OwcBax87} where a more general Temperley--Lieb interactions model is introduced) and proves, in particular, that they too satisfy an FKG inequality.

\subsection{Orientations of edges in the six-vertex model}\label{sec:six vertex results}

In this section, we state an immediate consequence of Theorem~\ref{thm:spins-gibbs} for the six-vertex model in its classical representation in terms of edge orientations. As stated in the introduction, a \emph{six-vertex configuration} is an orientation of the edges of $\bbZ^2$ that satisfies the ice-rule at every vertex (two incoming and two outgoing edges); see Figure~\ref{fig:6v-arrow-config}. Given a six-vertex configuration $\vec{\tau}$, the finite-volume six-vertex measure ${\sf SixV}_{E,a,b,c}^{\vec{\tau}}$, on a finite subset of edges $E\subset E(\Z^2)$ with boundary conditions~$\vec{\tau}$, is supported on six-vertex configurations that coincide with~$\vec{\tau}$ at all edges outside of~$E$ and is defined by:
\begin{equation*}
  {\sf SixV}_{E,a,b,c}^{\vec{\tau}}(\vec{\omega}) = \frac{1}{Z_{{\sf SixV},E,a,b,c}^{\vec{\tau}}}a^{n_1(\vec{\omega})+n_2(\vec{\omega})}b^{n_3(\vec{\omega})+n_4(\vec{\omega})}c^{n_5(\vec{\omega})+n_6(\vec{\omega})},
\end{equation*}
where~$Z_{{\sf SixV},E,a,b,c}^{\vec{\tau}}$ is a normalizing constant and $n_i(\vec{\omega})$ is the number of endpoints of edges in~$E$ at which the six-vertex configuration~$\vec{\omega}$ is of type~$i$ according to Figure~\ref{fig:6v-arrow-config}. Gibbs states for the six-vertex model are then defined in the standard way (as in the previous sections).

Our analysis classifies extremal (i.e., tail trivial) Gibbs states which are flat in an appropriate sense (these are expected to be the only Gibbs states for which the associated height function has zero slope but that is not proved here).

\begin{corollary}[Gibbs states: six-vertex model]\label{cor:6v-gibbs} A Gibbs state is termed `flat' if in a configuration sampled from that state, almost surely, there are infinitely many disjoint oriented circuits which surround the origin and consist of alternating vertical and horizontal edges.
\begin{enumerate}
  \item When~$a+b<c$, there are exactly two extremal flat Gibbs states ${\sf SixV}_{a,b,c}^{\circlearrowleft}, {\sf SixV}_{a,b,c}^{\circlearrowright}$. These states are invariant under parity-preserving translations and under ninety-degree rotations around the origin and differ from each other by a global edge-orientation flip. Moreover, if we denote by $A(e)$ the event that the edge $e\in E(\Z^2)$ is oriented so that the even face that it borders lies on its left, then ${\sf SixV}_{a,b,c}^{\circlearrowleft}(A(e))$ does not depend on $e$ (by invariance) and it holds that ${\sf SixV}_{a,b,c}^{\circlearrowleft}(A(e))>\frac{1}{2}$ and
      \begin{equation}\label{eq:arrows-correlation}
      	{\sf SixV}_{a,b,c}^{\circlearrowleft}(A(e) A(f))\ge {\sf SixV}_{a,b,c}^{\circlearrowleft}(A(e))^2
      \end{equation}
      for all edges $e,f\in E(\Z^2)$. (corresponding statements hold for ${\sf SixV}_{a,b,c}^{\circlearrowright}$ as it differs from ${\sf SixV}_{a,b,c}^{\circlearrowleft}$ by a global edge-orientation flip).

  \item When~$a+b=c$, the six-vertex model has a unique flat Gibbs state. This state is extremal and invariant under all translations.
\end{enumerate}
\end{corollary}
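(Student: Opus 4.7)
The plan is to reduce Corollary \ref{cor:6v-gibbs} to Theorem \ref{thm:spins-gibbs} via the 2-to-1 projection $\Pi\colon \calE_{\sf spin}(\bbZ^2) \to \{\text{six-vertex configurations}\}$ from Figure \ref{fig:6v-hom-config}, noting that $\Pi(\sigma)=\Pi(-\sigma)$ and that $\Pi$ preserves weights. Consequently, every six-vertex Gibbs state $\mu$ admits a spin lift $\tilde\mu$ invariant under the global spin flip (constructed from $\mu$ by coin-flipping the sign at a fixed reference face), and $\mu = \Pi_*\tilde\mu$; conversely, any spin Gibbs state pushes forward to a six-vertex Gibbs state. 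The geometric core is the following dictionary, which I would establish first: an oriented alternating horizontal/vertical circuit surrounding the origin in the arrow representation corresponds, via $\Pi$, precisely to a level loop of any height-function lift, equivalently to a loop that simultaneously separates opposite sign-clusters of $\sigma^\bullet$ and of $\sigma^\circ$. Under this dictionary, a six-vertex Gibbs state is flat if and only if its spin lift is supported on configurations exhibiting infinitely many such separating loops around the origin; by Theorem \ref{thm:spins-gibbs} these are exactly the extremal parity-translation-invariant spin Gibbs states, namely the four states $\{{\sf Spin}^{\alpha\beta}_{a,b,c}\}_{\alpha,\beta \in \{\pm\}}$ when $a+b<c$, and the single state ${\sf Spin}_{a,b,c}$ when $a+b=c$.

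When $a+b<c$, reversing all arrows corresponds on the spin side to the map $\sigma \mapsto \sigma'$ with $\sigma' = \sigma$ on even faces and $\sigma' = -\sigma$ on odd faces (any height lift is sent by arrow reversal to $-h+\text{const}$, which changes the $\bmod\,4$ class only on odd faces). Together with the global spin flip, this yields two orbits on $\{{\sf Spin}^{\pm\pm}\}$: $\{{\sf Spin}^{++},{\sf Spin}^{--}\}$ and $\{{\sf Spin}^{+-},{\sf Spin}^{-+}\}$. The symmetric averages of the two orbits push forward under $\Pi$ to the two distinct extremal flat six-vertex Gibbs states ${\sf SixV}^{\circlearrowleft}$ and ${\sf SixV}^{\circlearrowright}$, manifestly related by global edge reversal; extremality of each six-vertex state follows from a standard argument that the tail $\sigma$-algebra of $\Pi_*\tilde\mu$ is trivial when $\tilde\mu$ is the half-and-half mixture of two extremal spin states that are genuinely interchanged by $\sigma\mapsto-\sigma$. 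Parity-preserving translation invariance descends from the spin states, and the stated 90-degree rotation invariance follows from the corresponding symmetry at the spin level — a rotation around an appropriate lattice vertex exchanges the even/odd sublattices and maps each of the two orbits to itself as an unordered set. For the arrow statistics, I would express $A(e)$ on an edge $e$ bordering an even face $u$ and odd face $v$ in terms of the pair $(\sigma(u),\sigma(v))$ via Figure \ref{fig:6v-hom-config}; this identifies $A(e)$ with an event increasing in the even-sublattice marginal $\sigma^\bullet$. The strict bias ${\sf SixV}^{\circlearrowleft}(A(e))>\tfrac12$ then follows from the non-vanishing magnetisation of ${\sf Spin}^{++}$ on even faces (Theorem \ref{thm:spins-gibbs}(1)), and \eqref{eq:arrows-correlation} from the FKG inequality of Theorem \ref{thm:fkg} applied to the two increasing events $A(e)$ and $A(f)$.

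When $a+b=c$, the state ${\sf Spin}_{a,b,c}$ is already invariant under independent sign flips on each sublattice (Theorem \ref{thm:spins-gibbs}(2)), in particular under the global spin flip, so its pushforward $\Pi_*{\sf Spin}_{a,b,c}$ is well-defined as the unique flat six-vertex Gibbs state, with extremality and full translation invariance descending from the spin side. The main obstacle in the entire scheme is the flatness dictionary of the first paragraph: making rigorous the correspondence between oriented alternating horizontal/vertical circuits in the arrow representation and the boundary loops between sign clusters of $\sigma^\bullet$ and $\sigma^\circ$, and deducing that the extremal parity-translation-invariant spin Gibbs states whose pushforward is flat are exactly those listed in Theorem \ref{thm:spins-gibbs}. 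Once this identification is in place, every remaining claim in the corollary reduces to a transfer of invariance or correlation data already established on the spin side.
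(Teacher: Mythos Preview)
Your overall framework---lifting flat six-vertex Gibbs states to the spin representation via the coin-flip construction, identifying oriented alternating circuits with circuits along which the spins on both sublattices are constant, and then invoking Theorem~\ref{thm:spins-gibbs} to classify the possible lifts---matches the paper's approach and is essentially correct for the classification part and for the case $a+b=c$.

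However, your treatment of the arrow bias and the correlation inequality contains a genuine gap. You assert that the event $A(e)$, for an edge $e$ bordering an even face $u$ and an odd face $v$, can be expressed as ``an event increasing in the even-sublattice marginal $\sigma^\bullet$''. This is incorrect: under the correspondence of Figure~\ref{fig:6v-hom-config}, $A(e)$ is equivalent to $\sigma(u)\sigma(v)=1$, which is a function of \emph{both} sublattices and is not monotone in $\sigma^\bullet$ (nor in the height function). Consequently, neither the strict bias ${\sf SixV}^{\circlearrowleft}(A(e))>\tfrac12$ nor the inequality~\eqref{eq:arrows-correlation} follows from Theorem~\ref{thm:fkg} or from the sublattice magnetization in Theorem~\ref{thm:spins-gibbs}. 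The paper establishes the bias by a different and more substantial argument: coupling ${\sf Spin}_c^{++}$ with the critical random-cluster model via BKW on both even and odd domains, computing the edge-open probabilities in each, and showing that their difference (which is strictly positive precisely because the phase transition is discontinuous when $q>4$) equals a positive multiple of ${\sf Spin}_c^{++}(\sigma(u)\sigma(v))$. The correlation inequality~\eqref{eq:arrows-correlation} is then obtained not through FKG but from the fact that, in the BKW coupling, the spins on distinct random-cluster clusters are assigned independently, so conditioning on $\sigma(u')\sigma(v')=1$ for one edge cannot decrease the probability of the same event at another edge.
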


\subsection{Ashkin--Teller model}
\label{sec:ashkin-teller}

The Ashkin--Teller model was originally introduced~\cite{AshTel43} as a generalization of the Ising model to a four-state system. The definition in terms of two coupled Ising models that we provide below is due to Fan~\cite{Fan72}.

We consider the (symmetric) Ashkin--Teller model on the square grid. We will later describe a coupling of the Ashkin--Teller model with the spin representation of the six-vertex model (Proposition~\ref{prop:coupling-AT-6V}) and in anticipation of this it is convenient to define the Ashkin--Teller model on the set of even faces $F^\bullet(\bbZ^2)$ of $\bbZ^2$ with diagonal connectivity (this graph is isomorphic to $\bbZ^2$) rather than on $\bbZ^2$ itself. Accordingly, we let~$(\bbZ^2)^\bullet$ be the graph with vertex set $F^\bullet(\bbZ^2)$ and with edges between diagonally-adjacent faces. The Ashkin--Teller measure with parameters~$J,U\in\bbR$ on a subgraph~$\Omega$ of~$(\bbZ^2)^\bullet$ is supported on pairs of spin configurations~$(\tau,\tau')\in \{-1,1\}^{V(\Omega)}\times  \{-1,1\}^{V(\Omega)}$ and is defined by
\begin{equation}\label{eq:Ashkin-Teller measure def}
	{\sf AT}_{\Omega,J,U} (\tau,\tau') = \tfrac{1}{Z_{\Omega,J,U}}\cdot \exp \left[ \sum_{\{u,v\}\in E(\Omega)} J(\tau(u)\tau(v) + \tau'(u)\tau'(v)) + U\tau(u)\tau(v)\tau'(u)\tau'(v) \right],
\end{equation}
where~$Z_{\Omega,J,U}$ is a normalizing constant and the sum is taken over all edges in~$\Omega$.

Proposition~\ref{prop:coupling-AT-6V} shows that there is a coupling of the Ashkin--Teller measure with parameters $J,U$ and the spin representation measure~\eqref{eq:spin representation measure def} with parameters $a,b,c$, on suitable domains and with suitable boundary conditions, when the parameters satisfy the relations
\begin{equation}\label{eq:at-to-6v-parameters}
	\sinh 2J = e^{-2U}, \quad a=b=1, \quad c = \coth 2J
\end{equation}
so that the configurations $(\tau,\tau')$ and $\sigma$ satisfy at every even face the equality
\[
	\tau\tau' = \sigma.
\]
The first equality in~\eqref{eq:at-to-6v-parameters} describes the self-dual curve of parameters for the Ashkin--Teller model and was first found by Mittag and Stephen~\cite{MitSte71} (see Figure~\ref{fig:AT-phase-diagram}). The relation between the Ashkin--Teller and the eight-vertex model was noticed already by Fan~\cite{Fan72} and then made explicit by Wegner~\cite{Weg72} (see also~\cite[Section III]{Kot85}). In the particular case given by~\eqref{eq:at-to-6v-parameters}, this turns into a correspondence between the Ashkin--Teller and six-vertex models (see, e.g.,~\cite[Section 12.9]{Bax82}). This correspondence is upgraded here to a coupling of the models together with an FK--Ising representation that is introduced (Section~\ref{sec:fk-ising} and Section~\ref{sec:proof-ashkin-teller}), which facilitates the transfer of results between the models. Thus we obtain a coupling of the six-vertex model with $a=b=1$ and $c>1$ and the self-dual Ashkin--Teller model (see Section~\ref{sec:open questions} for the limiting $c=1$ case).

\begin{figure}
	\begin{center}
		\includegraphics[scale=1.4]{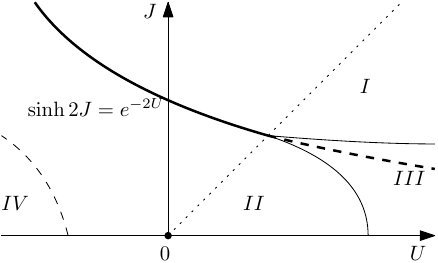}
		\caption{A sketch of the predicted phase diagram of the Ashkin--Teller model. The self-dual curve~$\sinh 2J= e^{-2U}$ is in bold: it is critical when~$J\ge U$ (solid) but expected to be non-critical when~$J<U$ (dashed) while the critical curve is expected to split at the point $J=U$ into two critical curves dual to each other (no prediction for their exact location). Phase $I$: $\tau,\tau',\tau\tau'$ are ferromagnetically ordered. Phase~$II$: $\tau,\tau',\tau\tau'$ are disordered. Phase~$III$: $\tau\tau'$ is ferromagnetically ordered, while $\tau,\tau'$ are disordered. Phase~$IV$: $\tau\tau'$ is antiferromagnetically ordered, while $\tau,\tau'$ are disordered. The line~$J=U$ (dotted) corresponds to the $4$-state Potts model. The line~$U=0$ corresponds to two independent Ising models. Only the regime $J\ge0$ is drawn as the regime $J\le 0$ is equivalent to it, by flipping the signs of the Ising models on one partite class of the square grid.}
		\label{fig:AT-phase-diagram}
	\end{center}
\end{figure}

In the next theorem, we show that on the self-dual curve when~$J<U$ (see Figure~\ref{fig:AT-phase-diagram}), correlations in~$\tau$ and~$\tau'$ decay exponentially (disordered regime) while correlations in the product~$\tau\tau'$ are uniformly positive (ordered regime), in agreement with the predicted~\cite{Kno75,DitBanGreKad80} phase diagram of the Ashkin--Teller model (see also~\cite[Section~5]{HuaDenJacSal13} for a recent survey with explicit computations). For integer $k\ge 0$, let~$\Omega_k$ be the induced subgraph of~$(\bbZ^2)^\bullet$ on the faces of $\bbZ^2$ whose centers are at~$(i,j)\in [-k-1,k+1]^2$. Define~${\sf AT}_{\Omega_k,J,U}^{\mathrm{free},+}$ to be the Ashkin--Teller measure conditioned on the event that~$\tau=\tau'$ on the internal boundary of~$\Omega_k$.
\begin{theorem}\label{thm:ashkin-teller}
	 Let~$J,U>0$ be such that~$\sinh 2J= e^{-2U}$ and~$J<U$. Then, the sequence of measures~${\sf AT}_{\Omega_k,J,U}^{\mathrm{free},+}$ converges (weakly) to a measure ${\sf AT}_{J,U}^{\mathrm{free},+}$ that is translation invariant and extremal. Further, there exist~$C,c,\alpha>0$ such that for any two vertices~$u,v$ of~$(\bbZ^2)^\bullet$,
	 \begin{align}
	 	{\sf AT}_{J,U}^{\mathrm{free},+}(\tau(u)\tau'(u)\tau(v)\tau'(v)) &\geq c,\label{eq:AT-corr-bounded}\\
	 	{\sf AT}_{J,U}^{\mathrm{free},+}(\tau(u)\tau(v)) = {\sf AT}_{J,U}^{\mathrm{free},+}(\tau'(u)\tau'(v)) &\leq C e^{-\alpha |u-v|}\label{eq:AT-corr-exp-decay} .
	 \end{align}
\end{theorem}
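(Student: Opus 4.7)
The plan is to derive Theorem~\ref{thm:ashkin-teller} from Theorem~\ref{thm:spins-gibbs} via the coupling of the Ashkin--Teller model with the spin representation of the six-vertex model (Proposition~\ref{prop:coupling-AT-6V}). First I would verify the parameter correspondence: with $a=b=1$ and $c=\coth 2J$, the self-dual relation $\sinh 2J = e^{-2U}$ together with $J<U$ gives $\sinh 2J < e^{-2J}$, which rearranges to $\coth 2J > 2$, so $a+b<c$. Thus we are in the antiferroelectric regime where Theorem~\ref{thm:spins-gibbs}(1) applies. The boundary constraint $\tau=\tau'$ on the internal boundary of $\Omega_k$ is equivalent to $\sigma=\tau\tau'=+1$ on the corresponding even boundary faces, which under the coupling matches the $++$ spin boundary condition on a suitable sequence of domains $\calD_k$ exhausting $\bbZ^2$.

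Given this set-up, convergence, translation invariance, extremality, and \eqref{eq:AT-corr-bounded} are essentially immediate. By Theorem~\ref{thm:spins-gibbs}(1) the measures ${\sf Spin}_{\calD_k,1,1,c}^{++}$ converge to ${\sf Spin}_{1,1,c}^{++}\in\calG_{1,1,c}^{\sf spin}$; through the coupling this yields the limit ${\sf AT}_{J,U}^{\mathrm{free},+}$, and extremality together with invariance under translations of $(\bbZ^2)^\bullet$ (which correspond to parity-preserving translations of $\bbZ^2$) descend from the spin side. Since $\tau(u)\tau'(u)=\sigma(u)$ on every even face, the estimate \eqref{eq:AT-corr-bounded} follows from the inequality ${\sf Spin}_{1,1,c}^{++}(\sigma(u)\sigma(v))\geq {\sf Spin}_{1,1,c}^{++}(\sigma(u))^2>0$ provided by Theorem~\ref{thm:spins-gibbs}(1).

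The main work lies in \eqref{eq:AT-corr-exp-decay}. The equality ${\sf AT}_{J,U}^{\mathrm{free},+}(\tau(u)\tau(v))={\sf AT}_{J,U}^{\mathrm{free},+}(\tau'(u)\tau'(v))$ is immediate from the symmetry of the Ashkin--Teller measure under $\tau\leftrightarrow\tau'$. For the upper bound I would invoke the FK--Ising type bond representation announced in Section~\ref{sec:fk-ising} and developed in Section~\ref{sec:proof-ashkin-teller}. The defining feature of this representation is that, conditionally on the bond configuration $\omega$, the values of $\tau$ on distinct clusters of $\omega$ are independent unbiased signs, so $\bigl|{\sf AT}_{J,U}^{\mathrm{free},+}(\tau(u)\tau(v))\bigr|$ is bounded by the probability that $u$ and $v$ lie in the same $\omega$-cluster. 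It then remains to prove exponential decay of this connection probability when $J<U$ (equivalently $c>2$, so $2a,2b<c$). I would use the FKG inequality for the FK--Ising representation, valid in the regime $2a,2b\leq c$ as advertised in the introduction, to reduce the question to an estimate of a one-arm event, and then close the loop by showing that any long $\omega$-connection forces a long cluster of height values outside $\{n,n+1\}$, whose probability is exponentially small by \eqref{eq:exponential tail decay}.

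The main obstacle is precisely this last step: translating the height-function localization of Theorem~\ref{thm:heights-gibbs} into exponential decay for the FK--Ising connections. This requires a careful geometric dictionary between the bond representation $\omega$, the spin configuration $\sigma$, and the level sets of the height function, and then a Peierls-type duality argument powered by the FKG inequality for $\omega$. Once this percolative decay is established, the upper bound in \eqref{eq:AT-corr-exp-decay} follows automatically and the proof is complete.
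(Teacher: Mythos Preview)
Your overall architecture is correct and matches the paper: the parameter correspondence, the coupling with the spin representation, and the derivation of \eqref{eq:AT-corr-bounded} from Theorem~\ref{thm:spins-gibbs}(1) are all as in the paper's proof. The identification of the key remaining task --- exponential decay of connectivity in the FK--Ising bond configuration (your $\omega$, the paper's $\xi^*$) --- is also right, as is the role of the FKG inequality (Proposition~\ref{prop:fkg-fk-ising}).

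However, the mechanism you propose for the last step does not work. A $\xi^*$-cluster has constant $\sigma^\bullet$, but if that constant equals $+1$ then the even faces along the cluster can perfectly well all have height $0$; no excursion outside $\{0,1\}$ is forced. So the implication ``long $\omega$-connection $\Rightarrow$ long height cluster outside $\{n,n+1\}$'' is false as stated, and a direct Peierls translation of \eqref{eq:exponential tail decay} to $\xi^*$-connectivity is not available. The paper's argument (Proposition~\ref{prop:exp-decay-eta-o}) is genuinely different: it first shows $\xi$ has an infinite cluster (Corollary~\ref{cor:sigma-given-fk-ising-infinite}), then observes that on the event that this infinite cluster misses a box $\Lambda_n$, the conditional law of $\sigma^\circ$ in $\Lambda_n$ is invariant under a global sign flip, which forces a long $\bbT^\circ$-crossing of minus spins with probability at least $1/2$; it is \emph{this} crossing event that has exponentially small probability under ${\sf Spin}_c^{++}$ by the pushforward of \eqref{eq:exponential tail decay}. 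The FKG inequality for $\xi$ then converts the resulting bound on ``infinite cluster misses $\Lambda_n$'' into exponential decay of $\xi^*$-connectivity via a standard circuit-building argument. You should replace your Peierls sketch by this route.

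A smaller gap: convergence of ${\sf Spin}_{\calD_k,c}^{++}$ does not by itself give convergence of ${\sf AT}_{\Omega_k,J,U}^{\mathrm{free},+}$, since the coupling is not a function from $\sigma$ to $(\tau,\tau')$. The paper establishes convergence of the joint law $(\sigma,\xi)$ first, then uses that all $\xi^*$-clusters are finite (Proposition~\ref{prop:exp-decay-eta-o}) so that the conditional assignment of $\tau$ (uniform independent signs on $\xi^*$-clusters) passes to the limit. Extremality is likewise deduced for the joint law and then for the marginal.
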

We briefly survey some of the rigorous results on the phase transition of the Ashkin--Teller model near its self-dual curve. It is natural to search for possible phase transitions when changing the parameters along the lines in which~$J/U$ is constant (this corresponds to changing the temperature when the term in the exponent of~\eqref{eq:Ashkin-Teller measure def} is multiplied by an inverse temperature parameter). 
When doing so with~$J>U>0$, under plus boundary conditions, correlations of~$\tau$, $\tau'$ and~$\tau\tau'$ can be shown to undergo a sharp phase transition at the same curve of parameters~$\gamma_c$: they decay exponentially fast in the distance when
~$(J,U)$ is below~$\gamma_c$ and~stay uniformly positive when~$(J,U)$ is above~$\gamma_c$\footnote{One needs to use a monotonic random-cluster representation developed in~\cite{PfiVel97,ChaMac98} and a general approach~\cite{DumRauTas17} allowing to show sharpness for monotonic measures using the OSSS inequality~\cite{OdoSakSchSer05}. See a recent work~\cite{AouDobGla23} for the details.}.
Provided that the transition under free boundary conditions also occurs at~$\gamma_c$, this implies that~$\gamma_c$ coincides with the self-dual curve~$\sinh 2J = e^{-2U}$.
Rigorous results on the critical behavior are available only at~$J=U=\tfrac14 \log 3$ (critical 4-state Potts model) where all correlations are known to have power-law decay~\cite{DumSidTas17} and at~$J=\tfrac12 \log (1+\sqrt{2})$, $U=0$ (two independent critical Ising models) where correlations in~$\tau$ and~$\tau'$ decay as~$|u-v|^{-1/4}$~\cite{onsager1944crystal, mccoy2014two, Smi10,CheHonIzy15}. 
When $0<J<U$ and the parameters are varied on a line with~$J/U<\frac{1}{2}$ fixed, Pfister~\cite{Pfi82} (see also H\"aggstr\"om~\cite{Hag96}) proved that there exist three phases~--- a disordered phase and an ordered phase (for $\tau, \tau'$ and $\tau\tau'$) as well as an intermediate phase in which~$\tau$ and~$\tau'$ are disordered but~$\tau\tau'$ is ordered (see Figure~\ref{fig:AT-phase-diagram}). This behavior is predicted to persist, when $0<J<U$, for all values of the ratio~$J/U$ and Theorem~\ref{thm:ashkin-teller} supports this prediction as it shows that on the part of the self-dual curve~$\sinh 2J= e^{-2U}$ with~$0<J<U$ the model indeed has the properties of the intermediate phase. However, our results do not show that the intermediate phase extends beyond the self-dual curve.

The following corollary is a straightforward consequence of the positive association of the spin representation established in Theorem~\ref{thm:fkg} and the coupling between the spin representation and the Ashkin--Teller models described in Proposition~\ref{prop:coupling-AT-6V}.

\begin{corollary}\label{cor:fkg-AT}
	Let~$J>0$ and $U\in\R$ be such that~$\sinh 2J= e^{-2U}$. Then, for any~$k$, the marginal of the measure~${\sf AT}_{\Omega_k,J,U}^{\mathrm{free},+}$ on the product~$\tau\tau'$ of the spins satisfies the FKG lattice condition (in particular, it is positively associated).
\end{corollary}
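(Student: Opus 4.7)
The plan is to derive the corollary from two ingredients already stated earlier in the paper: the coupling of Proposition~\ref{prop:coupling-AT-6V} between the Ashkin--Teller model with parameters $(J,U)$ satisfying $\sinh 2J=e^{-2U}$ and the spin representation of the six-vertex model with $a=b=1$, $c=\coth 2J$; and Theorem~\ref{thm:fkg}, which gives the FKG lattice condition for the marginal on $\sigma^\bullet$ of the spin representation whenever $a,b\le c$ and the boundary condition is constant $+1$ on all odd faces outside the domain.

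First I would verify the parameter hypothesis of Theorem~\ref{thm:fkg}. Since $J>0$, one has $c=\coth 2J>1=a=b$, so $a,b\le c$, and Theorem~\ref{thm:fkg} is applicable to the spin representation measure with these weights on any domain equipped with any odd-constant boundary condition.

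Next I would identify the two marginals via the coupling. At every even face $u$, the coupling guarantees the pointwise identity $\tau(u)\tau'(u)=\sigma(u)$. Conditioning on $\tau=\tau'$ along the internal boundary of $\Omega_k$ — which by definition produces ${\sf AT}_{\Omega_k,J,U}^{\mathrm{free},+}$ — therefore translates on the six-vertex side into the conditioning $\sigma\equiv +1$ on the corresponding boundary even faces, and the coupling yields a companion spin-representation measure of the form ${\sf Spin}_{\calD,1,1,c}^{\tau^{\sf bc}}$ on a suitable domain $\calD$, with boundary configuration $\tau^{\sf bc}\in\calE_{\sf spin}(\bbZ^2)$ that is constant $+1$ on all odd faces outside of $\calD$. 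Consequently, the joint distribution of $(\tau\tau'(u))_{u\in V(\Omega_k)}$ under ${\sf AT}_{\Omega_k,J,U}^{\mathrm{free},+}$ coincides with the joint distribution of $\sigma^\bullet$ under ${\sf Spin}_{\calD,1,1,c}^{\tau^{\sf bc}}$.

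Applying Theorem~\ref{thm:fkg} to that marginal then immediately yields the FKG lattice condition — and in particular positive association — for the marginal of ${\sf AT}_{\Omega_k,J,U}^{\mathrm{free},+}$ on the product $\tau\tau'$, which is the claim. The only delicate step I expect is the boundary-condition bookkeeping: checking that the ``free,$+$'' conditioning on the Ashkin--Teller side does correspond, through the precise statement of Proposition~\ref{prop:coupling-AT-6V}, to a spin-representation measure whose odd-face boundary values are genuinely all $+1$ (so that the hypotheses of Theorem~\ref{thm:fkg} are exactly met). Once this is read off from the coupling, the corollary follows with no further calculation.
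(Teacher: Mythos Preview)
Your argument is correct and is exactly the approach the paper takes: invoke the coupling of Proposition~\ref{prop:coupling-AT-6V} to identify the $\tau\tau'$-marginal of ${\sf AT}_{\Omega_k,J,U}^{\mathrm{free},+}$ with the $\sigma^\bullet$-marginal of ${\sf Spin}_{\calD,c}^{++}$, and then apply Theorem~\ref{thm:fkg}. The boundary-condition bookkeeping you flag as delicate is in fact already resolved by the statement of Proposition~\ref{prop:coupling-AT-6V} itself, which packages the six-vertex side directly as ${\sf Spin}_{\calD,c}^{++}$ (constant $+1$ on all faces outside $\calD$, so in particular on all odd faces), so no further verification is needed.
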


\subsection{Monotonicity in the boundary coupling constant}
\label{sec:monotonicity-in-c-b}

The starting point for our analysis of the six-vertex model is the Baxter--Kelland--Wu~\cite{BaxKelWu76} coupling of it with the random-cluster model.
Originally, the coupling was stated for the six-vertex model on general planar graphs with no boundary condition.
In Section~\ref{sec:coupling} we describe an extended version of this coupling on domains on $\bbZ^{2}$ for the models under two different types of boundary conditions.
In particular, the wired boundary condition in the critical random-cluster model (discussed in the original work~\cite{BaxKelWu76}) corresponds to a six-vertex model, in which the parameters $a,b,c$ assigned to vertices on the boundary of the domain differ from the parameters in the bulk. 
We call attention to it as a useful extension of the model.
As we describe below, on a class of domains, the height-function measures are stochastically ordered with respect to these boundary parameters and these parameters enable a sort of continuous interpolation between different boundary conditions for the six-vertex model.

Let $\calD$ be a domain on $\bbZ^{2}$.
Denote by~$\partial_V\calD$ the set of vertices belonging to exactly one face of~$\calD$; see Figure~\ref{fig:even-domain-rcm}.
Given a height function $t$, the finite-volume height-function measure on~$\calD$ with boundary conditions~$t$ and parameters~$a,b,c,c_b>0$ is supported on height functions that coincide with~$t$ at all faces outside of~$\calD$ and is defined by
\begin{equation}\label{eq:def-hf-cb}
	{\sf HF}_{\calD,a,b,c}^{t,c_b}(h) = \tfrac{1}{Z_{{\sf hf},\calD,a,b,c}^{t,c_b}}\cdot a^{n_1'(h)+n_2'(h)}b^{n_3'(h)+n_4'(h)}c^{n_5'(h)+n_6'(h)}c_b^{n_b(h)},
\end{equation}
where~$Z_{{\sf hf},\calD,a,b,c}^{t,c_b}$ is a normalizing constant and~$n_i'(h)$ is the number of vertices of~$\calD\setminus \partial_V \calD$ that are of type~$i$ according to the correspondence described in Figure~\ref{fig:6v-hom-config} (up to an additive constant) and $n_b(h)$ is the number of vertices in~$\partial_V \calD$ which are of type 5 and 6 according to the figure (the boundary weights of vertices of types 1-4 are fixed to one). 

As in Section \ref{sec:results-heights}, we write ${\sf HF}_{\calD,a,b,c}^{n,n+1,c_b}$ when the boundary condition $t$ takes values in $\{n,n+1\}$.
Denote by~$\partial_{\mathrm{ext}}\calD$ (and call it the {\it external boundary} of~$\calD$) the set of faces in~$\bbZ^2\setminus \calD$ that are adjacent to faces in~$\calD$.
We say that a domain $\calD$ on $\bbZ^{2}$ is of \emph{fixed parity} if all faces in $\partial_{\mathrm{ext}}\calD$ have the same parity.
According to this parity, the domain is then called \emph{even} or \emph{odd}.

\begin{proposition}[Monotonicity in~$c_b$: heights]
\label{prop:monotone-c-b-heights}
	Let~$\calD$ be an even domain. Suppose $c \geq \max\{a,b\}$ and~$c_b, c_b'\in [0,\infty]$ satisfy~$c_b\leq c_b'$.
	Then,
	\begin{equation}
	\label{eq:compare-c-b-heights}
	{\sf HF}_{\calD\setminus\partial_V\calD,a,b,c}^{-1,0} \preceq {\sf HF}_{\calD,a,b,c}^{0,1; c_b} \preceq{\sf HF}_{\calD,a,b,c}^{0,1; c_b'} \preceq {\sf HF}_{\calD\setminus\partial_V\calD,a,b,c}^{0,1},
	\end{equation}
	where~$\calD\setminus\partial_V\calD$ denotes the graph obtained from~$\calD$ after removing all vertices~$\partial_V\calD$ (together with the edges incident to them).
\end{proposition}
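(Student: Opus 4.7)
The strategy is to route all four measures in~\eqref{eq:compare-c-b-heights} through the extended Baxter--Kelland--Wu coupling developed in Section~\ref{sec:coupling}. Under that coupling, the six-vertex model with boundary coupling constant $c_b$ corresponds to a random-cluster model on a planar graph derived from $\calD$, in which $c_b$ appears as a boundary edge/cluster weight that interpolates monotonically between free boundary conditions (at $c_b=0$) and wired boundary conditions (at $c_b=\infty$). The hypothesis $c\geq \max\{a,b\}$ places the associated random-cluster model in a regime where FKG and monotonicity in boundary conditions are available, and this monotonicity transfers back to the height-function side.

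For the middle inequality ${\sf HF}^{0,1;c_b}_{\calD,a,b,c} \preceq {\sf HF}^{0,1;c_b'}_{\calD,a,b,c}$, I note that the Radon--Nikodym density of the $c_b'$ measure with respect to the $c_b$ measure is proportional to $(c_b'/c_b)^{n_b(h)}$, an increasing function of the number $n_b(h)$ of c-type boundary vertices. After mapping through the BKW coupling this becomes exactly the monotonicity in boundary wiredness of the associated random-cluster model, producing a pointwise-monotone coupling of the two height functions. For the outer inequalities I would identify the $c_b\to 0$ and $c_b\to \infty$ limits with the shrunken-domain measures. In an even domain, every $v\in\partial_V\calD$ has a unique interior face $F$, and $F$ must be of odd parity (the two external faces sharing an edge of $v$ with $F$ belong to $\partial_{\mathrm{ext}}\calD$ and are therefore even, forcing $F$ to be odd). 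Around $v$ the three external faces carry heights $0,0$ (on the two even ones) and $\pm 1$ (on the diagonal odd face), the sign depending on whether the boundary condition is ${\sf HF}^{0,1}$ or ${\sf HF}^{-1,0}$. A c-type configuration at $v$ produces an alternating pattern around $v$ and forces $F$ to equal the diagonal odd value, whereas an a- or b-type configuration forces $F$ to the opposite odd value. Hence as $c_b\to\infty$ only c-type vertices survive on $\partial_V\calD$, pinning $F=+1$ and reducing the measure to ${\sf HF}^{0,1}_{\calD\setminus\partial_V\calD,a,b,c}$ (the constant local weights at the now-trivial boundary vertices factor through the partition function); as $c_b\to 0$, only a- and b-type vertices survive, pinning $F=-1$ and yielding ${\sf HF}^{-1,0}_{\calD\setminus\partial_V\calD,a,b,c}$. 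Combining these two identifications with the middle monotonicity chains the four measures as required.

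The main obstacle is justifying the monotonicity step rigorously across the full range $c\ge\max\{a,b\}$. In the antiferroelectric regime $c\ge a+b$ the BKW coupling is a bona fide probabilistic coupling with a random-cluster model of parameter $q\ge 4$, and the standard FKG comparison of boundary conditions applies directly. The assumption $c\ge\max\{a,b\}$, however, also permits values in the disordered regime $|a-b|<c<a+b$, where the BKW coupling becomes a signed measure and the comparison must be recovered by other means~--- a natural candidate is the FKG inequality for the spin representation established in Theorem~\ref{thm:fkg}, which holds precisely when $a,b\le c$ and controls monotone events of the height function modulo $4$. A careful execution of one of these routes, combined with the identification of the $c_b=0$ and $c_b=\infty$ limits above, will yield the chain~\eqref{eq:compare-c-b-heights}.
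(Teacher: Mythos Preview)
Your identification of the $c_b\to 0$ and $c_b\to\infty$ limits is correct and matches the paper. Your observation that the Radon--Nikodym density between the two measures is proportional to $(c_b'/c_b)^{n_b(h)}$ is also the right starting point. But from there you take a detour that does not close.

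The paper's argument is direct and avoids BKW entirely. It establishes the FKG lattice condition for the height-function measure ${\sf HF}_{\calD,a,b,c}^{t;c_b}$ itself (Proposition~\ref{prop:fkg-homo}), valid whenever $c\ge\max\{a,b\}$ and for any $c_b\ge 0$. With this in hand, the middle inequality follows from a one-line derivative computation: differentiating ${\sf HF}_{\calD,a,b,c}^{0,1;c_b}(A)$ in $c_b$ yields $c_b^{-1}\mathrm{Cov}(n_b,\1_A)$, and since $n_b$ counts boundary vertices whose unique interior (odd) face has height $+1$ rather than $-1$, it is an increasing function of $h$. FKG for heights then gives non-negativity of the covariance. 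No random-cluster model, no coupling.

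Your proposed route through BKW has two problems. First, as you note, the coupling is only probabilistic when $a+b\le c$, so the disordered strip $\max\{a,b\}\le c<a+b$ is simply not covered. Second, even in the antiferroelectric regime, it is not automatic that stochastic ordering of the random-cluster marginals lifts to stochastic ordering of the \emph{height} marginals: the height is obtained from the cluster structure by an additional random assignment of values to clusters, and you would need to argue separately that more wiredness of $\eta$ makes $h$ stochastically larger. Your fallback to Theorem~\ref{thm:fkg} does not help either, since that result concerns $\sigma^\bullet$ (heights modulo $4$) and cannot distinguish, say, height $0$ from height $4$. The missing ingredient is simply the FKG lattice condition for the height function with boundary weight, which is a short local check (the boundary vertices contribute trivially, and the bulk vertices are handled exactly as in the standard $c\ge\max\{a,b\}$ argument).
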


\begin{remark}
	It follows from the proof that varying~$c_b$ from~$0$ to~$\infty$ allows to continuously interpolate between~$-1,0$ and~$0,1$ boundary conditions.
\end{remark}

This monotonicity with respect to $c_{b}$ follows from the well-known positive association for the height-function measure when~$c\geq \max\{a,b\}$ (see~\cite[Proposition 2.2]{BenHagMos00} and Proposition~\ref{prop:fkg-homo} below).
Similarly, the positive association stated in Theorem~\ref{thm:fkg} for the marginals of the spin representation on the even and the odd sublattices, implies that these marginals are stochastically ordered in~$c_b$. More precisely, let ${\sf Spin}_{\calD,c}^{+; c_b}$ be supported on the set of spin configurations on~$\bbZ^2$ that are equal to~$+1$ outside of~$\calD$ and defined by
\begin{equation}\label{eq:def-spins-c-b}
	{\sf Spin}_{\calD,c}^{+; c_b}(\sigma) = \tfrac{1}{Z_{{\sf spin},\calD,c}^{+; c_b}} \cdot a^{n_1'(\sigma)+n_2'(\sigma)}b^{n_3'(\sigma)+n_4'(\sigma)}c^{n_5'(\sigma)+n_6'(\sigma)}c_b^{n_b(\sigma)},
\end{equation}
where~$Z_{{\sf spin},\calD,c}^{+;c_b}$ is a normalizing constant; $n_{i}'(\sigma)$ and $n_{b}(\sigma)$ are the counterparts for the spins of the corresponding quantities in \eqref{eq:def-hf-cb} for the heights.

\begin{proposition}[Monotonicity in~$c_b$: spins]
\label{prop:monotone-c-b-spins}
	Let~$\calD$ be an odd domain. Take any~$c \geq \max\{a,b\}$ and~$c_b, c_b'\in [0,\infty]$, such that~$c_b\leq c_b'$.
	Then, for any increasing event~$A\subset \{1,-1\}^{F^\bullet(\calD)}$,
	\begin{equation}
	\label{eq:compare-c-b-spins}
	{\sf Spin}_{\calD,c}^{+; c_b}(A)\leq {\sf Spin}_{\calD,c}^{+; c_b'}(A).
	\end{equation}
\end{proposition}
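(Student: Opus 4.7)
The plan is to follow the same template as in the proof of Proposition~\ref{prop:monotone-c-b-heights}: express the $c_b$-weighted measure as a monotone tilt of a reference measure and invoke Holley's inequality, with the required FKG property supplied by Theorem~\ref{thm:fkg}. The key step is to observe that $n_b(\sigma)$ is a function of $\sigma^\bullet$ alone and is monotone increasing in it.

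To verify this, fix $v\in\partial_V\calD$ and examine the four faces surrounding~$v$. Because $\calD$ is odd, the two faces of $\partial_{\mathrm{ext}}\calD$ that are adjacent at~$v$ through a common edge to the unique face $f_v\in\calD$ at~$v$ are odd; hence $f_v$ and the diagonally opposite outside face are both even. The $+$ boundary condition then fixes the spins at all three outside faces to $+1$, so the local spin pattern around~$v$ is $(\sigma^\bullet(f_v),+1,+1,+1)$. A direct check using the height/spin dictionary of Figure~\ref{fig:6v-hom-config} shows: if $\sigma^\bullet(f_v)=+1$ the unique (up to an additive constant) consistent height pattern is the alternating one $(n,n{+}1,n,n{+}1)$, so $v$ is of type~5 or~6 (weight~$c$); if $\sigma^\bullet(f_v)=-1$ the heights form a ``peak'' pattern $(n{+}2,n{+}1,n,n{+}1)$, so $v$ is of type~1 or~2 (weight~$a$). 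Consequently
\[
n_b(\sigma)\;=\;\#\bigl\{v\in\partial_V\calD:\sigma^\bullet(f_v)=+1\bigr\},
\]
a modular, in particular monotone increasing, function of $\sigma^\bullet$.

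Let $\mu_{c_b}$ denote the $\sigma^\bullet$-marginal of ${\sf Spin}_{\calD,c}^{+;c_b}$. Since $n_b$ is $\sigma^\bullet$-measurable, one can factor
\[
\mu_{c_b}(\sigma^\bullet)\;\propto\;\mu_c(\sigma^\bullet)\cdot (c_b/c)^{n_b(\sigma^\bullet)},
\]
where $\mu_c$ is the $\sigma^\bullet$-marginal of the unmodified measure ${\sf Spin}_{\calD,c}^{+}$ (the case $c_b=c$). By Theorem~\ref{thm:fkg}, $\mu_c$ satisfies the FKG lattice condition. Modularity of $n_b$ makes $(c_b/c)^{n_b(\sigma^\bullet)}$ log-modular, so its product with the log-supermodular density of $\mu_c$ remains log-supermodular; hence $\mu_{c_b}$ satisfies the FKG lattice condition for every $c_b\in[0,\infty]$. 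When $c_b\leq c_b'$, the Radon--Nikodym derivative
\[
\frac{d\mu_{c_b'}}{d\mu_{c_b}}(\sigma^\bullet)\;\propto\;(c_b'/c_b)^{n_b(\sigma^\bullet)}
\]
is an increasing function of $\sigma^\bullet$, so Holley's inequality applied to the FKG measure $\mu_{c_b}$ gives $\mu_{c_b}(A)\leq \mu_{c_b'}(A)$ for every increasing event~$A$, which is exactly~\eqref{eq:compare-c-b-spins}.

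The step that I expect to require the most care is the geometric/combinatorial one, namely verifying that $n_b$ is $\sigma^\bullet$-measurable \emph{and} monotone in $\sigma^\bullet$. It is precisely at this point that the hypothesis ``$\calD$ is odd'' enters: were $\calD$ even, the face $f_v$ would be odd, $n_b$ would depend on $\sigma^\circ$ rather than on $\sigma^\bullet$, and the reduction to the FKG property of the $\sigma^\bullet$-marginal afforded by Theorem~\ref{thm:fkg} would not go through. This is the exact dual of the parity mismatch that forces Proposition~\ref{prop:monotone-c-b-heights} to be stated for \emph{even} domains.
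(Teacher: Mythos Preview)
Your proof is correct and follows essentially the same route as the paper's: the paper simply says the argument for Proposition~\ref{prop:monotone-c-b-heights} ``can be adapted mutatis mutandis using the FKG inequality stated in Corollary~\ref{cor:fkg-spins}'', and your write-up is precisely that adaptation spelled out in detail --- identify $n_b$ as an increasing functional of $\sigma^\bullet$, then use the FKG lattice condition for the $\sigma^\bullet$-marginal to get the correlation (or Holley) inequality.

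Two minor remarks. First, your assertion that when $\sigma^\bullet(f_v)=-1$ the vertex is ``of type~1 or~2 (weight~$a$)'' is not needed and not quite right: whether such a vertex is $a$-type or $b$-type depends on the position of $f_v$ relative to $v$, so different boundary vertices may fall into either class. All that matters for the argument is that $v$ is \emph{not} of type~5 or~6, which you correctly derive from the fact that the even diagonal disagrees. Second, the Radon--Nikodym formulation $(c_b'/c_b)^{n_b}$ is ill-defined at $c_b=0$; the paper's differentiation argument has the same issue at the endpoint, and in both cases one handles $c_b\in\{0,\infty\}$ by continuity or by directly observing (as in the proof of Proposition~\ref{prop:monotone-c-b-heights}) that these endpoints correspond to conditioning on the extreme values of $\sigma^\bullet$ at the boundary faces~$f_v$.
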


\begin{figure}
	\begin{center}
		\begin{minipage}{0.6\textwidth}
			\includegraphics[width=\textwidth]{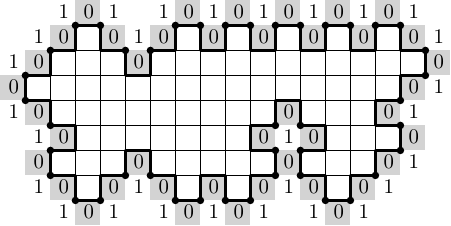}
		\end{minipage}
		\begin{minipage}{0.39\textwidth}
			\includegraphics[width=\textwidth]{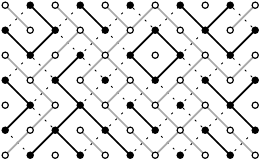}
		\end{minipage}
		\caption{\emph{Left:} An even domain~$\calD$ with edge-boundary~$\partial\calD$ in bold. The vertices belonging to~$\partial_V \calD$ are marked; they are assigned the coupling constant~$c_b$. The faces of the external boundary~$\partial_{\mathrm{ext}}\calD$ are shown in gray. The boundary conditions are~$0,1$. \emph{Right:} A sample of a random-cluster configuration on the square grid (rotated by~$45^\circ$). Open edges of the configuration are shown in black, closed edges are dashed, dual edges are in gray. Here~$k_b = 9$ (nine boundary clusters) and~$k_i=4$ (four non-boundary clusters).}
		\label{fig:even-domain-rcm}
	\end{center}
\end{figure}

\subsection{An FK model with a modified boundary-cluster weight}
\label{sec:rcm-results}

In Section~\ref{sec:coupling} we further introduce a second coupling of the six-vertex model with the random-cluster model, in which the six-vertex model has the same parameters on the boundary and in the bulk but the random-cluster model is modified by assigning a special weight~$q_b \neq q$ to boundary clusters. This modified random-cluster model appears natural, as varying $q_{b}$ between $1$ and $q$ allows a continuous interpolation between wired and free boundary conditions. Indeed, following our work, it was used in~\cite{RaySpi19} in a short proof of the discontinuity of the phase transition for~$q>4$. We describe the model and some of its basic properties in this section.

Let~$\Omega$ be a subgraph of a square lattice and let~$E(\Omega)$ denote the set of edges in~$\Omega$. Given a configuration~$\eta\in\{0,1\}^{E(\Omega)}$, we call an edge~$e\in E(\Omega)$ open if~$\eta(e) = 1$ and closed if~$\eta(e) = 0$. Thus, each configuration~$\eta$ can be viewed as a subset of~$E(\Omega)$ given by the set of open edges in~$\eta$; see Figure~\ref{fig:even-domain-rcm}.

Given~$q,q_b>0$ and~$p\in [0,1]$, the random-cluster measure~${\sf RC}_{\Omega,q,p}^{q_b}$ is supported on~$\eta\in\{0,1\}^{E(\Omega)}$ and is defined by
\begin{equation}
\label{eq:rcm-def}
{\sf RC}_{\Omega,q,p}^{q_b} (\eta) := \tfrac{1}{Z_{{\sf RC},\Omega,q,p}^{q_b}}\cdot q^{k_i(\eta)}q_b^{k_b(\eta)}p^{o(\eta)}(1-p)^{c(\eta)},
\end{equation}
where~$Z_{{\sf RC},\Omega,q,p}^{q_b}$ is a normalizing constant, $k_b(\eta)$ denotes the number of boundary clusters of~$\eta$ (i.e. connected components containing at least one boundary vertex), $k_i(\eta)$ denotes the number of the interior (i.e. non-boundary) clusters, $o(\eta)$ denotes the number of open edges in~$\eta$, and~$c(\eta)$ denotes the number of closed edges in~$\eta$.

In the classical definition of the random-cluster measure due to Fortuin and Kasteleyn~\cite{ForKas72} (see also~\cite[Section 1.2]{Gri06}), one does not distinguish between boundary clusters and interior clusters. The boundary conditions are defined only by merging (\emph{wiring}) certain boundary vertices, thus influencing the count of boundary clusters. If all boundary vertices are wired together, the boundary conditions are called \emph{wired}, and if there is no wiring, the boundary conditions are called \emph{free}. It is easy to see that~$q_b=1$ corresponds to the wired boundary conditions, $q_b=q$ corresponds to the free boundary conditions, and the measures~${\sf RC}_{\Omega,q,p}^{q_b}$ with different values of~$q_b\in [1,q]$ thus interpolate between wired and free boundary conditions (see Proposition~\ref{prop:rcm-input} below).

In~\cite{BefDum12} (see also~\cite{DumRauTas16,DumRauTas17} for alternative proofs), it was shown that when~$q\geq 1$, the model undergoes  
 the random-cluster model undergoes a phase transition at 
 \[
 	p_c(q):=\tfrac{\sqrt{q}}{\sqrt{q}+1}
\]
in terms of the correlation length~--- independently of the boundary conditions, the model exhibits exponential decay of the size of clusters when~$p<p_c(q)$, and the origin belongs to an infinite cluster with a positive probability when~$p>p_c(q)$. In particular, for all~$p\neq p_c(q)$, the infinite-volume limit does not depend on the boundary conditions.

We focus on the critical case~$p=p_c(q)$. Here it was shown that the free and the wired measures are the same~\cite{DumSidTas17} when~$q\in [1,4]$, and different~\cite{DumGagHar16,RaySpi19} when~$q>4$. This raises a natural question~--- when~$q>4$, what is the limit for each particular value of~$q_b\in [1,q]$? In the next theorem, we partially answer it.

\begin{theorem}\label{thm:q-b}
	i) Let~$q> 4$ and~$\lambda > 0$ be such that~$\sqrt{q} = e^\lambda+e^{-\lambda}$. Take any sequence~$\Omega_k$ of increasing domains. Then
	\begin{itemize}
		\item for all~$q_b\in [1, e^{-\lambda}\sqrt{q}]$, the limit of~${\sf RC}_{\Omega_k,q,p_c(q)}^{q_b}$ is the same and is equal to the wired random-cluster Gibbs measure;
		\item for all~$q_b\in [e^{\lambda}\sqrt{q}, q]$, the limit of~${\sf RC}_{\Omega_k,q,p_c(q)}^{q_b}$ is the same and is equal to the free random-cluster Gibbs measure.
	\end{itemize}
	
	ii) When~$q\in [1, 4]$, the infinite-volume limit of~${\sf RC}_{\Omega_k,q,p_c(q)}^{q_b}$ is the same, for any~$q_b\in [1,q]$, and is equal to the unique random-cluster Gibbs measure.
\end{theorem}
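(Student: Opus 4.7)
When $q\in[1,4]$, it is known \cite{DumSidTas17} that the critical random-cluster model on $\bbZ^2$ admits a unique infinite-volume Gibbs state, i.e., the free and wired limits coincide. The modified measures ${\sf RC}_{\Omega,q,p_c(q)}^{q_b}$ satisfy a standard Holley-type monotonicity in $q_b$: decreasing $q_b$ penalizes boundary clusters less, hence favors more open edges at the boundary and yields a stochastically larger configuration. Thus, for every $q_b\in[1,q]$, the measure ${\sf RC}_{\Omega_k,q,p_c(q)}^{q_b}$ is sandwiched stochastically between ${\sf RC}_{\Omega_k,q,p_c(q)}^{1}$ (wired) and ${\sf RC}_{\Omega_k,q,p_c(q)}^{q}$ (free), and since these both converge to the same unique Gibbs state, so does every intermediate measure.

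\textbf{Part (i), the main case.} The plan is to pass through the extended Baxter--Kelland--Wu coupling constructed in Section~\ref{sec:coupling}, which couples ${\sf RC}_{\Omega,q,p_c(q)}^{q_b}$ with the height-function measure ${\sf HF}_{\calD,1,1,c}^{0,1;c_b}$ on an associated dual domain $\calD$, with $c=\sqrt{q}=e^\lambda+e^{-\lambda}$. On the RC side, $q_b=1$ is wired and $q_b=q$ is free; on the height side, the parameter $c_b$ controls the weight assigned to the boundary $c$-vertices and interpolates continuously between the $-1,0$ and $0,1$ flat boundary conditions. Under the coupling, the extremal values $c_b\in\{0,\infty\}$ correspond respectively to $q_b=1$ and $q_b=q$, and the thresholds $q_b=e^{\pm\lambda}\sqrt{q}$ should arise as the values of $c_b$ at which one of the two orientations of each boundary loop in the BKW expansion (carrying local weight $e^{\pm\lambda}$) is exactly suppressed so that the boundary contribution matches the flat-boundary contribution on the smaller domain.

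Once the coupling is in hand, the heart of the argument is Proposition~\ref{prop:monotone-c-b-heights}. Since $q>4$ ensures $a+b=2<c$, in particular $c\geq\max\{a,b\}=1$, the proposition gives the sandwich
\[
    {\sf HF}_{\calD\setminus\partial_V\calD,1,1,c}^{-1,0}\ \preceq\ {\sf HF}_{\calD,1,1,c}^{0,1;c_b}\ \preceq\ {\sf HF}_{\calD\setminus\partial_V\calD,1,1,c}^{0,1},
\]
with monotonic interpolation in $c_b$. Taking $\calD=\calD_k\nearrow \bbZ^2$, Theorem~\ref{thm:heights-gibbs}(1) guarantees that the two sandwiching measures converge to the distinct extremal infinite-volume Gibbs states ${\sf HF}_{1,1,c}^{-1,0}$ and ${\sf HF}_{1,1,c}^{0,1}$. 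For $c_b$ sufficiently small the sandwich collapses to its left endpoint; for $c_b$ sufficiently large it collapses to its right endpoint. Translating back via the coupling identifies these two endpoints with the wired and the free RC Gibbs states respectively, proving the two claims of part~(i).

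\textbf{The main obstacle.} The delicate point is to pin down the exact arithmetic matching between the boundary parameters and to show that the thresholds $q_b=e^{\pm\lambda}\sqrt{q}$ are precisely the ones at which the sandwich collapses. Concretely, one needs to compute how the boundary-cluster weight $q_b$ in ${\sf RC}^{q_b}$ is converted, loop by loop in the BKW expansion, into a boundary $c$-vertex weight $c_b$ in ${\sf HF}^{0,1;c_b}$, and verify that $q_b\leq e^{-\lambda}\sqrt{q}$ (respectively $q_b\geq e^\lambda\sqrt{q}$) corresponds to a range of $c_b$ in which boundary loops have a unique admissible orientation, so that ${\sf HF}_{\calD,1,1,c}^{0,1;c_b}$ agrees with the measure ${\sf HF}_{\calD\setminus\partial_V\calD,1,1,c}^{-1,0}$ (respectively ${\sf HF}_{\calD\setminus\partial_V\calD,1,1,c}^{0,1}$) on the interior. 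This is essentially a bookkeeping statement about the extended coupling, but it is the calculation that fixes the exact form of the two intervals of $q_b$ in the statement.
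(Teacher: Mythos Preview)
Your part~(ii) is correct and matches the paper's argument exactly (sandwich by monotonicity in $q_b$, combined with uniqueness of the Gibbs state for $q\le 4$).

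For part~(i), your high-level strategy---pass through the BKW coupling to height functions and use results there---is the same as the paper's, but the execution has real gaps and some errors.

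First, the arithmetic: the relation is $c=e^{\lambda/2}+e^{-\lambda/2}$, not $c=\sqrt{q}$; it is $\sqrt{q}=e^\lambda+e^{-\lambda}$ that holds. More importantly, your picture of the coupling is off. Theorem~\ref{thm:coupling} does \emph{not} furnish a continuous dictionary $q_b\mapsto c_b$; it gives exactly two specific correspondences: ${\sf RC}^{\,q_b=e^{-\lambda}\sqrt{q}}$ is coupled with the \emph{standard} height measure ${\sf HF}_{\calD,c}^{0,1}$ (no boundary modification), and ${\sf RC}^{\,q_b=1}$ is coupled with ${\sf HF}_{\calD,c}^{0,1;c_b}$ for the single value $c_b=e^{\lambda/2}$. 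In particular the endpoints $c_b\in\{0,\infty\}$ do not correspond to $q_b\in\{1,q\}$, and the thresholds in the theorem are not produced by ``boundary loops having a unique admissible orientation''.

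The actual mechanism is different and is the part you label as the obstacle. The paper shows (Proposition~\ref{prop:therm-lim-heights} / Corollary~\ref{cor:therm-lim-heights-c-b}) that the two height measures ${\sf HF}_{\calD_k,c}^{0,1}$ and ${\sf HF}_{\calD_k,c}^{0,1;e^{\lambda/2}}$ have the \emph{same} infinite-volume limit; this is a nontrivial fact relying on the $\bbT$-circuit argument, not a boundary bookkeeping computation. Because the coupling rule~\eqref{eq:coupling-edge-form} is local, it follows that ${\sf RC}^{\,e^{-\lambda}\sqrt{q}}$ and ${\sf RC}^{\,1}$ share the same limit (namely the wired one). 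Only then does monotonicity in $q_b$ on the \emph{random-cluster} side give the full interval $[1,e^{-\lambda}\sqrt{q}]$. Your proposed route---use Proposition~\ref{prop:monotone-c-b-heights} to collapse a sandwich on the height side and then transfer back---does not by itself yield this equality; the sandwich in that proposition only places ${\sf HF}^{0,1;c_b}$ between ${\sf HF}^{-1,0}$ and ${\sf HF}^{0,1}$, and nothing in Proposition~\ref{prop:monotone-c-b-heights} or Theorem~\ref{thm:heights-gibbs} alone forces the collapse at the specific value $c_b=e^{\lambda/2}$. The second interval $[e^{\lambda}\sqrt{q},q]$ then follows by duality (or by running the coupling with $-\lambda$), which you do not address.
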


It is reasonable to expect that the limiting measure is wired for all~$q_b < \sqrt{q}$ and free for all~$q_b > \sqrt{q}$ (see Question \ref{ques:rc-qb} in Section~\ref{sec:open questions}).

\subsection{Overview of the proofs}\label{sec:overview of the proof}

The proofs of our main results on the height function (Theorem~\ref{thm:var}, Theorem~\ref{thm:heights-gibbs}), spin (Theorem~\ref{thm:spins-gibbs} and Theorem~\ref{thm:fkg}) and six-vertex representations (Corollary~\ref{cor:6v-gibbs}) are detailed below under the restriction that $a=b=1$. This special case is called the \emph{F-model} and was first considered by Rys~\cite{Rys63} (apparently named after Rys' advisor Fierz~\cite{Gaa79,simon2014statistical}). We chose to present the proofs in this restricted setting in order to further highlight the main ideas and to avoid encumbering the notation but they extend to the general case directly, as detailed in Section~\ref{sec:non-symmetric-case}. We note that the connection to the self-dual Ashkin--Teller model (Section~\ref{sec:ashkin-teller}) is available only in the restricted setting $a=b=1$.

In this section we provide an overview of the proofs in the restricted setting and then discuss the required modifications to handle the general case.

\subsubsection{Overview of the proofs for $a=b=1$}\label{sec:overview for a=b=1}
\paragraph{Baxter--Kelland--Wu correspondence:}
A crucial tool in our analysis is a correspondence introduced by Baxter--Kelland--Wu (BKW)~\cite{BaxKelWu76}, extending an earlier partition function relation by Temperley--Lieb \cite{LieTem71}. BKW described a correspondence of the random-cluster model on a finite planar graph $G$ with a six-vertex model on the medial graph of $G$. For domains in $\Z^2$, the random-cluster model corresponds to the F-model with the choice of parameters
\begin{equation*}
  q=(c^2-2)^2, \quad \quad p = p_c(q) := \tfrac{\sqrt{q}}{\sqrt{q} + 1}.
\end{equation*}
This choice leads to a critical random-cluster model (criticality is proven in~\cite{BefDum12} for $q\ge 1$. See also~\cite{DumRauTas16,DumRauTas17} for alternative proofs). The BKW correspondence allows to make use of recent results establishing the order of the phase transition in the random-cluster model with~$q\ge 1$: second order for~$q\in [1,4]$~\cite{DumSidTas17} and first order for~$q>4$~\cite{DumGagHar16,RaySpi19}. Our results apply in the regime $c\ge 2$, corresponding to $q\ge 4$, where the correspondence is given by a probabilistic coupling. For $c<2$, the correspondence involves a \emph{complex measure}, complicating the further transfer of results.

In the BKW correspondence it is important to take into account the effect of boundary conditions.
Theorem~\ref{thm:coupling} states the correspondence for $q\ge 4$, as probabilistic couplings of the random-cluster model and the height function representations, for two different sets of boundary conditions.

In the work of BKW, the random-cluster model is considered on a general finite planar graph under free or wired boundary condition, and boundary vertices in the corresponding six-vertex model have degree two.
On domains on $\bbZ^{2}$, we show that this six-vertex model can be defined in the standard way on full-plane arrow configurations satisfying the ice-rule if a modified parameter $c_{b}$ is introduced on the boundary (see~Section~\ref{sec:monotonicity-in-c-b}) and satisfies the relation
\[
c = c_b + \frac{1}{c_b}.
\]
We extend the correspondence to the six-vertex model with the standard boundary weight as long as the random-cluster model is modified so that connected components touching the boundary receive the cluster weight~$q_b$ satisfying the relation
\[
	q=q_b + \tfrac{q}{q_b}.
\]
The results on the modified random-cluster model are stated in Section~\ref{sec:rcm-results}.
When $q\geq 1$ and $q_{b} \in [1,q]$, we show that the modified random-cluster model is positively associated and thus the results known for the standard random-cluster model can be extended directly.
This is used in our proofs for the case $c=2$. 

\paragraph{Height representation:} When coupling the height representation and the random-cluster model, the faces on which the height function is defined correspond to the vertices and the dual vertices of the random-cluster model (even faces to vertices and odd faces to dual vertices) in such a way that the height is constant on every primal and dual cluster. The fluctuations of the height function (Theorem~\ref{thm:var}) are then analyzed for $c>2$ using the existence of an infinite cluster and for $c=2$ using Russo--Seymour--Welsh (RSW) techniques. We point out that for $c=2$ the random-cluster model is considered with the modified boundary weight $q_b$ but monotonicity of the model in the boundary weight (Proposition~\ref{prop:rcm-input} and Proposition~\ref{prop:fkg-rc}) allows to extend the known RSW estimates to this case. The case $c=2$ of Theorem~\ref{thm:heights-gibbs} may be deduced from Theorem~\ref{thm:var} and its proof.

In order to prove Theorem~\ref{thm:heights-gibbs} in the case $c>2$, a more detailed analysis of the height functions is performed. More precisely, when~$q>4$, it is known~\cite{DumGagHar16} that the critical random-cluster measures on finite domains with wired boundary conditions converge to an infinite-volume limit that exhibits an infinite cluster with finite `holes' whose diameters have exponentially decaying tail probability. Assigning heights to the primal and dual clusters according to the BKW coupling, this translates to the convergence of the height-function measure with parameter~$c>2$ on \emph{even domains} with $0,1$-boundary conditions (and a modified boundary weight) to an infinite-volume height-function measure~${\sf HF}_{\mathrm{even}, c}^{0,1}$ that exhibits an infinite cluster of diagonally-adjacent faces of height~$0$ (with holes whose diameter has exponentially decaying tail probability). Similarly, the measure~${\sf HF}_{\mathrm{odd},c}^{0,1}$ is obtained as the limit over \emph{odd domains} and exhibits an infinite cluster of height~$1$. However, it is a priori not clear whether these two measures are equal.

The argument proving that~${\sf HF}_{\mathrm{even}, c}^{0,1} ={\sf HF}_{\mathrm{odd},c}^{0,1}$ is one of the main novelties of the current article. Monotonicity of the heights implies that ${\sf HF}_{\mathrm{odd},c}^{0,1}$ stochastically dominates ${\sf HF}_{\mathrm{even}, c}^{0,1}$ and we derive the equality of the measures by showing stochastic domination in the opposite direction.

As the first step, we study the set of even faces $u$ where $h(u)\ge 2$ and the set where $h(u)\le 0$ when $h$ is sampled from~${\sf HF}_{\mathrm{odd},c}^{0,1}$. The standard connectivity structure on the even faces is to link $(i,j)$ with~$(i\pm 1, j\pm 1)$. An important trick in our argument is to augment this to a \emph{triangular lattice connectivity}, denoted $\mathbb{T}$-connectivity, in which $(i,j)$ is linked with~$(i\pm 1, j\pm 1)$ and also to~$(i\pm 2,j)$. As the $\mathbb{T}$-connectivity is still planar, standard percolation techniques (uniqueness and non-coexistence of infinite clusters) imply that, in this connectivity, there is at most one infinite cluster of $h(u)\ge 2$ and at most one infinite cluster of $h(u)\le 0$ and these may not coexist. From this we deduce that $h(u)\ge 2$ cannot have an infinite cluster in the $\mathbb{T}$-connectivity. The latter uses several ideas: symmetry between the set $h(u)\ge 2$ and the set $\tilde{h}(u)\le 0$ when $\tilde{h}$ is sampled from the (naturally-defined)~${\sf HF}_{\mathrm{odd},c}^{2,1}$ measure; monotonicity arguments; the non-coexistence statement. Consequently, as the $\mathbb{T}$-connectivity is dual to itself, there are infinitely many disjoint $\mathbb{T}$-circuits on which $h(u)\le 0$.

As a second step, we define a new height function $h'$ by
\[
	h'((i,j)):=1-h((i-1),j)).
\]
It is straightforward that $h'$ is distributed as~${\sf HF}_{\mathrm{even}, c}^{0,1}$. Let $\mathcal{C}$ be a $\mathbb{T}$-circuit on which $h(u)\le 0$ and set $\mathcal{C}':=\{(i,j)\colon (i-1,j)\in\mathcal{C}\}$ so that $h'(u)\ge 1$ on $\mathcal{C}'$. The crucial observation in the argument (indeed, the main use of the $\mathbb{T}$-connectivity) is the following: On any finite connected component of $\Z^2\setminus(\mathcal{C}\cup\mathcal{C}')$ the boundary values in $h'$ are necessarily larger or equal to those of $h$ (see Figure~\ref{fig:height-function-flip}). Thus, conditioned on $\mathcal{C}$, the distribution of $h'$ in this component stochastically dominates that of $h$. As there are infinitely many disjoint $\mathbb{T}$-circuits with $h(0)\le 0$, this implies that~${\sf HF}_{\mathrm{even}, c}^{0,1}$ stochastically dominates~${\sf HF}_{\mathrm{odd}, c}^{0,1}$, as we wanted to show.

A technical point for concluding Theorem~\ref{thm:heights-gibbs} is that the height-function measure on finite domains needs to be taken with a modified boundary weight and it is a priori possible that its effect remains in the infinite-volume limit. However, after proving the equality of the two infinite-volume measures, an extra monotonicity argument (using Proposition~\ref{prop:monotone-c-b-heights}) allows to adjust the boundary weight (Corollary~\ref{cor:therm-lim-heights-c-b}).

\paragraph{Spin and FK-Ising representations:}
As discussed in Section~\ref{sec:spin-rep}, it is useful to regard the spin representation $\sigma$ as consisting of two coupled Ising configurations $\sigma^\bullet$ and $\sigma^\circ$. Here $\sigma^\bullet$ ($\sigma^\circ$) is the restriction of $\sigma$ to the even (odd) faces (a mixed Ashkin--Teller representation). It is natural to condition on one of the Ising configurations, say $\sigma^\bullet$, and consider the other configuration $\sigma^\circ$ which is then exactly an Ising model on a graph obtained from the even faces by contracting some of the diagonal edges in a manner specified by $\sigma^\bullet$. This Ising model is ferromagnetic when $c\ge 1$ and then naturally has an associated \emph{FK-Ising} bond configuration which we denote by $\xi$. When conditioning on $\sigma^\bullet$ one can treat $\xi$ using the standard tools, in particular, its known monotonicity (FKG) properties. In our arguments, however, it is also important to consider the unconditional measure of $\xi$, i.e., when averaging over $\sigma^\bullet$, and this measure is later referred to as an \emph{FK-Ising representation of the Ashkin--Teller model} (related to a random-cluster representation introduced in~\cite{PfiVel97}). As it turns out, the averaged measure satisfies monotonicity (FKG) properties when $c\ge 2$ (see Proposition~\ref{prop:fkg-fk-ising}) but this is not used in proving the theorems of Section~\ref{sec:spin-rep}.

The monotonicity properties of the spin configuration $\sigma^\bullet$ stated in Theorem~\ref{thm:fkg} follow by checking the FKG lattice condition, but the calculation is non-trivial as the probability density of $\sigma^\bullet$ involves a sum over all possibilities for $\sigma^\circ$. This sum can be rewritten using a product of partition functions of Ising models on different domains and eventually the required inequality is proved using the monotonicity properties of the standard FK-Ising model.

For $c>2$, Theorem~\ref{thm:spins-gibbs} is derived from Theorem~\ref{thm:heights-gibbs} using the fact that the spin representation is obtained from the modulo $4$ of the height function. An additional ingredient is the equality (a standard relation for FK-Ising models)
\begin{equation*}
  {\sf Spin}_c^{++}(\sigma^\circ(u)) = {\sf FKIs}_c(u\xleftrightarrow{\xi} \infty)
\end{equation*}
which relates the expectation of the spin $\sigma(u)$ in the limiting measure to the probability that $u$ is connected to infinity in the FK-Ising representation of the Ashkin--Teller model. This already proves the non-negativity of the spin expectations and strict positivity is then deduced from the properties in Theorem~\ref{thm:heights-gibbs}.

For $c=2$, Theorem~\ref{thm:spins-gibbs} is derived by coupling the spin representation directly to the random-cluster model with modified boundary cluster weight $q_b$ and using the RSW estimates which are available there (again, using monotonicity in $q_b$). The description of infinite clusters in the extremal invariant Gibbs measures relies on percolation techniques and uses the monotonicity properties of Theorem~\ref{thm:fkg}.

\paragraph{Six-vertex representation:} The properties of flat Gibbs states of the six-vertex model stated in Corollary~\ref{cor:6v-gibbs} are derived from Theorem~\ref{thm:spins-gibbs}. Recall that a flat Gibbs state satisfies that sampled configurations contain infinitely many disjoint oriented circuits surrounding the origin and consisting of alternating vertical and horizontal edges. The connection to Theorem~\ref{thm:spins-gibbs} is enabled by noting that the spins on either side of such an oriented circuit must take a constant value.

\paragraph{Self-dual Ashkin--Teller model:} As discussed in Section~\ref{sec:ashkin-teller}, the self-dual Ashkin--Teller model is known to be in correspondence with the six-vertex model (see, e.g.,~\cite[Section 12.9]{Bax82}). This correspondence is upgraded here to a probabilistic coupling of the Ashkin--Teller model, the spin representation and the aforementioned FK-Ising representation of the Ashkin--Teller model. The correspondence maps the entire self-dual line to the six-vertex model with parameters $a=b=1$ and $c>1$ (the limiting case $c=1$ is discussed in Section~\ref{sec:open questions}), with the regime $J<U$ mapped to the regime $c>2$. The coupling is described in Proposition~\ref{prop:coupling-AT-6V} and we review its main features here.

The Ashkin--Teller configuration $(\tau,\tau')$ is defined on the lattice of even faces (with diagonal connectivity). Under the coupling, the spin configuration on the even faces $\sigma^\bullet$ equals the product of $\tau\tau'$, which already implies long-range order for the product when $J<U$ (equation~\eqref{eq:AT-corr-bounded} of Theorem~\ref{thm:ashkin-teller}) and also the FKG inequality of Corollary~\ref{cor:fkg-AT}, by using Theorem~\ref{thm:spins-gibbs} and Theorem~\ref{thm:fkg}. Recall now that the FK-Ising representation $\xi$ is a percolation configuration on the diagonal bonds between the odd faces, whence the dual percolation $\xi^*$ is on the bonds between even faces. Under the coupling, conditioned on $\sigma^\bullet$ (which equals $\tau\tau')$ and on $\xi^*$, the configuration $\tau$ is obtained by assigning a uniform spin value to each cluster of $\xi^*$, independently among the clusters; this also specifies $\tau'$.

The coupling directly links the decay of correlations in $\tau$ to the connection probabilities in $\xi^*$. Existence of an infinite cluster in $\xi$ is deduced from Theorem~\ref{thm:heights-gibbs}. It is additionally shown (Proposition~\ref{prop:fkg-fk-ising}) that $\xi^*$ satisfies the FKG inequality in the regime $c\ge 2$ (this should be compared with the fact that the spins $\sigma^\bullet$ are shown in Theorem~\ref{thm:fkg} to satisfy the inequality in the wider range $c\ge 1$).
By a general non-coexistence theorem (\cite[Theorem 1.5]{DumRauTas17}), all clusters in $\xi^{*}$ are finite. An exponential decay of their diameters (stated in~\eqref{eq:AT-corr-exp-decay})  is then derived from the BKW coupling and an exponential decay of dual clusters in the critical wired random-cluster measure for~$q>4$.

\subsubsection{Extension to the case~$a\neq b$}
\label{sec:non-symmetric-case}

We proceed to explain the modifications required to our arguments when the parameters of the six-vertex model satisfy $a\neq b$.

The main required modification is to the coupling between the six-vertex model and the random-cluster model that is described in Section~\ref{sec:coupling}. In the case $a=b=1$, Theorem~\ref{thm:coupling} of that section provides a coupling of the six-vertex measure~${\sf HF}_{\calD,c}^{0,1}$ with the random-cluster measure with modified boundary-cluster weight~${\sf RC}_{\calD^\bullet,q,p_c(q)}^{q_b}$, when the parameters satisfy
\[
		c=e^{\lambda/2} + e^{-\lambda/2}, \quad q=[e^{\lambda} + e^{-\lambda}]^2, \quad p_c(q) :=\tfrac{\sqrt{q}}{\sqrt{q}+1}, \quad q_b = e^{-\lambda}\sqrt{q},
\]
with the coupling supported on compatible configurations (i.e., assigning constant heights on primal and dual clusters).
The theorem additionally provides a coupling of the six-vertex measure with modified boundary-vertex weight~${\sf HF}_{\calD,c}^{0,1;c_b}$ and the standard random-cluster measure~${\sf RC}_{\calD^\bullet,q,p_c(q)}^1$ when
\[
	c=e^{\lambda/2} + e^{-\lambda/2}, \quad c_b = e^{\lambda/2}, \quad q=[e^{\lambda} + e^{-\lambda}]^2,\quad p_c(q) :=\tfrac{\sqrt{q}}{\sqrt{q}+1}.
\]
In the case $a\neq b$ there is still a coupling of the six-vertex model with a random-cluster model, as described by Baxter--Kelland--Wu~\cite{BaxKelWu76}. In this case, the relation between $q$ and the parameters $a,b,c$ is given by
\begin{equation}\label{eq:Delta and q}
\Delta= \tfrac{a^2+b^2 - c^2}{2ab},\quad \sqrt{q} = -2\Delta.
\end{equation}
The random-cluster model obtained assigns different probabilities $p_c^h, p_c^v$ to open horizontal and open vertical edges, respectively, according to the formulas
\begin{equation}\label{eq:horizontal and vertical probabilities}
p_c^h := \tfrac{b\sqrt{q}}{b\sqrt{q} + a}, \quad p_c^v := \tfrac{a\sqrt{q}}{a\sqrt{q} + b}.
\end{equation}
Thus one obtains, with the same proof, an analog of the coupling theorem, Theorem~\ref{thm:coupling}, with the same notion of compatible configurations and with the parameters related by~\eqref{eq:Delta and q} and~\eqref{eq:horizontal and vertical probabilities} and additionally by
\begin{equation}
  \cosh \lambda = -\Delta,\quad q_b = e^{-\lambda}\sqrt{q},\quad c_b = e^{\lambda/2}
\end{equation}
where the relations determine $\lambda$ only up to its sign but both choices lead to a valid coupling. Additionally, in the coupling in which the modified boundary-vertex weight $c_b$ is used, the weights assigned to boundary vertices of the first four types in Figure~\ref{fig:6v-arrow-config} are fixed to one.

Our arguments further use input on the critical properties of the random-cluster model as described in items (iv) and (v) of Proposition~\ref{prop:rcm-input}. To adapt these, we rely on the work of Duminil-Copin--Li--Manolescu~\cite{DumLiMan18} who extended many of the results known for the random-cluster model on the square lattice to the setting of the random-cluster model on isoradial graphs. In the language of that work, the above random-cluster model with edge probabilities~\eqref{eq:horizontal and vertical probabilities} is the critical random-cluster model on the isoradial graph given by a rectangular grid (the criticality condition is $\frac{p_c^v}{1-p_c^v}\cdot \frac{p_c^h}{1-p_c^h} = q$). This allows us to use the following results from~\cite{DumLiMan18}: (i) for~$q\in [1,4]$, the phase transition is of the second order and one has Russo--Seymour--Welsh estimates on crossings~\cite[Theorem 1.1]{DumLiMan18}; (ii) for~$q> 4$, the phase transition is of the first order and the wired infinite-volume measure exhibits a unique infinite cluster and exponential decay of dual clusters~\cite[Theorem 1.2]{DumLiMan18}.

A modification of a more minor nature concerns the formulas involving the FK--Ising representation of the Ashkin--Teller model $\xi$ introduced in Section~\ref{sec:fk-ising}. As explained in Section~\ref{sec:overview for a=b=1} for the case $a=b$, we obtain $\xi$ by conditioning on $\sigma^\bullet$ and then letting $\xi$ be the FK-Ising bond configuration corresponding to $\sigma^\circ$ (which, after the conditioning, is a ferromagnetic Ising model). Exactly the same construction of $\xi$ is used in the case $a\ne b$ and thus the arguments involving $\xi$ are still applicable, though the precise formulas governing the distribution of $\xi$ are modified. Specifically, the construction of $\xi$ conditioned on both $\sigma^\bullet$ and $\sigma^\circ$ which is described in Figure~\ref{fig:fk-ising} is used with the following modification:
in vertices where both the diagonally-adjacent spins of $\sigma^\bullet$ agree and the diagonally-adjacent spins of $\sigma^\circ$ agree, the probability with which the edge of $\xi$ is present equals $\tfrac{c-a}{c}$ if the top-left face of the vertex is odd and equals $\tfrac{c-b}{c}$ if it is even.

The rest of the arguments used for deriving our results apply directly to the case $a\ne b$ with only notational changes. We emphasize in particular that the arguments involving $\mathbb{T}$-circuits do not rely on reflection symmetry. We also point out that the connection with the Ashkin--Teller model and the results of Section~\ref{sec:ashkin-teller} are specific to the case $a=b$ and are not extended here (the case $a\ne b$ of the six-vertex model corresponds to a staggered Ashkin--Teller model). Theorem~\ref{thm:q-b}, concerning the random-cluster model with modified boundary weights, and its proof directly extend to the critical random-cluster model with the edge probabilities~\eqref{eq:horizontal and vertical probabilities}.

\section{Coupling between the six-vertex and the random-cluster models}
\label{sec:coupling}

The correspondence between the six-vertex and the random-cluster models is known since the seminal paper by Temperley and Lieb~\cite{LieTem71} and was described geometrically by Baxter, Kelland, and Wu~\cite{BaxKelWu76} (BKW).
As outlined in Sections \ref{sec:monotonicity-in-c-b} and \ref{sec:overview of the proof}, we upgrade the correspondence to a coupling and show how to describe the boundary condition for the six-vertex model by introducing the boundary parameter $c_{b}$ --- instead of changing the set of configurations as in the work of BKW.
Additionally, we extend the statement to the setup where the parameters in the six-vertex model are the same inside the domain and on its boundary. 
Following our work, this extension was used in~\cite{RaySpi19} to provide a new proof of the first-order phase transition in the random-cluster model with~$q> 4$. 

We start by introducing the graphs where the random-cluster model will be defined. Let~$\calD$ be a domain. Recall that~$\partial\calD$ denotes the circuit formed by boundary edges of~$\calD$, and~$\partial_{\mathrm{ext}}\calD$ denotes the set of faces in~$\bbZ^2\setminus\calD$ that are adjacent to faces in~$\calD$. For every face~$u\in \partial_{\mathrm{ext}}\calD$ and every vertex~$z$ on~$\partial\calD$ belonging to~$u$, we call the pair~$(u,z)$ a \emph{corner} of~$\calD$. The corner~$(u,z)$ is called even or odd according to the parity of~$u$.

Define a graph on the set of all even faces and corners of~$\calD$ by drawing edges according to the rule:
\begin{itemize}
	\item any two even faces of~$\calD$ having a common vertex are linked by an edge;
	\item even corner~$(u,z)$ and even face~$v$ of~$\calD$ are linked by an edge if~$z\in v$;
	\item even corners~$(u,z)$ and~$(v,z')$ of~$\calD$ are linked by an edge if~$z=z'$.
\end{itemize}
In this graph, we identify every two corners~$(u,z)$ and~$(u,z')$ such that~$zz'$ is an edge of~$\calD$. The resulting graph is denoted by~$\calD^\bullet$. All interior faces of~$\calD^\bullet$ are of degree four and are in bijection with odd faces of~$\calD$. Also, if one merges together corners of~$\calD$ corresponding to the same face, one obtains from~$\calD^\bullet$ a subgraph of a square lattice. Graph~$\calD^\circ$ is defined in the same way on odd faces and corners of~$\calD$; see Figure~\ref{fig:domains}.

\begin{figure}
	\begin{center}
		\includegraphics[width=0.49\textwidth, page=1]{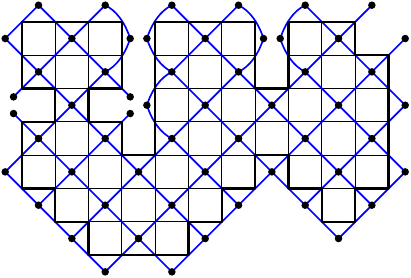}
		\includegraphics[width=0.49\textwidth, page=2]{domains.pdf}
		\caption{Domain~$\calD$ (in black) with the boundary~$\partial\calD$ (in bold). \emph{Left:} Graph~$\calD^\bullet$ on even faces (bullets) with edges marked in blue. \emph{Right:} Graph~$\calD^\circ$ on even faces (circles) with edges marked in red. \emph{Both:} Graphs~$\calD^\bullet$ and~$\calD^\circ$ are dual to each other. Corners of~$\calD$ are bullets and circles located outside of~$\partial\calD$. Inside of~$\partial\calD$, both~$\calD^\bullet$ and~$\calD^\circ$ are subgraphs of a square lattice. This is not the case outside of~$\partial\calD$, since some corners corresponding to the same face are not merged into one vertex.}
		\label{fig:domains}
	\end{center}
\end{figure}

By the definition, edges of~$\calD^\bullet$ and~$\calD^\circ$ are in bijection with vertices of~$\calD$. Given a vertex~$z$ of~$\calD$, denote the edges of~$\calD^\bullet$ and~$\calD^\circ$ corresponding to it by~$e_z$ and~$e_z^*$. For any edge configuration~$\eta\in\{0,1\}^{E(\calD^\bullet)}$, the dual configuration~$\eta^* \in \{0,1\}^{E(\calD^\circ)}$ is defined by:
\[
	\forall e\in E(\calD^\bullet) \quad \eta^*(e^*) = 1- \eta(e).
\]
Every height function~$h$ can be considered as a function on vertices of~$\calD^\bullet$ and~$\calD^\circ$ by setting~$h(u,z):=h(u)$, for every corner~$(u,z)$ of~$\calD$.

We say that a height function $h\in \calE_{\sf hf}^{0,1}(\calD)$ is \emph{compatible} with an edge-configuration $\eta\in\{0,1\}^{E(\calD^\bullet)}$ and write~$\eta\perp h$, if it has a constant value at every cluster of~$\eta$ and~$\eta^*$ (primal and dual clusters); see Figure~\ref{fig:coupling-height-rcm}. We say that cluster~$\calC$ of~$\eta$ and cluster~$\calC^*$ of~$\eta^*$ are adjacent, and denote this by~$\calC\sim\calC^*$, if there exist~$u\in\calC$ and~$u^*\in\calC^*$ that correspond to two adjacent faces of~$\calD$ or to two corners of~$\calD$ that share a vertex.

For two adjacent clusters~$\calC$ and~$\calC^*$, we write~$\calC\prec\calC^*$ if~$\calC$ is surrounded by~$\calC^*$.

Recall the height-function measures~${\sf HF}_{\calD,c}^{0,1}$ and~${\sf HF}_{\calD,c}^{0,1;c_b}$ defined in Sections~\ref{sec:results-heights} and~\ref{sec:monotonicity-in-c-b} (where we fix $a=b=1$), the random-cluster measure~${\sf RC}_{\calD^\bullet,q,p}^{q_b}$ defined in Section~\ref{sec:rcm-results}, and that~$p_c(q) :=\tfrac{\sqrt{q}}{\sqrt{q}+1}$.

\begin{theorem}
	\label{thm:coupling}
	1) Let~$\calD$ be a domain and~$\lambda\in\bbR$. Take
	\[
		c=e^{\lambda/2} + e^{-\lambda/2}, \quad q=[e^{\lambda} + e^{-\lambda}]^2, \quad q_b = e^{-\lambda}\sqrt{q}.
	\]
	Then, the measures~${\sf HF}_{\calD,c}^{0,1}$ and~${\sf RC}_{\calD^\bullet,q,p_c(q)}^{q_b}$ can be coupled in such a way that the joint law is supported on pairs of compatible configurations~$(h,\eta)$ and can be written in either of the two following ways:
	\begin{align}
		\label{eq:coupling-hf-form}	
		\bbP_{\sf cluster}(h,\eta)&\propto \exp{\left[ \lambda\sum_{\calC\sim\calC^*} {(h(\calC^*) - h(\calC))(-1)^{\mathbbm{1}_{\calC \prec \calC^*}}}\right]},\\
		\label{eq:coupling-edge-form}
		\bbP_{\sf edge}(h,\eta)&\propto\exp{\left[ \tfrac{\lambda}{4}\sum_{uv\in \calD^\bullet}{(h(u^*) + h(v^*) - h(u) - h(v))(-1)^{\mathbbm{1}_{uv\not\in \eta}}}\right]},
	\end{align}
	where in~\eqref{eq:coupling-edge-form}, vertices~$u^*$ and~$v^*$ are the endpoints of the edge~$(uv)^*$.

	2) If~$\calD$ is an even domain, then the same holds for~${\sf HF}_{\calD,c}^{0,1;c_b}$ and~${\sf RC}_{\calD^\bullet,q,p_c(q)}^1$ when
	\[
		c=e^{\lambda/2} + e^{-\lambda/2}, \quad c_b = e^{\lambda/2}, \quad q=[e^{\lambda} + e^{-\lambda}]^2.
	\]
\end{theorem}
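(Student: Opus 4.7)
The plan is to define the joint measure on compatible pairs $(h,\eta)$ by the edge formula \eqref{eq:coupling-edge-form}, and then verify in turn that (a) the edge form and cluster form \eqref{eq:coupling-hf-form} coincide as weights on compatible pairs, (b) marginalizing over $\eta$ yields the height-function measure, and (c) marginalizing over $h$ yields the random-cluster measure. For (a), I would reorganize the edge sum as a sum over adjacent cluster pairs by grouping edges $uv\in E(\calD^\bullet)$ according to which primal-dual cluster pair they ``witness'': compatibility forces $h$ to be constant on each cluster, so the height-differences inside the exponent depend only on the heights of the four clusters meeting at the vertex associated to $uv$, and planar duality (identifying the sign $\1_{\calC\prec\calC^*}$ with the parities of the edge states along the separating boundary) lets one match with the cluster form coefficient-by-coefficient.

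For (b), I would fix a compatible $h$ and factor the product in \eqref{eq:coupling-edge-form} over vertices $z\in\calD$, since the formula depends on $\eta$ only through the single edge $e_z\in E(\calD^\bullet)$. At each $a$- or $b$-type vertex, one of the two local states of $\eta(e_z)$ is incompatible with $h$ (it would force two faces of different height into the same cluster) while the surviving state has height-difference zero along the edge and contributes $1$. At each $c$-type vertex both states are compatible and contribute $e^{+\lambda/2}$ and $e^{-\lambda/2}$ respectively, summing to $c$. This reproduces the correct six-vertex weight $c^{n_5(h)+n_6(h)}$.

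For (c), I would fix $\eta$ and use the cluster form. Planarity arranges the primal and dual clusters into a tree rooted at the outside of $\calD$: the boundary condition pins the heights of the boundary clusters, and each interior cluster has an independent $\pm 1$ choice relative to its parent, contributing $e^\lambda+e^{-\lambda}=\sqrt q$ per interior cluster after summing. Each boundary cluster contributes a fixed factor $e^{-\lambda}$ from its pinned adjacency to the outside, producing the modified weight $q_b=e^{-\lambda}\sqrt q$. Combined with the standard Euler-type identity relating $o(\eta)$, $c(\eta)$, $k_i(\eta)$ and $k_b(\eta)$, and the fact that $p_c(q)/(1-p_c(q))=\sqrt q$, this yields ${\sf RC}_{\calD^\bullet,q,p_c(q)}^{q_b}$. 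For part~2 the same scheme applies on an even domain, but the modified boundary-vertex weight $c_b=e^{\lambda/2}$ breaks the $e^{\lambda/2}+e^{-\lambda/2}$ decomposition at each boundary vertex and effectively restores the missing $\pm 1$ freedom at the boundary, producing an extra $\sqrt q$ per boundary cluster and shifting $q_b$ back to $q$.

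The main obstacle will be the careful boundary bookkeeping. The graphs $\calD^\bullet$ and $\calD^\circ$ with their corner vertices, and the rule identifying corners that share a boundary edge of $\calD$, are designed precisely so that the cluster-tree structure, the planar duality in (a), and the Euler identity in (c) all work out cleanly. Verifying these details rigorously under the two different boundary-weight conventions (the modified $q_b$ in part~1 and the modified $c_b$ in part~2) is where the bulk of the effort lies.
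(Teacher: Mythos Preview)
Your plan for steps (b) and (c) is essentially the paper's argument: factor the edge form over vertices of $\calD$ to get the six-vertex weights, and use the cluster tree rooted at the boundary together with Euler's identity $k(\eta^*)-k(\eta)-1=o(\eta)-|V(\calD^\bullet)|$ to recover the random-cluster weight. These parts are fine.

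For (a), however, your proposed ``reorganize the edge sum by adjacent cluster pairs'' is too vague and hides the nontrivial topological input. A single edge $uv\in E(\calD^\bullet)$ touches four faces which may lie in up to three different clusters, so there is no clean assignment of each edge to one pair $(\calC,\calC^*)$. The paper instead proves $\bbP_{\sf edge}=\bbP_{\sf cluster}$ by an inductive argument: given a pair $\calC\prec\calC^*$, flipping $h\mapsto h_{\calC,\calC^*}$ (subtracting $2$ on $\calC$ and its interior) multiplies $\bbP_{\sf cluster}$ by $e^{-2\lambda}$, and a direct computation along the separating loop $\ell$ shows the same for $\bbP_{\sf edge}$. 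The key point you are missing is that this computation reduces to counting left turns (at open edges, each contributing $+\lambda/2$) versus right turns (at dual-open edges, each contributing $-\lambda/2$) along $\ell$, and a simple closed clockwise lattice loop has exactly four more right turns than left turns, yielding $-2\lambda$. Any height function can be reduced to the flat $0,1$ function by such flips, and the base case is then Euler's formula again. Your ``planar duality'' placeholder needs to become precisely this winding-number argument.

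For part 2 your description is backwards. Replacing $c$ by $c_b=e^{\lambda/2}$ at boundary vertices does not ``restore the missing $\pm1$ freedom'' and does not shift $q_b$ to $q$; the target measure is ${\sf RC}_{\calD^\bullet,q,p_c(q)}^{1}$, i.e.\ wired boundary conditions ($q_b=1$), not free ($q_b=q$). The paper obtains part 2 from part 1 simply by \emph{conditioning all boundary edges of $\calD^\bullet$ to be open}: on the random-cluster side this wires the boundary (so $q_b$ drops to $1$), and on the height side the sum $e^{\lambda/2}+e^{-\lambda/2}$ at each $c$-type boundary vertex is replaced by the single surviving term $e^{\lambda/2}=c_b$. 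You should rework your boundary heuristic accordingly.
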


\begin{proof}
	1) We write~$\bbP_{\sf hf}$ and~$\bbP_{\sf RC}$ instead of~${\sf HF}_{\calD,c}^{0,1}$ and~${\sf RC}_{\calD^\bullet,q,p_c(q)}^{q_b}$ for brevity. To prove the claim, it is enough to show that~$\bbP_{\sf edge}(h,\eta) = \bbP_{\sf cluster}(h,\eta)$ and that:
	\begin{align}
		\label{eq:marginal-hf}
		&\forall h\in \calE_{\sf hf}^{0,1}(\calD)&
		\sum_{\eta \perp h} \bbP_{\sf edge}(h,\eta)&= \bbP_{\sf hf}(h)\\
		\label{eq:marginal-rc}
		&\forall\eta\in \{0,1\}^{E(\calD^\bullet)}&
		\sum_{h \perp \eta} \bbP_{\sf cluster}(h,\eta) &= \bbP_{\sf RC}(\eta).
	\end{align}
	Relation~\eqref{eq:marginal-hf} follows immediately. Indeed, summing~$\bbP_{\sf edge}$ over all edge configurations compatible with~$h$, one obtains that every vertex of~$\calD$ contributes~$e^{\lambda/2}+e^{-\lambda/2} = c$ if the corresponding four heights agree on diagonals, and it contributes~$1$ otherwise. This coincides with the definition of~$\bbP_{\sf hf}(h)$.
	
	We now show~\eqref{eq:marginal-rc}. The height at the unique boundary cluster~$\calC^*$ of~$\eta^*$ equals~$1$ and the height at every boundary cluster~$\calC$ of~$\eta$ equals~$0$, whence the contribution of each such pair~$(\calC, \calC^*)$ to the LHS of~\eqref{eq:marginal-rc} equals~$e^{-\lambda} = q_b/\sqrt{q}$. All height functions~$h$ compatible with~$\eta$ can be obtained by exploring the adjacency graph of clusters of~$\eta$ and~$\eta^*$ starting from the boundary and at each step choosing independently whether the height is increasing or decreasing by one. Thus, every non-boundary cluster of~$\eta$ and~$\eta^*$ contributes~$e^\lambda + e^{-\lambda} = \sqrt{q}$ to the LHS of~\eqref{eq:marginal-rc}. Substituting this in~\eqref{eq:marginal-rc}, we get
	\begin{align*}
		\sum_{h \perp \eta} \bbP_{\sf cluster}(h,\eta)
		&\propto \sqrt{q}^{k_i(\eta) + k_i(\eta^*)}\left(\tfrac{q_b}{\sqrt{q}}\right)^{k_b(\eta)} = \sqrt{q}^{k(\eta^*)-k(\eta)-1}q^{k_i(\eta)}q_b^{k_b(\eta)} \\
		&= \left(\tfrac{p_c(q)}{1-p_c(q)}\right)^{o(\eta) -|V(\calD^\bullet)|} q^{k_i(\eta)}q_b^{k_b(\eta)} \propto \bbP_{\sf RC}(\eta),
	\end{align*}
	where in the first line we use that~$k_i(\eta) = k(\eta) - k_b(\eta)$ and~$k_i(\eta^*) = k(\eta^*) - 1$; in the second line we used the identity~$k(\eta^*) - k(\eta) -1 = o(\eta) -|V(\calD^\bullet)|$ (follows from Euler's formula and can be checked by induction in~$o(\eta)$) and the fact that~$\sqrt{q} = \tfrac{p_c(q)}{1-p_c(q)}$.
	
	\begin{figure}
		\begin{center}
			\includegraphics[scale=1.2, page=1]{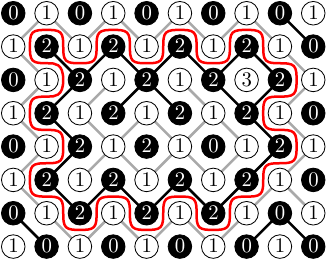} \quad\quad
			\includegraphics[scale=1.2, page=2]{coupling-height-flip.pdf}
			\caption{\emph{Left:} Edge configuration~$\eta$ (black primal edges, gray dual edges) and a height function~$h$ with~$0,1$ boundary conditions that is compatible with~$\eta$. The red cyclic path separates cluster~$\calC$ of~$\eta$ (inside) from cluster~$\calC^*$ of~$\eta^*$ (outside). \emph{Right:} Height function~$h_{\calC,\calC^*}$~--- all heights surrounded by the red loop are decreased by~$2$.}
			\label{fig:coupling-height-rcm}
		\end{center}
	\end{figure}
	
	It remains to show that~\eqref{eq:coupling-edge-form} and~\eqref{eq:coupling-hf-form} describe the same probability measure. For any pair of adjacent non-boundary clusters~$\calC$ and~$\calC^*$ that satisfy~$\calC\prec\calC^*$ and any height function~$h$ compatible with~$\eta$ and such that~$h(\calC) - h(\calC^*) = 1$, define~$h_{\calC,\calC^*}$ on the faces of~$\bbZ^2$ such that~$h_{\calC,\calC^*}(\cdot) = h(\cdot) - 2$ on~$\calC$ and its interior, and~$h_{\calC,\calC^*}(\cdot) = h(\cdot)$ outside of~$\calC$. It is immediate that
	\[
		\bbP_{\sf cluster}(h_{\calC,\calC^*},\eta) = e^{-2\lambda}\bbP_{\sf cluster}(h,\eta).
	\]
	We now prove that the same is true for~$\bbP_{\sf edge}$; see Figure~\ref{fig:coupling-height-rcm} for an illustration. The edges of~$\calD$ separating~$\calC$ from~$\calC^*$ form a cyclic path~$\ell$ of alternating vertical and horizontal edges that does not visit twice the same edge (but can visit twice the same vertex of~$\calD$). Consider the difference between the expression in~\eqref{eq:coupling-edge-form} computed for~$h_{\calC,\calC^*}$ and for~$h$:
	\begin{align}\label{eq:proof-coupling-diff-contr}
		\Delta_{\calC,\calC^*}:
		&= \tfrac{\lambda}{4}\sum_{uv\in \calD^\bullet}{(h_{\calC,\calC^*}(u^*) + h_{\calC,\calC^*}(v^*)- h_{\calC,\calC^*}(u) - h_{\calC,\calC^*}(v))(-1)^{\mathbbm{1}_{uv\not\in \eta}}} \\
		&- \tfrac{\lambda}{4}\sum_{uv\in \calD^\bullet}{(h(u^*) + h(v^*) - h(u) - h(v))(-1)^{\mathbbm{1}_{uv\not\in \eta}}}.\nonumber
	\end{align}
	Only edges of~$\calD^\bullet$ corresponding to vertices on~$\ell$ have a non-zero contribution to~$\Delta_{\calC,\calC^*}$:
	\begin{itemize}
		\item[--] if~$z$ has degree~$2$ in~$\ell$, then~$e_z$ contributes~$\lambda/2$ if~$e_z\in\eta$ and~$-\lambda/2$ if~$e_z^*\in\eta^*$;
		\item[--] if~$z$ has degree~$4$ in~$\ell$, then~$e_z$ contributes~$\lambda$ if~$e_z\in\eta$ and~$-\lambda$ if~$e_z^*\in\eta^*$.
	\end{itemize}
	Going along~$\ell$ in a clockwise direction, we obtain that every left turn occurs at an edge of~$\eta$ (and contributes~$\lambda/2$ to~$\Delta_{\calC,\calC^*}$) and every right turn occurs at an edge of~$\eta^*$ (and contributes~$-\lambda/2$ to~$\Delta_{\calC,\calC^*}$). Since~$\ell$ is a non-self-intersecting curve oriented clockwise, it has~$4$ more right turns than left turns, whence~$\Delta (h_{\calC,\calC^*}, h) = -2\lambda$ and
	\[
		\bbP_{\sf edge}(h_{\calC,\calC^*},\eta) = e^{-2\lambda}\bbP_{\sf edge}(h,\eta).
	\]
	The operation~$h\mapsto h_{\calC,\calC^*}$ can be described analogously when~$\calC^*$ is surrounded by~$\calC$. The combination of such operations can bring any height function~$h\in \calE_{\sf hf}^{0,1}(\calD)$ to the~$0,1$ height function that is equal to~$0$ at all even faces and to~$1$ at all odd faces. Since we showed that this operation has the same effect on~$\bbP_{\sf edge}$ and~$\bbP_{\sf cluster}$, it is enough to show that the two probability measures are equal when~$h$ is a~$0,1$ height function. In the latter case, we have:
	\begin{align*}
		\bbP_{\sf cluster}(h,\eta) &\propto \exp{\left[\lambda \sum_{\calC\sim \calC^*}(-1)^{\mathbbm{1}_{\calC \prec \calC^*}}\right]} = \exp{\left[\lambda(k(\eta^*) - k(\eta) - 1)\right]},\\
		\bbP_{\sf edge}(h,\eta) &\propto \exp{\left[ \tfrac{\lambda}{4}\sum_{uv\in \calD^\bullet}{2(-1)^{\mathbbm{1}_{uv\not\in \eta}}}\right]} = \exp{\left[ \tfrac{\lambda}{2}(o(\eta) - c(\eta))\right]}.
	\end{align*}
	By Euler's formula, the right-hand sides of the above equations are the same up to a constant and this finishes the proof.\\

	2) The second item is a straightforward consequence of the first item when one conditions all boundary edges to be open (where an edge of~$\calD^\bullet$ is a boundary edge if its endpoints are corners of~$\calD$). Indeed, this sets wired boundary conditions for the random-cluster model and the contribution of the boundary edges to~\eqref{eq:marginal-hf} equals~$e^{\lambda/2}=c_b$.
\end{proof}

Since height functions are in correspondence with spin configurations (see Section~\ref{sec:spin-rep}), the coupling with the random-cluster model can also be stated for the spin representation. Similarly to above, we say that a spin configuration~$\sigma\in \calE_{\sf spin}^{++}(\calD)$ and an edge-configuration~$\eta\in \{0,1\}^{E(\calD^\bullet)}$ are compatible if~$\sigma$ is constant at each cluster of~$\eta$ and~$\eta^*$.

\begin{corollary}\label{cor:coupling-spins}
	In the notation of Theorem~\ref{thm:coupling}, the measures~${\sf Spin}_{\calD,c}^{++}$ and~${\sf RC}_{\calD^\bullet,q,p_c(q)}^{q_b}$ (and the measures~${\sf Spin}_{\calD,c}^{+;e^{\lambda/2}}$ and~${\sf RC}_{\calD^\bullet,q,p_c(q)}^{1}$ if~$\calD$ is even) can be coupled in such a way that the joint law is supported on pairs of compatible configurations~$(\sigma,\eta)$ and can be written in either of the two following ways:
	\begin{align}
		\label{eq:coupling-spin-form}
		\bbP_{\sf cluster}(\sigma, \eta) &\propto \exp{\left[ \lambda\sum_{\calC\sim\calC^*}
{\sigma(\calC)\sigma(\calC^*)(-1)^{\mathbbm{1}_{\calC \prec \calC^*}}}\right]},\\
		\label{eq:coupling-edge-form-spin}
		\bbP_{\sf edge}(\sigma, \eta) &\propto \exp{\left[ \tfrac{\lambda}{4}\sum_{uv\in \calD^\bullet}{(\sigma(u)\sigma(u^*) + \sigma(v)\sigma(v^*))(-1)^{\mathbbm{1}_{uv\not\in \eta}}}\right]}.
	\end{align}
\end{corollary}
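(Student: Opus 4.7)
The plan is to derive Corollary~\ref{cor:coupling-spins} as a direct push-forward of the coupling of Theorem~\ref{thm:coupling} under the modulo~$4$ map from height functions to spin configurations defined in Section~\ref{sec:spin-rep}. The key observation is that, once $\eta$ is fixed, this map is a bijection between compatible heights (with $0,1$ boundary) and compatible spins (with $++$ boundary), and that the exponents in \eqref{eq:coupling-hf-form}--\eqref{eq:coupling-edge-form} translate directly into those of \eqref{eq:coupling-spin-form}--\eqref{eq:coupling-edge-form-spin} via a short parity identity.

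First I would establish the bijection. Given $(h,\eta)$ supported by $\bbP_{\sf cluster}$ of Theorem~\ref{thm:coupling}(1), the configuration $\sigma := h \bmod 4$ lies in $\calE_{\sf spin}^{++}(\calD)$ (since boundary heights $0,1$ both map to $+$) and is constant on each primal and dual cluster of $\eta$, hence is compatible with $\eta$. Conversely, given such $\sigma$ compatible with $\eta$, one recovers $h$ uniquely: on each primal boundary cluster (even faces) the boundary constraint forces $h \equiv 0$, and on each dual boundary cluster (odd faces) it forces $h \equiv 1$; for non-boundary clusters one propagates through the cluster-adjacency graph, using the fact that an adjacent primal $\calC$ and dual $\calC^*$ satisfy $h(\calC^*) \in \{h(\calC)-1,\, h(\calC)+1\}$ and the two candidates have opposite spins (by the spin assignment). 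The $\eta$-marginal is unchanged by this push-forward, while the $h$-marginal, which is ${\sf HF}_{\calD,c}^{0,1}$ by Theorem~\ref{thm:coupling}, pushes forward to ${\sf Spin}_{\calD,c}^{++}$ by the definition of the spin measure recalled in Section~\ref{sec:spin-rep}.

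It then remains to rewrite the exponents in terms of $\sigma$. The central identity, which I expect to be the only computational point requiring care, is that for any adjacent primal cluster $\calC$ (on even faces) and dual cluster $\calC^*$ (on odd faces),
\[
    \sigma(\calC)\,\sigma(\calC^*) \;=\; h(\calC^*) - h(\calC).
\]
This is a four-case check: since $\calC$ sits on even faces, $h(\calC) \in 2\bbZ$; writing $h(\calC) \equiv 0$ or $2 \pmod 4$ and $h(\calC^*) = h(\calC) \pm 1$, the spin assignment ($+$ for $h \equiv 0,1$, $-$ for $h \equiv 2,3 \pmod 4$) gives $\sigma(\calC)\sigma(\calC^*) = +1$ iff $h(\calC^*)-h(\calC) = +1$ in every case. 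Substituting this identity into \eqref{eq:coupling-hf-form} yields \eqref{eq:coupling-spin-form}, while applying it to the pairs $\{u,u^*\}$ and $\{v,v^*\}$ for each vertex $z$ of $\calD$ (with $u,v$ the adjacent even faces and $u^*,v^*$ the adjacent odd faces) turns the telescoping sum in \eqref{eq:coupling-edge-form} into \eqref{eq:coupling-edge-form-spin}.

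The second part (even domain, boundary weight $c_b$) follows from the same argument applied to Theorem~\ref{thm:coupling}(2): the measure ${\sf HF}_{\calD,c}^{0,1;\,c_b}$ pushes forward to ${\sf Spin}_{\calD,c}^{+;\,e^{\lambda/2}}$ under the modulo~$4$ map (the boundary condition is determined by parity, since in an even domain all external faces are of one parity), and the bijection together with the parity identity applies verbatim. No new ideas are needed; the main substantive step is the parity identity above, and everything else is a formal manipulation of the already-established joint law from Theorem~\ref{thm:coupling}.
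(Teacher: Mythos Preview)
Your proposal is correct and is precisely the approach the paper has in mind: the corollary is stated without proof as an immediate consequence of Theorem~\ref{thm:coupling} via the height-to-spin correspondence of Section~\ref{sec:spin-rep}, and you have accurately filled in the details (the bijection for fixed $\eta$ and the parity identity $\sigma(\calC)\sigma(\calC^*)=h(\calC^*)-h(\calC)$ for adjacent even/odd clusters).
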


For~$c =2$, the coupling becomes a uniform measure.

\begin{corollary}
	\label{cor:coupling-c-2}
	1) The measures~${\sf HF}_{\calD,2}^{0,1}$ and~${\sf RC}_{\calD^\bullet,4,p_c(4)}^2$ can be coupled in such a way that the joint law is a uniform measure on pairs~$(h,\eta)$ of compatible configurations. In particular, the distribution of the height at a particular face~$u$ of~$\calD$ according to~${\sf HF}_{\calD,2}^{0,1;1}$ is that of a simple random walk that starts at~$0$, at each step goes up or down by~$1$ uniformly, and makes in total as many steps as there are clusters of~$\eta$ and~$\eta^*$ surrounding~$u$, where~$\eta$ is distributed according to~${\sf RC}_{\calD^\bullet,4,p_c(4)}^2$.
	
	If~$\calD$ is an even domain, the same holds for the measures~${\sf HF}_{\calD,2}^{0,1;2}$ and~${\sf RC}_{\calD^\bullet,4,p_c(4)}^1$.
	
	2) Similarly, the measures~${\sf Spin}_{\calD,2}^{++}$ and~${\sf RC}_{\calD^\bullet,4,p_c(4)}^{1}$ can be coupled in such a way that the joint law is a uniform measure on pairs~$(\sigma,\eta)$ of compatible configurations; and the same for~${\sf Spin}_{\calD,2}^{+;1}$ and~${\sf RC}_{\calD^\bullet,4,p_c(4)}^1$ if~$\calD$ is an even domain.
\end{corollary}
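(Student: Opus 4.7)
The plan is to derive Corollary~\ref{cor:coupling-c-2} as the special case $\lambda = 0$ of Theorem~\ref{thm:coupling} and Corollary~\ref{cor:coupling-spins}. Indeed, $c = e^{\lambda/2} + e^{-\lambda/2} = 2$ forces $\lambda = 0$, and the remaining parameters then specialize to $q = (e^0 + e^0)^2 = 4$, $q_b = e^0\sqrt{4} = 2$ in part~1 of Theorem~\ref{thm:coupling}, and $c_b = e^0 = 1$ in part~2. Substituting $\lambda = 0$ into either of~\eqref{eq:coupling-hf-form} or~\eqref{eq:coupling-edge-form}, the exponential becomes identically $1$, so the joint law coincides with the uniform measure on pairs $(h,\eta)$ of compatible configurations. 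The corresponding statement for the spin representation is obtained in the same way by setting $\lambda = 0$ in~\eqref{eq:coupling-spin-form}--\eqref{eq:coupling-edge-form-spin} of Corollary~\ref{cor:coupling-spins}.

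For the simple random walk description, I would condition on $\eta$ distributed according to the appropriate random-cluster measure and use the observation (already made in the proof of Theorem~\ref{thm:coupling}) that a height function compatible with $\eta$ is built by exploring the adjacency graph of primal and dual clusters outward from the boundary, choosing at each newly encountered cluster whether to add or subtract one relative to the height of the neighboring (already explored) cluster. Because the joint law is uniform on compatible pairs, these choices are independent fair coin flips, independent of $\eta$. Consequently, along the nested sequence of clusters of $\eta\cup\eta^*$ surrounding a fixed face $u$, the value $h(u)$ equals the boundary value (which is either $0$ or $1$ according to the parity of the outermost cluster) plus a sum of i.i.d.\ uniform $\pm 1$ increments, one for each cluster strictly surrounding $u$. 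This is exactly the claimed simple random walk.

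The even-domain claim is identical, with part~2 of Theorem~\ref{thm:coupling} replacing part~1 to match the boundary cluster weight $q_b = 1$ with the modified six-vertex measure ${\sf HF}_{\calD,2}^{0,1;1}$, and the spin versions are deduced directly via the corresponding cases of Corollary~\ref{cor:coupling-spins}. There is no substantive obstacle: the entire corollary reduces to the substitution $\lambda = 0$ in already-proved couplings. The only point that requires a modicum of care is keeping track of which pair of finite-volume measures is being coupled in each of the two boundary regimes and verifying that the parity of the outermost cluster is correctly tracked so that the random walk starts from the right boundary value.
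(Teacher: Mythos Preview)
Your proposal is correct and matches the paper's (implicit) approach: the corollary is stated immediately after Corollary~\ref{cor:coupling-spins} with no separate proof, since it is nothing more than the specialization $\lambda=0$ of Theorem~\ref{thm:coupling} and Corollary~\ref{cor:coupling-spins}, which collapses the exponential weights in~\eqref{eq:coupling-hf-form}--\eqref{eq:coupling-edge-form-spin} to the constant $1$. Your handling of the simple-random-walk description via the cluster-exploration argument from the proof of Theorem~\ref{thm:coupling} is exactly right, and the care you take in tracking which boundary regime ($q_b=2$ versus $q_b=1$, $c_b=1$) pairs with which finite-volume measure is appropriate---indeed, the corollary as printed contains minor typos in some of the superscripts (e.g.\ ${\sf HF}_{\calD,2}^{0,1;2}$ should read ${\sf HF}_{\calD,2}^{0,1;1}$ for the even-domain part, and the $q_b$ in part~2 should be $2$ rather than $1$ for ${\sf Spin}_{\calD,2}^{++}$), and your derivation from Theorem~\ref{thm:coupling} recovers the intended values.
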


\section{Input from the random-cluster model}
\label{sec:rcm-input}

In this section, we discuss some fundamental properties of the random-cluster model~${\sf RC}_{\Omega,q,p}^{q_b}$ introduced in Section~\ref{sec:rcm-results}, with a priori different weights~$q_b$ and~$q$ for boundary and non-boundary clusters. These properties are derived in a straightforward manner from the known results on the standard random-cluster model~--- classical results are described in~\cite{Gri06}, and the relevant recent results were established in~\cite{BefDum12,DumSidTas17,DumGagHar16}.

Define a partial order on~$\{0,1\}^{E(\Omega)}$ as follows: $\eta \leq \eta'$ if~$\eta(e) \leq \eta'(e)$ for any~$e\in E(\Omega)$. An event~$A\subset \{0,1\}^{E(\Omega)}$ is called increasing if its indicator is an increasing function with the respect to this partial order.

It is well-known that when~$q \geq 1$, the standard random-cluster model is positively associated (\cite[Theorem (3.8)]{Gri06}): any two increasing events are positively correlated.
Below we show that when~$q_b \in [1,q]$, the measure~$\bbP_{{\sf RC},\Omega,q,p}^{q_b}$ satisfies the FKG lattice condition, which in particular implies positive association~\cite{ForKasGin71}.

\begin{proposition}[Positive association]
\label{prop:fkg-rc}
Let~$q\geq 1$, $p\in [0,1]$, $q_b\in [1,q]$ and~$\Omega$ be a finite subgraph of~$\bbZ^2$.  Then~$\bbP_{{\sf RC},q,p}^{q_b}$ satisfies the FKG lattice condition. In particular, for any two increasing functions~$f,g$ one has:
\[
	{\sf RC}_{\Omega,q,p}^{q_b}(f(\eta)g(\eta)) \geq {\sf RC}_{\Omega,q,p}^{q_b}(f(\eta))\cdot {\sf RC}_{\Omega,q,p}^{q_b}(g(\eta)).
\]
\end{proposition}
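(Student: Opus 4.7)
The plan is to verify the FKG lattice condition
\[
  \mu(\eta\vee\eta')\,\mu(\eta\wedge\eta')\;\ge\; \mu(\eta)\,\mu(\eta')\qquad\text{for all }\eta,\eta'\in\{0,1\}^{E(\Omega)}
\]
for $\mu={\sf RC}_{\Omega,q,p}^{q_b}$, and then to invoke Holley's inequality in its standard form to deduce positive association for any pair of increasing functions $f,g$. Equivalently, I would show that the log-density is supermodular.

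The key algebraic step is to rewrite the density so that both effective cluster weights become at least $1$. Let $\tilde{\Omega}$ be the graph obtained from $\Omega$ by identifying all boundary vertices of $\Omega$ into a single ``wired'' vertex, and let $\tilde{k}(\eta)$ denote the number of connected components of $\eta$ viewed in $\tilde{\Omega}$. Assuming the boundary of $\Omega$ is nonempty (else $k_b\equiv 0$ and we are in the classical setting), we have $k_i(\eta)=\tilde{k}(\eta)-1$ and $k_b(\eta)=k(\eta)-\tilde{k}(\eta)+1$, where $k(\eta)$ is the total cluster count in $\Omega$. Substituting into~\eqref{eq:rcm-def} and discarding an $\eta$-independent prefactor gives
\[
  {\sf RC}_{\Omega,q,p}^{q_b}(\eta)\;\propto\; q_b^{k(\eta)}\cdot (q/q_b)^{\tilde{k}(\eta)}\cdot p^{o(\eta)}(1-p)^{c(\eta)}.
\]

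The factor $p^{o(\eta)}(1-p)^{c(\eta)}$ has a log that is linear in $\eta$ and is therefore log-modular, so it satisfies the lattice condition with equality. For the other two factors I would invoke the classical supermodularity of the cluster count: for any finite graph $G$ and all $\eta,\eta'\in\{0,1\}^{E(G)}$,
\[
  k_G(\eta\vee\eta')+k_G(\eta\wedge\eta')\;\ge\; k_G(\eta)+k_G(\eta'),
\]
which is standard and follows from the observation that going from $\eta\wedge\eta'$ to $\eta$ and from $\eta'$ to $\eta\vee\eta'$ adds the same set of edges, while each added edge can only merge at least as many clusters in the sparser starting configuration as in the denser one (see~\cite[Theorem~3.8]{Gri06}). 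Applying this with $G=\Omega$ and with $G=\tilde{\Omega}$, and using that both $q_b\ge 1$ and $q/q_b\ge 1$ under the hypothesis $q_b\in[1,q]$, shows that $q_b^{k(\eta)}$ and $(q/q_b)^{\tilde{k}(\eta)}$ are both log-supermodular. A product of nonnegative log-supermodular functions is log-supermodular, which is precisely the FKG lattice condition for~$\mu$.

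I anticipate no serious obstacle. The only slightly delicate point is the decomposition $k_i=\tilde{k}-1,\ k_b=k-\tilde{k}+1$, which is exactly what pins down why the hypothesis $q_b\in[1,q]$ is the natural domain of validity: outside this range one of the two effective cluster weights drops below $1$ and the corresponding factor ceases to be log-supermodular, so the argument would break.
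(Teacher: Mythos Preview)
Your proof is correct and takes a genuinely different route from the paper. The paper reduces to the two-edge version of the lattice condition and then carries out a case analysis on the quantities $\Delta_i(e),\Delta_b(e),\Delta_i(f),\Delta_b(f),\Delta_i(e,f),\Delta_b(e,f)$, checking in particular that whenever the boundary-cluster contribution goes the wrong way (i.e.\ $\Delta_b(e)+\Delta_b(f)-\Delta_b(e,f)=-1$), the interior-cluster contribution compensates. Your approach bypasses this bookkeeping entirely: the identity $q^{k_i}q_b^{k_b}\propto q_b^{k}\,(q/q_b)^{\tilde k}$ reduces the problem to two applications of the single classical fact that the cluster count on any finite graph is supermodular (equivalently, the graphic-matroid rank is submodular), applied once to $\Omega$ and once to the wired graph $\tilde\Omega$. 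This is cleaner, avoids case analysis, and makes the role of the hypothesis $q_b\in[1,q]$ completely transparent: it is exactly the condition that both effective weights $q_b$ and $q/q_b$ are at least $1$. The paper's computation, by contrast, shows more explicitly \emph{where} the inequality can be tight and how the interior and boundary counts interact, which could be informative if one wanted to probe the boundary of the regime, but for the stated result your argument is more economical.
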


\begin{proof}
We write~$\bbP$ instead of~${\sf RC}_{\Omega,q,p}^{q_b}$ for brevity. By~\cite[Theorem (2.22)]{Gri06} it is sufficient to consider only pairs of configurations that differ on exactly two edges. In this case, the lattice conditions takes the form:
\begin{equation}
\label{eq:fkg-rc}
\bbP(\eta^{ef})\bbP(\eta_{ef}) \geq \bbP(\eta^e_f)\cdot\bbP(\eta^f_e),
\end{equation}
where~$e,f\in E(\Omega)$, $\eta\in\{0,1\}^{E(\Omega)}$, all four configurations~$\eta^{ef}$, $\eta_{ef}$, $\eta^e_f$, $\eta^f_e$ agree with~$\eta$ on~$E(\Omega)\setminus\{e,f\}$,
$e\in \eta^{ef}\cap \eta^e_f$, $f\in \eta^{ef}\cap \eta^f_e$, $e\not\in \eta_{ef}\cup \eta^f_e$, $f\not\in \eta_{ef}\cup \eta^e_f$.

The term counting the edges cancels out in~\eqref{eq:fkg-rc} and it remains to take care of the number of clusters. We are going to use the following notation:
\begin{align*}
\Delta_i(e) &= k_i(\eta_{ef}) - k_i(\eta^e_{f}),& \Delta_i(f) &= k_i(\eta_{ef}) - k_i(\eta^f_{e}),&  \Delta_i(e,f) &= k_i(\eta_{ef}) - k_i(\eta^{ef}); \\
\Delta_b(e) &= k_b(\eta_{ef}) - k_b(\eta^e_{f}),& \Delta_b(f) &= k_b(\eta_{ef}) - k_b(\eta^f_{e}),&  \Delta_b(e,f) &= k_b(\eta_{ef}) - k_b(\eta^{ef}).
\end{align*}
In this notation, we need to show that:
\[
\log q \cdot (\Delta_i(e) + \Delta_i(f) - \Delta_i(e,f)) + \log q_b \cdot (\Delta_b(e) + \Delta_b(f) - \Delta_b(e,f)) \geq 0.
\]
It is easy to see that~$\Delta_i(e) + \Delta_i(f) - \Delta_i(e,f)\geq 0$. Also, $\Delta_b(e) + \Delta_b(f) - \Delta_b(e,f)\geq -1$, because if~$\Delta_b(e,f) = 2$, then each of~$e$ and~$f$ connects two different boundary clusters. Using the inequalities~$q\geq 1$ and~$q_b\in [1,q]$, it is then enough to show that~$\Delta_b(e) + \Delta_b(f) - \Delta_b(e,f) = -1$ implies~$\Delta_i(e) + \Delta_i(f) - \Delta_i(e,f)\geq 1$.

Indeed, assume that~$\Delta_b(e) + \Delta_b(f) - \Delta_b(e,f) = -1$. Then~$\Delta_b(e,f) = 1$, $\Delta_b(e) = 0$, $\Delta_b(f) = 0$. This means that clusters in~$\eta_{ef}$ containing endpoints of~$e$ and~$f$ can be denoted by~$C_1$, $C_2$, $C_3$, so that: $e$ connects~$C_1$ and~$C_2$; $f$ connects~$C_2$ and~$C_3$; $C_1$ and~$C_3$ are boundary clusters; $C_2$ is an interior cluster. Clearly, in this case~$\Delta_i(e) + \Delta_i(f) - \Delta_i(e,f) = 1$ and the proof is finished.
\end{proof}

Denote by~${\sf RC}_{\Omega,q,p}^{\mathrm{wired}}$ and~${\sf RC}_{\Omega,q,p}^{\mathrm{free}}$ the standard random-cluster measures on~$\Omega\subset \bbZ^2$ with wired and free boundary conditions, respectively. As defined above,
	\[
		p_c(q) = \tfrac{\sqrt{q}}{\sqrt{q}+1}.
	\]
For two measures~$\mu$ and~$\nu$ on~$\{0,1\}^{E(\Omega)}$, one says that~$\mu$ stochastically dominates~$\nu$ and writes~$\mu\succeq \nu$, if~$\mu(A)\geq \nu(A)$, for any increasing event~$A$.

\begin{proposition}\label{prop:rcm-input}
	Let~$q\geq 1$, $p\in [0,1]$, $q_b\in [1,q]$, and~$\Omega$ be a finite subgraph of~$\bbZ^2$.\\
	
	i) One has~${\sf RC}_{\Omega,q,p}^{1} = {\sf RC}_{\Omega,q,p}^{\mathrm{wired}}$ and ${\sf RC}_{\Omega,q,p}^q = {\sf RC}_{\Omega,q,p}^{\mathrm{free}}$. In particular, as~$\Omega\nearrow \bbZ^2$, the infinite-volume limits~${\sf RC}_{q,p}^1$ and~${\sf RC}_{q,p}^q$ are well-defined and coincide with the wired and free Gibbs states for the random-cluster model.\\

	ii) Let $q'\geq 1$, $q_b'\in [1,q]$, $p'\in [0,1]$ satisfy~$q'\geq q$, $q_b'\geq q_b$ and~$p' \leq p$. Then,
	\[
		{\sf RC}_{\Omega,q',p'}^{q_b'} \preceq {\sf RC}_{\Omega,q,p}^{q_b}.
	\]
	
	iii) Let~$p\neq p_c(q)$ and~$\Omega_k$ be a sequence of domains increasing to~$\bbZ^2$. Then the infinite-volume limit of~${\sf RC}_{\Omega_k,q,p}^{q_b}$ exists, is independent of~$q_b$ and coincides with the unique Gibbs state for the random-cluster model with parameters~$q,p$. We denote it by~${\sf RC}_{q,p}$.\\
	
	iv) The statement of item iii) holds true also if~$q\in [1,4]$ and~$p=p_c(q)$. Also, the following Russo--Seymour--Welsh (RSW) type estimate holds for any vertex~$u\in\Omega$ and some constants~$c,C>0$ independent of~$\Omega$:
		\begin{equation}\label{eq:rsw}
			c\,\log  \dist(u, \bbZ^2\setminus\Omega) < \bbE_{{\sf RC},\Omega,q,p_c}^{q_b}(N_\Omega) < C\,\log  \dist(u, \bbZ^2\setminus\Omega),
		\end{equation}
		where~$N_\Omega$ is the number of connected components surrounding~$u$.\\
		
	v)  Let~$q>4$. Then, under~${\sf RC}_{\Omega,q,p_c}^1$, the size of any dual cluster has exponential tails. In particular, ${\sf RC}_{q,p_c}^1$-a.s. there exists a unique infinite cluster and, under~${\sf RC}_{q,p_c}^q$, the sizes of clusters exhibit exponential decay.
\end{proposition}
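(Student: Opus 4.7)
The plan is to reduce each item to either a short combinatorial calculation or to a direct application of a known result for the standard random-cluster model, exploiting the FKG inequality of Proposition~\ref{prop:fkg-rc}.

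For (i), I would check weights directly. For $q_b=q$, the weight $q^{k_i(\eta)}q^{k_b(\eta)}=q^{k(\eta)}$ is exactly the free weight. For $q_b=1$, the weight $q^{k_i(\eta)}$ differs only by an overall factor $q$ (arising because identifying all boundary vertices produces one extra cluster) from the wired weight $q^{k_i(\eta)+1}$, so the two measures coincide after normalization. Passing $\Omega\nearrow \bbZ^2$ then reproduces the usual wired and free Gibbs states.

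For (ii), I would use Holley's criterion. Conditioning on the states of all edges other than a fixed edge $e$, an explicit calculation gives
\[
	{\sf RC}_{\Omega,q,p}^{q_b}(e \text{ open}\mid \text{rest})=\frac{p}{p+(1-p)\,q^{\Delta_i}q_b^{\Delta_b}},
\]
where $\Delta_i,\Delta_b\in\{0,1\}$ record whether adding $e$ merges two interior clusters or two boundary clusters (with the mixed interior--boundary merge contributing $\Delta_i=1$, $\Delta_b=0$). This probability is non-decreasing in $p$ and non-increasing in $q$ and $q_b$; together with the FKG lattice condition of Proposition~\ref{prop:fkg-rc}, Holley's criterion then produces the stated stochastic domination.

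For (iii) and (iv), part (ii) sandwiches ${\sf RC}_{\Omega,q,p}^{q_b}$ between ${\sf RC}_{\Omega,q,p}^{\mathrm{wired}}$ and ${\sf RC}_{\Omega,q,p}^{\mathrm{free}}$ for any $q_b\in[1,q]$. When $p\neq p_c(q)$, \cite{BefDum12} gives uniqueness of the infinite-volume Gibbs state, forcing the two extremal limits, and hence the sandwiched one, to agree; this is (iii). When $p=p_c(q)$ with $q\in[1,4]$, the same uniqueness is provided by \cite{DumSidTas17}. For the bound \eqref{eq:rsw}, I would decompose $N_\Omega$ according to the dyadic annulus around $u$ in which each surrounding cluster lives. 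The RSW circuit estimates of \cite{DumSidTas17} transfer to ${\sf RC}^{q_b}$ via the sandwich in (ii) and imply that each dyadic annulus contributes an expected number of primal-plus-dual clusters surrounding $u$ bounded above and below by absolute constants, yielding \eqref{eq:rsw} after summation over the $\Theta(\log \dist(u,\bbZ^2\setminus\Omega))$ scales.

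Finally, (v) transfers via (i) from the result of \cite{DumGagHar16} (or the short argument of \cite{RaySpi19}) that the wired critical measure with $q>4$ exhibits exponential decay of dual clusters. Uniqueness of the infinite primal cluster under ${\sf RC}_{q,p_c}^1$ then follows by a Burton--Keane type argument, and the analogous decay of primal clusters under the free measure ${\sf RC}_{q,p_c}^q$ follows by planar duality on $\bbZ^2$. The only mildly nontrivial step is the verification of joint monotonicity of conditional edge probabilities in (ii); every other item is essentially a reduction to an existing result.
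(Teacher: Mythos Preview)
Your proposal is correct and follows essentially the same route as the paper: the paper proves (ii) by invoking Proposition~\ref{prop:fkg-rc} together with \cite[Theorem~(3.21)]{Gri06} (which is exactly the Holley-type argument you spell out with the explicit single-edge conditional), derives (iii)--(iv) by the same free/wired sandwich and the RSW input of~\cite{DumSidTas17}, and for (v) simply cites~\cite{DumGagHar16}. The only place the paper adds a detail you omit is in (iv), where it records (Claim~4.1) that the probability of a primal or dual circuit in an annulus is also bounded \emph{away from~$1$}, which is what controls the upper bound on $N_\Omega$; you may want to mention this explicitly rather than fold it into ``RSW circuit estimates''.
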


\begin{proof}
	$i)$ When~$q_b=q$, all clusters receive the same weight. There is no imposed connectivity on the boundary. Thus, this value of~$q_b$ corresponds to free boundary conditions.

When~$q_b=1$, the number of boundary clusters has no influence on the distribution. This is equivalent to counting all of them as one cluster. Thus, this value of~$q_b$ corresponds to wired boundary conditions.\\

	$ii)$ In the same way as for the standard random-cluster model (\cite[Theorem (3.21)]{Gri06}), the statement follows from the FKG inequality shown in Proposition~\ref{prop:fkg-rc}.\\
	
	$iii)$ By~\cite[Theorem (6.17)]{Gri06} and item i), for any~$q\geq 1$ and~$p\neq \tfrac{\sqrt{q}}{\sqrt{q}+1}$, the measures~${\sf RC}_{\Omega_k,q,p}^{1}$ and~${\sf RC}_{\Omega_k,q,p}^q$ have the same limit, as~$k$ tends to infinity. By item ii), for any~$q_b\in [1,q]$,
	\[
		{\sf RC}_{\Omega_k,q,p}^{1} \succeq {\sf RC}_{\Omega_k,q,p}^{q_b} \succeq {\sf RC}_{\Omega_k,q,p}^q,
	\]
	whence the claim follows.\\
	
	$iv)$ By~\cite{BefDum12}, when~$q\geq 1$, the random-cluster model exhibits a phase transition at $p=p_c(q)$ (see also~\cite{DumRauTas16,DumRauTas17} for alternative proofs). It was shown~\cite{DumSidTas17} that, when~$q\in [1,4]$, the phase transition is of the second order. In particular, this means that the Gibbs measure is unique. In the same way as in item iii), this implies that the limit of~${\sf RC}_{\Omega_k,q,p}^{q_b}$ is independent of~$q_b\in [1,q]$.
	
	The estimate~\eqref{eq:rsw} is a standard consequence of the RSW theory developed in~\cite{DumSidTas17}. We provide only a sketch of the proof. It is enough to consider only~$q_b=q$, since for $q_b = 1$ the proof is completely analogous and then the statement can be extended to any~$q_b\in (1,q)$ by monotonicity shown in Item ii. The following claim allows to bound~$N_\Omega$ from above and below by Bernoulli random variables.	
	
	Without loss of generality, we can assume that~$u=0$.
	
	\begin{claim}\label{cl:one-circuit}
		Let~$\calE_{\mathrm{open}}$ and~$\calE_{\mathrm{closed}}$ be the events that there exists a circuit of open (resp. closed) edges in~$\Omega\setminus\Lambda_{\mathrm{rad}(\Omega)/2}$ that goes around~$0$. Then there exists a constant~$c'>0$ not depending on~$\Omega$ such that we have
		\[
			c'<{\sf RC}_{\Omega,q,p_c}^{1}(\calE_{\mathrm{closed}}) < 1-c' \quad \text{ and }\quad  c'<{\sf RC}_{\Omega,q,p_c}^{q}(\calE_{\mathrm{open}}) < 1-c' .
		\]
	\end{claim}
	
	\begin{proof}
		Inequalities for~$\calE_{\mathrm{closed}}$ and~$\calE_{\mathrm{open}}$ are completely analogous, so we will show only the first one. The lower bound follows readily from the box-crossing property established in Theorems~$2$ and~$3$ of~\cite{DumSidTas17} for~$q\in [1,4]$ under any boundary conditions. The upper bound is also a rather straightforward consequence of the Russo--Seymour--Welsh theory but it is less standard so we prefer to give details below.
		
		Let~$r:= \dist (0, \Omega^c)$. Let~$\calF_1$ be the event that there exists a circuit of open edges contained in~$\Lambda_{r/2}\setminus \Lambda_{r/4}$ and going around~$0$. Let~$\calF_2$ be the event that there exists an open path linking two different points on the boundary of~$\Omega$ and passing through~$\Lambda_{r/4}$. Since~$\calF_1\cap \calF_2\cap \calE_{\mathrm{closed}} = \emptyset$, it is enough to show that there exists~$c'>0$ such that
		\begin{align*}
			\bbP_{{\sf RC},\Omega,q,p_c}^{1}(\calF_1\cap \calF_2) &>c'.
		\end{align*}
		
		Events~$\calF_1$ and~$\calF_2$ are increasing, thus it is enough to show the statement for each of them separately. By the definition of~$r$, there exists a vertex~$z\in \Lambda_{r + 1}$ that belongs to the boundary of~$\Omega$. Then~${\sf RC}_{\Omega,q,p_c}^{1}(\calF_2)$ is greater or equal than the probability to have an open circuit going around~$z$ and crossing~$\Lambda_{r/4}$ under~${\sf RC}_{q,p_c}$ (the unique infinite-volume measure). The latter, as well as~${\sf RC}_{\Omega,q,p_c}^{1}(\calF_1)$, can be bounded below as explained in the beginning of the proof.
	\end{proof}
	
	 To see how the estimate~\eqref{eq:rsw} follows from the claim, we refer the reader to the proof of~\cite[Theorem 1.2 (v)]{GlaMan18}. The only difference is that in our case one has two types of clusters~--- primal and dual. However, since Claim~\ref{cl:one-circuit} takes care of both of them, this does not have any impact on the proof.

	$v)$ This is shown in~\cite{DumGagHar16}.
\end{proof}

\section{FKG for heights, proof of Proposition~\ref{prop:monotone-c-b-heights}}
\label{sec:fkg-heights}

In this section we discuss the positive association properties of the height function measures. These are deduced from straightforward applications of the FKG inequality (as done in~\cite{BenHagMos00} for the uniform case $a=b=c$). We also deduce Proposition~\ref{prop:monotone-c-b-heights} as a corollary.

\begin{proposition}(FKG and monotonicity in boundary conditions for the height function)
\label{prop:fkg-homo}
	Let $a,b>0$ and let $c\ge \max\{a,b\}$. Let~$\calD$ be a domain and let $t$ be a height function. Then ${\sf HF}_{\calD,a,b,c}^{t}$ satisfies the FKG lattice condition. In particular for any increasing functions~$F,G:\bbZ^{F(\calD)}\to\R$, one has
	\[
		{\sf HF}_{\calD,a,b,c}^{t}(F(h)G(h))\geq {\sf HF}_{\calD,a,b,c}^{t}(F(h)){\sf HF}_{\calD,a,b,c}^{t}(G(h)).
	\]
In addition, the measures ${\sf HF}_{\calD,a,b,c}^{t}$ are stochastically increasing in $t$.

Moreover, the FKG lattice condition is satisfied also for the measure ${\sf HF}_{\calD,a,b,c}^{t;c_b}$ with $c\ge \max\{a,b\}, c_b\ge 0$.
If $c_{b}\geq 1$, then ${\sf HF}_{\calD,a,b,c}^{t;c_{b}}$ is stochastically increasing in $t$.
\end{proposition}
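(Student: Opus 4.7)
The plan is to reduce the FKG lattice condition to a local supermodularity check at each vertex of $\calD$, then derive positive association and stochastic monotonicity in $t$ as standard consequences. First I verify that the set of height functions agreeing with $t$ outside $\calD$ is closed under pointwise $\vee$ and $\wedge$: parity is automatic since $h(u) \equiv h'(u) \pmod 2$ at every face $u$, and a short case analysis on the signs of $h(u) - h(v)$ and $h'(u) - h'(v)$ (each lying in $\{\pm 1\}$) delivers the Lipschitz constraint $|(h \vee h')(u) - (h \vee h')(v)| = 1$ on adjacent $u, v$, with the same for $h \wedge h'$.

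The weight factorises vertex-by-vertex as $W(h) = \prod_{v \in \calD} w_v(h)$, with $w_v$ a function of the four heights around $v$ only, so the FKG lattice condition reduces to the pointwise local inequality
\[
  w_v(h \vee h') \cdot w_v(h \wedge h') \;\geq\; w_v(h) \cdot w_v(h')
\]
at every vertex. There are six possible local types at $v$ (with weights $a, a, b, b, c, c$), hence at most $36$ configurations $(h|_v, h'|_v)$ to enumerate, further reducible by the symmetry $h \leftrightarrow h'$. The inequality is automatic when the two local configurations coincide or when one dominates the other pointwise around $v$; the remaining cases are those in which $h|_v$ and $h'|_v$ interlace. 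In the worst such case both $w_v(h)$ and $w_v(h')$ equal $a$ or $b$ while $w_v(h \vee h')$ or $w_v(h \wedge h')$ acquires the value $c$, so that the inequality becomes $c^2 \geq ab$, which is ensured by $c \geq \max(a, b)$. This finite enumeration is the main, purely mechanical, obstacle; it is the direct extension of the check in~\cite{BenHagMos00} for the uniform case $a = b = c$.

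The FKG inequality displayed in the proposition then follows from the classical Fortuin--Kasteleyn--Ginibre theorem applied to the distributive lattice of the first paragraph with the log-supermodular weight of the second. Stochastic monotonicity in $t$ is a Holley-type consequence: for $t \leq t'$ I embed both measures in a common finite distributive lattice (e.g., height functions on a domain containing $\calD \cup \partial\calD$ and sandwiched between $t$ and $t'$) and verify Holley's criterion using the same local supermodularity, producing a monotone coupling of the marginals.

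For the measure ${\sf HF}^{t;c_b}_{\calD,a,b,c}$ the only change is at vertices $z \in \partial_V \calD$, whose local weight equals $1$ for types 1--4 and $c_b$ for types 5--6. Crucially, $z$ has exactly one face in $\calD$, so three of the four heights around $z$ are fixed by $t$ and $w_z$ is a function of a single variable only; the FKG lattice inequality for a function of one variable is automatically an equality for any $c_b \geq 0$, so no constraint on $c_b$ arises at such $z$. At all other vertices the weights are unchanged, hence the FKG lattice condition for ${\sf HF}^{t;c_b}_{\calD,a,b,c}$ holds as soon as $c \geq \max(a, b)$ and $c_b \geq 0$. The stronger hypothesis $c_b \geq 1$ becomes necessary for monotonicity in $t$ because the standard enlargement argument turns the $\partial_V\calD$ vertices into genuine interior vertices with effective weights $(1, 1, 1, 1, c_b, c_b)$, and the supermodularity check applied to $(a', b', c') = (1, 1, c_b)$ then forces $c_b \geq \max(1, 1) = 1$.
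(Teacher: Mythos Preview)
Your proof is correct and follows essentially the same route as the paper: factorise the weight over vertices, verify the FKG lattice condition locally, observe that boundary vertices in $\partial_V\calD$ involve only one free face so their contribution is automatic, and obtain monotonicity in $t$ by the standard domain-enlargement (Holley) argument. The paper streamlines your ``at most $36$ configurations'' enumeration via a short parity claim (if $f(u)>g(u)$ then $f\ge g$ on all four neighbours of $u$), which pins down the only non-trivial local case as the one where $f$ and $g$ interlace on a diagonal pair; in that case both $f\vee g$ and $f\wedge g$ are forced to be $c$-type while $f,g$ are each of $a$- or $b$-type, so the needed inequality is $c^2\ge a^2$ or $c^2\ge b^2$ (your ``$c^2\ge ab$'' is a slight understatement of the worst case, but $c\ge\max\{a,b\}$ covers it either way).
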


\begin{proof}[Proof of Proposition~\ref{prop:fkg-homo}]
By~\cite[Proposition 1]{ForKasGin71}, it is enough to check for any two height functions~$f,g$ on~$\calD$ with the given boundary conditions that the FKG lattice condition is satisfied:
\begin{equation}
\label{eq:fkg-lattice-homo}
{\sf HF}_{\calD,a,b,c}^{t}(f\vee g) \cdot {\sf HF}_{\calD,a,b,c}^{t}(f\wedge g) \geq {\sf HF}_{\calD,a,b,c}^{t}(f) \cdot {\sf HF}_{\calD,a,b,c}^{t}(g),
\end{equation}
where~$f\vee g$ and~$f\wedge g$ denote the point-wise maximum and minimum respectively. We start the proof by the following claim.\\

\emph{Claim.} If~$f(u)>g(u)$ for some~$u\in F(\calD)$, then on all four faces adjacent to~$u$ we have that~$f\vee g$ coincides with~$f$ and~$f\wedge g$ coincides with~$g$.

\begin{proof}
The functions~$f$ and~$g$ must have the same parity at~$u$. Thus, $f(u)>g(u)$ implies that~$f(u)-g(u)\geq 2$. Take any face~$v$ adjacent to~$u$. Since~$f,g$ are height functions, $|f(u)-f(v)| =1$ and~$|g(u)-g(v)|=1$. Thus, $f(v)\geq g(v)$.
\end{proof}

Note that the Claim implies that, on any two adjacent faces~$u,v$ in~$\calD$, each of the functions~$f\vee g$ and~$f\wedge g$ coincides either with~$f$ or with~$g$ (or with both of them). We know that~$|f(u)-f(v)| = 1$ and~$|g(u)-g(v)| = 1$. Thus, the same holds for~$f\vee g$ and~$f\wedge g$, and hence these two functions are also height functions.

It remains to show for any vertex~$z$ of~$\calD$ that its contribution to the LHS of~\eqref{eq:fkg-lattice-homo} is greater or equal than to the RHS of~\eqref{eq:fkg-lattice-homo}. Denote by~$(u_i)_{i=1,2,3,4}$ the four faces of~$\calD$ containing~$z$ (in this cyclic order). If either $f$ or $g$ is larger than the other on all of the $(u_i)$ then this statement is trivial. Otherwise, by the claim, there is a pair of diagonally adjacent faces $u_i, u_j$ such that $f(u_i)>g(u_i)$ and $f(u_j)<g(u_j)$. In this case, $z$ is necessarily a c-type vertex (see Figure~\ref{fig:6v-hom-config}) for both $f\vee g$ and $f\wedge g$ and the statement follows from the assumption that~$c\geq \max\{a,b\}$.

The same argument applies also to the analogue of~\eqref{eq:fkg-lattice-homo} for~${\sf HF}_{\calD,a,b,c}^{t;c_b}$, when $z\in \calD\setminus \partial_{V}\calD$. If $z\in\partial_{V}\calD$, then contribution to both sides of~\eqref{eq:fkg-lattice-homo} is trivially the same.

Monotonicity in $t$ for ${\sf HF}_{\calD,a,b,c}^{t}$ and ${\sf HF}_{\calD,a,b,c}^{c_{b},t}$ follows in the standard manner (by enlarging $\calD$).
\end{proof}

\begin{proof}[Proof of Proposition~\ref{prop:monotone-c-b-heights}]
	Recall that~$\calE_{\sf hf}^{0,1}(\calD)$ is the set of all height functions on~$\calD$ with~$0,1$ boundary conditions. Let~$A$ be any increasing event on~$\calE_{\sf hf}^{0,1}(\calD)$. We need to show that the derivative of~${\sf HF}_{\calD, c}^{0,1;c_b}(A)$ in~$c_b$ is non-negative. Define~$Z$ and~$Z(A)$ by
	\[
		Z:= Z_{{\sf hf},\calD,c}^{0,1;c_b} = \sum_{f\in \calE_{\sf hf}^{0,1}(\calD)} c^{n_{5}'(f)+n_{6}'(f)} c_b^{n_b(f)} \quad \text{and} \quad Z(A) : =\sum_{f\in A} c^{n_{5}'(f)+n_{6}'(f)} c_b^{n_b(f)}.
	\]
	Then~${\sf HF}_{\calD, c}^{0,1;c_b}(A) = Z(A) / Z$, and its derivative in~$c_b$ can be written as
	\[
		\tfrac{\partial}{\partial c_b} {\sf HF}_{\calD, c}^{0,1;c_b}(A) = \tfrac{1}{Z}\cdot \tfrac{\partial}{\partial c_b} Z(A)
		- \tfrac{Z(A)}{Z}\cdot \tfrac{1}{Z} \cdot \tfrac{\partial}{\partial c_b} Z = \tfrac{1}{c_b}\cdot \left[ \bbE(n_b\mathbbm{1}_{A}) -\bbE(n_b) \bbE(\mathbbm{1}_{A})\right],
	\]
	where~$\bbE$ denotes the expectation with respect to~${\sf HF}_{\calD, c}^{0,1;c_b}$.
	
	The random variable~$n_b$ is equal to the number of vertices~$z\in\partial_V\calD$, such that the unique face of~$\calD$ containing~$z$ has height~$1$. Since the height at these faces can be either~$1$ or~$-1$, the variable~$n_b$ is increasing. Thus, the RHS of the last equality is positive by the FKG inequality (Proposition~\ref{prop:fkg-homo}).
	
	This implies the second inequality of the claim and the rest follows, since
	\[
		{\sf HF}_{\calD\setminus\partial_V\calD, c}^{0,-1} = {\sf HF}_{\calD, c}^{0,1;0} \quad \text{and} \quad {\sf HF}_{\calD\setminus\partial_V\calD, c}^{0,1} = {\sf HF}_{\calD, c}^{0,1;\infty}.\qedhere
	\]
\end{proof}

\section{The behavior of the height function}
\label{sec:proofs-heights}

Throughout this section we assume that~$a=b=1$ and~$c\geq 2$. The proofs can be adapted to the general case~$a+b \leq c$ in a straightforward way (see Section~\ref{sec:non-symmetric-case}).

\subsection{Fluctuations}
\label{sec:finite-variance}
In this section we prove Theorem~\ref{thm:var}.
\begin{proof}
Let~$\calD$ be a domain. Define~$\calD^{\mathrm{even}}$ and~$\calD^{\mathrm{odd}}$ as the domains obtained from~$\calD$ by removing from~$\calD$ all its boundary faces that are even (resp. odd). It is easy to see that~$\calD^{\mathrm{even}}$ is an even domain and~$\calD^{\mathrm{odd}}$ is an odd domain (we assume here that~$\calD^{\mathrm{even}}$ and~$\calD^{\mathrm{odd}}$ are connected but this has no effect on the proofs).

Take the unique~$\lambda \geq 0$, such that~$e^{\lambda/2}+e^{-\lambda/2} = c$. The following comparison inequalities follow from Proposition~\ref{prop:monotone-c-b-heights} or can be obtained along the same lines:
\begin{equation}\label{eq:from-c-to-c-b}
	\mathrm{HF}_{\calD,c}^{0,-1}  \preceq \mathrm{HF}_{\calD^{\mathrm{even}},c}^{0,1;e^{\lambda/2}} \preceq \mathrm{HF}_{\calD,c}^{0,1} \preceq \mathrm{HF}_{\calD^{\mathrm{odd}},c}^{0,1;e^{\lambda/2}} \preceq \mathrm{HF}_{\calD,c}^{2,1}.
\end{equation}
Since~$\mathrm{HF}_{\calD,c}^{2,1}$ is the image of~$\mathrm{HF}_{\calD,c}^{0,-1}$ under the bijection~$h(\cdot)\mapsto h(\cdot) + 2$ between~$\calE_{\sf hf}^{0,-1}$ and~$\calE_{\sf hf}^{2,1}$, it is enough to prove Theorem~\ref{thm:var} for measures~${\sf HF}_{\calD^{\mathrm{even}},c}^{0,1;e^{\lambda/2}}$ and~${\sf HF}_{\calD^{\mathrm{odd}},c}^{0,1;e^{\lambda/2}}$.

We prove the statement only for~$\calD^{\mathrm{even}}$ (the case of~$\calD^{\mathrm{odd}}$ is analogous) and, to simplify the notation, we assume that~$\calD$ is an even domain, so that~$\calD^{\mathrm{even}}=\calD$. Clearly,
\[
	\mathrm{Var}_{\sf HF}(h^2(0,0)) = \bbE_{\sf HF} (h^2(0,0)) - \left[\bbE_{\sf HF} (h(0,0))\right]^2,
\]
where the variance and the expectation are with respect to the height-function measure~${\sf HF}_{\calD,c}^{0,1;e^{\lambda/2}}$. Since~$\bbE_{\sf HF} (h(0,0))\in [0,1]$ by~\eqref{eq:from-c-to-c-b}, it is enough to estimate~$\bbE_{\sf HF} (h^2(0,0))$.

Take~$q:= (e^{\lambda} + e^{-\lambda})^2$. For an edge-configuration~$\eta$ on~$\calD^\bullet$, denote the number of primal and dual clusters of~$\eta$ surrounding the origin~$(0,0)$ by~$N(\eta)$. By the coupling stated in Theorem~\ref{thm:coupling}, given~$\eta$ sampled according to~${\sf RC}_{\Omega_{\calD}^\bullet, q, p_c(q)}^1$, the height at the origin is distributed as a simple random walk on~$\bbZ$ starting at~$0$, making~$N(\eta)$ steps and at each step going up or down by~$1$ with probability~$e^{-\lambda}/\sqrt{q}$ (resp.~$e^{\lambda}/\sqrt{q}$). 
Then,
\begin{align*}
	\bbE_{\sf HF} (h^2(u))
	&= \bbE_{\sf RC} (\bbE_{\sf coupling}(h^2(u)\, | \, \eta)) = \bbE_{\sf RC}(\mathrm{Var}_{\sf coupling}(h(u)\, | \, \eta) + \bbE_{\sf coupling}(h(u)\, | \, \eta)^2)\\
	&= \bbE_{\sf RC}(N_u(\eta))\cdot \tfrac{4}{q} + \bbE_{\sf RC}(N_u(\eta)^2)\cdot \tfrac{(e^{\lambda} - e^{-\lambda})^2}{q}.
\end{align*}
If~$c=2$, then~$\lambda = 0$ and~$q=4$, whence the second term cancels out and the first term is treated in Item (iv) of Proposition~\ref{prop:rcm-input}. If~$c>2$, then~$q>4$ and by Item (v) of Proposition~\ref{prop:rcm-input}, the size of any dual cluster in~$\eta$ has expenontial tails. In particular, it means that~${\sf RC}_{\calD^\bullet,q,p_c(q)}^{1}(N_\eta > t) < e^{-\alpha t}$, for a certain constant~$\alpha>0$ depending only on~$q$, whence the statement follows.
\end{proof}

\subsection{Gibbs states}
\label{sec:proof-heights-gibbs}
In this section we prove Theorem~\ref{thm:heights-gibbs}.

The main step in the proof is Proposition~\ref{prop:therm-lim-heights}, which is proven by considering percolation on faces of particular heights. This is somewhat reminiscent to the approach used in~\cite{She05}. However, we emphasize that unlike in~\cite{She05}, here we consider percolation on a suitable {\em triangular} lattice ($\bbT^\bullet$ and~$\bbT^\circ$ defined below). The latter has a benefit of being self-dual and we hope that this approach will turn out to be useful in the future research.

\begin{proposition}\label{prop:therm-lim-heights}
	Fix~$c>2$ and~$n\in \bbZ$. Let~$\calD_k$ be an increasing sequence of domains exhausting~$\bbZ^2$. Then, the sequence of measures~${\sf HF}_{\calD_k,c}^{n,n+1}$ converges to a Gibbs state~${\sf HF}_c^{n,n+1}$, which is extremal, invariant under parity-preserving translations, and satisfies the exponential tail decay~\eqref{eq:exponential tail decay}.
\end{proposition}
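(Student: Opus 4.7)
By the bijection $h\mapsto h+2n$ between $\calE_{\sf hf}^{0,1}$ and $\calE_{\sf hf}^{2n,2n+1}$, it suffices to take $n=0$. The overall plan is to (i) build candidate infinite-volume measures as limits along \emph{even} and \emph{odd} subsequences of domains, with an auxiliary boundary weight making the BKW coupling of Theorem~\ref{thm:coupling} part~2 applicable; (ii) prove that these two candidates coincide --- the novel step, via the $\bbT$-circuit technique; and (iii) deduce convergence along arbitrary exhaustions together with invariance, extremality, and~\eqref{eq:exponential tail decay}.

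For (i), choose $\lambda>0$ with $c=e^{\lambda/2}+e^{-\lambda/2}$, so that $q:=(e^\lambda+e^{-\lambda})^2>4$. By Theorem~\ref{thm:coupling} part~2, on an even domain $\calD$ the measure ${\sf HF}_{\calD,c}^{0,1;\,c_b}$ with $c_b=e^{\lambda/2}$ is coupled to the wired critical random-cluster measure ${\sf RC}_{\calD^\bullet,q,p_c(q)}^1$. By Proposition~\ref{prop:rcm-input}, items (i) and (v), this random-cluster measure converges along even domains to the wired infinite-volume Gibbs state, which has a unique infinite primal cluster and exponentially decaying dual-cluster diameters. Transferring through the coupling yields an infinite-volume height measure ${\sf HF}_{\mathrm{even},c}^{0,1}$ carrying an infinite diagonal cluster of even faces at height $0$ and the exponential tail bound~\eqref{eq:exponential tail decay}. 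The identical argument over odd domains produces ${\sf HF}_{\mathrm{odd},c}^{0,1}$ with an infinite cluster of height $1$ on odd faces.

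For (ii), FKG and monotonicity in boundary conditions (Proposition~\ref{prop:fkg-homo}) already give ${\sf HF}_{\mathrm{even},c}^{0,1}\preceq {\sf HF}_{\mathrm{odd},c}^{0,1}$. For the reverse, I endow the even sublattice with the triangular connectivity $\bbT^\bullet$ linking $(i,j)$ to $(i\pm 1,j\pm 1)$ and to $(i\pm 2,j)$; this graph is planar and self-dual. Under ${\sf HF}_{\mathrm{odd},c}^{0,1}$, planar non-coexistence forbids the sets $\{h\ge 2\}$ and $\{h\le 0\}$ from both having infinite $\bbT^\bullet$-clusters. Combining this with the symmetry $h\mapsto 2-h$ (which exchanges ${\sf HF}_{\mathrm{odd},c}^{0,1}$ with ${\sf HF}_{\mathrm{odd},c}^{2,1}$ and swaps the two sets), and with monotonicity in boundary conditions between these two measures, shows that neither set percolates; by self-duality one then obtains, almost surely, infinitely many disjoint $\bbT^\bullet$-circuits $\mathcal{C}$ around the origin on which $h\le 0$. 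Given such $\mathcal{C}$, set $h'(i,j):=1-h(i-1,j)$; a parity and boundary-condition check shows $h'$ is distributed as ${\sf HF}_{\mathrm{even},c}^{0,1}$, and on the translate $\mathcal{C}':=\mathcal{C}+(1,0)$ one has $h'\ge 1$. The key geometric point, which is exactly what forces the use of \emph{triangular} rather than merely diagonal adjacency, is that on the boundary of every finite component of $\bbZ^2\setminus(\mathcal{C}\cup\mathcal{C}')$ the pointwise inequality $h'\ge h$ holds. Conditionally on these boundary values, Proposition~\ref{prop:fkg-homo} yields a domination of $h$ by $h'$ inside the component, and letting $\mathcal{C}$ range over the nested family produces ${\sf HF}_{\mathrm{even},c}^{0,1}\succeq {\sf HF}_{\mathrm{odd},c}^{0,1}$, hence equality.

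For (iii), monotonicity in $c_b$ (Proposition~\ref{prop:monotone-c-b-heights}) allows dropping the auxiliary boundary weight $c_b$, showing that ${\sf HF}_{\calD,c}^{0,1}$ along even (resp.\ odd) domains has the same limit as ${\sf HF}_{\calD,c}^{0,1;c_b}$. For an arbitrary exhaustion $\calD_k\nearrow \bbZ^2$, I will sandwich each $\calD_k$ between nested even and odd sub/super-domains and invoke monotonicity in boundary conditions to conclude convergence to the common limit ${\sf HF}_c^{0,1}$. Invariance under parity-preserving translations follows because such translations preserve the classes of even and odd domains, so the construction commutes with them. Finally, \eqref{eq:exponential tail decay} is inherited from the wired random-cluster side via the BKW coupling and the exponential decay of dual clusters, and extremality then follows from the mixing consequences of this tail bound (or, alternatively, from the extremality of the wired random-cluster Gibbs state transported through the coupling). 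I expect the main obstacle to be the non-percolation of $\{h\ge 2\}$ in $\bbT^\bullet$ under ${\sf HF}_{\mathrm{odd},c}^{0,1}$ together with the geometric verification that $(\mathcal{C},\mathcal{C}')$ enforces $h'\ge h$ on every component boundary: both steps use the self-duality and planarity of the triangular connectivity in an essential way and are what distinguish this argument from earlier height-function arguments based solely on diagonal adjacency.
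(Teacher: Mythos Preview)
Your proposal is correct and follows essentially the same approach as the paper's proof (indeed, it mirrors the overview in Section~\ref{sec:overview for a=b=1} closely): the three-stage structure, the BKW coupling with $c_b=e^{\lambda/2}$ on even/odd domains, the $\bbT$-circuit argument combining non-coexistence, the symmetry $h\mapsto 2-h$, and monotonicity to equate ${\sf HF}_{\mathrm{even},c}^{0,1}$ and ${\sf HF}_{\mathrm{odd},c}^{0,1}$, and the sandwich via Propositions~\ref{prop:monotone-c-b-heights} and~\ref{prop:fkg-homo} all match. Two small corrections: your symmetry/monotonicity combination only shows that $\{h\ge 2\}$ does not $\bbT^\bullet$-percolate (not that \emph{neither} set percolates --- in fact $\{h\le 0\}$ does), but this is exactly what is needed to produce the $\bbT^\bullet$-circuits of $\{h\le 0\}$ by self-duality; and since your $h'$ is a deterministic function of $h$, the domination ``of $h$ by $h'$'' inside a component should be read as a comparison of the two conditional laws via the reference measure ${\sf HF}_{\calD,c}^{0,1}$ (equivalently, resample inside after coupling the circuits), which is how the paper implements it.
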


The first step is to prove a similar statement under modified boundary conditions.

\begin{lemma}\label{lem:therm-lim-heights-c-b}
	Let~$c>2$. Take~$\lambda > 0$ such that~$e^{\lambda/2} + e^{-\lambda/2} = c$. Let~$\calD_k$ be an increasing sequence of even domains exhausting~$\bbZ^2$. Then, the sequence~${\sf HF}_{\calD_k,c}^{0,1; e^{\lambda/2}}$ converges to a Gibbs state~${\sf HF}_{\mathrm{even},c}^{0,1; e^{\lambda/2}}$, which is extremal and invariant under parity-preserving translations. Moreover, ${\sf HF}_{\mathrm{even},c}^{0,1; e^{\lambda/2}}$-a.s. faces of height~$0$ contain a unique infinite cluster (in the diagonal connectivity), and diameters of connected components of non-zero heights have exponential tails.
	
	Similarly, for odd domains, the limit~${\sf HF}_{\mathrm{odd},c}^{0,1; e^{\lambda/2}}$ exhibits a unique infinite cluster of height~$1$, and 
\end{lemma}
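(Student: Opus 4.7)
The natural strategy is to lift the lemma from the random-cluster model via the coupling of Theorem~\ref{thm:coupling}, part~2. With parameters $a=b=1$, $c = e^{\lambda/2}+e^{-\lambda/2}$, $c_b = e^{\lambda/2}$, and $q := (e^\lambda+e^{-\lambda})^2 > 4$ (since $c>2$), the theorem couples ${\sf HF}_{\calD_k,c}^{0,1;e^{\lambda/2}}$ with ${\sf RC}_{\calD_k^\bullet,q,p_c(q)}^1$, which by Proposition~\ref{prop:rcm-input}~i) is the wired random-cluster measure. By parts~i) and~v) of the same proposition, the latter converges to an extremal infinite-volume Gibbs measure ${\sf RC}_{q,p_c(q)}^1$ that almost surely exhibits a unique infinite primal cluster and has dual clusters with exponentially decaying diameters.

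Given this, the candidate limit ${\sf HF}_{\mathrm{even},c}^{0,1;e^{\lambda/2}}$ is constructed directly from the coupling rule~\eqref{eq:coupling-hf-form}: sample $\eta \sim {\sf RC}_{q,p_c(q)}^1$, assign height $0$ to the unique infinite primal cluster, and extend outward by adding $\pm 1$ at every boundary between adjacent primal and dual clusters, where the signs are independent Bernoullis with probabilities $(e^{-\lambda},e^\lambda)/(e^\lambda+e^{-\lambda})$ dictated by~\eqref{eq:coupling-hf-form}. The exponential tails of dual-cluster diameters force the nesting depth of every face of $\bbZ^2$ to be almost surely finite, so $h$ is well-defined everywhere. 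Weak convergence of ${\sf HF}_{\calD_k,c}^{0,1;e^{\lambda/2}}$ to this measure then follows from the coupling: the height on any finite set of faces is, with probability tending to one as $R\to\infty$ (uniformly in $k$, by exponential decay), a deterministic function of the restriction of $\eta$ to $\Lambda_R$ and finitely many Bernoulli variables, and the random-cluster marginals on $\Lambda_R$ converge. Invariance under parity-preserving translations follows from the corresponding invariance of ${\sf RC}_{q,p_c(q)}^1$ together with the equivariance of the coupling. The unique infinite cluster of height $0$ coincides with the unique infinite primal cluster, and a connected component of non-zero heights is trapped in a hole of this cluster, hence bounded by a finite dual cluster; its diameter thus inherits exponential tails.

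Extremality is the most delicate step and, in my view, the main obstacle. I would deduce it from the extremality of the wired random-cluster Gibbs measure together with the fact that $h$ is a deterministic function of $\eta$ and an independent iid Bernoulli field, so its tail $\sigma$-algebra is contained in the joint tail of $\eta$ and of the Bernoulli field. The tail of $\eta$ is trivial because ${\sf RC}_{q,p_c(q)}^1$ is extremal for $q>4$, and the point requiring care is that a single Bernoulli only affects the heights on clusters nested inside one finite cluster of $\eta$, so any local observable depends on finitely many Bernoullis and the Bernoulli tail is also trivial. Finally, the odd-domain case is entirely analogous: one runs the coupling on $\calD_k^\circ$ so that odd faces become primal, height $1$ plays the role previously played by $0$, and the same argument yields ${\sf HF}_{\mathrm{odd},c}^{0,1;e^{\lambda/2}}$ with a unique infinite cluster of height $1$ and exponentially decaying diameters of connected components of heights different from $1$.
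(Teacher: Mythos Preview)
Your proposal is correct and follows essentially the same route as the paper: construct the limiting measure by sampling $\eta$ from the wired infinite-volume random-cluster measure ${\sf RC}_{q,p_c(q)}^1$, fix height $0$ on the unique infinite cluster, fill in the holes by the coupling rule~\eqref{eq:coupling-hf-form}, and read off all properties (invariance, extremality, infinite cluster of height $0$, exponential tails) from the corresponding random-cluster facts in Proposition~\ref{prop:rcm-input}. The only cosmetic difference is that the paper makes the convergence step concrete via an exterior-most open circuit in an annulus that is connected to $\partial\calD_k$ (resp.\ to infinity), which is exactly the event underlying your ``height on $\Lambda_N$ is, with high probability, a function of $\eta|_{\Lambda_R}$ and finitely many Bernoullis'' claim.
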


\begin{proof}
	We prove the statement only for even domains, since the case of odd domains is completely analogous. As was already mentioned above, the centres of even faces of~$\bbZ^2$ form another square lattice that we denote by~$(\bbZ^2)^\bullet$.
	
	Take~$q:=(c^2-2)^2$. Let~${\sf RC}_{q}^1$ be the wired infinite-volume random-cluster measure on~$(\bbZ^2)^\bullet$ with parameters~$q$ and~$p_c(q):=\tfrac{\sqrt{q}}{\sqrt{q}+1}$. By Item  (v) of Proposition~\ref{prop:rcm-input}, ${\sf RC}_{q}^1$-a.s. there is a unique infinite cluster and dual clusters are exponentially small, that is there exists~$\alpha>0$ such that, for any~$k\in\bbN$ and any $u^{*}\in (\bbZ^{2})^{\circ}$,
	\begin{equation}\label{eq:exp-decay}
		{\sf RC}_{q}^1( \text{ the dual cluster of $u^{*}$ has size } >k ) <e^{-\alpha k}.
	\end{equation}
	Define a random height function~$h$ in the following way: sample~$\eta\in\{0,1\}^{E((\bbZ^2)^\bullet)}$ according to~${\sf RC}_{q}^1$, set~$h$ to be~$0$ on the unique infinite cluster of~$\eta$, then sample~$h$ in the holes of this cluster according to the coupling~\eqref{eq:coupling-hf-form} in Theorem~\ref{thm:coupling} for~$c_b = e^{\lambda/2}$ and~$q_b=1$. Denote by~${\sf HF}_{\mathrm{even},c}^{0,1; e^{\lambda/2}}$ the distribution of~$h$.
	
	Note that measure~${\sf HF}_{\mathrm{even},c}^{0,1; e^{\lambda/2}}$ is well-defined since the values of~$h$ in different holes of~$\eta$ are independent (conditioned on~$\eta$) and the size of each hole has exponential tails. Properties of~${\sf HF}_{\mathrm{even},c}^{0,1; e^{\lambda/2}}$ (extremality, invariance under parity-preserving translations and existence of an infinite cluster of height~$0$) follow from the corresponding properties of~${\sf RC}_{q}^1$ (extremality, invariance under all translations and existence of an infinite cluster). It remains to show that~$\mathrm{HF}_{\calD_k,c}^{0,1; e^{\lambda/2}}$ tends to~$\mathrm{HF}_{\mathrm{even},c}^{0,1; e^{\lambda/2}}$.
	
	Fix any~$\varepsilon>0$ and take~$n$ big enough so that~$e^{-\alpha n}<\varepsilon$.
	
	Recall the definition of $\calD^\bullet$ for a domain~$\calD$ given in Section~\ref{sec:coupling}. Since~${\sf RC}_{\calD_k^\bullet,q}^{1}$ tends~${\sf RC}_{q}^1$, there exists~$K>0$ such that~$\calD_K\supset \Lambda_{4n}$ and, for all~$k>K$, the total variation distance between the restriction of~${\sf RC}_{q}^1$ and~${\sf RC}_{\calD_k^\bullet,q}^{1}$ to~$\Lambda_{2n}^\bullet$ is less than~$\varepsilon$. Fix any~$k>K$. Define~$C_{n,k}$ to be the exterior-most circuit of open edges in~$\eta$ contained in $\Lambda_{2n}^\bullet$ that goes around~$\Lambda_n^\bullet$ (take~$C_{n,k}:=\emptyset$ if no such circuit exists).
	
	Using the estimate on the total variation distance we get that
\begin{equation}\label{eq:inf-vol-tot-var}
	\sum_{C}|{\sf RC}_{q}^1(C_{n,k} = C) - {\sf RC}_{q,\calD_k^\bullet}^1(C_{n,k} = C)| < \varepsilon,
\end{equation}
where the sum is taken over all circuits~$C\subset\Lambda_{2n}^\bullet\setminus \Lambda_{n}^\bullet$.

Also note that by~\eqref{eq:exp-decay} we have
\begin{align}
	\label{eq:inf-vol-circuit-cond1}
	{\sf RC}_{q,\calD_k^\bullet}^1(C_{n,k} \subset \eta, \, C_{n,k} \xleftrightarrow{\eta}\partial\calD_k^\bullet) > 1 - 2e^{-\alpha n} &> 1-2\varepsilon, \\
	\label{eq:inf-vol-circuit-cond2}
	{\sf RC}_{q}^1(C_{n,k} \subset \eta, \, C_{n,k} \xleftrightarrow{\eta}\infty)  > 1 - 2e^{-\alpha n} &> 1-2\varepsilon.
\end{align}
	If~$\eta$ satisfies conditions in~\eqref{eq:inf-vol-circuit-cond1} and~\eqref{eq:inf-vol-circuit-cond2}, then given~$C_{n,k}$ the heights on~$\Lambda_n$ sampled according to~${\sf HF}_{\mathrm{even}, c}^{0,1;e^{\lambda/2}}$ and~${\sf HF}_{\calD_k,c}^{0,1;e^{\lambda/2}}$ have the same law. Putting this together with the estimates~\eqref{eq:inf-vol-tot-var}, \eqref{eq:inf-vol-circuit-cond1} and~\eqref{eq:inf-vol-circuit-cond2} we get that the total variation distance between restrictions of~${\sf HF}_{\mathrm{even}, c}^{0,1;e^{\lambda/2}}$ and~${\sf HF}_{\calD_k,c}^{0,1;e^{\lambda/2}}$ to~$\Lambda_n$ is less than~$5\varepsilon$, whence convergence follows.
\end{proof}

As we will show below, measures~${\sf HF}_{\mathrm{even},c}^{0,1; e^{\lambda/2}}$ and~${\sf HF}_{\mathrm{odd},c}^{0,1; e^{\lambda/2}}$ are in fact equal. The next step in the proof of Proposition~\ref{prop:therm-lim-heights} is to establish certain percolation statements for the faces of height~$1$ under~${\sf HF}_{\mathrm{even},c}^{0,1; e^{\lambda/2}}$ and for the faces of height~$0$ under~${\sf HF}_{\mathrm{odd},c}^{0,1; e^{\lambda/2}}$.

\begin{definition}\label{def-t-circuits}
	Denote by~$\bbT^\bullet$ (resp.~$\bbT^\circ$) the graph on the even (resp. odd) faces of~$\bbZ^2$, where a face~$(i,j)$ is linked by an edge to the faces~$(i\pm 1, j\pm 1)$ and~$(i\pm 2, j)$. It is easy to see that both~$\bbT^\bullet$ and~$\bbT^\circ$ are isomorphic to the standard triangular lattice. As usual, a circuit in $\bbT^\bullet$ (resp.~$\bbT^\circ$) is a sequence of vertices $v_1,\ldots, v_{n-1}, v_n=v_1$ with $v_i$ adjacent to $v_{i+1}$ in $\bbT^\bullet$ (resp.~$\bbT^\circ$) and $v_1,\ldots, v_{n-1}$ distinct. The circuit is said to surround a vertex $v\notin\{v_1,\ldots, v_{n-1}\}$ if the collection of edges $(v_i, v_{i+1})$, viewed as straight line segments in the ambient $\R^2$, disconnect $v$ from infinity.
\end{definition}

For~$K\in \bbN$ and~$(i,j)\in \bbZ^2$, denote by~$\Lambda_K(i,j)$ the ball of radius~$K$ around~$(i,j)$:
\[
	\Lambda_K(i,j):= \{(u,v)\in \bbZ^2 \colon |(u-i)\pm (v-j)| \leq K-1\}.
\]
Let~$\calC_K^0$ be the exterior-most $\bbT^{\bullet}$-circuit of height~$0$ in~$\Lambda_K(1,0)$ that surrounds the face~$(1,0)$ (take~$\calC_K^0:= \emptyset$ if there is no such circuit). Similarly, let~$\calC_K^1$ be the exterior-most $\bbT^{\circ}$-circuit of height~$1$ in~$\Lambda_K(0,0)$ that surrounds the face~$(0,0)$; see Figure~\ref{fig:height-function-flip}.

\begin{lemma}\label{lem:T-circuits}
	Let~$c>2$. Then, the distribution of~$\calC_K^0$ under~${\sf HF}_{\mathrm{odd},c}^{0,1; e^{\lambda/2}}$ coincides with the distribution of~$\calC_K^1$ under~${\sf HF}_{\mathrm{even},c}^{0,1; e^{\lambda/2}}$ shifted by~$1$ to the right. In addition, for any~$N \in \bbN$,
	\begin{align}
		{\sf HF}_{\mathrm{even},c}^{0,1; e^{\lambda/2}}(\calC_K^1 \text{ surrounds } \Lambda_N(0,0)) &\xrightarrow[K\to \infty]{} 1 \label{eq:many-T-circuits}.
	\end{align}
\end{lemma}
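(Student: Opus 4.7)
The plan for the first (distributional) statement is to exhibit an explicit measure-preserving bijection carrying $\calC_K^1$ onto $\calC_K^0-(1,0)$. Define $\Phi h(i,j) := 1 - h(i+1,j)$. Since shifting by $(1,0)$ flips the parity of every face and $h\mapsto 1-h$ flips the parity of every integer, $\Phi$ preserves the height-function parity constraint. On the associated six-vertex configuration $\Phi$ acts as a shift composed with a global arrow reversal, so it preserves the bulk weights $a=b=1,\,c$ and the boundary weight $c_b=e^{\lambda/2}$; moreover $\Phi$ sends $0,1$ boundary conditions on an odd domain $\calD$ to $0,1$ boundary conditions on the even domain $\calD-(1,0)$. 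Combined with the convergence in Lemma~\ref{lem:therm-lim-heights-c-b}, this yields $\Phi_*\,{\sf HF}_{\mathrm{odd},c}^{0,1;e^{\lambda/2}}={\sf HF}_{\mathrm{even},c}^{0,1;e^{\lambda/2}}$. Because $(\Phi h)(u)\ge 1 \iff h(u+(1,0))\le 0$, and because the shift by $(1,0)$ identifies $\bbT^\bullet$-connectivity on even faces with $\bbT^\circ$-connectivity on odd faces, the exterior-most $\bbT^\bullet$-circuit $\calC_K^0$ of $\{h\le 0\}$ around $(1,0)$ corresponds under $\Phi$ to the exterior-most $\bbT^\circ$-circuit $\calC_K^1$ of $\{\Phi h\ge 1\}$ around $(0,0)$, shifted by $-(1,0)$.

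Granted this symmetry, the second statement reduces to showing that under ${\sf HF}_{\mathrm{odd},c}^{0,1;e^{\lambda/2}}$ arbitrarily large $\bbT^\bullet$-circuits of $\{h\le 0\}$ around $(1,0)$ exist with probability tending to $1$. Following the outline in Section~\ref{sec:overview for a=b=1}, my plan uses three ingredients. First, the reflection $h\mapsto 2-h$ is a measure-preserving bijection ${\sf HF}_{\mathrm{odd},c}^{0,1;e^{\lambda/2}}\to {\sf HF}_{\mathrm{odd},c}^{2,1;e^{\lambda/2}}$ identifying the event ``$\{h\ge 2\}$ percolates in $\bbT^\bullet$'' under the former with ``$\{h\le 0\}$ percolates in $\bbT^\bullet$'' under the latter. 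Second, the FKG stochastic domination ${\sf HF}_{\mathrm{odd},c}^{2,1;e^{\lambda/2}}\succeq {\sf HF}_{\mathrm{odd},c}^{0,1;e^{\lambda/2}}$ of Proposition~\ref{prop:fkg-homo}, applied to the decreasing event ``$\{h\le 0\}$ percolates in $\bbT^\bullet$'', gives
\[
{\sf HF}_{\mathrm{odd},c}^{0,1;e^{\lambda/2}}(\{h\ge 2\}\text{ percolates in }\bbT^\bullet)\le {\sf HF}_{\mathrm{odd},c}^{0,1;e^{\lambda/2}}(\{h\le 0\}\text{ percolates in }\bbT^\bullet).
\]
Third, because $\bbT^\bullet$ is planar and self-matching (it is isomorphic to the triangular lattice), a Burton--Keane uniqueness argument combined with Zhang's non-coexistence trick, applied to the translation-invariant extremal measure ${\sf HF}_{\mathrm{odd},c}^{0,1;e^{\lambda/2}}$ supplied by Lemma~\ref{lem:therm-lim-heights-c-b}, forbids $\{h\ge 2\}$ and $\{h\le 0\}$ from percolating simultaneously. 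Combined with the previous inequality this forces the $\{h\ge 2\}$-percolation probability to be zero, and the standard topological duality of the triangular lattice then produces arbitrarily large $\bbT^\bullet$-circuits of $\{h\le 0\}$ around $(1,0)$ almost surely.

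The main obstacle will be the non-coexistence step. Classical Burton--Keane and Zhang arguments rest on a finite-energy property---that arbitrary single-face modifications have positive conditional probability---which fails for the height function because of the $\pm 1$ gradient constraint on adjacent faces. My plan is to replace finite energy by a ``cluster finite-energy'' input extracted from the BKW coupling of Theorem~\ref{thm:coupling} with the modified random-cluster model: a bounded modification that flips the heights inside a single dual-cluster region (equivalently, reassigns its sign in the associated spin picture) can be performed with positive conditional probability, and together with ergodicity and the FKG inequality of Proposition~\ref{prop:fkg-homo} this is enough to run both the Burton--Keane uniqueness of the infinite $\bbT^\bullet$-cluster in each level set and Zhang's square-box trick. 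Setting this up carefully, in particular handling the fact that ${\sf HF}_{\mathrm{odd},c}^{0,1;e^{\lambda/2}}$ is not invariant under $h\mapsto -h$ so that the usual symmetric form of Zhang's argument must be replaced by the asymmetric monotonicity-based inequality above, is the technical heart of the proof.
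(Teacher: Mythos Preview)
Your bijection $\Phi$ and the symmetry/monotonicity scheme for the second part are the same ideas the paper uses. Two points, however, deserve correction.

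\medskip
\textbf{A genuine gap in the conclusion.} The objects $\calC_K^0$ and $\calC_K^1$ are defined in the paper as the exterior-most circuits on which the height equals \emph{exactly} $0$ (respectively, exactly $1$), not on which $h\le 0$ (respectively, $h\ge 1$). Your argument, as written, only establishes that $\{h\ge 2\}$ fails to percolate in $\bbT^\bullet$ under ${\sf HF}_{\mathrm{odd},c}^{0,1;e^{\lambda/2}}$ and hence that $\{h\le 0\}$ has arbitrarily large $\bbT^\bullet$-circuits; this does \emph{not} yet give circuits of height exactly $0$. The missing step is to also rule out $\bbT^\bullet$-percolation of $\{h\le -2\}$: once neither $\{h\ge 2\}$ nor $\{h\le -2\}$ percolates, the set $\{h\ne 0\}$ on even faces has no infinite $\bbT^\bullet$-cluster (the two pieces cannot be $\bbT^\bullet$-adjacent since neighbouring even faces differ by at most $2$), and self-duality of the triangular lattice then yields the required height-$0$ circuits. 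This extra input is in fact immediate from Lemma~\ref{lem:therm-lim-heights-c-b}: under ${\sf HF}_{\mathrm{odd},c}^{0,1;e^{\lambda/2}}$ the height-$1$ cluster on odd faces has exponentially small complement, so even faces with $|h|\ge 4$ or $h=-2$ lie in small holes and cannot percolate. The paper handles this differently: it argues by contradiction directly on the absence of height-$1$ circuits, splits $\{h\ne 1\}$ on odd faces into $F^\circ_{\le -1}\cup F^\circ_{\ge 3}$, and reduces both cases to coexistence of the complementary sets $F^\circ_{\ge 1}$ and $F^\circ_{\le -1}$.

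\medskip
\textbf{An unnecessary detour for non-coexistence.} You flag the non-coexistence step as the main obstacle and propose to build a ``cluster finite-energy'' substitute from the BKW coupling to run Burton--Keane and Zhang. This is more machinery than needed. The paper simply observes that the indicator of $\{h\ge 1\}$ (or in your setup $\{h\ge 2\}$) is an increasing function of the heights, so its law is translation-invariant and positively associated by Proposition~\ref{prop:fkg-homo}; uniqueness of the infinite cluster then follows from the Burton--Keane argument in the form of \cite[Theorem~4.9]{ChaPelSheTas18}, and non-coexistence with the complementary set is a direct citation of \cite[Theorem~1.5]{DumRauTas17}, which requires only positive association and translation invariance (no finite-energy hypothesis). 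Your proposed BKW-based workaround is plausible but is not the route the paper takes and is not required.
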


\begin{proof}
	For~$k\in\bbN$, let~$f_k$ be distributed according to~${\sf HF}_{\Lambda_{2k}(0,0),c}^{0,1; e^{\lambda/2}}$. Define~$g_k$ by:
	\begin{equation}
		g_k(i,j) = 1- f_k(i-1,j) \label{eq:operation-shift-one-minus}
	\end{equation}
		
	It is straightforward that~$g_k$ is supported on height functions and the image of~$0,1$ boundary conditions under this mapping are again~$0,1$ boundary conditions, though on a slightly different domain. More precisely, the domain is $1 + \Lambda_{2k}(0,0)$, which is the same as~$\Lambda_{2k}(1,0)$. In conclusion, height function~$g_k$ is distributed according to~${\sf HF}_{\Lambda_{2k}(1,0),c}^{0,1; e^{\lambda/2}}$.
	
	The domains~$\Lambda_{2k}(1,0)$ form a sequence of odd domains, whence by Lemma~\ref{lem:therm-lim-heights-c-b} the weak limit of~${\sf HF}_{\Lambda_{2k}(1,0),c}^{0,1; e^{\lambda/2}}$ is~${\sf HF}_{\mathrm{odd},c}^{0,1; e^{\lambda/2}}$. Also, by Lemma~\ref{lem:therm-lim-heights-c-b} the weak limit of~${\sf HF}_{\Lambda_{2k}(0,0),c}^{0,1; e^{\lambda/2}}$ is~${\sf HF}_{\mathrm{even},c}^{0,1; e^{\lambda/2}}$. Thus, measure~${\sf HF}_{\mathrm{odd},c}^{0,1; e^{\lambda/2}}$ is obtained from~${\sf HF}_{\mathrm{even},c}^{0,1; e^{\lambda/2}}$ by the operation described in~\eqref{eq:operation-shift-one-minus}. Finally, it is easy to see that under the operation~\eqref{eq:operation-shift-one-minus}, circuit~$\calC_K^1$ is mapped into circuit~$\calC_K^0$.
	
	It remains to show~\eqref{eq:many-T-circuits}, which is equivalent to showing that~${\sf HF}_{\mathrm{even},c}^{0,1; e^{\lambda/2}}$-a.s. there are infinitely many disjoint~$\bbT^\circ$-circuits of height~$1$ surrounding the origin. 
	Lemma~\ref{lem:therm-lim-heights-c-b} implies that, for all $n$, measure~${\sf HF}_{\mathrm{even},c}^{n,n+1; e^{\lambda/2}}$ is extremal. Thus, the ${\sf HF}_{\mathrm{even},c}^{0,1; e^{\lambda/2}}$-probability to have infinitely many disjoint~$\bbT^\circ$-circuits of height~$1$ surrounding the origin is either zero or one. Assume that this probability is zero. Then, by the self-duality of~$\bbT^\circ$ and the extremality of~${\sf HF}_{\mathrm{even},c}^{0,1; e^{\lambda/2}}$, at least one of the following occurs:
	\begin{align}
		{\sf HF}_{\mathrm{even},c}^{0,1; e^{\lambda/2}}(F_{\leq -1}^\circ \text{ contains an infinite } \bbT^\circ\text{-cluster}) &= 1, \label{eq:inf-T-cluster-minus-1} \quad \text{or}\\
		{\sf HF}_{\mathrm{even},c}^{0,1; e^{\lambda/2}}(F_{\geq 3}^\circ \text{ contains an infinite } \bbT^\circ\text{-cluster}) &= 1, \label{eq:inf-T-cluster-3}
	\end{align}
	where~$F_{\leq n}^\circ$ (resp. $F_{\geq n}^\circ$) denotes the set of odd faces of height smaller (resp. greater) or equal to~$n$. The bijection mapping~$h\to 2-h$ implies that~\eqref{eq:inf-T-cluster-3} is equivalent to
	\[
		{\sf HF}_{\mathrm{even},c}^{2,1; e^{\lambda/2}}(F_{\leq -1}^\circ \text{ contains an infinite } \bbT^\circ\text{-cluster}) = 1,
	\]
	which in turn implies~\eqref{eq:inf-T-cluster-minus-1}, since the height function measures are monotone with respect to boundary conditions (Proposition \ref{prop:fkg-homo}). Thus, we can assume~\eqref{eq:inf-T-cluster-minus-1} occurs. 
	
	Through the bijection mapping~$h\to -h$, \eqref{eq:inf-T-cluster-minus-1} implies
	\[
		{\sf HF}_{\mathrm{even},c}^{0,-1; e^{\lambda/2}}(F_{\geq 1}^\circ \text{ contains an infinite } \bbT^\circ\text{-cluster}) = 1,
	\]
	whence, by the monotonicity in boundary conditions, also
	\[
		{\sf HF}_{\mathrm{even},c}^{0,1; e^{\lambda/2}}(F_{\geq 1}^\circ \text{ contains an infinite } \bbT^\circ\text{-cluster}) = 1.
	\]
	This means that, ${\sf HF}_{\mathrm{even},c}^{0,1; e^{\lambda/2}}$-a.s. each of $F_{\leq -1}^\circ$ and $F_{\geq 1}^\circ$ contains an infinite $\bbT^{\circ}$-cluster. These clusters are unique. Indeed, the measure ${\sf HF}_{\mathrm{even},c}^{0,1; e^{\lambda/2}}$ is invariant under parity-preserving translations by Lemma~\ref{lem:therm-lim-heights-c-b}.
	Then, the general argument of Burton and Keane~\cite{BurKea89} can be applied in the same way as in~\cite[Theorem 4.9]{ChaPelSheTas18} to yield uniqueness of the infinite clusters.
			
	Since~${\sf HF}_{\mathrm{even},c}^{0,1; e^{\lambda/2}}$ is positively associated and~$F_{\geq 1}^\circ$ is increasing, the marginal of~${\sf HF}_{\mathrm{even},c}^{0,1; e^{\lambda/2}}$ on~$F_{\geq 1}^\circ$ is also positively associated. Also, the compliment to~$F_{\geq 1}^\circ$ in~$\bbT^\circ$ is the set~$F_{\leq -1}^\circ$. In conclusion, the distribution of~$F_{\geq 1}^\circ$ is invariant under all translations of~$\bbT^\circ$, is positively associated, and in both~$F_{\geq 1}^\circ$ and its compliment there is almost surely a unique infinite cluster. This contradicts~\cite[Theorem 1.5]{DumRauTas17}. We note that the latter theorem was established for pairs of dual edge-configurations but it adapts in a straightforward manner to the setting of pairs of site-configurations on the triangular lattice used here.
\end{proof}

We are now ready to finish the proof of Proposition~\ref{prop:therm-lim-heights}.

\begin{figure}
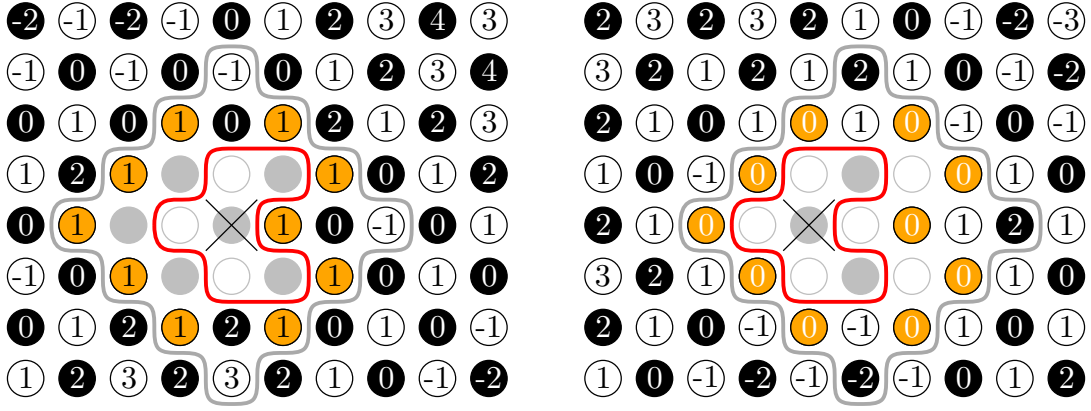

	\begin{center}
		\includegraphics[scale=1.2,page=3]{coupling-height-flip.pdf} \quad \quad
		\includegraphics[scale=1.2,page=4]{coupling-height-flip.pdf}
		\caption{The origin is marked with a cross. \emph{Left:} Height function is sampled from the outside up to $\bbT^\circ$-circuit~$C_4^1$ (orange)~--- the exterior-most $\bbT^\circ$-circuit of height~$1$ contained inside~$\Lambda_4(0,0)$ (bounded by the gray contour). \emph{Right:}  Height function is obtained from the one on the left applying a composition of the shift by~$1$ to the right and operation~$h\mapsto 1-h$. This maps~$C_4^1$ to~$C_4^0$ (orange)~--- the exterior-most $\bbT^\bullet$-circuit of height~$0$ contained inside~$\Lambda_4 (1,0)$ (in gray). \emph{Both:} The red contour surrounds domain~$\calD_C$~--- the unique connected component in~$\bbZ^2\setminus (C_4^1\cup C_4^0)$ that contains the origin. The left and the right height functions are sampled on~$\calD_C$ independently. Then, the FKG inequality implies that, on~$\calD_C$, the left function stochastically dominates the right function.}
		\label{fig:height-function-flip}
	\end{center}
\end{figure}

\begin{proof}[Proof of Proposition~\ref{prop:therm-lim-heights}]
	Without loss of generality, one can assume that~$n=0$. The main step of the proof is to show that~${\sf HF}_{\mathrm{even},c}^{0,1; e^{\lambda/2}} = {\sf HF}_{\mathrm{odd},c}^{0,1; e^{\lambda/2}}$.
	
	Take any~$N\in \bbN$ and~$\varepsilon>0$. By Lemma~\ref{lem:T-circuits}, there exists~$K\in\bbN$ such that
	\begin{equation}\label{eq:t-circuit-large}
		{\sf HF}_{\mathrm{even},c}^{0,1; e^{\lambda/2}}(\calC_K^1 \text{ surrounds } \Lambda_{N}(0,0)) > 1-\varepsilon.
	\end{equation}
	Fix such $K$. 
	Consider any $\bbT^\circ$-circuit~$C^1$ which surrounds~$\Lambda_{N}(0,0)$ and for which ${\sf HF}_{\mathrm{even},c}^{0,1; e^{\lambda/2}}(\calC_K^1  = \calC^1) >0$.
	Define~$C^0$ as the $\bbT^\bullet$-circuit obtained from~$C^1$ after a shift by~$1$ to the right. 
	Let~$\calD_K$ be the set of faces in the connected component of~$\bbZ^2\setminus(C^1\cup C^0)$ containing~$\Lambda_{N}(0,0)$.  Conditioned on the event that~$\calC_K^1  = C^1$, the heights on the circuit~$C^0$ are at least~$0$, whence
	\begin{equation}\label{eq:t-circuits-1}
		{\sf HF}_{\mathrm{even},c}^{0,1; e^{\lambda/2}}( \cdot \, | \, \calC_K^1 = C^1)_{|\Lambda_{N}(0,0)} \succeq {\sf HF}_{\calD_K,c}^{0,1}( \cdot )_{|\Lambda_{N}(0,0)}.
	\end{equation}
	Similarly, conditioned on the event that~$\calC_K^0  = \calC^0$, the heights at the faces of the circuit~$\calC^1$ are at most~$1$, whence
	\begin{equation}\label{eq:t-circuits-0}
		{\sf HF}_{\mathrm{odd},c}^{0,1; e^{\lambda/2}}( \cdot \, | \, \calC_K^0 = C^0)_{|\Lambda_{N}(0,0)} \preceq {\sf HF}_{\calD_K,c}^{0,1}( \cdot )_{|\Lambda_{N}(0,0)}.
	\end{equation}
	We now couple~$h^{1}\sim {\sf HF}_{\mathrm{even},c}^{0,1; e^{\lambda/2}}$ and~$h^{0}\sim {\sf HF}_{\mathrm{odd},c}^{0,1; e^{\lambda/2}}$ in the following way; see Figure~\ref{fig:height-function-flip}. First, we explore $\calC_{K}^{1}(h^{1})$ from outside, for all~$(x,y)\in \bbZ^2$ in the exterior of~$\calC_{K}^{1}(h^{1})$, we set
	\[
		h^0((x,y))= 1-h^1((x-1,y)).
	\]
	This, in particular, implies that~$\calC_{K}^{0}(h^{0})$ is obtained from~$\calC_{K}^{1}(h^{1})$ by shift by one to the right.
	Also, conditioned on $\calC_{K}^{1}(h^{1}) = C^{1}$, the distribution of $h^{1}$ and $h^{0}$ on $\calD_{K}$ is given by ${\sf HF}_{\mathrm{even},c}^{0,1; e^{\lambda/2}}( \cdot \, | \, \calC_K^1 = C^1)$ and ${\sf HF}_{\mathrm{odd},c}^{0,1; e^{\lambda/2}}( \cdot \, | \, \calC_K^0 = C^0)$, respectively.
	By \eqref{eq:t-circuits-1} and \eqref{eq:t-circuits-0}, these two measures can be coupled in such a way that, for all $(x,y)\in \calD_{K}$,
	\[
		h^0((x,y)) \leq h^1((x,y)).
	\]
	Summing over all circuits~$C^1$, by \eqref{eq:t-circuit-large}, we get that $h^{0}\leq h^{1}$ on the box $\Lambda_{N}(0,0)$ with probability at least $1-\eps$. Sending~$\varepsilon$ to zero and taking arbitrary~$N$ gives
	\[
		{\sf HF}_{\mathrm{even},c}^{0,1; e^{\lambda/2}} \succeq {\sf HF}_{\mathrm{odd},c}^{0,1; e^{\lambda/2}}.
	\]
	
	The opposite inequality follows from monotonicity in boundary conditions. Indeed, let~$\calD_k$ be any increasing sequence of even domains exhausting~$\bbZ^2$. Define the sequence~$\calD_k'$ of odd domains obtained from~$\calD_k$ after a shift by one to the right. 
	By Proposition~\ref{prop:monotone-c-b-heights},
	\begin{align*}
		{\sf HF}_{\calD_k,c}^{0,1; e^{\lambda/2}} &\preceq {\sf HF}_{\calD_k,c}^{0,1; c}  = {\sf HF}_{\calD_k,c}^{0,1},\\
		{\sf HF}_{\calD_k',c}^{0,1; e^{\lambda/2}} &\succeq {\sf HF}_{\calD_k',c}^{0,1; c}  = {\sf HF}_{\calD_k',c}^{0,1}.
	\end{align*}
	Also, by the monotonicity in boundary conditions (Proposition \ref{prop:fkg-homo}),
	\[
		{\sf HF}_{\calD_k,c}^{0,1} \preceq {\sf HF}_{\calD_k\cap\calD_k',c}^{0,1} \preceq {\sf HF}_{\calD_k',c}^{0,1}.
	\]
	Altogether, this gives
	\[
		{\sf HF}_{\calD_k,c}^{0,1; e^{\lambda/2}} \preceq {\sf HF}_{\calD_k,c}^{0,1} \preceq  {\sf HF}_{\calD_k',c}^{0,1} \preceq {\sf HF}_{\calD_k',c}^{0,1; e^{\lambda/2}}.
	\]	
	By taking limits on both sides with we get the desired in equality.	
	In conclusion,
	\[
		{\sf HF}_{\mathrm{even},c}^{0,1; e^{\lambda/2}} = {\sf HF}_{\mathrm{odd},c}^{0,1; e^{\lambda/2}}.
	\]
	We denote this measure by~${\sf HF}_c^{0,1}$.
	Then, by~\eqref{eq:from-c-to-c-b}, for any sequence of domains $\calD_{k}''$, the limit of ${\sf HF}_{\calD_{k}'',c}^{0,1; e^{\lambda/2}}$ exists and is given by ${\sf HF}_c^{0,1}$. Properties of~${\sf HF}_c^{0,1}$ follow immediately from Lemma~\ref{lem:therm-lim-heights-c-b}.
\end{proof}

The following corollary is a straightforward consequence of the convergence proven in Lemma~\ref{lem:therm-lim-heights-c-b}, the equality~${\sf HF}_{\mathrm{even},c}^{0,1; e^{\lambda/2}} = {\sf HF}_{\mathrm{odd},c}^{0,1; e^{\lambda/2}}$ proven in Proposition~\ref{prop:therm-lim-heights} and the monotonicity in~$c_b$ established in Proposition~\ref{prop:monotone-c-b-heights}.

\begin{corollary}\label{cor:therm-lim-heights-c-b}
	Let~$c>2$ and~$\calD_k$ be an increasing sequence of even domains that exhausts~$\bbZ^2$. Then, ${\sf HF}_{\calD_k,c}^{0,1;c_b}$ converges to~${\sf HF}_{c}^{0,1}$ when~$c_b\geq e^{\lambda/2}$, and~${\sf HF}_{\calD_k,c}^{0,1;c_b}$ converges to~${\sf HF}_{c}^{0,-1}$ when~$c_b\leq e^{-\lambda/2}$.
\end{corollary}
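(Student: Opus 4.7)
Fix $\lambda>0$ such that $c=e^{\lambda/2}+e^{-\lambda/2}$ (possible since $c>2$). In both cases, the plan is to use the stochastic-domination chain from Proposition~\ref{prop:monotone-c-b-heights} to sandwich ${\sf HF}_{\calD_k,c}^{0,1;c_b}$ between two sequences whose limits can be identified via Lemma~\ref{lem:therm-lim-heights-c-b} and Proposition~\ref{prop:therm-lim-heights}. The claim will then follow because both bounds converge to the asserted limit.

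For $c_b\ge e^{\lambda/2}$, I would apply Proposition~\ref{prop:monotone-c-b-heights} (with the two choices $c_b'=c_b$ and $c_b'=\infty$) to get
\[
{\sf HF}_{\calD_k,c}^{0,1;e^{\lambda/2}} \preceq {\sf HF}_{\calD_k,c}^{0,1;c_b} \preceq {\sf HF}_{\calD_k\setminus\partial_V\calD_k,c}^{0,1}.
\]
The lower bound converges to ${\sf HF}_{\mathrm{even},c}^{0,1;e^{\lambda/2}}$ by Lemma~\ref{lem:therm-lim-heights-c-b}, which Proposition~\ref{prop:therm-lim-heights} identifies with ${\sf HF}_c^{0,1}$. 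The upper bound is the measure on the odd sub-domains $\calD_k\setminus\partial_V\calD_k$, which still form an exhausting sequence; Proposition~\ref{prop:therm-lim-heights} applies to any such sequence and gives convergence to ${\sf HF}_c^{0,1}$ as well. The stochastic-order sandwich then yields the claim.

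For $c_b\le e^{-\lambda/2}$, the analogous chain is
\[
{\sf HF}_{\calD_k\setminus\partial_V\calD_k,c}^{-1,0} \preceq {\sf HF}_{\calD_k,c}^{0,1;c_b} \preceq {\sf HF}_{\calD_k,c}^{0,1;e^{-\lambda/2}}.
\]
The lower bound converges to ${\sf HF}_c^{-1,0}={\sf HF}_c^{0,-1}$ by Proposition~\ref{prop:therm-lim-heights}. For the upper bound, my plan is to observe that Theorem~\ref{thm:coupling}~(2) remains valid with $\lambda$ replaced by $-\lambda$: the parameters $q$ and $p_c(q)$ are symmetric under $\lambda\leftrightarrow -\lambda$, so ${\sf HF}_{\calD_k,c}^{0,1;e^{-\lambda/2}}$ couples with the same ${\sf RC}_{\calD_k^\bullet,q,p_c(q)}^1$. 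The proof of Lemma~\ref{lem:therm-lim-heights-c-b} uses only the unique infinite cluster and exponential decay of dual clusters in ${\sf RC}_q^1$ (Proposition~\ref{prop:rcm-input}~(v)) and so goes through verbatim, yielding convergence to a limit ${\sf HF}_{\mathrm{even},c}^{0,1;e^{-\lambda/2}}$ whose infinite cluster of even faces has height~$0$ but whose holes are now filled by a downward-biased walk. Applying the $\bbT$-circuit argument of Proposition~\ref{prop:therm-lim-heights}, which is insensitive to the sign of~$\lambda$, matches this limit with the corresponding odd-domain limit and identifies it as the infinite-volume Gibbs measure ${\sf HF}_c^{-1,0}$. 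Sandwiching concludes the case.

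The hard part will be the upper bound in Case~2: since Lemma~\ref{lem:therm-lim-heights-c-b} is stated only for $c_b=e^{\lambda/2}$, I must rerun its proof together with the $\bbT$-circuit identification of Proposition~\ref{prop:therm-lim-heights} with $\lambda$ replaced by $-\lambda$. No new random-cluster input is required; the only substantive change is that heights now typically decrease when moving into holes of the unique infinite primal cluster, which yields ${\sf HF}_c^{-1,0}$ rather than ${\sf HF}_c^{0,1}$ in the limit.
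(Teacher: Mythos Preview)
Your Case~1 is fine and matches the paper: sandwich via Proposition~\ref{prop:monotone-c-b-heights}, identify the lower bound using Lemma~\ref{lem:therm-lim-heights-c-b} together with the equality ${\sf HF}_{\mathrm{even},c}^{0,1;e^{\lambda/2}}={\sf HF}_c^{0,1}$ from Proposition~\ref{prop:therm-lim-heights}, and the upper bound via Proposition~\ref{prop:therm-lim-heights}.

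Your Case~2 has a real gap. Replacing $\lambda$ by $-\lambda$ in Lemma~\ref{lem:therm-lim-heights-c-b} is fine and does yield convergence of ${\sf HF}_{\calD_k,c}^{0,1;e^{-\lambda/2}}$ along even domains. But literally re-running the $\bbT$-circuit argument of Proposition~\ref{prop:therm-lim-heights} with $-\lambda$ would attempt to prove ${\sf HF}_{\mathrm{even},c}^{0,1;e^{-\lambda/2}} = {\sf HF}_{\mathrm{odd},c}^{0,1;e^{-\lambda/2}}$, and this equality is \emph{false}. For odd domains the monotonicity in $c_b$ is reversed (there $n_b$ counts boundary vertices whose internal even face has height~$0$, a decreasing function of $h$), so ${\sf HF}_{\mathrm{odd},c}^{0,1;e^{-\lambda/2}}\succeq {\sf HF}_{\mathrm{odd},c}^{0,1;e^{\lambda/2}}={\sf HF}_c^{0,1}$, whereas the even limit you want is ${\sf HF}_c^{0,-1}\prec{\sf HF}_c^{0,1}$. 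Concretely, the odd limit is ${\sf HF}_c^{1,2}$: it is the image of the even limit under the shift-reflect $h(i,j)\mapsto 1-h(i-1,j)$, which preserves $c_b$. A second obstruction is that the monotonicity-in-boundary-condition step inside Lemma~\ref{lem:T-circuits} relies on Proposition~\ref{prop:fkg-homo}, whose comparison of boundary values is stated for $c_b\ge 1$; this fails for $c_b=e^{-\lambda/2}<1$.

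The fix is actually simpler than your plan. After re-running Lemma~\ref{lem:therm-lim-heights-c-b} with $-\lambda$, observe that the two infinite-volume constructions (sample $\eta\sim{\sf RC}_q^1$, set $h=0$ on the unique infinite cluster, and fill each hole via the coupling~\eqref{eq:coupling-hf-form} with parameter $\pm\lambda$) are images of one another under $h\mapsto -h$: negating $\lambda$ exactly negates the step distribution of the height random walk in each hole, while the value $0$ on the infinite cluster is fixed. Hence ${\sf HF}_{\mathrm{even},c}^{0,1;e^{-\lambda/2}}$ is the $h\mapsto -h$ pushforward of ${\sf HF}_{\mathrm{even},c}^{0,1;e^{\lambda/2}}={\sf HF}_c^{0,1}$, i.e.\ it equals ${\sf HF}_c^{0,-1}$, and your sandwich closes. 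No re-run of the $\bbT$-circuit argument is needed; the already-established Proposition~\ref{prop:therm-lim-heights} is invoked only once, for $+\lambda$. This is also what the paper's one-line justification (``straightforward consequence'') has in mind.
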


We are now ready to finish the proof of Theorem~\ref{thm:heights-gibbs}.

\begin{proof}[Proof of Theorem~\ref{thm:heights-gibbs}]
	Fix~$c>2$. Existence, invariance under parity-preserving translations and extremality of Gibbs measures~$\{{\sf HF}_c^{n,n+1}\}_{n\in\bbZ}$ is established in Proposition~\ref{prop:therm-lim-heights}.
	Also, this proposition gives existence of a unique infinite cluster on heights $n$ and $n+1$ with exponentially small holes whose diameters satisfy \eqref{eq:exponential tail decay}.
	Invariance under transformation $h(i,j) \mapsto 1-h(i-1,j)$ and stochastic ordering in $n$ follow readily from the construction of $\{{\sf HF}_c^{n,n+1}\}$ as the infinite-volume limit under $(n,n+1)$-boundary conditions. 
	
	Now take any extremal Gibbs state~${\sf HF}$ for the height-function measure with parameter~$c$ that is invariant under parity-preserving translations. It remains to show that~${\sf HF} = {\sf HF}_c^{n,n+1}$, for some~$n\in\bbZ$.
	
	For any~$n\in\bbZ$, define the following events:
	\begin{align*}
		\calF_{2n}&:=\{\exists \text{ infinitely many disjoint } \bbT^\bullet\text{-circuits of height } 2n \text{ surrounding } (0,0)\},\\
		\calF_{2n+1}&:=\{\exists \text{ infinitely many  disjoint } \bbT^\circ\text{-circuits of height } 2n+1 \text{ surrounding } (0,0)\}.
	\end{align*}
	Since~${\sf HF}$ is an extremal measure, we have that~${\sf HF}(\calF_n) \in \{0,1\}$.
	
	Assume there exist~$m, n\in\bbZ$, such that~$m< n$ and~${\sf HF}(\calF_m) = {\sf HF}(\calF_n) = 1$. All faces that are adjacent to a face of height~$m$ have height at most~$m+1$. Thus~${\sf HF}$-a.s., there exist infinitely many circuits (in the usual $\bbZ^2$ connectivity, even and odd faces are alternating) of height at most~$m+1$. By positive association (Proposition~\ref{prop:fkg-homo}), this implies that~${\sf HF} \preceq {\sf HF}_c^{m,m+1}$. Similarly, ${\sf HF} \succeq {\sf HF}_c^{n-1,n}$. Then necessarily~$n=m+1$ and~${\sf HF} = {\sf HF}_c^{m,m+1}$, which would finish the argument.
	
	Thus, we can assume that either ${\sf HF}(\calF_m) = 0$ for all even values of $m$ or ${\sf HF}(\calF_m) = 0$ for all odd values of $m$.
	Without loss of generality, below we assume that for any~$n\in\bbZ$,
	\begin{equation}
		{\sf HF}(\calF_{2n}) = 0. \label{eq:thm-gibbs-no-T-circuits}
	\end{equation}
	Define the following events:
	\begin{align*}
		A_{2n}&:= \{\exists \text{ infinite } \bbT^\bullet \text{-cluster of height } \geq 2n\}, \\
		B_{2n}&:= \{\exists \text{ infinite } \bbT^\bullet \text{-cluster of height } \leq 2n\}.
	\end{align*}
	By~\eqref{eq:thm-gibbs-no-T-circuits}, for any~$n\in\bbZ$,
	\[
		{\sf HF}(A_{2n+2} \cup B_{2n-2}) = 1.
	\]
	By extremality of~${\sf HF}$, each of the events~$A_{2n+2}$ and~$B_{2n-2}$ occurs with probability~$0$ or~$1$. Thus, for any~$n\in \bbZ$
	\begin{equation}
		\text{either }{\sf HF} (A_{2n+2}) = 1 \text{ or } {\sf HF}(B_{2n-2}) = 1.\label{eq:thm-gibbs-alternative}
	\end{equation}
	Without loss of generality, assume that, for~$n=0$, the first alternative in~\eqref{eq:thm-gibbs-alternative} occurs, that is~${\sf HF}(A_2) = 1$ (the case~${\sf HF}(B_{-2}) = 1$ is completely analogous). Then, by~\cite[Theorem 1.5]{DumRauTas17}, applied here in the same way as in the proof of Lemma~\ref{lem:T-circuits}, 
	\[
		{\sf HF}(B_0) = 0.
	\]
	Applying~\eqref{eq:thm-gibbs-alternative} for~$n=1$, we obtain that~${\sf HF} (A_4) = 1$. Continuing in the same way, we obtain~${\sf HF}(B_{2n}) = 0$, for all~$n\in \bbN$ --- and hence, by monotonicity, for all~$n\in \bbZ$. This implies that,  for all~$n\in \bbZ$,
	\[
		{\sf HF} \succeq {\sf HF}_c^{2n-1,2n}.
	\]
	This leads to a contradiction, since then, for any~$L\in\bbZ$,
	\[
		{\sf HF} (h(0,0) < L) \leq {\sf HF}_c^{2n-1,2n} (h(0,0) < L)  \xrightarrow[L\to \infty]{} 0.
	\]
	This finishes the proof in the case~$c>2$. 
	
	Now assume there exists an extremal translation-invariant Gibbs state~$\mathsf{HF}$ for the height-function measure with parameter~$c=2$. What we showed above implies that, for some $n\in \bbZ$,
	\[
		\mathsf{HF}(\calF_{2n})=1.
	\]
	Then, by positive association, for every~$K,N$,
	\[
		\mathsf{HF}(h(0)\geq N) \geq \inf_{\calD \supset \Lambda_K} \mathsf{HF}_{\calD,2}^{2n,2n-1}(h(0)\geq N),
	\]
	where the infimum is taken over all domains~$\calD$ containing~$\Lambda_K$. It follows from the proof of Theorem~\ref{thm:var} that the limit in~$K$ of the right-hand side of the last inequality is bounded below by a positive constant that is independent of~$N$. Contradiction.
\end{proof}

\subsection{Consequences for the FK model with modified boundary-cluster weight}
In this section we prove Theorem~\ref{thm:q-b}.
\begin{proof}
	Item (ii) of Theorem~\ref{thm:q-b} follows from Item (iv) of Proposition~\ref{prop:rcm-input} which is in turn implied by known results for the random-cluster model. It remains to show Item (i).
	
	Let~$q>4$. By Theorem~\ref{thm:coupling}, the measure~${\sf RC}_{\calD_k^\bullet,q,p_c(q)}^{e^{-\lambda}q}$ can be coupled with~${\sf HF}_{\calD_k,c}^{0,1}$, and~${\sf RC}_{\calD_k^\bullet,q,p_c(q)}^1$ can be coupled with~${\sf HF}_{\calD_k,c}^{0,1;e^{\lambda/2}}$. As shown in the proof of Proposition~\ref{prop:therm-lim-heights}, sequences of measures~${\sf HF}_{\calD_k,c}^{0,1}$ and~${\sf HF}_{\calD_k,c}^{0,1;e^{\lambda/2}}$ have the same limit. Since the coupling rule~\eqref{eq:coupling-edge-form} is completely local, this implies that sequences of measures~${\sf RC}_{\calD_k^\bullet,q,p_c(q)}^{e^{-\lambda}q}$ and~${\sf RC}_{\calD_k^\bullet,q,p_c(q)}^1$ also have the same limit. By monotonicity in~$q_b$ established in Proposition~\ref{prop:rcm-input}, we get that, for all~$q_b\in [1,e^{-\lambda}\sqrt{q}]$, the limit of~${\sf RC}_{\calD_k^\bullet,q,p_c(q)}^{q_{b}}$ also exists and is equal to~${\sf RC}_{q,p_c(q)}^1$.
\end{proof}

\section{The behavior of the spin representation and the six-vertex model}
\label{sec:proofs-spins}

Throughout this section we assume that~$a=b=1$ (see Section~\ref{sec:non-symmetric-case} for~$a\neq b$).

The main tools in the proof are the FK--Ising-type representation~$\xi$ introduced in Section~\ref{sec:fk-ising} and the height representation of the six-vertex model.

Let~$\calD$ be a domain on~$\bbZ^2$. Recall a pair of dual graphs~$\calD^\bullet$ and~$\calD^\circ$ defined on even and odd faces of~$\calD$ (Section~\ref{sec:coupling}). Recall that, for a spin configuration~$\sigma$ on~$\bbZ^2$, the restrictions of~$\sigma$ to even and odd faces are denoted by~$\sigma^\bullet$ and~$\sigma^\circ$.

Define~$\theta(\sigma^\circ) \subset E(\calD^\circ)$, such that~$uv\in \theta(\sigma^\circ)$ iff~$\sigma^\circ(u) \neq \sigma^\circ(v)$. Similarly, define~$\theta(\sigma^\bullet) \subset E(\calD^\bullet)$. Define~$\omega(\sigma^\bullet)\subset E(\calD^\circ)$, such that~$e\in \omega(\sigma^\bullet)$ iff~$e^*\in \theta(\sigma^\bullet)$.

\subsection{FK--Ising-type representation: Item 1 of Theorem~\ref{thm:spins-gibbs}, Corollary~\ref{cor:6v-gibbs}}
\label{sec:fk-ising}

The FK--Ising representation of the six-vertex model that we discuss in this section is directly related (see the remark after Proposition~\ref{prop:coupling-AT-6V}) to the random-cluster representation of the Ashkin--Teller model introduced by Pfister and Velenik~\cite{PfiVel97}  and is used in Section~\ref{sec:proof-ashkin-teller} to describe the coupling between the two models. For the Ashkin--Teller model, this representation allowed to derive the Lebowitz inequality~\cite{ChaSht00}. Here we choose to define this representation in terms of the six-vertex model in order to avoid confusion between different models and restrict the appearance of the Ashkin--Teller model to Section~\ref{sec:proof-ashkin-teller}. We refer the reader to the works of Ray and Spinka~\cite{RaySpi19b} and Lis~\cite{Lis19} where the representations on the primal and the dual lattices are considered simultaneously.

Given~$\sigma^\bullet$ and~$\sigma^\circ$, define a random edge-configuration $\xi\in\{0,1\}^{E(\calD^\circ)}$: if~$e\in\theta(\sigma^\circ)$, then~$\xi(e) = 0$; if~$e\in\omega(\sigma^\bullet)$, then~$\xi(e) = 1$; if~$e\in E(\calD^\circ)\setminus(\omega(\sigma^\bullet)\cup \theta(\sigma^\circ))$, then~$\xi(e) = 1$ with probability~$\tfrac{c-1}{c}$ and~$\xi(e) = 0$ with probability~$\tfrac{1}{c}$; see Figure~\ref{fig:fk-ising}. We call~$\xi$ the \emph{FK--Ising representation of the six-vertex model}. The measure~${\sf FKIs}_{\calD^\circ,c}$ is defined as the distribution of~$\xi$ when~$\sigma$ is distributed according to~${\sf Spin}_{\calD,c}^{++}$.

It is easy to see that~$\sigma$ has constant value on clusters of~$\xi$ and~$\xi^*$ (in the terminology of Section~\ref{sec:coupling}, $\sigma$ and~$\xi$ are compatible). In the next lemma, we state further properties of this coupling.

\begin{lemma}\label{lem:sigma-given-eta-o}
	$i)$ The joint law of~$\sigma$ and~$\xi$ can be written as:
	\begin{equation}\label{eq:fk-ising-coupling}
		(\sigma, \xi) \propto  (c-1)^{|\xi| - |\omega(\sigma^\bullet)|}\mathbbm{1}_{\sigma^\circ\perp \xi}\mathbbm{1}_{\sigma^\bullet\perp \xi}.
	\end{equation}
	$ii)$ The measure~${\sf FKIs}_{\calD^\circ,c}$ can be written in the following way:
	\begin{equation}\label{eq:marginal-fk-is}
		{\sf FKIs}_{\calD^\circ,c} (\xi) = \tfrac{1}{2Z} \cdot (c-1)^{|\xi|}2^{k(\xi^1)}\sum_{\sigma^\bullet:\,  \omega(\sigma^\bullet)\subset \xi } {\left(\tfrac{1}{c-1}\right)^{ |\omega(\sigma^\bullet)|}}.
	\end{equation}
	$iii)$ Let~$\xi$ be distributed according to~${\sf FKIs}_{\calD^\circ,c}$. Assign plus to all boundary clusters of~$\xi$ and plus or minus with probability~$1/2$ independently to all other clusters of~$\xi$. Then, the obtained spin configuration has the same distribution as the marginal distribution of~${\sf Spin}_{\calD,c}^{++}$ on~$\sigma^\circ$.
\end{lemma}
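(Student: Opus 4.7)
The plan is to verify item~(i) by direct computation from the definition, and then derive (ii) and (iii) as straightforward marginalisations of the joint distribution established in (i).

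For item~(i), I would start from the factorisation $\bbP(\sigma,\xi)={\sf Spin}_{\calD,c}^{++}(\sigma)\cdot\bbP(\xi\mid\sigma)$ and read off $\bbP(\xi\mid\sigma)$ edge by edge from the construction of $\xi$: it forces $\xi(e)=0$ on $e\in\theta(\sigma^\circ)$ (equivalently $\sigma^\circ\perp\xi$), it forces $\xi(e)=1$ on $e\in\omega(\sigma^\bullet)$ (equivalently $\sigma^\bullet\perp\xi$), and on each of the remaining $|E(\calD^\circ)|-|\omega(\sigma^\bullet)|-|\theta(\sigma^\circ)|$ edges it is an independent Bernoulli of parameter $(c-1)/c$. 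Multiplying by the spin weight $c^{n_5(\sigma)+n_6(\sigma)}$ and collecting the exponents of $c$ and $c-1$ yields
\[
\bbP(\sigma,\xi)\;\propto\;c^{\,n_5+n_6+|\omega(\sigma^\bullet)|+|\theta(\sigma^\circ)|-|E(\calD^\circ)|}\cdot(c-1)^{|\xi|-|\omega(\sigma^\bullet)|}\cdot\1_{\sigma^\circ\perp\xi}\1_{\sigma^\bullet\perp\xi}.
\]
The main obstacle, and the only genuinely non-trivial step in the whole lemma, is the combinatorial identity
\[
n_5(\sigma)+n_6(\sigma)+|\omega(\sigma^\bullet)|+|\theta(\sigma^\circ)|\;=\;|V(\calD)|,
\]
which collapses the $c$-dependence into an overall constant and yields the form claimed in (i). I would prove it by a vertex-by-vertex case analysis using the ice rule: at every $z\in\calD$ the four surrounding spins either have both diagonals agreeing (a $c$-type vertex, contributing $1$ to $n_5+n_6$ and $0$ to the other two terms) or exactly one diagonal agrees, in which case precisely one of $e_z\in\theta(\sigma^\bullet)$ or $e_z^*\in\theta(\sigma^\circ)$ holds. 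Summing over $z$ and using $|\theta(\sigma^\bullet)|=|\omega(\sigma^\bullet)|$ gives the identity.

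Item~(ii) then follows by summing (i) over $\sigma^\circ$. Compatibility $\sigma^\circ\perp\xi$ requires $\sigma^\circ$ to be constant on every cluster of $\xi$ in $\calD^\circ$, and the $++$ boundary condition pins $\sigma^\circ\equiv+1$ on the boundary cluster(s); with the boundary wired as in $\xi^1$, there are exactly $2^{k(\xi^1)-1}=\tfrac{1}{2}\cdot 2^{k(\xi^1)}$ such $\sigma^\circ$, a count independent of $\sigma^\bullet$ once the condition $\sigma^\bullet\perp\xi$ (equivalently $\omega(\sigma^\bullet)\subset\xi$) is imposed. Substituting this count into the expression from (i) reproduces the stated formula. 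Item~(iii) is the dual observation: summing (i) over $\sigma^\bullet$ instead, the $\sigma^\bullet$-dependence is collected into a purely $\xi$-dependent factor $\sum_{\sigma^\bullet:\omega(\sigma^\bullet)\subset\xi}(c-1)^{-|\omega(\sigma^\bullet)|}$, so the conditional law of $\sigma^\circ$ given $\xi$ is uniform on the set of $\sigma^\circ\perp\xi$ taking value $+1$ on the boundary cluster. This uniform law is precisely the outcome of the sampling rule in (iii) --- assign $+1$ to the boundary cluster and an independent fair $\pm1$ to each other $\xi$-cluster.
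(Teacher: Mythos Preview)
Your proposal is correct and follows essentially the same route as the paper. The paper carries out the same factorisation $\bbP(\sigma,\xi)={\sf Spin}_{\calD,c}^{++}(\sigma)\cdot\bbP(\xi\mid\sigma)$, rewrites the indicators as compatibility conditions, and then invokes without further comment the identity ${\sf Spin}_{\calD,c}^{++}(\sigma)\propto c^{|E(\calD^\circ)|-|\omega(\sigma^\bullet)|-|\theta(\sigma^\circ)|}$ (equivalent to your vertex-count identity, since $|E(\calD^\circ)|=|V(\calD)|$); your explicit vertex-by-vertex justification of this identity via the ice rule is the only point where you are more detailed than the paper. Parts~(ii) and~(iii) are obtained in both cases by summing over $\sigma^\circ$ and $\sigma^\bullet$ respectively, exactly as you describe.
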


\begin{proof}
	$i)$ By the definition of~$\xi$, one has
	\[
		(\sigma, \xi) \propto {\sf Spin}_{\calD,c}^{++}(\sigma) \cdot \mathbbm{1}_{\omega(\sigma^\bullet) \subset \xi } \cdot \mathbbm{1}_{\theta(\sigma^\circ) \cap \xi = \emptyset}\cdot \left(\tfrac{c-1}{c}\right)^{|\xi \setminus \omega(\sigma^\bullet)|} \cdot \left(\tfrac{1}{c}\right)^{|E(\calD^\circ)\setminus (\xi \cup\theta(\sigma^\circ))|}.
	\]
	The indicators in the formula are equivalent to saying that~$\sigma$ and~$\xi$ are compatible. Since~${\sf Spin}_{\calD,c}^{++}(\sigma)$ is proportional to~$c^{|E(\calD^\circ)|- |\omega(\sigma^\bullet)| - |\theta(\sigma^\circ)|}$, the formula above turns into~\eqref{eq:fk-ising-coupling}.
	
	$ii)$ To show~\eqref{eq:marginal-fk-is}, we sum~\eqref{eq:fk-ising-coupling} over all~$\sigma$ compatible with~$\xi$ in two steps. First, we sum over all~$\sigma^\circ$ that are pluses at all boundary clusters of~$\xi$ and have a constant value at all other clusters of~$\xi$~--- this results in multiplication by~$2^{k(\xi^1)-1}$. Then, we sum over all~$\sigma^\bullet$ that are pluses at all boundary clusters of~$\xi^*$ and have a constant value at all other clusters of~$\xi^*$. This gives~\eqref{eq:marginal-fk-is}.
	
	$iii)$ By~\eqref{eq:fk-ising-coupling}, the marginal distribution of~$\sigma^\circ$ according to~${\sf Spin}_{\calD,c}^{++}$ conditioned on~$\xi$ is uniform on all spin configurations on~$\calD^\circ$ that are pluses at all boundary clusters of~$\xi$ and have a constant value at all other clusters of~$\xi$. This proves the claim.
\end{proof}

\begin{figure}
	\begin{center}
		\includegraphics[width=\textwidth]{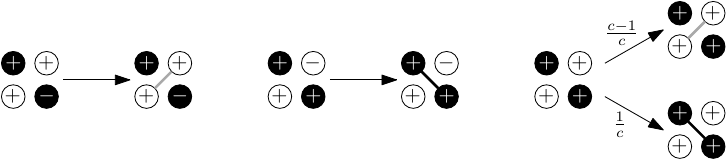}
		\caption{The FK--Ising representation~$\xi$ of the six-vertex model: if the spins of~$\sigma^\bullet$ disagree, then the edge is open in~$\xi$ (gray);  if the spins of~$\sigma^\circ$ disagree, then the edge is closed in~$\xi$ (the dual edge is shown in black); if the spins of~$\sigma^\bullet$ agree and the spins of~$\sigma^\circ$ agree, then the edge is open w.p.~$\tfrac{c-1}{c}$ and closed w.p.~$\tfrac{1}{c}$.}
		\label{fig:fk-ising}
	\end{center}
\end{figure}

In order to discuss the properties of the FK--Ising representation in the infinite volume for~$c>2$, we first state a straightforward consequence of Theorem~\ref{thm:heights-gibbs} for the thermodynamic limits of spin measures.

\begin{lemma}\label{lem:limits-c-greater-than-2}
	Let~$c>2$. Then, the measure~${\sf Spin}_{\calD,c}^{++}$, as~$\calD\nearrow \bbZ^2$, converges to a limiting measure~${\sf Spin}_c^{++}$ that is extremal, translation-invariant and exhibits a unique infinite cluster of pluses at even faces and a unique infinite cluster of pluses at odd faces; clusters of minuses (even and odd together) are exponentially small. 
	
	Similarly, for the limits~${\sf Spin}_c^{+-}$, ${\sf Spin}_c^{-+}$, ${\sf Spin}_c^{--}$ under the corresponding boundary conditions. In particular, the four limiting measures are distinct.
\end{lemma}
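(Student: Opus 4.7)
The plan is to derive the lemma by pushing forward the four height-function Gibbs states produced by Theorem~\ref{thm:heights-gibbs}(1) along the modulo-$4$ map $\pi_4\colon h\mapsto \sigma$ from Section~\ref{sec:spin-rep}. As recalled in Section~\ref{sec:spin-rep}, for every finite domain one has the identity
\begin{equation*}
	{\sf Spin}_{\calD,c}^{\alpha\beta} \;=\; \pi_4\#\,{\sf HF}_{\calD,c}^{n,n+1},
\end{equation*}
with $n\bmod 4\in\{0,1,2,3\}$ corresponding to $(\alpha\beta)\in\{++,\,-+,\,--,\,+-\}$ (read off from which residues produce sign $+$ on even/odd faces). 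Theorem~\ref{thm:heights-gibbs}(1) supplies ${\sf HF}_{\calD_k,c}^{n,n+1}\Rightarrow {\sf HF}_c^{n,n+1}$ along any exhausting sequence $\calD_k$, and since $\pi_4$ is continuous for the product topology, the push-forwards converge weakly, producing the four limit measures ${\sf Spin}_c^{\alpha\beta}:=\pi_4\#\,{\sf HF}_c^{n,n+1}$.

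The structural properties I would inherit from the height side. Parity-preserving translation invariance follows from the equivariance of $\pi_4$ under such translations. Extremality is inherited because $\pi_4$ is deterministic: the tail $\sigma$-algebra of ${\sf Spin}_c^{\alpha\beta}$ is the image of that of ${\sf HF}_c^{n,n+1}$, which is trivial by Theorem~\ref{thm:heights-gibbs}. For the exponential decay of the clusters of the ``wrong'' sign, I would use that any face $u$ with $\sigma(u)=-$ has $h(u)\notin\{n,n+1\}$, so $u$ lies in a ``bad'' height cluster whose diameter in augmented connectivity has exponential tails by~\eqref{eq:exponential tail decay}; the cluster of $u$ in $\{\sigma=-\}$ (in the standard $\bbZ^2$ connectivity, or in the coarser diagonal connectivity if one restricts to a single parity) is contained in this bad cluster and inherits the bound.

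For the infinite-cluster statement I would combine the existence of the infinite good region of the height function with a planar-percolation argument. The good region (the complement of all bad clusters) is ${\sf HF}_c^{n,n+1}$-a.s.\ infinite and consists, on even faces, of heights equal to whichever of $\{n,n+1\}$ is even -- a value whose residue mod $4$ produces sign $\alpha$ -- so it immediately exhibits at least one infinite $\alpha$-cluster of even faces. To upgrade this to uniqueness in diagonal connectivity, I would note that the $-\alpha$-clusters of even faces are a.s.\ finite (by Borel--Cantelli at annular scales from the exponential tail) and then apply the standard planar-duality step on the $\bbZ^2$-isomorphic lattice $(\bbZ^2)^\bullet$: this forces the a.s.\ existence of infinitely many disjoint $\alpha$-circuits of even faces around the origin, hence a unique infinite $\alpha$-cluster; the odd case is identical. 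Distinctness of the four measures then follows from the single-face spin expectations, since translation invariance and the unique infinite $\alpha$-cluster on even faces force ${\sf Spin}_c^{\alpha\beta}(\sigma(u))$ to carry the sign of $\alpha$ at every even $u$ and of $\beta$ at every odd $u$, which separates the four residue classes.

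The main technical point I expect is the mismatch between the augmented/triangular connectivity in which Theorem~\ref{thm:heights-gibbs} controls the bad-height clusters and the coarser diagonal connectivity $(\bbZ^2)^\bullet$ demanded by the lemma. The planar-circuit argument above is designed to bridge this gap, since it depends only on the finiteness of the $-\alpha$-clusters, which is automatic as soon as the containing bad height cluster is finite and requires no further refinement of the connectivity.
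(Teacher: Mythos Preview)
Your approach is exactly the one the paper uses: the paper's proof is a single line (``follows immediately from the properties of ${\sf HF}_c^{n,n+1}$ and the mapping between heights and spins''), and you are simply filling in the details of that pushforward. The convergence, translation invariance, extremality, and exponential smallness of minus clusters are all handled correctly.

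Two points deserve tightening. First, your uniqueness argument for the infinite $\alpha$-cluster on even faces via ``planar duality'' is not quite right as stated: on the lattice $(\bbZ^2)^\bullet$ (isomorphic to $\bbZ^2$), finiteness of all $(-\alpha)$-clusters in nearest-neighbour connectivity yields $\alpha$-circuits only in the \emph{dual} ($*$-)connectivity, not the same one. You can repair this either by invoking the exponential tail plus Borel--Cantelli on dyadic annuli (so that no $(-\alpha)$-cluster crosses large annuli, forcing $\alpha$-circuits), or --- more directly --- by noting that a diagonally-connected cluster of same-sign even faces necessarily has constant height (since diagonal neighbours differ by $0$ or $\pm 2$, and same sign forces same residue mod $4$); hence any infinite $\alpha$-cluster has height exactly $n$, and uniqueness is inherited from the unique infinite height-$n$ cluster already provided by Lemma~\ref{lem:therm-lim-heights-c-b}/Proposition~\ref{prop:therm-lim-heights}.

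Second, your distinctness argument via single-face spin expectations is circular in the paper's logical order: the strict positivity of ${\sf Spin}_c^{++}(\sigma(u))$ is established only later (via~\eqref{eq:bias of spin}), using the present lemma. Instead, argue directly from percolation: ${\sf Spin}_c^{\alpha\beta}$-a.s.\ there is an infinite $\alpha$-cluster and no infinite $(-\alpha)$-cluster on even faces (the latter because such a cluster would have height $\notin\{n,n+1\}$ and hence be finite), and similarly on odd faces; these tail events have $0/1$ probabilities that separate the four measures.
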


\begin{proof}
	Follows immediately from the properties of measures~${\sf HF}_c^{n,n+1}$ stated in Theorem~\ref{thm:heights-gibbs} and the mapping between heights and spins (Section \ref{sec:spin-rep}).
\end{proof}

The next corollary follows readily from Lemma~\ref{lem:sigma-given-eta-o} and extremality of~${\sf Spin}_c^{++}$.

\begin{corollary}\label{cor:sigma-given-fk-ising-infinite}
	Let $c>2$. The weak limit of~${\sf FKIs}_{\calD^\circ,c}$ as~$\calD\nearrow \bbZ^2$ exists. Denote it by~${\sf FKIs}_c$. Then, ${\sf FKIs}_c$ can be coupled with~${\sf Spin}_c^{++}$ in such a way that the joint law is supported on pairs of compatible configurations~$(\sigma,\xi)$ and satisfies the following properties:
	\begin{itemize}
		\item given~$\sigma$, if $\sigma(u)=\sigma(v)$ and~$\sigma(u^*)=\sigma(v^*)$ for a pair of dual edges $uv$ of $(\bbZ^{2})^{\circ}$ and $u^{*}v^{*}$ o f$(\bbZ^{2})^{\bullet}$, then~$\xi(uv)=1$ with probability~$(c-1)/c$;
		\item given~$\xi$, if~$\calC$ is a finite cluster of~$\xi$, the value of~$\sigma$ on~$\calC$ is constant plus or minus with probability~$1/2$, and the value of~$\sigma$ is fixed to be plus on infinite clusters of~$\xi$.
	\end{itemize}
	Moreover, the measure ${\sf FKIs}_c$ is extremal, translation-invariant and there exists a unique infinite cluster in~$\xi$ almost surely. Lastly, the following relation holds for any two odd faces~$u,v$ of~$\bbZ^2$:
	\begin{align}
	 	&{\sf Spin}_c^{++}(\sigma^\circ(u)) = {\sf FKIs}_c(u\xleftrightarrow{\xi} \infty)> 0,\label{eq:bias of spin}\\
		&{\sf Spin}_c^{++} (\sigma^\circ(u)\sigma^\circ(v)) = {\sf FKIs}_c(u\xleftrightarrow{\xi} v).\label{eq:correlations-sigma-as-xi}
	\end{align}
\end{corollary}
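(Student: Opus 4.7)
The plan is to realise $\xi$ from $\sigma$ through an auxiliary i.i.d.\ uniform $[0,1]$ family $(U_e)_{e\in E((\bbZ^2)^\circ)}$: set $\xi(e)=1$ if $\sigma^\bullet$ disagrees across $e^*$, $\xi(e)=0$ if $\sigma^\circ$ disagrees across $e$, and $\xi(e)=\1\{U_e<(c-1)/c\}$ otherwise, reproducing the construction of Figure~\ref{fig:fk-ising}. Since each $\xi(e)$ depends on finitely many coordinates of $(\sigma,(U_e))$, the map $(\sigma,(U_e))\mapsto \xi$ is continuous in the product topology, so Lemma~\ref{lem:limits-c-greater-than-2} (convergence of the spin marginals) yields weak convergence of the finite-volume joint law to a limit $\mu$ on $(\sigma,\xi)$ with $\sigma$-marginal ${\sf Spin}_c^{++}$. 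Define ${\sf FKIs}_c$ as the $\xi$-marginal. The first coupling bullet is immediate from the construction; translation invariance of ${\sf FKIs}_c$ is inherited from that of ${\sf Spin}_c^{++}$ and $(U_e)$; and any tail event for $\xi$ is a tail event for the independent product $(\sigma,(U_e))$, whose tail is trivial by extremality of ${\sf Spin}_c^{++}$ and Kolmogorov's $0$--$1$ law, giving extremality of ${\sf FKIs}_c$.

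For the identities~\eqref{eq:bias of spin}--\eqref{eq:correlations-sigma-as-xi} and the existence of an infinite cluster, I pass to the limit in the finite-volume equalities
\begin{align*}
{\sf Spin}_{\calD,c}^{++}(\sigma^\circ(u)) &= {\sf FKIs}_{\calD^\circ,c}\bigl(u\xleftrightarrow{\xi}\partial\calD^\circ\bigr),\\
{\sf Spin}_{\calD,c}^{++}(\sigma^\circ(u)\sigma^\circ(v)) &= {\sf FKIs}_{\calD^\circ,c}\bigl(u\xleftrightarrow{\xi}v\bigr) + {\sf FKIs}_{\calD^\circ,c}\bigl(u,v\text{ in distinct boundary clusters}\bigr),
\end{align*}
which are direct consequences of Lemma~\ref{lem:sigma-given-eta-o}(iii). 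The left-hand sides converge by weak convergence of the spin marginal. The inclusion $\{u\xleftrightarrow{\xi}\partial\calD^\circ\}\subseteq \{u\xleftrightarrow{\xi}\partial\Lambda_N^\circ\}$ for $\calD\supseteq \Lambda_N$, together with weak convergence of the cylinder event on the right and $N\to\infty$, yields $\limsup_\calD {\sf FKIs}_{\calD^\circ,c}(u\xleftrightarrow{\xi}\partial\calD^\circ)\le {\sf FKIs}_c(u\xleftrightarrow{\xi}\infty)$. Lemma~\ref{lem:limits-c-greater-than-2} gives ${\sf Spin}_c^{++}(\sigma^\circ(u))>0$, forcing ${\sf FKIs}_c(u\xleftrightarrow{\xi}\infty)>0$, and the extremality of ${\sf FKIs}_c$ upgrades this to almost-sure existence of an infinite $\xi$-cluster.

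The second coupling bullet and uniqueness are treated together. Conditional on $\sigma^\bullet$, the pair $(\sigma^\circ,\xi)$ is an Edwards--Sokal coupling of a ferromagnetic Ising model with $+$-boundary at infinity on the graph obtained from $(\bbZ^2)^\circ$ by contracting the edges of $\omega(\sigma^\bullet)$; this FK--Ising coupling satisfies FKG and finite energy in the limit, so Burton--Keane~\cite{BurKea89} gives at most one infinite cluster $\sigma^\bullet$-a.s., and the $+$-boundary FK--Ising identity forces $\sigma^\circ=+1$ on it. Averaging over $\sigma^\bullet$ preserves both properties. For a finite cluster, Lemma~\ref{lem:sigma-given-eta-o}(iii) applied to any cylinder event $\{\xi|_F=f\}$ pinning the cluster of a fixed site gives uniform $\pm 1$ for $\sigma^\circ$ there, and this passes to the limit by weak convergence. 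Substituting both behaviours into the decomposition $E_\mu[\sigma^\circ(u)]=E_\mu[\sigma^\circ(u)\1\{u\xleftrightarrow{\xi}\infty\}]$ yields~\eqref{eq:bias of spin}, and~\eqref{eq:correlations-sigma-as-xi} is analogous, the distinct-boundary-clusters term vanishing in the limit by uniqueness.

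\textbf{Main obstacle.} The delicate step is the Burton--Keane argument conditional on $\sigma^\bullet$, as the contracted graph $G(\sigma^\bullet)$ is random and not translation invariant. I would circumvent this by working directly with the marginal ${\sf FKIs}_c$: FKG follows from the positive association established for the dual configuration $\xi^*$ in the regime $c\geq 2$ (Proposition~\ref{prop:fkg-fk-ising}) together with duality, and finite energy follows from the explicit conditional formula in Lemma~\ref{lem:sigma-given-eta-o}(ii), so that Burton--Keane applies to ${\sf FKIs}_c$ itself. The argument then closes by the sandwich: the upper bound on $\limsup_\calD {\sf FKIs}_{\calD^\circ,c}(u\xleftrightarrow{\xi}\partial\calD^\circ)$, combined with the positivity of the limit and the uniform $\pm 1$ behaviour on finite clusters, pins down both ${\sf FKIs}_c(u\xleftrightarrow{\xi}\infty)$ and the $+1$-value of $\sigma^\circ$ on the (unique) infinite cluster.
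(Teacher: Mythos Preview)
Your construction of the coupling via auxiliary i.i.d.\ uniforms, the deduction of translation invariance and extremality of ${\sf FKIs}_c$, and the use of Burton--Keane for ``at most one infinite cluster'' all match the paper's approach. The one genuine gap is in your argument for \emph{existence} of an infinite cluster.

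You write that ``Lemma~\ref{lem:limits-c-greater-than-2} gives ${\sf Spin}_c^{++}(\sigma^\circ(u))>0$'' and then feed this into the $\limsup$ inequality to conclude ${\sf FKIs}_c(u\xleftrightarrow{\xi}\infty)>0$. But Lemma~\ref{lem:limits-c-greater-than-2} does \emph{not} assert the strict positivity of the single-spin expectation: it gives existence of an infinite plus-cluster at odd faces and exponential tails for minus-clusters, neither of which bounds $P(\sigma^\circ(u)=-1)$ below $1/2$ (a face can be an isolated minus). The strict inequality ${\sf Spin}_c^{++}(\sigma^\circ(u))>0$ is exactly the positivity part of~\eqref{eq:bias of spin}, which is one of the outputs of the corollary you are proving; invoking it here is circular. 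The paper closes this gap by arguing the contrapositive: if $\xi$ has no infinite cluster, then (by the ``finite clusters get uniform $\pm$'' half of bullet~2, which you do establish) the marginal of $\sigma^\circ$ under ${\sf Spin}_c^{++}$ is invariant under a global sign flip on odd faces, hence ${\sf Spin}_c^{++}={\sf Spin}_c^{+-}$. This contradicts Theorem~\ref{thm:heights-gibbs} (equivalently, the ``four limiting measures are distinct'' clause of Lemma~\ref{lem:limits-c-greater-than-2}). Once existence is secured this way, \eqref{eq:bias of spin} follows from the coupling description, and then \eqref{eq:correlations-sigma-as-xi} from uniqueness, as you outline.

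Two smaller remarks. First, Burton--Keane requires only translation invariance and finite energy, both of which you have for ${\sf FKIs}_c$ directly; bringing in Proposition~\ref{prop:fkg-fk-ising} is unnecessary (and note that proposition already concerns $\xi$, not $\xi^*$, so no duality step is needed). Second, your conditional-on-$\sigma^\bullet$ route for bullet~2 (``$+$-boundary FK--Ising at infinity on a random contracted graph'') is, as you yourself flag, delicate to make rigorous; the paper simply reads bullet~2 off as the infinite-volume limit of Lemma~\ref{lem:sigma-given-eta-o}(iii), where boundary clusters become the infinite cluster.
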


\begin{proof}
  Existence of the limiting measure ${\sf FKIs}_c$, existence of the coupling and the itemized statements follow immediately from the finite-volume coupling of Lemma~\ref{lem:sigma-given-eta-o} and the limiting properties of Lemma~\ref{lem:limits-c-greater-than-2}. The coupling satisfies that, conditioned on $\sigma$, the edges of $\xi$ are independent. Thus, extremality and translation invariance of ${\sf FKIs}_c$ follow from the corresponding properties of ${\sf Spin}_c^{++}$.

  We proceed to show the uniqueness of the infinite cluster. It is easy to see that~${\sf FKIs}_c$ satisfies the finite energy property. Thus, the argument of Burton and Keane~\cite{BurKea89} can be applied to show that ${\sf FKIs}_c$-a.s. there is at most one infinite cluster in~$\xi$. Assume in order to get a contradiction that there is no infinite cluster. Then the spin measure ${\sf Spin}_c^{++}$ is invariant to flipping the sign on all odd faces (by the finite-volume coupling of Lemma~\ref{lem:sigma-given-eta-o} and the limiting procedure). However, ${\sf Spin}_c^{++}$ is the push-forward of~${\sf HF}_{c}^{0,1}$, under the modulo $4$ mapping. The above contradicts Theorem~\ref{thm:heights-gibbs}, which shows that samples from ${\sf HF}_{c}^{0,1}$ have an infinite cluster in diagonal connectivity of height $1$ but do not have such an infinite cluster of any other odd height.

  Formula~\eqref{eq:bias of spin} follows from the finite-volume coupling of Lemma~\ref{lem:sigma-given-eta-o} and the existence of the infinite cluster. To deduce the formula~\eqref{eq:correlations-sigma-as-xi}, observe first that in finite volume, by Lemma~\ref{lem:sigma-given-eta-o}, the expectation of $\sigma^\circ(u)\sigma^\circ(v)$ is the probability that $u$ and $v$ are either connected by $\xi$ or are both in boundary clusters of $\xi$. Formula~\eqref{eq:correlations-sigma-as-xi} thus follows in infinite volume by the uniqueness of the infinite cluster.
\end{proof}

\begin{proof}[Proof of Item 1 of Theorem~\ref{thm:spins-gibbs}.] 
	Convergence to the infinite-volume limit, as well as existence and uniqueness of the infinite clusters, follow from Lem\-ma~\ref{lem:limits-c-greater-than-2}. The property in the second bullet follows from~\eqref{eq:bias of spin} and the fact that the spins at faces of the same parity are non-negatively correlated by Theorem~\ref{thm:fkg} (the theorem is stated in finite volume but extends to the infinite volume limiting measure).
\end{proof}

We point out that an alternative proof of the non-negative correlation of the spins at faces of the same parity could be obtained using positive association of ${\sf FKIs}_{\calD^\circ,c}$ (Proposition~\ref{prop:fkg-fk-ising} below), together with~\eqref{eq:bias of spin} and~\eqref{eq:correlations-sigma-as-xi}. However, such a proof would not apply in the entire regime $a+b<c$ since we establish Proposition~\ref{prop:fkg-fk-ising} only in the restricted range $c\ge\max\{2a,2b\}$. This is in contrast to the FKG property of the spins (Theorem~\ref{thm:fkg}) which applies whenever $c\ge\max\{a,b\}$.

We proceed to prove Corollary~\ref{cor:6v-gibbs}. We will view each configuration~$\vec{\omega}$ of the six-vertex model as an element in~$\{1,-1\}^{E(\bbZ^2)}$, where~$\vec{\omega}(e) = 1$ iff when following~$e$ in the direction that it is assigned in~$\vec{\omega}$, the even face bordering~$e$ is on the left.

\begin{proof}[Proof of Corollary~\ref{cor:6v-gibbs}]
	Let $c \geq 2$. Define~${\sf SixV}_{c}^{\circlearrowleft}$ as the push-forward of~${\sf Spin}_c^{++}$ to the six-vertex configurations under the mapping depicted on Figure~\ref{fig:6v-hom-config}. Similarly, ${\sf SixV}_{c}^{\circlearrowright}$ is defined as the push-forward of ${\sf Spin}_c^{+-}$. That measures ${\sf SixV}_{c}^{\circlearrowleft}$ and ${\sf SixV}_{c}^{\circlearrowright}$ are extremal and invariant under parity-preserving translations follows directly from the same properties of~${\sf Spin}_c^{++}$ and ${\sf Spin}_c^{+-}$ established in Theorem~\ref{thm:spins-gibbs}.
		
	It is also a straightforward consequence that ${\sf SixV}_{c}^{\circlearrowleft}$  and ${\sf SixV}_{c}^{\circlearrowright}$ are the only extremal flat Gibbs states.
	Indeed, let ${\sf SixV}_{c}$ be an extremal flat Gibbs state on arrow configurations.
	Associate with ${\sf SixV}_{c}$ a spin measure ${\sf Spin}_{c}$ in the following way: spin at $(0,0)$ is $+$ or $-$ with probability 1/2 and all other spins are obtained as the pushforward of ${\sf SixV}_{c}$.
	Then ${\sf Spin}_{c}$ is a Gibbs state for the spin representation of the six-vertex model.
	Since, ${\sf SixV}_{c}$ is flat, we know that ${\sf Spin}_{c}$ exhibits infinitely many circuits of alternating vertical and horizontal edges surrounded by even faces of constant spin and odd faces of constant spin.
	Then ${\sf Spin}_{c}$ is a mixture of the measures ${\sf Spin}_{c}^{++}$,  ${\sf Spin}_{c}^{+-}$,  ${\sf Spin}_{c}^{-+}$,  ${\sf Spin}_{c}^{--}$.
	The push-forward of these measures to arrow configurations gives ${\sf SixV}_{c}^{\circlearrowleft}$ and ${\sf SixV}_{c}^{\circlearrowright}$, which yields the statement.
	
	In particular, when $c=2$,  by Theorem \ref{thm:spins-gibbs}, one has  ${\sf Spin}_{c}^{++} = {\sf Spin}_{c}^{+-}$ --- and hence ${\sf SixV}_{c}^{\circlearrowleft} = {\sf SixV}_{c}^{\circlearrowright}$ is the unique flat Gibbs state.
	This finishes the proof for $c=2$.
	
	Below we assume that $c>2$.
	We will now prove that ${\sf SixV}_{c}^{\circlearrowleft}(A(e))> 1/2$, for every edge~$e\in E(\bbZ^2)$. 
	This is equivalent to showing that, for any pair of adjacent faces~$u$ (even) and $u^{*}$ (odd),
	\begin{equation}\label{eq:arrow-up-bias}
		{\sf Spin}_c^{++}(\sigma(u)\sigma(u^*)) > 0.
	\end{equation}
	
	Consider an even face~$v$ adjacent to~$u^*$ and the odd face~$v^*$ adjacent to~$u$ and~$v$.
	
	Take~$\lambda  > 0$ such that~$e^{\lambda/2}+e^{-\lambda/2} = c$. It follows from Lemma~\ref{lem:therm-lim-heights-c-b}, that the weak limit of~${\sf Spin}_{\calD,c}^{++;e^{\lambda/2}}$ over even domains exists and is equal to~${\sf Spin}_c^{++}$. 
	
	Take~$q:=(e^\lambda + e^{-\lambda})^2$, $p_c(q):=\tfrac{\sqrt{q}}{\sqrt{q}+1}$. By Corollary~\ref{cor:coupling-spins}, when~$\calD$ is an even domain, measures~${\sf Spin}_{\calD,c}^{++;e^{\lambda/2}}$ and~${\sf RC}_{\calD^\bullet,q, p_c(q)}^1$ can be coupled in such a way that the joint law is given by~\eqref{eq:coupling-edge-form-spin} and thus:
	\begin{itemize}
		\item if~$\sigma(u) \neq \sigma(v)$, then edge~$uv$ is closed;
		\item if~$\sigma(u^*) \neq \sigma(v^*)$, then edge~$uv$ is open;
		\item if~$\sigma(u) = \sigma(v) = \sigma(u^*) = \sigma(v^*)$, then edge~$uv$ is open with probability~$e^{\lambda/2}/c$ and closed with probability~$e^{-\lambda/2}/c$;
		\item if~$\sigma(u) = \sigma(v) \neq \sigma(u^*) = \sigma(v^*)$, then edge~$uv$ is open with probability~$e^{-\lambda/2}/c$ and closed with probability~$e^{\lambda/2}/c$.
	\end{itemize}
	Taking a thermodynamic limit, one obtains
	\begin{align}
		{\sf RC}_{q, p_c(q)}^1 (uv \text{ open})
		&= {\sf Spin}_c^{++}(\sigma(u^*)\neq \sigma(v^*)) \nonumber
		+ \tfrac{1}{c}e^{\lambda/2}\cdot {\sf Spin}_c^{++}(\sigma(u) \sigma(u^*) = \sigma(v) \sigma(v^*) =1) \\
		&+ \tfrac{1}{c}e^{-\lambda/2}\cdot {\sf Spin}_c^{++}(\sigma(u) \sigma(u^*) = \sigma(v) \sigma(v^*) = -1).\label{eq:6v-order-1}
	\end{align}
	Similarly, when~$\calD$ is an odd domain, measures~${\sf Spin}_{\calD,c}^{++;e^{\lambda/2}}$ are~${\sf RC}_{\calD^\circ,q, p_c(q)}^1$ are coupled and
	\begin{align}
		{\sf RC}_{q, p_c(q)}^{*,1} (u^*v^* \text{ closed})
		&= {\sf Spin}_c^{++}(\sigma(u^*)\neq \sigma(v^*)) \nonumber
		+ \tfrac{1}{c}e^{-\lambda/2}\cdot {\sf Spin}_c^{++}(\sigma(u) \sigma(u^*) = \sigma(v) \sigma(v^*) =1) \\
		&+ \tfrac{1}{c}e^{\lambda/2}\cdot {\sf Spin}_c^{++}(\sigma(u) \sigma(u^*) = \sigma(v) \sigma(v^*) = -1),\label{eq:6v-order-2}
	\end{align}
	where~${\sf RC}_{q, p_c(q)}^{*,1}$ stands for the wired random-cluster measure on odd faces. By duality,
	\[
		{\sf RC}_{q, p_c(q)}^{*,1} (u^*v^* \text{ closed}) = {\sf RC}_{q, p_c(q)}^0 (uv \text{ open}).
	\]
	Subtracting~\eqref{eq:6v-order-2} from~\eqref{eq:6v-order-1}, we obtain
	\begin{align}
		{\sf RC}_{q, p_c(q)}^1 (uv \text{ open}) 
		&- {\sf RC}_{q, p_c(q)}^0 (uv \text{ open})   \nonumber \\
		&= \tfrac{1}{c}(e^{\lambda/2} - e^{-\lambda/2}){\sf Spin}_c^{++}(\sigma(u) \sigma(u^*) + \sigma(v) \sigma(v^*)).\label{eq:difference-wried-free-bc}
	\end{align}
	It was proven in~\cite{DumGagHar16} that ${\sf RC}_{q, p_c(q)}^1 \neq {\sf RC}_{q, p_c(q)}^0$ when~$q>4$ (see Proposition~\ref{prop:rcm-input}). Positive association then implies that that the LHS of \eqref{eq:difference-wried-free-bc} is strictly positive. Also, by translation invariance of~${\sf Spin}_c^{++}$, we have~${\sf Spin}_c^{++}(\sigma(u) \sigma(u^*))= {\sf Spin}_c^{++}( \sigma(v) \sigma(v^*))$. Substituting this in \eqref{eq:difference-wried-free-bc} proves \ref{eq:arrow-up-bias}, since~$\lambda > 0$.
	
	To finish the proof for $c>2$, it remains to prove the inequality \eqref{eq:arrows-correlation}.
	We use that ${\sf SixV}_{c}^{\circlearrowleft}(A(e)) = {\sf SixV}_{c}^{\circlearrowleft}(A(f))$ by invariance, approximate the inequality by its finite volume analogue on even domains and rewrite it in terms of spins: 
	\begin{equation}\label{eq:arrows-correlation-spins}
		{\sf Spin}_{\calD,c}^{++;e^{\lambda/2}}(\sigma(u)\sigma(u^*) = 1 \, | \, \sigma(v)\sigma(v^*) = 1 ) \stackrel{?}{\geq}  {\sf Spin}_c^{++;e^{\lambda/2}}(\sigma(u)\sigma(u^*) = 1).
	\end{equation}
	When ${\sf Spin}_{\calD,c}^{++;e^{\lambda/2}}$ is coupled to ${\sf RC}_{\calD^\bullet,q, p_c(q)}^1$, the spins are assigned to different clusters independently, whence the inequality follows readily.	
\end{proof}

The next two propositions describe properties of the FK--Ising representation of the six-vertex model required for the proof of Theorem~\ref{thm:ashkin-teller} for the Ashkin--Teller model (Section~\ref{sec:proof-ashkin-teller}). We want to emphasize that, unlike other proofs in this section, the proofs of the propositions below do not extend to the full parameter regime $c\ge a+b$ but rather only to its subset $c\ge\max\{2a,2b\}$ (we do not know whether positive association holds whenever~$c\ge a+b$, however, the FKG lattice condition may fail when either~$2a > c $ or~$2b > c$).

\begin{proposition}[Positive association of~$\xi$]\label{prop:fkg-fk-ising}
	Let~$c\geq 2$ and~$\calD$ be a domain. Then, the measure~${\sf FKIs}_{\calD^\circ,c}$ satisfies the FKG lattice condition and hence is positively associated.
\end{proposition}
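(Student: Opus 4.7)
The plan is to verify the FKG lattice condition
\[
{\sf FKIs}_{\calD^\circ,c}(\xi \vee \eta)\,{\sf FKIs}_{\calD^\circ,c}(\xi \wedge \eta) \;\ge\; {\sf FKIs}_{\calD^\circ,c}(\xi)\,{\sf FKIs}_{\calD^\circ,c}(\eta)
\]
for all $\xi,\eta\in\{0,1\}^{E(\calD^\circ)}$. As in the proof of Proposition~\ref{prop:fkg-rc}, Theorem~(2.22) of~\cite{Gri06} reduces this to the case where $\xi$ and $\eta$ differ on exactly two edges $e,f\in E(\calD^\circ)$. Starting from formula~\eqref{eq:marginal-fk-is}, I would write
\[
{\sf FKIs}_{\calD^\circ,c}(\xi) \;\propto\; (c-1)^{|\xi|}\cdot 2^{k(\xi^1)}\cdot T(\xi),\qquad T(\xi):=\sum_{\sigma^\bullet:\,\omega(\sigma^\bullet)\subset\xi}(c-1)^{-|\omega(\sigma^\bullet)|},
\]
and handle the three factors separately.

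The edge factor $(c-1)^{|\xi|}$ is of product type and cancels on the two sides of the two-edge FKG inequality. The cluster factor $\xi\mapsto 2^{k(\xi^1)}$ is exactly the cluster-counting weight of the $q=2$ random-cluster model with wired boundary, so its log-supermodularity is precisely the content of the FKG computation in the proof of Proposition~\ref{prop:fkg-rc} (here for $q=q_b=2$). What is left is to show that $T$ is log-supermodular.

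The key identification is that $T(\xi)$ is a ferromagnetic Ising partition function. The constraint $\omega(\sigma^\bullet)\subset\xi$ means precisely that $\sigma^\bullet$ is constant on each cluster of $\xi^\ast$, so grouping the sum by the induced spin assignment on $\xi^\ast$-clusters yields $T(\xi)=Z(\beta;G(\xi))$, the Ising partition function on the multigraph $G(\xi)$ whose vertices are the $\xi^\ast$-clusters, whose edges are the duals $e^\ast$ of edges $e$ with $\xi(e)=1$, and with inverse temperature $\beta=\tfrac12\log(c-1)$; boundary $\xi^\ast$-clusters are pinned to $+1$. The hypothesis $c\geq 2$ is exactly what makes $\beta\geq 0$, so that the Ising model on $G(\xi)$ is ferromagnetic. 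I would then expand $T(\xi)$ via the Edwards--Sokal coupling as a sum over joint pairs $(\sigma^\bullet,\zeta)$ with $\zeta$ a $q=2$ FK configuration on $G(\xi)$, and reduce the required log-supermodularity of $T$ to a cluster-count comparison of the same form as that handled in Proposition~\ref{prop:fkg-rc}.

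The main obstacle will be the case analysis in this last step. The graph $G(\xi)$ itself depends on $\xi$: changing $\xi$ on $e$ or $f$ may merge or split $\xi^\ast$-clusters (hence merge or split vertices of $G(\xi)$) and simultaneously add or remove edges of $G(\xi)$. I would enumerate the combinatorial possibilities for how the endpoints of $e,f$ and of $e^\ast,f^\ast$ lie relative to the $\xi$- and $\xi^\ast$-clusters of the common part $\xi\wedge\eta$, and verify in each case that the two-edge FKG inequality for $T$ holds; this is where $c\geq 2$ (equivalently ferromagneticity) is used in an essential way, in accordance with the remark preceding the proposition that the inequality can fail when $2a>c$ or $2b>c$.
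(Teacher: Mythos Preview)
Your setup agrees with the paper: reduce to the two-edge lattice condition, factor the density as $(c-1)^{|\xi|}\cdot 2^{k(\xi^1)}\cdot T(\xi)$, note that the first two factors together form the $q=2$ random-cluster weight (hence log-supermodular), and identify $T(\xi)$ as a ferromagnetic Ising partition function on the quotient by $\xi^\ast$-clusters, with $\beta=\tfrac12\log(c-1)\ge 0$ precisely when $c\ge 2$. All of this is correct and matches the paper's argument.

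The divergence is in how you finish. You propose to expand each $T(\xi)$ by Edwards--Sokal on its own graph $G(\xi)$ and then do a case analysis on how vertex merges and edge additions in $G(\cdot)$ interact as you flip $e,f$. This is not wrong in principle, but it is the hard way, and you have not actually carried it out---``cluster-count comparison of the same form as Proposition~\ref{prop:fkg-rc}'' does not directly apply, because in that proposition the underlying graph is fixed while here the four graphs $G(\xi^{ef}),G(\xi^e_f),G(\xi^f_e),G(\xi_{ef})$ differ in their vertex sets.

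The paper avoids this entirely by a simple change of viewpoint: work on the \emph{single} graph $G(\xi^{ef})$ (quotient by $(\xi^{ef})^\ast$-clusters). Since the constraint $\omega(\sigma^\bullet)\subset\xi$ for any of the four configurations is just $\omega(\sigma^\bullet)\subset\xi^{ef}$ together with the additional agreement conditions $e\notin\omega(\sigma^\bullet)$ and/or $f\notin\omega(\sigma^\bullet)$, the four $T$-values are all (unnormalized) Ising expectations on this one graph. Dividing by $T(\xi^{ef})$, the needed inequality becomes
\[
\mathbb{P}\bigl(\sigma^\bullet(u_e)=\sigma^\bullet(v_e),\ \sigma^\bullet(u_f)=\sigma^\bullet(v_f)\bigr)
\;\ge\;
\mathbb{P}\bigl(\sigma^\bullet(u_e)=\sigma^\bullet(v_e)\bigr)\cdot
\mathbb{P}\bigl(\sigma^\bullet(u_f)=\sigma^\bullet(v_f)\bigr),
\]
which is exactly the second Griffiths inequality for the ferromagnetic Ising model. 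No case analysis, no tracking of how $G(\xi)$ changes.
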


\begin{proof}
	By~\cite[Thm~4.11]{Gri06}, it is enough that for any~$e,f\in E(\calD^\circ)$ and any edge-configuration~$\xi$ where both~$e$ and~$f$ are closed, the following holds:
	\[
		{\sf FKIs}_{\calD^\circ,c}^{++}(\xi^{e,f}) {\sf FKIs}_{\calD^\circ,c}^{++}(\xi_{e,f}) \geq {\sf FKIs}_{\calD^\circ,c}^{++}(\xi^e_f) {\sf FKIs}_{\calD^\circ,c}^{++}(\xi^f_e),
	\]
	where configurations~$\xi^{ef}$, $\xi_{ef}$, $\xi^e_f$, and~$\xi^f_e$ coincide on~$E(\calD^\circ)\setminus \{e,f\}$; edge~$e$ is open in~$\xi^{e,f}$ and~$\xi^e_f$ and closed in~$\xi_{ef}$ and~$\xi^f_e$; edge~$f$ is open in~$\xi^{ef}$ and~$\xi^f_e$ and closed in~$\xi_{e,f}$ and~$\xi^e_f$.
	
	Substitute~\eqref{eq:marginal-fk-is} in this inequality. The term~$(c-1)^{|\xi|}2^{k(\xi^1)}$ is just the usual FK--Ising measure (random-cluster with $q=2$) and it is standard (and follows from Proposition \ref{prop:fkg-rc}) that it satisfies the FKG lattice condition. Thus, it is enough to show that
	\[
		\frac{\sum_{\sigma^\bullet:\,  \omega(\sigma^\bullet)\subset \xi_{ef} } {\left(\tfrac{1}{c-1}\right)^{ |\omega(\sigma^\bullet)|}}}
		{\sum_{\sigma^\bullet:\,  \omega(\sigma^\bullet)\subset \xi^{ef}} {\left(\tfrac{1}{c-1}\right)^{ |\omega(\sigma^\bullet)|}} }
		\stackrel{?}{\geq}
		\frac{\sum_{\sigma^\bullet:\,  \omega(\sigma^\bullet)\subset\xi^e_f  } {\left(\tfrac{1}{c-1}\right)^{ |\omega(\sigma^\bullet)|}}}
		{\sum_{\sigma^\bullet:\,  \omega(\sigma^\bullet)\subset \xi^{ef}  } {\left(\tfrac{1}{c-1}\right)^{ |\omega(\sigma^\bullet)|}} }
		\cdot
		\frac{ \sum_{\sigma^\bullet:\,  \omega(\sigma^\bullet)\subset\xi^f_e  } {\left(\tfrac{1}{c-1}\right)^{ |\omega(\sigma^\bullet)|}}}
		{\sum_{\sigma^\bullet:\,  \omega(\sigma^\bullet)\subset \xi^{ef}  } {\left(\tfrac{1}{c-1}\right)^{ |\omega(\sigma^\bullet)|}} }.
	\]
	Let~$\bbP$ denote the Ising measure with parameter~$\beta = \tfrac12 \log (c-1)$ and plus boundary conditions on the graph obtained from~$\calD^\bullet$ after identifying all vertices belonging to the same connected component of~$(\xi^{e,f})^*$. Also, denote the endpoints of~$e^*$ (resp.~$f^*$) by~$u_e$ and $v_e$ (resp.~$u_f$ and~$v_f$). Then, the ratios can be written in terms of~$\bbP$ as follows:
	\[
		\bbP(\sigma^\bullet(u_e) = \sigma^\bullet(v_e),\sigma^\bullet(u_f) = \sigma^\bullet(v_f)) \stackrel{?}{\geq} \bbP(\sigma^\bullet(u_e) = \sigma^\bullet(v_e))\cdot  \bbP(\sigma^\bullet(u_f) = \sigma^\bullet(v_f)).
	\]
	The last inequality follows from the second Griffiths' inequality in the Ising model~\cite[Theorem 2]{Gri67} (see also~\cite[Theorem 2.3]{PelSpi17}) and thus holds whenever~$\beta \geq  0$, that is~$c \geq 2$.
\end{proof}

\begin{proposition}\label{prop:exp-decay-eta-o}
	Let~$c>2$. Then~${\sf FKIs}_c$-a.s. there exists an infinite cluster, while dual clusters are exponentially small~--- there exist~$M,\alpha>0$ such that, for any even faces~$u,v$,
	\[
		{\sf FKIs}_c(u\xleftrightarrow{\xi^*} v) \leq M e^{-\alpha |u-v|}.
	\]
\end{proposition}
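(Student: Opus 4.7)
The existence of an infinite $\xi$-cluster is already part of Corollary~\ref{cor:sigma-given-fk-ising-infinite}, so I will focus on the exponential decay of $\xi^{*}$-cluster diameters. The key starting observation is an ice-rule identity: at every vertex $z\in\bbZ^{2}$, at most one of the two diagonal pairs of faces around $z$ can disagree (otherwise no diagonal would carry matching spins, violating the spin version of the ice rule), so whenever $\sigma^{\bullet}$ disagrees on the two even faces at $z$, the corresponding edge $e$ of $\calD^{\circ}$ lies in $\omega(\sigma^{\bullet})$ and the FK--Ising rule forces $\xi(e)=1$, hence $\xi^{*}(e^{*})=0$. Thus every $\xi^{*}$-cluster is monochromatic in $\sigma^{\bullet}$, and the two possible signs can be handled separately.

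For a $\xi^{*}$-cluster of sign~$-$: its underlying $\sigma^{\bullet}=-$ component on even faces is contained in the augmented-connectivity cluster of heights different from $\{0,1\}$ under ${\sf HF}_{c}^{0,1}$, since $\sigma^{\bullet}=-$ on an even face forces $h\neq 0$ there. By Theorem~\ref{thm:heights-gibbs} this augmented cluster has exponentially decaying diameter tails, and so does the $\xi^{*}$-cluster.

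For a $\xi^{*}$-cluster of sign~$+$ the argument is more subtle. Inside the infinite $\sigma^{\bullet}=+$ region the same ice-rule case analysis shows that $\xi^{*}(e^{*})=1$ arises in exactly two scenarios: (a)~$\sigma^{\circ}$ disagrees at the endpoints of $e$, in which case the edge is deterministically open, or (b)~all four spins around $z$ agree on both diagonals, in which case $\xi^{*}(e^{*})=1$ conditional on $\sigma$ is an independent Bernoulli variable with parameter $1/c$. The type~(a) edges are confined to the boundary of the $\sigma^{\circ}=-$ clusters on odd faces, which have exponentially decaying diameters by the same application of Theorem~\ref{thm:heights-gibbs} to odd faces of height different from $1$. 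The type~(b) edges, conditional on $\sigma$, form a Bernoulli$(1/c)$ bond percolation on the relevant sub-lattice of $(\bbZ^{2})^{\bullet}\cong\bbZ^{2}$, which is strictly subcritical since $1/c<\tfrac{1}{2}=p_{c}^{\mathrm{bond}}(\bbZ^{2})$ whenever $c>2$, and hence has exponentially decaying cluster diameters. A Peierls-type argument then bounds the $\xi^{*}$-cluster diameter: any $\xi^{*}$-path from a given even face of length at least $n$ must either contain a Bernoulli$(1/c)$-percolation sub-path of linear length, or traverse a collection of bad neighbourhoods of $\sigma^{\circ}=-$ clusters of total diameter of order $n$, and both events have probability exponentially small in~$n$. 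The main technical obstacle is to coordinate the conditional independence of the random $\xi^{*}$-bits with the fluctuations of the underlying $\sigma^{\circ}$-geometry; this is handled by first exposing the $\sigma^{\circ}=-$ cluster structure (using its exponential-tail estimate as input) and then applying a standard subcritical-percolation bound to the conditionally-independent random edges outside the exposed region.
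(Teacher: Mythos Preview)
Your reduction to monochromatic $\sigma^\bullet$-regions is correct, and the $\sigma^\bullet=-$ case is handled cleanly: such a $\xi^*$-cluster sits inside a diagonal-connectivity $\sigma^\bullet=-$ cluster on even faces, hence inside an augmented-connectivity cluster of heights $\notin\{0,1\}$, and Theorem~\ref{thm:heights-gibbs} gives the exponential tail directly.

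The gap is in the $\sigma^\bullet=+$ case. Your Peierls dichotomy does not close. A long $\xi^*$-path may alternate between many short type~(b) stretches and many short type~(a) stretches without containing any single type~(b) segment of linear length, and the alternative event ``the $\sigma^\circ=-$ regions visited have total diameter of order~$n$'' is \emph{not} exponentially unlikely: Theorem~\ref{thm:heights-gibbs} bounds the diameter of each individual $\sigma^\circ=-$ cluster but says nothing about their \emph{density}, which equals $(1-{\sf Spin}_c^{++}(\sigma^\circ(u)))/2>0$ and in fact approaches $1/2$ as $c\downarrow 2$. Thus for $c$ close to $2$ the type~(a) domain-wall edges are dense while the type~(b) Bernoulli parameter $1/c$ is close to the critical value $1/2$; neither ingredient carries a quantitative margin, and you have supplied no mechanism showing that their union has exponentially decaying clusters. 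In effect you are trying to show subcriticality of a near-critical Bernoulli percolation sprinkled with a positive density of deterministic short-circuits, which is false in general without further input.

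The paper avoids this difficulty by using the positive association of $\xi$ (Proposition~\ref{prop:fkg-fk-ising}, valid precisely for $c\ge 2$). It first rules out an infinite $\xi^*$-cluster via the non-coexistence theorem of~\cite{DumRauTas17}, then proves an exponential bound on the probability that the infinite $\xi$-cluster misses $\Lambda_n$ (via the coupling to spins and~\eqref{eq:exponential tail decay}), and finally bootstraps this through FKG by building $\xi^*$-circuits out of $\xi^*$-crossings, yielding the exponential decay of dual connectivities. Your argument never invokes FKG for $\xi$, which is why it runs out of tools at the combination step.
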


\begin{proof}
	Since~${\sf FKIs}_c$ is obtained as a limit of finite-volume measures, Lemma~\ref{prop:fkg-fk-ising} implies that it satisfies positive association inequality for any increasing events of finite support. By Corollary~\ref{cor:sigma-given-fk-ising-infinite} measure~${\sf FKIs}_c$ is extremal. Hence, approximating any increasing events with increasing events of finite support and using the martingale convergence theorem, we get that~${\sf FKIs}_c$ is positive associated.

	Corollary~\ref{cor:sigma-given-fk-ising-infinite} showed that $\xi$ exhibits a unique infinite cluster almost surely. Denote it by~$\calC_\infty$. Applying the argument of Burton and Keane~\cite{BurKea89} to~$\xi^*$, we obtain that either ${\sf FKIs}_c$-a.s. there is no infinite cluster in~$\xi^*$ or ${\sf FKIs}_c$-a.s. there exists a unique infinite cluster in~$\xi^*$. The latter option, together with the translational invariance, positive association and existence of~$\calC_\infty$, contradicts~\cite[Theorem 1.5]{DumRauTas17}. Hence, ${\sf FKIs}_c$ exhibits no infinite dual cluster.
	
	Recall that an $\ell^1$ box of size~$n$ centered at the origin is denoted by~$\Lambda_n$. We now show
	\begin{equation}\label{eq:AT-exp-decay-interior-radius}
		{\sf FKIs}_c(\calC_\infty\cap \Lambda_n = \emptyset) \leq M'e^{-\alpha'n},
	\end{equation}
	for some~$M',\alpha'>0$. Indeed, assume the event in \eqref{eq:AT-exp-decay-interior-radius}, so that all clusters of~$\xi$ that intersect~$\Lambda_n$ are finite. Then, by Corollary~\ref{cor:sigma-given-fk-ising-infinite}, the distribution of~$\sigma^\circ$ on~$\Lambda_n$ conditioned on this realisation of~$\xi$ is invariant under a global sign flip. By duality then, two opposite sides of~$\Lambda_n$ are linked by a $\bbT^\circ$-crossing  (recall definitions above Lemma~\ref{lem:T-circuits}) of minus spins with probability at least 1/2.
	However, averaging over all $\xi$, we get ${\sf Spin}_{c}^{++}$, where this event has exponentially small probability --- by the pushforward of \eqref{eq:exponential tail decay} in Theorem \ref{thm:heights-gibbs} to spins.
	This gives \eqref{eq:AT-exp-decay-interior-radius}.
	
It remains to show that~\eqref{eq:AT-exp-decay-interior-radius} implies exponential decay of connectivities in~$\xi^*$. For any~$u\in \partial \Lambda_n$, let~$A_u$ be an event that~$u$ is connected to the origin by a path in~$\xi^*\cap \Lambda_n$. Define, $A_u':= A_u + u$ and~$A_u'':= A_u + 2u$. Combining the crossings and using the positive association (Proposition \ref{prop:fkg-fk-ising}) and translational invariance of~${\sf FKIs}_c$, we obtain
	\[
		{\sf FKIs}_c(0 \xleftrightarrow{\xi^*} 3u \text{ in }\Lambda_n\cup\Lambda_n(u)\cup\Lambda_n(2u)) \geq {\sf FKIs}_c(A_u)^3.
	\]
	Note that the crossing described above does not intersect~$\Lambda_n + \tfrac32\cdot n(1+i)$. Since~${\sf FKIs}_c$ is invariant under rotation by~$\tfrac{\pi}{2}$, we obtain bounds on existence of crossings~$3u \leftrightarrow 3u+3iu$, $3u + 3iu \leftrightarrow 3iu$, and~$3iu \leftrightarrow 0$ none of which intersects~$\Lambda_n + \tfrac32\cdot n(1+i)$. Combining these crossings and using the FKG inequality once again, we get
	\[
		{\sf FKIs}_c(\exists \text{ circuit } \Gamma \subset \xi^* \text{ surrounding } \Lambda_n + \tfrac32\cdot n(1+i)) \geq {\sf FKIs}_c(A_u)^{12}.
	\]
	The LHS of the above inequality is exponentially small by~\eqref{eq:AT-exp-decay-interior-radius}, then so is~${\sf FKIs}_c(A_u)$ and the proof is finished.
\end{proof}

\subsection{FKG for spins: proof of Theorem~\ref{thm:fkg}, Proposition~\ref{prop:monotone-c-b-spins}}
\label{sec:proof-fkg-spin-rep}

Theorem~\ref{thm:fkg} will follow from the next proposition.

\begin{proposition}[FKG lattice condition]\label{prop:FKG}
	Let~$\calD$ be a domain, $c\geq 1$ and~$\tau\in\{1,-1\}^{F(\bbZ^2)}$ be such that~$\tau$ is a plus at all odd faces. Then, for every~$\sigma_e,\sigma_e' \in\{1,-1\}^{F^\bullet(\bbZ^2)}$,
	\begin{equation}\label{eq:FKG}
		{\sf Spin}_{\calD,c}^\tau[\sigma_1^\bullet\vee {\sigma_2^\bullet}] \cdot {\sf Spin}_{\calD,c}^\tau[\sigma_1^\bullet\wedge \sigma_2^\bullet]\ge{\sf Spin}_{\calD,c}^\tau[\sigma_1^\bullet] \cdot {\sf Spin}_{\calD,c}^\tau[\sigma_2^\bullet].
	\end{equation}
\end{proposition}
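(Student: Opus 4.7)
The approach is to verify the FKG lattice condition~\eqref{eq:FKG} directly. By \cite[Theorem (2.22)]{Gri06}, it suffices to consider pairs $\sigma_1^\bullet,\sigma_2^\bullet$ that differ at exactly two even faces $u,w$. When $u,w$ share no vertex of $\calD$, the contributions to the marginal density coming from the vertices near $u$ and near $w$ decouple and the inequality becomes an equality, so the substantive case is when $u,w$ share one or two vertices of $\calD$.

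The key reformulation is to integrate out $\sigma^\circ$. Given $\sigma^\bullet$, the ice rule together with the assumption that $\tau\equiv +1$ on odd faces of $\partial_{\mathrm{ext}}\calD$ forces $\sigma^\circ$ to be constant on each connected component of the subgraph $(V(\calD^\circ),\omega(\sigma^\bullet))$ of $\calD^\circ$, with the component meeting the boundary pinned to $+1$. Summing the vertex weights over admissible $\sigma^\circ$ yields, using the section-wide assumption $a=b=1$,
\begin{equation*}
    {\sf Spin}_{\calD,c}^\tau[\sigma^\bullet] \;\propto\; c^{-|\theta(\sigma^\bullet)|/2}\cdot Z^{+}_{\mathrm{Ising}}\bigl(G(\sigma^\bullet)\bigr),
\end{equation*}
where $G(\sigma^\bullet)$ is the graph obtained from $\calD^\circ$ by contracting the edges in $\omega(\sigma^\bullet)$ and $Z^{+}_{\mathrm{Ising}}$ denotes the partition function of the ferromagnetic Ising model on $G(\sigma^\bullet)$ with uniform coupling constant $\tfrac12\log c$ and $+$ boundary condition on the component touching the boundary. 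The assumption $c\ge 1$ is precisely what makes this coupling nonnegative.

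With this representation, the FKG inequality reduces to a comparison of four Ising partition functions on the graphs $G(\sigma_1^\bullet\vee\sigma_2^\bullet), G(\sigma_1^\bullet\wedge\sigma_2^\bullet), G(\sigma_1^\bullet), G(\sigma_2^\bullet)$, modulated by an explicit factor of $c^{\pm 1/2}$ arising from the discrepancy in $|\theta(\cdot)|$ at the edge $(u,w)\in E(\calD^\bullet)$ (when it exists); this prefactor vanishes when $u,w$ do not share an edge in $\calD^\bullet$. The four graphs agree outside a small neighborhood of the shared vertices of $u,w$, where the sets of contracted versus non-contracted edges differ in a coordinated way across the four configurations. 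I would then prove the Ising comparison by coupling the four systems on a common underlying graph, representing each contraction as a $+\infty$ ferromagnetic coupling along the relevant edge, and invoking the second Griffiths inequality \cite{Gri67} -- which states that $\log$ of a ferromagnetic Ising partition function is a supermodular function of the edge couplings -- or equivalently by arguing via the FKG lattice condition for the FK-Ising random-cluster representation. The main obstacle is the case analysis at the shared vertex: the contraction statuses of the edges of $\calD^\circ$ dual to the edges of $\calD^\bullet$ incident to the shared vertex change in a correlated way, so setting up the Griffiths coupling and absorbing the combinatorial prefactor requires careful bookkeeping, with additional care needed for shared vertices on $\partial\calD$. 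The overall strategy closely parallels the proof of \cite[Theorem 2.6]{GlaMan18}.
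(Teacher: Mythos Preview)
Your representation of the marginal as $c^{-|\theta(\sigma^\bullet)|/2}\, Z^{+}_{\mathrm{Ising}}(\calD^\circ/\omega(\sigma^\bullet))$ is correct, but both of the subsequent steps contain genuine gaps.

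\textbf{The decoupling claim is false.} After integrating out $\sigma^\circ$ the marginal on $\sigma^\bullet$ is \emph{not} a product of local vertex contributions: the Ising partition function on the contracted graph is a global object, and contractions near $u$ and near $w$ interact through the bulk even when $u,w$ share no vertex of $\calD$. The lattice inequality is therefore nontrivial for every pair $u,w$. In the paper's dual formulation (see below) the case of non-adjacent $u,w$ reduces to the positive correlation of the two decreasing FK--Ising events ``all edges incident to $u$ are closed'' and ``all edges incident to $w$ are closed'', which is not an equality in general.

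\textbf{Supermodularity does not apply as stated.} Flipping $\sigma^\bullet(u)$ from $+$ to $-$ removes from $\omega(\sigma^\bullet)$ the duals of edges joining $u$ to its $-$-neighbours and \emph{adds} the duals of edges joining $u$ to its $+$-neighbours. In the $+\infty$-coupling picture some edge couplings go up and others go down, so the four coupling vectors $J^{(\pm\pm)}$ do not form a sublattice: in general $J^{(+-)}\vee J^{(-+)}$ equals neither $J^{(++)}$ nor $J^{(--)}$. Supermodularity of $\log Z$ in the couplings therefore yields no direct comparison of $Z^{(++)}Z^{(--)}$ with $Z^{(+-)}Z^{(-+)}$. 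This is not a bookkeeping issue localised at a shared vertex; it already occurs for far-apart $u,w$ with mixed-sign neighbours.

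The paper circumvents this obstruction by a duality step. From the same Ising sum it passes (via the FK--Ising expansion on $\calD^\circ$ and then edge/face duality) to
\[
{\sf Spin}_{\calD,c}^\tau(\sigma^\bullet)\ \propto\ (c+1)^{-|\theta(\sigma^\bullet)|}\,Z_{\sf FK}\bigl(P(\sigma^\bullet)\bigr)\,Z_{\sf FK}\bigl(M(\sigma^\bullet)\bigr),
\]
where $Z_{\sf FK}$ is the free-boundary FK--Ising partition function at $p=2/(c+1)$ and $P,M$ are the subgraphs of $\calD^\bullet$ induced on the plus and minus sites of $\sigma^\bullet$. The key point is that the four graphs $P(\sigma_{\varepsilon\varepsilon'}^\bullet)$ are now related simply by adding or removing the vertices $u,w$, and the required inequality for the $P$-factors (and, separately, for the $M$-factors) is exactly positive association of the decreasing events $\{\xi^*\cap E_u=\emptyset\}$ and $\{\xi^*\cap E_w=\emptyset\}$ under the FK--Ising measure on $P(\sigma_{++}^\bullet)$. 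The factorisation into plus and minus parts, made possible by duality, is the idea missing from your outline; without it the non-monotone dependence of $\omega(\sigma^\bullet)$ on $\sigma^\bullet(u)$ blocks a direct Griffiths argument on $\calD^\circ$.
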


	Recall the pair of dual graphs~$\calD^\bullet$ and~$\calD^\circ$ introduced in Section~\ref{sec:coupling}. The proof goes through the FK--Ising and the dual FK--Ising representations on these graphs~--- we use that each of the terms in~\eqref{eq:FKG2} can be interpreted as the partition function of an FK model with free boundary conditions on the set of all pluses of~$\sigma^\bullet$ times the same on the set of all minuses of~$\sigma^\bullet$, and we derive the claim from the FKG inequality for the FK model applied separately to these partition functions.
	
	Define~$\calE(\sigma^\bullet)$ as the set of all spin configurations~$\sigma^\circ$ on~$\calD^\circ$, for which~${\sf Spin}_{\calD,c}^\tau(\sigma^\bullet, \sigma^\circ)>0$ (in other words,~$\omega(\sigma^\bullet)\cap \theta(\sigma^\circ) = \emptyset$ and~$\sigma^\circ$ is a plus at all corners of~$\calD$).

\begin{proof}[Proof of Proposition~\ref{prop:FKG}]
	By~\cite[Theorem (2.22)]{Gri06}, it is enough to show~\eqref{eq:FKG} for any two configurations which differ in exactly two places i.e., that for any $\sigma^\bullet\in\{-1,1\}^{F^\bullet(\bbZ^2)}$, that coincides with~$\tau$ outside of~$\calD$, and for any $u, v\in F^\bullet(\calD)$,
	\begin{align}\label{eq:FKG2}
		{\sf Spin}_{\calD,c}^\tau[\sigma^\bullet_{++}] \cdot {\sf Spin}_{\calD,c}^\tau[\sigma^\bullet_{-\, -}] \ge {\sf Spin}_{\calD,c}^\tau[\sigma^\bullet_{+-}] \cdot {\sf Spin}_{\calD,c}^\tau[\sigma^\bullet_{-+}] ,
	\end{align}
	where $\sigma^\bullet_{\varepsilon\varepsilon'}$ is the configuration coinciding with $\sigma^\bullet$ except (possibly) at $u$ and $v$, and such that $\sigma^\bullet_{\varepsilon\varepsilon'}(u)= \varepsilon$ and $\sigma^\bullet_{\varepsilon\varepsilon'}(v)= \varepsilon'$.
	
	By definition, the marginal of~${\sf Spin}_{\calD,c}^\tau$ on even spins can be written as:
	\begin{align}
		{\sf Spin}_{\calD,c}^\tau(\sigma^\bullet)
		&= \frac{1}{Z}\sum_{\sigma^\circ\in \calE(\sigma^\bullet)} c^{| E(\calD^\circ)\setminus(\omega(\sigma^\bullet)\cup \theta(\sigma^\circ))|}
		= \frac{1}{Z} \sum_{\sigma^\circ\in \calE(\sigma^\bullet)}\sum_{\substack{\xi\subset E(\calD^\circ)\setminus \theta(\sigma^\circ)\\ \xi\supset \omega(\sigma^\bullet)}} (c-1)^{|\xi \setminus \omega(\sigma^\bullet)|} \label{eq:fkg-spins-marginal-1}\\
		&= \frac{1}{Z} \sum_{\xi \supset \omega(\sigma^\bullet)}\sum_{\substack{\sigma^\circ\in \calE(\sigma^\bullet):\\
		\theta(\sigma^\circ) \cap \xi = \emptyset}}(c-1)^{|\xi| - |\omega(\sigma^\bullet)|} = \frac{1}{Z} \left(\tfrac{1}{c-1}\right)^{\omega(\sigma^\bullet)}\sum_{\xi \supset \omega(\sigma^\bullet)}(c-1)^{|\xi|} 2^{k(\xi^1)-1}, \nonumber		
	\end{align}
	where the 1st equality holds since edges not belonging to~$\omega(\sigma^\circ)\cup \theta(\sigma^\circ)$ are exactly those contributing~$c$ to the probability of a configuration; to obtain the 2nd equality, we develop~$c=(c-1)+1$ (and the resulting~$\xi$ is in fact the FK--Ising representation defined in Section~\ref{sec:fk-ising}); the 3rd equality is obtained by exchanging the order of summation; the 4th equality uses the fact that every non-boundary cluster of~$\xi$ receives in~$\sigma^\circ$ a constant spin plus or minus independently. Configuration $\xi^1$ is obtained from~$\xi$ by \emph{wiring} (i.e., merging) all vertices corresponding to corners of~$\calD$.
	
	The sum on the RHS of \eqref{eq:fkg-spins-marginal-1} is a partition function of the FK--Ising model on~$\calD^\circ$ conditioned on all edges in~$\omega(\sigma^\bullet)$ being open. Performing the usual duality transformation and using that
	\[
		k(\xi^1)-1 = k(\xi^*) + |\xi^*| - |E(\calD^\bullet)|- |V(\calD^\bullet)|
	\]
	(follows from Euler's formula, can be proven by induction), we obtain
	\begin{align}
		{\sf Spin}_{\calD,c}^\tau(\sigma^\bullet)
		&= \frac{2^{-|V(\calD^\bullet)|}}{2Z}\cdot \left(\tfrac{c-1}{2}\right)^{|E(\calD^\bullet)|} \left(\tfrac{1}{c-1}\right)^{\omega(\sigma^\bullet)}\sum_{\xi^* \subset E(\calD^\bullet)\setminus \theta(\sigma^\bullet)}\left(\tfrac{2}{c-1}\right)^{|\xi^*|} 2^{k(\xi^*)} \nonumber \\
		&= \frac{1}{Z'} \left(\tfrac{1}{c+1}\right)^{\omega(\sigma^\bullet)}\sum_{\xi^* \subset E(\calD^\bullet)\setminus \theta(\sigma^\bullet)}\left(\tfrac{2}{c+1}\right)^{|\xi^*|}\left(\tfrac{c-1}{c+1}\right)^{|E(\calD^\bullet)\setminus \theta(\sigma^\bullet)| - |\xi^*|} 2^{k(\xi^*)},
		\label{eq:fkg-spins-marginal-2}
	\end{align}
	where~$Z'$ is the normalizing constant independent of~$\sigma^\bullet$.
	
	The sum on the RHS of \eqref{eq:fkg-spins-marginal-2} is the partition function of the FK--Ising model on~$\calD^\bullet\setminus\theta(\sigma^\bullet)$ with free boundary conditions and parameter~$p=\tfrac{2}{c+1}$. 
	We denote it by~$Z_{\sf FK}(\calD^\bullet\setminus\theta(\sigma^\bullet))$.
	It is clear that
	\[
		Z_{\sf FK}(\calD^\bullet\setminus\theta(\sigma^\bullet)) = Z_{\sf FK}(P(\sigma^\bullet)) \cdot Z_{\sf FK}(M(\sigma^\bullet)),
	\]
	where~$P(\sigma^\bullet)$ and~$M(\sigma^\bullet)$ are the subgraphs of~$\calD^\bullet$ spanned on the vertices having $\sigma^\bullet$-spin plus or minus, respectively. Then \eqref{eq:fkg-spins-marginal-2} takes form:
	\begin{align}
		{\sf Spin}_{\calD,c}^\tau(\sigma^\bullet) = \tfrac{1}{Z'}\cdot \left(\tfrac{1}{c+1}\right)^{|\theta(\sigma^\bullet)|}\cdot Z_{\sf FK}(P(\sigma^\bullet)) \cdot Z_{\sf FK}(M(\sigma^\bullet)),
		\label{eq:marginal-final}
	\end{align}
	Before inserting this into~\eqref{eq:FKG2}, note that
	\[
		|\theta(\sigma_{++}^\bullet)|+|\theta(\sigma_{-\, -}^\bullet)|-|\theta(\sigma_{+-}^\bullet)|-|\theta(\sigma_{-+}^\bullet)| = -2\cdot \mathbbm{1}_{u\sim v},
	\]
	where by~$u\sim v$ we mean that~$u$ and~$v$ are adjacent in~$\calD^\bullet$.
	Thus, it is enough to show the following two inequalities:
	\begin{align}\label{eq:FKG3}
		(c+1)^{\mathbbm{1}_{u\sim v}}\cdot \frac{Z_{\sf FK}(P(\sigma_{-\, -}^\bullet))}{Z_{\sf FK}(P(\sigma_{++}^\bullet))}
		&\stackrel{?}{\geq} \frac{Z_{\sf FK}(P(\sigma_{+-}^\bullet))}{Z_{\sf FK}(P(\sigma_{++}^\bullet))}\cdot \frac{Z_{\sf FK}(P(\sigma_{-+}^\bullet))}{Z_{\sf FK}(P(\sigma_{++}^\bullet))}, \\
		(c+1)^{\mathbbm{1}_{u\sim v}}\cdot \frac{Z_{\sf FK}(M(\sigma_{++}^\bullet))}{Z_{\sf FK}(M(\sigma_{-\, -}^\bullet))}
		&\stackrel{?}{\geq} \frac{Z_{\sf FK}(M(\sigma_{+-}^\bullet))}{Z_{\sf FK}(M(\sigma_{-\, -}^\bullet))}\cdot \frac{Z_{\sf FK}(M(\sigma_{-+}^\bullet))}{Z_{\sf FK}(M(\sigma_{-\, -}^\bullet))}.\nonumber
	\end{align}	
	We will show only the first inequality, as the second one is analogous.
	Each ratio in \eqref{eq:FKG3} can be linked to the probability of some event under FK--Ising measure on~$P(\sigma_{++}^\bullet)$. Indeed, graph~$P(\sigma_{-+}^\bullet)$ is obtained from~$P(\sigma_{++}^\bullet)$ by removing vertex~$u$ together with all edges that are incident to it. Let~$E_u$ (resp.~$E_v$) be the set of edges in~$P(\sigma_{++}^\bullet)$ incident to~$u$ (resp.~$v$). Then,
	\begin{align*}
		\frac{Z_{\sf FK}(P(\sigma_{-+}^\bullet))}{Z_{\sf FK}(P(\sigma_{++}^\bullet))}
		&= \frac{1}{Z_{\sf FK}(P(\sigma_{++}^\bullet))} \cdot \sum_{\xi^*\subset P(\sigma_{-+}^\bullet)} \left(\tfrac{2}{c+1}\right)^{|\xi^*|}\left(\tfrac{c-1}{c+1}\right)^{|E(P(\sigma_{-+}^\bullet))| - |\xi^*|} 2^{k(\xi^*)}\\
		&= \frac{1}{Z_{\sf FK}(P(\sigma_{++}^\bullet))} \cdot \sum_{\xi^*\subset P(\sigma_{++}^\bullet)} \left(\tfrac{2}{c+1}\right)^{|\xi^*|}\left(\tfrac{c-1}{c+1}\right)^{|E(P(\sigma_{-+}^\bullet))| - |\xi^*|} 2^{k(\xi^*)-1}\mathbbm{1}_{\xi^*\cap E_u=\emptyset},
	\end{align*}
	where we used that, when~$\xi^*\cap E_u=\emptyset$, the number of clusters in~$\xi^*$ gets increased by one when it is viewed as a spanning subgraph of~$P(\sigma_{++}^\bullet)$ instead of a spanning subgraph of~$P(\sigma_{-+}^\bullet)$ (since we need to count a singleton~$u$). Let~$\bbP_{\sf FK}$ denote the FK--Ising measure on~$P(\sigma_{++}^\bullet)$ with parameter~$\tfrac2{c+1}$. In order to write the RHS of the last equation as~$\bbP_{\sf FK}$-probability, it remains to substitute~$|E(P(\sigma_{-+}^\bullet))|$ with~$|E(P(\sigma_{++}^\bullet))|$. Since~$|E(P(\sigma_{++}^\bullet))| - |E(P(\sigma_{-+}^\bullet))| = |E_u|$, we obtain
	\[
		\frac{Z_{\sf FK}(P(\sigma_{-+}^\bullet))}{Z_{\sf FK}(P(\sigma_{++}^\bullet))}  = \tfrac12 \left(\tfrac{c+1}{c-1}\right)^{|E_u|} \bbP_{\sf FK}(\xi^*\cap E_u=\emptyset)
	\]
	Similarly,
	\begin{align*}
		\frac{Z_{\sf FK}(P(\sigma_{+-}^\bullet))}{Z_{\sf FK}(P(\sigma_{++}^\bullet))}  &= \tfrac12 \left(\tfrac{c+1}{c-1}\right)^{|E_v|}\bbP_{\sf FK}(\xi^*\cap E_v=\emptyset) ,\\
		\frac{Z_{\sf FK}(P(\sigma_{-\, -}^\bullet))}{Z_{\sf FK}(P(\sigma_{++}^\bullet))}  &= \tfrac14 \left(\tfrac{c+1}{c-1}\right)^{|E_u\cup E_v|}\bbP_{\sf FK}(\xi^*\cap (E_u\cup E_v) =\emptyset).
	\end{align*}
	Substituting this in~\eqref{eq:FKG3} and using that~$|E_u| + |E_v| - |E_u\cup E_v| = \mathbbm{1}_{u\sim v}$, we get that it is enough to show
	\begin{equation}\label{eq:FKG4}
		(c-1)^{\mathbbm{1}_{u\sim v}}\bbP_{\sf FK}(\xi^*\cap (E_u \cup E_v) =\emptyset)\stackrel{?}{\geq} \bbP_{\sf FK}(\xi^*\cap E_u=\emptyset) \cdot \bbP_{\sf FK}(\xi^*\cap E_v=\emptyset).
	\end{equation}
	If~$u\not\sim v$, then the last inequality takes from of positive association inequality for the FK--Ising model, which is well known (see eg.~\cite[Thm. (3.8)]{Gri06} and also Section~\ref{sec:rcm-input} above).
	
	Assume that~$u\sim v$. Dividing all probabilities in~\eqref{eq:FKG4} by~$\bbP_{\sf FK}(uv\not\in\xi^*)$ and rewriting them as conditional probabilities, we obtain that it is enough to show that
	\begin{align*}
		\frac{c-1}{\bbP_{\sf FK}(uv\not\in\xi^*)}\cdot &\bbP_{\sf FK}(\xi^*\cap (E_u \cup E_v) =\emptyset \, | \, uv\not\in\xi^*)\\
		&\stackrel{?}{\geq} \bbP_{\sf FK}(\xi^*\cap E_u=\emptyset\, | \, uv\not\in\xi^*) \cdot \bbP_{\sf FK}(\xi^*\cap E_v=\emptyset\, | \, uv\not\in\xi^*).
	\end{align*}
	Since the conditional probability~$\bbP_{\sf FK}(\cdot \, | \, uv\not\in\xi^*)$ is equal to the FK--Ising measure on~$P(\sigma_{++}^\bullet)\setminus \{uv\}$, it is also positively associated. Thus, in order to finish the proof, it remains to show
	\[
		c-1 \stackrel{?}{\geq} \bbP_{\sf FK}(uv\not\in\xi^*).
	\]
	Recall that the parameter of the FK--Ising measure is equal to~$\tfrac{2}{c+1}$. Pairing up edge-configurations on~$P(\sigma_{++}^\bullet)$ that coincide everywhere except at~$uv$, one obtains
	\[
		2\cdot \tfrac{c-1}{c+1} \cdot \bbP_{\sf FK}(uv\in\xi^*) \geq \tfrac{2}{c+1}\cdot \bbP_{\sf FK}(uv\not\in\xi^*),
	\]
	whence the claim follows readily.
\end{proof}

\begin{corollary}\label{cor:fkg-spins}
	Let~$\calD$ be an even domain, $c\geq 1$ and~$c_b \geq  0$. Then the marginal distribution of~${\sf Spin}_{\calD}^{++;c_b}$ on~$\sigma^\bullet$ satisfies the FKG lattice condition.
\end{corollary}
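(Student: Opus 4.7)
The plan is to adapt the proof of Proposition~\ref{prop:FKG} to accommodate the modified boundary vertex weight. As in that proof, by \cite[Theorem (2.22)]{Gri06} it is enough to verify the FKG lattice condition for pairs of configurations $\sigma^\bullet_{\pm\pm}$ differing at exactly two faces $u,v\in F^\bullet(\calD)$, in the spirit of~\eqref{eq:FKG2}:
\[
{\sf Spin}_{\calD,c}^{++;c_b}[\sigma^\bullet_{++}] \cdot {\sf Spin}_{\calD,c}^{++;c_b}[\sigma^\bullet_{-\,-}] \geq {\sf Spin}_{\calD,c}^{++;c_b}[\sigma^\bullet_{+-}] \cdot {\sf Spin}_{\calD,c}^{++;c_b}[\sigma^\bullet_{-+}].
\]
I would then compute the marginal of $\sigma^\bullet$ under ${\sf Spin}_{\calD,c}^{++;c_b}$ by summing over admissible $\sigma^\circ$, repeating the derivation of \eqref{eq:fkg-spins-marginal-1}. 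The only difference from Proposition~\ref{prop:FKG} is that each vertex $z\in\partial_V\calD$ contributes weight $c_b$ (rather than $c$) when $z$ is of type 5/6. Since $\calD$ is even, every $z\in\partial_V\calD$ has a unique adjacent face in $\calD$, and that face is odd; hence the modification affects only the boundary edges of $\calD^\circ$ in the FK--Ising-type expansion.

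Expanding $c=(c-1)+1$ at interior vertices and $c_b=(c_b-1)+1$ at boundary vertices (for $c_b\geq 1$) and performing the duality transformation as in \eqref{eq:fkg-spins-marginal-2}, the marginal takes a factorized form analogous to~\eqref{eq:marginal-final}:
\[
{\sf Spin}_{\calD,c}^{++;c_b}(\sigma^\bullet) \;\propto\; \left(\tfrac{1}{c+1}\right)^{|\theta(\sigma^\bullet)|}\,\widetilde{Z}_{\sf FK}(P(\sigma^\bullet))\,\widetilde{Z}_{\sf FK}(M(\sigma^\bullet)),
\]
where $\widetilde{Z}_{\sf FK}$ denotes a modified FK--Ising partition function, differing from $Z_{\sf FK}$ only through altered weights on the edges dual to $\partial_V\calD$. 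Equivalently, $\widetilde{Z}_{\sf FK}$ corresponds to a random-cluster measure with $q=2$ and a modified boundary cluster weight $q_b=q_b(c_b)$.

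The remainder of the argument proceeds as in the end of the proof of Proposition~\ref{prop:FKG}: the desired inequality is reduced to a positive association statement of the form~\eqref{eq:FKG4} for the measure $\widetilde{Z}_{\sf FK}$ applied to the decreasing events $\{\xi^*\cap E_u=\emptyset\}$ and $\{\xi^*\cap E_v=\emptyset\}$. Positive association of $\widetilde{Z}_{\sf FK}$ is supplied by Proposition~\ref{prop:fkg-rc} as long as the resulting $q_b$ lies in $[1,2]$. The main technical hurdle will be checking this admissibility explicitly in terms of $c_b$ and handling the endpoints $c_b\in\{0,\infty\}$ and the range $c_b\in[0,1)$: in these regimes the naive $(c_b-1)+1$ expansion is not probabilistic, but the cases can be treated either by fixing the forced values of boundary odd spins and reducing to Proposition~\ref{prop:FKG} on a smaller even domain (for $c_b\in\{0,\infty\}$), or by a limit argument, since the FKG lattice condition is preserved under pointwise convergence of probability measures on a finite state space.
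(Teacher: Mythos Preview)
Your overall plan—rerunning the proof of Proposition~\ref{prop:FKG} while tracking the boundary modification—matches the spirit of the paper, which states the result as a corollary of Proposition~\ref{prop:FKG} without an explicit proof. However, there are two genuine gaps in your execution.

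First, your identification of $\widetilde Z_{\sf FK}$ with a random-cluster model carrying a modified boundary-cluster weight $q_b$ is incorrect. Replacing the coupling constant $c$ by $c_b$ on the edges at vertices of $\partial_V\calD$ produces, after the expansion and duality of~\eqref{eq:fkg-spins-marginal-1}--\eqref{eq:fkg-spins-marginal-2}, an FK-Ising model on $\calD^\bullet\setminus\theta(\sigma^\bullet)$ with \emph{inhomogeneous edge probabilities}: $p=2/(c+1)$ on interior edges and $p'=2/(c_b+1)$ on the boundary edges of $\calD^\bullet$ (those connecting two even corners). This is not the $q_b$-model of Section~\ref{sec:rcm-results}, and Proposition~\ref{prop:fkg-rc} is the wrong tool. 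The fix is easy: the random-cluster model with $q=2$ and arbitrary edge probabilities $p_e\in[0,1]$ satisfies the FKG lattice condition by the standard argument (e.g.\ \cite[Theorem~(3.8)]{Gri06}), and this is all that~\eqref{eq:FKG4} requires. Note also that boundary edges of $\calD^\bullet$ connect even corners, which are fixed to~$+$, so they always lie in $P(\sigma^\bullet)$ and never in $M(\sigma^\bullet)$; hence only the $P$-inequality in~\eqref{eq:FKG3} is affected, and $E_u,E_v$ for $u,v\in F^\bullet(\calD)$ never contain boundary edges.

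Second, your handling of $c_b\in[0,1)$ does not work. For $c_b<1$ one has $p'=2/(c_b+1)>1$, so the inhomogeneous FK picture breaks down. Your limit argument does not help either: having the lattice condition for all $c_b\ge 1$ says nothing about $c_b<1$ by continuity alone. The clean way to close this gap uses a symmetry specific to even domains. Since $\calD$ is even, one checks that every vertex $z\in\calD\setminus\partial_V\calD$ has both adjacent odd faces inside $\calD$, so the interior weight $c^{n_5'+n_6'}$ is invariant under $\sigma^\circ\mapsto-\sigma^\circ$ on $F^\circ(\calD)$; each $z\in\partial_V\calD$ has exactly one adjacent odd face inside $\calD$, so $n_b(-\sigma^\circ)=|\partial_V\calD|-n_b(\sigma^\circ)$. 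Summing over $\sigma^\circ$ then shows that the marginal of ${\sf Spin}_{\calD,c}^{++;c_b}$ on $\sigma^\bullet$ coincides with that of ${\sf Spin}_{\calD,c}^{++;1/c_b}$, reducing $c_b\in(0,1)$ to $1/c_b>1$. The endpoint $c_b=0$ then follows by an actual limit, or directly since it corresponds to ${\sf Spin}_{\calD\setminus\partial_V\calD,c}^{+-}$, whose $\sigma^\bullet$-marginal agrees with that of ${\sf Spin}_{\calD\setminus\partial_V\calD,c}^{++}$ by the same symmetry.
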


This readily implies Proposition~\ref{prop:monotone-c-b-spins}.

\begin{proof}[Proof of Proposition~\ref{prop:monotone-c-b-spins}]
	The proof of Proposition~\ref{prop:monotone-c-b-heights} given in Section~\ref{sec:fkg-heights} can be adapted mutatis mutandis using the FKG inequality stated in Corollary~\ref{cor:fkg-spins}.
\end{proof}

\subsection{Proof of Item 2 of Theorem~\ref{thm:spins-gibbs}}
\label{sec:same-measure-c-2}

\begin{proof}
Let~$\tau\in\calE_{\sf spin}(\bbZ^2)$ be a constant plus at all odd faces. By Theorem~\ref{thm:fkg},
\begin{equation}\label{eq:spins-gibbs-st-dom}
	{\sf Spin}_{\calD,2}^{+-} \stackrel[{\sf even}]{}{\preceq} {\sf Spin}_{\calD,2}^{\tau} \stackrel[{\sf even}]{}{\preceq} {\sf Spin}_{\calD,2}^{++},
\end{equation}
where by~$\stackrel[{\sf even}]{}{\preceq}$ we mean the stochastic domination of the marginals on the spin configurations at even faces. We start by proving that~${\sf Spin}_{\calD,2}^{+-}$ and~${\sf Spin}_{\calD,2}^{++}$ converge to the same limit and then we show how this implies that the limit of~${\sf Spin}_{\calD,2}^{\tau}$ is also the same.

By Corollary~\ref{cor:coupling-c-2}, measure~${\sf Spin}_{\calD,2}^{++}$ can be obtained from the random-cluster measure ${\sf RC}_{\calD^\bullet,4,p_c(4)}^2$ by assigning plus to all boundary clusters and assigning plus or minus independently with probability~$1/2$ to all other clusters. By Proposition~\ref{prop:rcm-input}, the limit of~${\sf RC}_{\calD^\bullet,4,p_c(4)}^2$, as~$\calD\nearrow\bbZ^2$, exists, is the unique random-cluster Gibbs measure~${\sf RC}_{4,p_c(4)}$ with parameters~$q=4$, $p=p_c(4)$ and exhibits infinitely many primal and dual clusters surrounding the origin. Then, the infinite-volume limit of~${\sf Spin}_{\calD,2}^{++}$ also exists, can be obtained from~${\sf RC}_{4,p_c(4)}$ by assigning plus or minus independently with probability~$1/2$ to every cluster and thus exhibits infinitely many circuits of even (or odd) faces having constant spin plus (or minus). Denote this measure by~${\sf Spin}_{2}$. 

Similarly, the infinite-volume limit of~${\sf Spin}_{\calD,2}^{+-}$ is also equal to measure~${\sf Spin}_{2}$.
Extremality of measure~${\sf Spin}_{2}$ and invariance under all translations follow from the same properties of the random-cluster measure~${\sf RC}_{4,p_c(4)}$.

By~\eqref{eq:spins-gibbs-st-dom}, the above immediately implies that the limit of the marginal distribution of~${\sf Spin}_{\calD,c}^{\tau}$ on the spin configurations at even faces exists and is equal to the corresponding marginal of~${\sf Spin}_{2}$. In particular, for any~$\varepsilon,N>0$, when~$\calD$ is large enough,
\[
	{\sf Spin}_{\calD,2}^{\tau}(\exists \text{ circuits } C_+ \text{ and } C_- \text{ surrounding } \Lambda_N \text{ s.t. } \sigma^\bullet(C_+)= 1, \, \sigma^\bullet(C_-)= -1) > 1-\varepsilon,
\]
where by~$\sigma^\bullet(C_+)= 1$ and~$\sigma^\bullet(C_-)= -1$ we mean that~$\sigma^\bullet$ is constant plus at~$C_+$ and constant minus at~$C_-$. When this occurs, the ice-rule implies existence of a circuit~$C$ of odd faces between~$C_+$ and~$C_-$, on which~$\sigma$ is constant plus or minus. Then there exists a simple cyclic path~$\gamma$ on~$\bbZ^2$ between~$C$ and~$C_+$ such that all even faces bordering~$\gamma$ have constant spin plus and all odd faces bordering~$\gamma$ have constant spin (plus or minus). This implies that for any fixed~$n>0$, when~$N$ is large enough, the restriction of~${\sf Spin}_{\calD,c}^{\tau}$ to the box~$\Lambda_n$ is~$\varepsilon$-close to the restriction of the  measure~${\sf Spin}_{2}$ to the same box. Letting~$\varepsilon$ tend to zero finishes the proof.
\end{proof}

\section{The Ashkin--Teller model on the self-dual curve}
\label{sec:proof-ashkin-teller}

In this section we prove Theorem~\ref{thm:ashkin-teller}.

The main tool in the proof is the FK--Ising-type representation~${\sf FKIs}_c$ introduced in Section~\ref{sec:fk-ising} that allows to transfer to the Ashkin--Teller model the results established in Theorem~\ref{thm:spins-gibbs} for the spin representation of the six-vertex model.

In the next proposition we describe a coupling between the six-vertex and the Ashkin--Teller models; see Figure~\ref{fig:coupling-AT-SixV}. Recall graphs~$\calD^\bullet$ and~$\calD^\circ$ dual to each other and the notion of compatible spin and edge configurations (Section~\ref{sec:coupling}).

\begin{proposition}\label{prop:coupling-AT-6V}
	Let~$\calD$ be a domain on~$\bbZ^2$. Let~$J,U\in \bbR$ be such that~$\sinh 2J = e^{-2U}$. Take~$c = \coth 2J$. Let~$(\tau,\tau')$, $\xi$, and~$(\sigma^\bullet,\sigma^\circ)$ be random variables distributed according to measures~${\sf AT}_{\calD^\bullet,J,U}^{\mathrm{free},+}$, ${\sf FKIs}_{\calD^\circ,c}$, and~${\sf Spin}_{\calD,c}^{++}$, respectively. Then, these random variables can be coupled in such a way that their joint law takes the form:
	\begin{equation}\label{eq:coupling-AT-6V}
		(\tau,\tau',\xi,\sigma^\bullet,\sigma^\circ) \propto 2^{-k(\xi^*)} (c-1)^{|\xi| - |\omega(\sigma^\bullet)|}\mathbbm{1}_{\tau\tau'=\sigma^\bullet}\mathbbm{1}_{\tau\perp \xi^*}\mathbbm{1}_{\sigma^\bullet\perp \xi^*}\mathbbm{1}_{\sigma^\circ\perp \xi},
	\end{equation}
	where by~$\tau\perp \xi^*$ we mean that~$\tau$ has a constant value on every cluster of~$\xi^*$, and similarly for other spin and edge configurations. As before, $k(\xi^{*})$ is the number of connected components in $\xi^{*}$ and $\omega(\sigma^{\bullet})$ is he set of edges of $\calD^{\circ}$ separating opposite spins in $\sigma^{\bullet}$.
\end{proposition}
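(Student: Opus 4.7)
The plan is to verify that the expression on the right-hand side of~\eqref{eq:coupling-AT-6V}, suitably normalized, defines a probability measure on quintuples $(\tau,\tau',\xi,\sigma^\bullet,\sigma^\circ)$ whose two key marginals are as required: the marginal on $(\sigma^\bullet,\sigma^\circ,\xi)$ should agree with the joint law from Lemma~\ref{lem:sigma-given-eta-o}(i) (which already couples ${\sf Spin}_{\calD,c}^{++}$ with ${\sf FKIs}_{\calD^\circ,c}$), and the marginal on $(\tau,\tau')$ should agree with ${\sf AT}_{\calD^\bullet,J,U}^{\mathrm{free},+}$. I first sum over $(\tau,\tau')$ for fixed compatible $(\sigma^\bullet,\sigma^\circ,\xi)$ with $\sigma^\bullet\perp\xi^*$ and $\sigma^\circ\perp\xi$: the constraint $\tau\tau'=\sigma^\bullet$ expresses $\tau'=\tau\sigma^\bullet$, and since both $\tau$ and $\sigma^\bullet$ are constant on each $\xi^*$-cluster so is $\tau'$, so the only substantive constraint is $\tau\perp\xi^*$. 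The free-plus boundary condition $\tau=\tau'$ on the internal boundary of $\calD^\bullet$ reduces to $\sigma^\bullet=+$ there, which is automatic under the $++$ spin boundary. Hence $\tau$ is free on each of the $k(\xi^*)$ connected components of $\xi^*$, giving $2^{k(\xi^*)}$ admissible pairs; the factor $2^{-k(\xi^*)}$ normalizes the sum to $1$ and reproduces exactly the joint law of Lemma~\ref{lem:sigma-given-eta-o}(i).

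\textbf{Reduction of the $(\tau,\tau')$ marginal.} Now fix $(\tau,\tau')$, so $\sigma^\bullet=\tau\tau'$ is determined. Summing $\sigma^\circ\perp\xi$ with $\sigma^\circ=+$ on boundary corners contributes a factor $2^{k_i(\xi)}$, as in Lemma~\ref{lem:sigma-given-eta-o}(ii). The combined indicators $\1_{\tau\perp\xi^*}\1_{\sigma^\bullet\perp\xi^*}$ are equivalent to $\1_{\xi\supset\omega(\tau)\cup\omega(\tau')}$, since constancy of $\tau$ and $\tau\tau'$ on a $\xi^*$-cluster forces constancy of $\tau'$. Applying the Euler identity derived in the proof of Theorem~\ref{thm:coupling}, namely
\begin{equation*}
k(\xi^*) \;=\; k(\xi) + |\xi| - |E(\calD^\bullet)| + |V(\calD^\bullet)| - 1,
\end{equation*}
together with the wiring of corner vertices so that $k_b(\xi)=1$, the combined cluster factor satisfies $2^{-k(\xi^*)+k_i(\xi)}=2^{-|\xi|}$ up to a constant depending only on $\calD$.

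\textbf{Matching edge weights.} Partition $E(\calD^\bullet)=A\sqcup B\sqcup C\sqcup D$ according to whether neither (A), only $\tau'$ (B), only $\tau$ (C), or both (D) change across the edge; then $|\omega(\tau)\cup\omega(\tau')|=|B|+|C|+|D|$ and $|\omega(\tau\tau')|=|B|+|C|$. Writing $\xi=(\omega(\tau)\cup\omega(\tau'))\sqcup\xi_0$ with $\xi_0$ free on the complementary edges, the sum over $\xi_0$ factorizes edgewise and equals $((c+1)/2)^{|A|}$. Collecting the factors, one obtains $P(\tau,\tau')\propto (c-1)^{|D|}(c+1)^{|A|}$ up to a $(\tau,\tau')$-independent constant. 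The parameter relations $c=\coth 2J$ and $\sinh 2J=e^{-2U}$ give $c-1=e^{2U-2J}$ and $c+1=e^{2U+2J}$, whence
\begin{equation*}
P(\tau,\tau') \;\propto\; e^{2J(|A|-|D|)}e^{2U(|A|+|D|)} \;\propto\; e^{2J(|A|-|D|)}e^{-2U(|B|+|C|)},
\end{equation*}
where the last step uses that $|A|+|B|+|C|+|D|=|E(\calD^\bullet)|$ is a constant. A direct case check against~\eqref{eq:Ashkin-Teller measure def} confirms that the last expression is proportional to the Ashkin--Teller edge weight of $(\tau,\tau')$, completing the verification.

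\textbf{Main obstacle.} The delicate point is the boundary bookkeeping. The global quantities $k(\xi^*)$ and $k_b(\xi)$ both depend on how the corner vertices of $\calD^\bullet$ and $\calD^\circ$ are connected in $\xi^*$ and $\xi$, so one must carefully align three boundary conventions: the $++$ condition for $(\sigma^\bullet,\sigma^\circ)$, the wiring of corner vertices implicit in ${\sf FKIs}_{\calD^\circ,c}$ (visible in Lemma~\ref{lem:sigma-given-eta-o}(ii) through the term $2^{k(\xi^1)}$), and the free-plus condition for $(\tau,\tau')$. Once these are matched, the boundary-cluster contributions become $(\tau,\tau')$-independent constants that disappear into the normalization; the rest of the proof is the bulk computation described above.
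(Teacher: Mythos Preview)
Your proposal is correct and follows essentially the same route as the paper: first sum over $(\tau,\tau')$ to recover the joint law~\eqref{eq:fk-ising-coupling} of Lemma~\ref{lem:sigma-given-eta-o}(i), then fix $(\tau,\tau')$, sum out $\sigma^\circ$, use Euler's formula to collapse the cluster factors to $\bigl(\tfrac{c-1}{2}\bigr)^{|\xi|}$, sum over $\xi$ edge-by-edge, and finally match the resulting edge weights $(c+1)^{|A|}(c-1)^{|D|}$ to the Ashkin--Teller weights via $c\pm 1=e^{2U\pm 2J}$. The paper carries out exactly these steps; your $A/B/C/D$ partition is just a convenient repackaging of the paper's case analysis of edge contributions, and the boundary bookkeeping you flag is handled there by working directly with $k(\xi^1)$ (the wired count) in the Euler identity $k(\xi^1)-k(\xi^*)=|V(\calD^\circ)|-|\xi|$ rather than invoking the unwired formula from Theorem~\ref{thm:coupling} and then adjusting.
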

  
\begin{figure}
	\begin{center}
		\includegraphics[width=\textwidth]{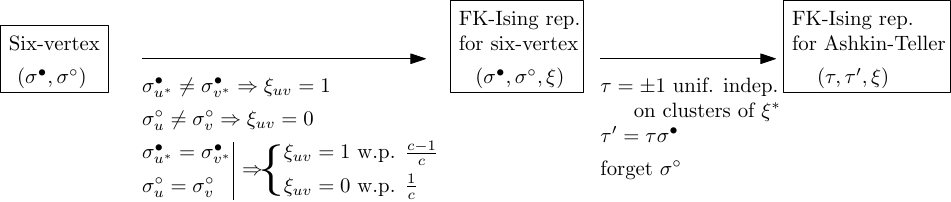}
		\caption{Coupling between the six-vertex and the Ashkin--Teller models through the FK--Ising representation.}
		\label{fig:coupling-AT-SixV}
	\end{center}
\end{figure}

\begin{proof}
	First, note that, given~$\sigma^\bullet$ and~$\tau$, the value of~$\tau'$ is uniquely determined by~$\tau\tau'=\sigma^\bullet$. Also, there are exactly~$2^{k(\xi^*)}$ different spin configurations~$\tau$ that are constant on every cluster of~$\xi^*$ (two possibilities per cluster). Thus, summing~\eqref{eq:coupling-AT-6V} over all possible values of~$(\tau,\tau')$, we obtain~\eqref{eq:fk-ising-coupling}. By Lemma~\ref{lem:sigma-given-eta-o}, the latter gives~${\sf FKIs}_{\calD^\circ,c}$ and~${\sf Spin}_{\calD,c}^{++}$ as marginals.
	
	It remains to show that the marginal of the distribution defined by~\eqref{eq:coupling-AT-6V} on~$(\tau,\tau')$ coincides with~${\sf AT}_{\calD^\bullet,J,U}^{\mathrm{free},+}$. Substituting~$\sigma^\bullet$ with~$\tau\tau'$ and summing~\eqref{eq:coupling-AT-6V} over all spin configurations~$\sigma^\circ$ on~$\calD^\circ$ that have a constant value at each cluster of~$\xi$, which is fixed to be a plus at boundary clusters, we obtain
	\[
		(\tau,\tau',\xi) \propto 2^{k(\xi^1)-k(\xi^*)} (c-1)^{|\xi| - |\omega(\tau\tau')|}\mathbbm{1}_{\tau\perp \xi^*}\mathbbm{1}_{\tau\tau'\perp \xi^*}.
	\]
	Recall that~$k(\xi^1)-k(\xi^*) = |V(\calD^\circ)| - |\xi|$ by Euler's formula, whence the joint distribution takes form
	\begin{equation}\label{eq:coupling-AT-fk-ising}
		(\tau,\tau',\xi) \propto \left(\tfrac{c-1}{2}\right)^{|\xi|}  \left(\tfrac1{c-1}\right)^{|\omega(\tau\tau')|}\mathbbm{1}_{\tau\perp \xi^*}\mathbbm{1}_{\tau\tau'\perp \xi^*}.
	\end{equation}
	Note that, since~$\tau$ and~$\tau\tau'$ have constant values on the clusters of~$\xi^*$, then an edge~$e\in E(\calD^\circ)$ can be closed in~$\xi$ only if both~$\tau$ and~$\tau'$ have constant value at the endpoints of~$e^*$, and otherwise the edge~$e$ must be open in~$\xi$. Thus, summing over edge configurations~$\xi\in\{0,1\}^{E(\calD^\circ)}$, edges belonging to~$\omega(\tau)\cup \omega(\tau')$ contribute~$\tfrac{c-1}{2}$ and other edges contribute~$\tfrac{c+1}{2}$. The distribution then takes form
	\[
		(\tau,\tau') \propto \left(\tfrac{c+1}{2}\right)^{|E(\calD^\circ)|-|\omega(\tau)\cup \omega(\tau')|} \left(\tfrac{c-1}{2}\right)^{\omega(\tau)\cup \omega(\tau')}\left(\tfrac1{c-1}\right)^{\omega(\tau\tau')}.
	\]
	Replacing~$\omega(\cdot)$ with~$\theta(\cdot)$ and multiplying the RHS by~$2^{|E(\calD^\circ)|}$, we obtain
	\[
		(\tau,\tau') \propto (c+1)^{|E(\calD^\bullet)|-|\theta(\tau)\cup \theta(\tau')|} (c-1)^{\theta(\tau)\cup \theta(\tau')}\left(\tfrac1{c-1}\right)^{\theta(\tau\tau')}.
	\]
	Consider any~$u,v\in V(\calD^\bullet)$. If~$\tau(u)=\tau(v)$ and~$\tau'(u)=\tau'(v)$, then~$uv$ contributes~$c+1$ to the RHS. If~$\tau(u)\neq \tau(v)$ and~$\tau'(u)\neq \tau'(v)$, then~$uv$ contributes~$c-1$ to the RHS. Otherwise, $uv$ contributes~$(c-1)\cdot \tfrac1{c-1} = 1$ to the RHS. It remains to check that these contributions are the same in case of the Ashkin--Teller measure~${\sf AT}_{\calD^\bullet}^{\mathrm{free},+}$.
	
	Indeed, if~$\tau(u)=\tau(v)$ and~$\tau'(u)=\tau'(v)$, then~$uv$ contributes~$e^{2J+U}$ to~${\sf AT}_{\calD^\bullet}^{\mathrm{free},+}$. If~$\tau(u)\neq \tau(v)$ and~$\tau'(u)\neq \tau'(v)$, then~$uv$ contributes~$e^{-2J+U}$ to~${\sf AT}_{\calD^\bullet}^{\mathrm{free},+}$. Otherwise, $uv$ contributes~$e^{-U}$ to~${\sf AT}_{\calD^\bullet}^{\mathrm{free},+}$. Multiplying all these contributions by~$e^U$ and comparing to the above, we get that it is enough to check that
	\[
		e^{2J+2U} \stackrel{?}{=} c+1 \quad \text{and} \quad e^{-2J+2U} \stackrel{?}{=} c-1.
	\]	
	Since~$e^{-2U} = \sinh 2J$ and~$c=\coth 2J$, we have
	\begin{align*}
		e^{2J+2U} &= \tfrac{2e^{2J}}{e^{2J}-e^{-2J}}= \tfrac{e^{2J}+e^{-2J}}{e^{2J}-e^{-2J}}+1 = c+1,\\
		e^{-2J+2U} &= \tfrac{2e^{-2J}}{e^{2J}-e^{-2J}} = \tfrac{e^{2J}+e^{-2J}}{e^{2J}-e^{-2J}}-1 = c-1.\qedhere
	\end{align*}
\end{proof}

\begin{remark}\label{rem:fk-ising-rep}
	By~\eqref{eq:coupling-AT-fk-ising}, given~$\tau$ and~$\tau'$, the configuration~$\xi^*$ can be sampled at every edge~$uv$ independently in the following way: if~$\tau(u)\neq \tau(v)$ or~$\tau'(u)\neq \tau'(v)$, then~$uv\not\in \xi^*$; if~$\tau(u)=\tau(v)$ and~$\tau'(u) =\tau'(v)$, then~$uv\in\xi^*$ w.p.~$1-e^{-4J}$ and~$uv\not\in\xi^*$ w.p.~$e^{-4J}$. Thus, $\xi^*$ coincides with~$\left[\underline{n}^{-1}(0,0)\right]^c$ in~\cite{PfiVel97}.
\end{remark}

\begin{corollary}\label{cor:AT-given-eta-o}
	In the notation of Proposition~\ref{prop:coupling-AT-6V}, assign~$1$ or~$-1$ uniformly at random independently to every cluster of~$\xi^*$. Then, the obtained random spin configuration on~$\calD^\bullet$ has the same distribution as the marginal of~${\sf AT}_{\calD^\bullet}^{\mathrm{free},+}$ on~$\tau$ (or, equivalently, $\tau'$).
	
	In particular, the following holds:
	\begin{equation}\label{eq:corr-tau-eta-o}
		{\sf AT}_{\calD^\bullet}^{\mathrm{free},+}(\tau(u)\tau(v)) = {\sf FKIs}_{\calD^\circ,c}(u \xleftrightarrow{\xi^*} v).
	\end{equation}
\end{corollary}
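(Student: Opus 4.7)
The plan is to read everything off the joint law \eqref{eq:coupling-AT-6V} supplied by Proposition~\ref{prop:coupling-AT-6V}, so the argument is really a computation of marginals rather than a new input. First I would compute the $(\tau,\xi)$-marginal by summing the joint law over $\tau'$, $\sigma^\bullet$ and $\sigma^\circ$. The indicator $\1_{\tau\tau'=\sigma^\bullet}$ collapses the sum over $\tau'$ to a single term for each $(\tau,\sigma^\bullet)$, and the remaining sums
\begin{equation*}
\sum_{\sigma^\bullet\perp\xi^*}(c-1)^{-|\omega(\sigma^\bullet)|}\quad\text{and}\quad\sum_{\sigma^\circ\perp\xi}1
\end{equation*}
depend only on $\xi$. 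Denoting their product by $G(\xi)$, the only surviving factor involving $\tau$ is $\1_{\tau\perp\xi^*}$, whence
\begin{equation*}
(\tau,\xi)\ \propto\ \1_{\tau\perp\xi^*}\cdot G(\xi).
\end{equation*}

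Next, I would summing the resulting expression over all $\tau$ compatible with $\xi^*$: this multiplies $G(\xi)$ by the factor $2^{k(\xi^*)}$, so the $\xi$-marginal is proportional to $2^{k(\xi^*)}G(\xi)$. By construction this marginal is ${\sf FKIs}_{\calD^\circ,c}$ (as asserted in Proposition~\ref{prop:coupling-AT-6V}). The conditional law of $\tau$ given $\xi$ is therefore uniform on the $2^{k(\xi^*)}$ spin configurations that are constant on each cluster of $\xi^*$, which is precisely the procedure described in the statement: sample $\xi$, then label each cluster of $\xi^*$ by an independent uniform $\pm 1$. Since Proposition~\ref{prop:coupling-AT-6V} identifies the $\tau$-marginal of the coupling with the $\tau$-marginal of ${\sf AT}_{\calD^\bullet}^{\mathrm{free},+}$, the first part of the corollary follows. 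Note that the boundary condition ``$\tau=\tau'$ on $\partial\calD^\bullet$'' is built in via $\sigma^\bullet=\tau\tau'=+1$ on the boundary (inherited from ${\sf Spin}_{\calD,c}^{++}$), and the individual value of $\tau$ on boundary clusters of $\xi^*$ is genuinely free, consistently with the uniform cluster labeling.

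Finally, for the correlation identity~\eqref{eq:corr-tau-eta-o} I would condition on $\xi$. If $u\xleftrightarrow{\xi^*}v$, then $u$ and $v$ are in the same cluster and the uniform labeling gives $\tau(u)=\tau(v)$ almost surely, so $\tau(u)\tau(v)=1$. If $u\not\xleftrightarrow{\xi^*}v$, then $\tau(u)$ and $\tau(v)$ are independent uniform $\pm 1$, so the conditional expectation of $\tau(u)\tau(v)$ vanishes. Taking the expectation over $\xi$ under ${\sf FKIs}_{\calD^\circ,c}$ yields the stated identity. There is no real obstacle here beyond keeping track of the marginalization bookkeeping; the entire statement is an immediate consequence of the explicit form of the joint law in Proposition~\ref{prop:coupling-AT-6V}.
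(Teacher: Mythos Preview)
Your proposal is correct and follows essentially the same approach as the paper's proof. The paper states the argument in a single sentence---that for a fixed value of $\tau\tau'$, the coupling~\eqref{eq:coupling-AT-6V} does not depend on the value of $\tau$ on clusters of $\xi^*$---while you spell out the marginalization explicitly; both amount to the same observation that after summing out $\tau'$, $\sigma^\bullet$, $\sigma^\circ$ the only $\tau$-dependence in the joint law is the indicator $\1_{\tau\perp\xi^*}$.
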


\begin{proof}
	This follows from Proposition~\ref{prop:coupling-AT-6V} since, for a fixed value of~$\tau\tau'$, the coupling~\eqref{eq:coupling-AT-6V} (or~\eqref{eq:coupling-AT-fk-ising}) does not depend on the value of~$\tau$ at clusters of~$\xi^*$.
\end{proof}

We now have all the ingredients to finish the proof of Theorem~\ref{thm:ashkin-teller}.

\begin{proof}[Proof of Theorem~\ref{thm:ashkin-teller}]
	We start by showing that a limit of~${\sf AT}_{\calD^\bullet,J,U}^{\mathrm{free},+}$ as~$\calD\nearrow\bbZ^2$ exists. Consider the coupling~$(\tau,\tau',\xi,\sigma^\bullet,\sigma^\circ)$ introduced in~Proposition~\ref{prop:coupling-AT-6V}. Then, the joint distribution of~$(\tau\tau', \xi)$ coincides with the joint distribution of~$(\sigma^\bullet,\xi)$. By Corollary~\ref{cor:sigma-given-fk-ising-infinite}, the distribution of $\xi$ converges to an extremal measure that exhibits a unique infinite cluster. Also, the limit of the joint distribution of~$(\sigma^\bullet,\xi)$ is obtained by assigning $1$ to the infinite cluster and assigning $1$ or $-1$ uniformly at random independently to every finite cluster. Then, the limiting joint distribution is also extremal.
	
	By Corollary~\ref{cor:AT-given-eta-o}, given~$\tau\tau'$ and~$\xi$, the distribution of~$\tau$ according to the coupling~\eqref{eq:coupling-AT-fk-ising} is obtained by assigning spins~$1$ and~$-1$ uniformly at random independently to different clusters of~$\xi^*$. By Proposition~\ref{prop:exp-decay-eta-o}, the clusters of~$\xi^*$ are finite, and thus, the joint law of~$(\tau,\tau',\xi)$ described by~\eqref{eq:coupling-AT-fk-ising} also converges to an extremal measure. Taking the marginal on~$(\tau,\tau')$, we obtain convergence of the Ashkin--Teller measures to an extremal limit denoted by~${\sf AT}_{J,U}^{\mathrm{free},+}$, which is also translation-invariant.
	
	It is easy to see that~$\Omega_k$ defined in Section~\ref{sec:ashkin-teller} coincides with~$\calD^\bullet$, when~$\calD$ is the square~$[-k-\tfrac12,k+\tfrac12]\times [-k-\tfrac12,k+\tfrac12]$. Thus, the above implies that the sequence of measures~${\sf AT}_{\Lambda_k^\bullet,J,U}^{\mathrm{free},+}$ converges to~${\sf AT}_{J,U}^{\mathrm{free},+}$.
	
	By our construction, the marginal of~${\sf AT}_{J,U}^{\mathrm{free},+}$ on~$\tau\tau'$ coincides with the marginal of~${\sf Spin}_c^{++}$ on~$\sigma^\bullet$. The distribution of~$\sigma^\bullet$ coincides with that of~$\sigma^\circ$ shifted by one. Thus, by~\eqref{eq:correlations-sigma-as-xi}, for any two even faces~$u,v$ of~$\bbZ^2$,
	\[
		{\sf AT}_{J,U}^{\mathrm{free},+}(\tau(u)\tau'(u)\tau(v)\tau'(v)) = {\sf FKIs}_c ((u+(0,1))\xleftrightarrow{\xi} (v+(0,1))) \geq {\sf FKIs}_c ((0,1)\xleftrightarrow{\xi} \infty)^2  >0,
	\]
	where we used translation-invariance of~${\sf FKIs}_c$ and that, by Proposition~\ref{prop:exp-decay-eta-o}, ${\sf FKIs}_c$ exhibits a unique infinite cluster.
	
	This proves the bound~\eqref{eq:AT-corr-bounded}. Similarly, by~\eqref{eq:corr-tau-eta-o}, the exponential decay of connectivities in~$\xi^*$ established in Proposition~\ref{prop:exp-decay-eta-o} implies~\eqref{eq:AT-corr-exp-decay}.
\end{proof}

\section{Discussion and open questions}\label{sec:open questions}

{\bf Ashkin--Teller model.} The segment of the self-dual curve~$\sinh 2J = e^{-2U}$ where $J\ge U$ can be shown to be critical using~\cite{DumRauTas17}\footnote{See a recent work~\cite{AouDobGla23} for the details.}. It is natural to try to extend the statement of Theorem~\ref{thm:ashkin-teller} to prove that no transition takes place on the self-dual curve when $J<U$, so that the critical line indeed splits into two critical curves immediately after~$J=U=\tfrac14\log 3$ (both transitions can be shown to be sharp using~\cite{DumRauTas17}).

\begin{question}\footnote{During the revision process, this question has been answered positively in~\cite{AouDobGla23}.}
	Fix~$\alpha < 1$. Show that the line~$J = \alpha U$ contains a non-trivial interval of parameters for which, under~${\sf AT}_{J,U}^{\mathrm{free},+}$, correlations of spins~$\tau$ and~$\tau'$ exhibit exponential decay while the product~$\tau\tau'$ is ferromagnetically ordered.
\end{question}
 
{\bf Height function with modified boundary weight.}
Fix~$a=b=1$ and~$c>2$. By Corollary~\ref{cor:therm-lim-heights-c-b}, the sequence of measures~${\sf HF}_{\calD_k,c}^{0,1;c_b}$, as~$\calD_k\nearrow\bbZ^2$, converges to~${\sf HF}_c^{0,1}$ when~$c_b\geq e^{\lambda/2}$ and to~${\sf HF}_c^{0,-1}$ when~$c_b \leq e^{-\lambda/2}$. What happens when~$c_b\in (e^{-\lambda/2},e^{\lambda/2})$?

\begin{question}
	Show that, as~$\calD_k\nearrow\bbZ^2$, the measure~${\sf HF}_{\calD_k,c}^{0,1;c_b}$ converges to~${\sf HF}_c^{0,1}$ when~$c_b>1$, to~${\sf HF}_c^{0,-1}$ when~$c_b<1$, and to the mixture~$\tfrac12({\sf HF}_c^{0,1}+ {\sf HF}_c^{0,-1})$ at~$c_b=1$.
\end{question}

{\bf FK model with modified boundary-cluster weight.}
Fix~$q>4$. By~\cite{DumGagHar16} (see~\cite{RaySpi19} for a recent short proof), the free and the wired Gibbs states for the critical random-cluster model with cluster-weight~$q$ are distinct (and by~\cite{GlaMan21} there are no other extremal Gibbs states). By Theorem~\ref{thm:q-b}, the measure~${\sf RC}_{\Omega,p_c(q),q}^{q_b}$ converges to the free measure when~$q_b\in [e^{\lambda}\sqrt{q},q]$ and to the wired measure when~$q_b\in [1,e^{-\lambda}\sqrt{q}]$. What happens when~$q_b\in (e^{-\lambda}\sqrt{q},e^{\lambda}\sqrt{q})$, or~$q_b <1$, or~$q_b>q$? It is easy to see that the measures with parameters~$q_b$ and~$q/q_b$ are dual to each other.

\begin{question}\label{ques:rc-qb}
	Show that~${\sf RC}_{\Omega,p_c(q),q}^{q_b}$ converges to the free measure when~$q_b > \sqrt{q}$, to the wired measure when~$q_b < \sqrt{q}$, and to the mixture~$\tfrac12({\sf RC}_{p_c(q),q}^{\mathrm{free}}+ {\sf RC}_{p_c(q),q}^{\mathrm{wired}})$ when~$q_b=\sqrt{q}$. The self-dual point~$q_b=\sqrt{q}$ is then critical.
\end{question}

{\bf FKG inequality.}
Fix~$a=b=1$. By Theorem~\ref{thm:fkg}, the FKG inequality for the marginal distribution on spin configurations on one of the two sublattices holds for any~$c\geq 1$. However, the FKG inequality for the random-cluster model holds only when~$q\geq 1$ which via the BKW coupling corresponds to~$c\geq \sqrt{3}$. Since the absence of an FKG inequality for the random-cluster model when~$q<1$ is the main reason why this regime of parameters is less studied, this motivates the following question.

\begin{question}
	Find an image of the six-vertex FKG inequality under the Baxter--Kelland--Wu coupling (see Theorem~\ref{thm:coupling}).
\end{question} 

{\bf Lipschitz clock model.}
Our work introduces a coupling of the self-dual Ashkin--Teller model with the six-vertex model with parameters $a=b=1$ and $c>1$; see Section~\ref{sec:ashkin-teller}. The coupling extends to the limiting case $c=1$, where the Ashkin--Teller model should be understood as the limit of the measures~\eqref{eq:Ashkin-Teller measure def} (on suitable domains and with suitable boundary conditions, as in the measure~${\sf AT}_{\Omega_k,J,U}^{\mathrm{free},+}$) as $J\to\infty$ and $\sinh 2J = e^{-2U}$. This limiting measure, which is coupled to the uniform six-vertex model (square-ice) studied in~\cite{She05,ChaPelSheTas18,DumHarLasRauRay18,ray2020proper}, may be described explicitly as the case $x=\frac{1}{2}$ of the following \emph{$4$-state Lipschitz clock model}.

The $4$-state Lipschitz clock model with parameter $x>0$ on a graph $G = (V,E)$ is supported on pairs of spin configurations~$(\tau,\tau')\in \{-1,1\}^V\times  \{-1,1\}^V$ which satisfy the \emph{Lipschitz condition}: either $\tau_u = \tau_v$ or $\tau'_u = \tau'_v$ for every edge $\{u,v\}\in E$. It assigns probability proportional to $x^{N(\tau,\tau')}$ to each configuration in its support, where $N(\tau,\tau') = |\{\{u,v\}\in E\colon (\tau_u, \tau'_u) \neq (\tau_v, \tau'_v)\}|$.

The case $x=\frac{1}{2}$ of the $4$-state Lipschitz clock model on $\Z^2$ is further shown in~\cite[Section 4]{ikhlef2012spin} to be in correspondence with an integrable 19-vertex (19V) model and a dilute Brauer model with loop weight $n=2$.

The $4$-state Lipschitz clock model at $x=1$ on a bipartite graph is equivalent to the proper $4$-coloring model (i.e., the zero-temperature $4$-state antiferromagnetic Potts model) through the mapping which flips the signs of both $\tau$ and $\tau'$ on one of the bipartition classes. This model is predicted to exhibit exponential decay of correlations on $\Z^2$~\cite{ferreira1995antiferromagnetic}.

On the triangular lattice, the $4$-state Lipschitz clock model is equivalent to integer-valued Lipschitz height functions (the loop $O(2)$ model; see, e.g., \cite{PelSpi17, GlaMan18}) by taking the modulo $4$ of the height function. However, this equivalence does not extend to the square lattice due to the appearance of vortices in the clock model (cycles of length $4$ in $\Z^2$ on which all four spin values appear).

As the Ashkin--Teller model with $J\ge U$ has a phase transition at the self-dual line it is natural to conjecture that this is the case also for the limiting model.
\begin{question}
	Prove that the $4$-state Lipschitz clock model on $\Z^2$ undergoes a phase transition at $x=\frac{1}{2}$.
\end{question}

{\bf $\mathbb{T}$-connectivity and delocalization of the height function of square-ice.} As discussed in Section~\ref{sec:overview of the proof}, the use of the triangular lattice connectivity in proving the equality of the height function measures ${\sf HF}_{\mathrm{even}, c}^{0,1}$ and ${\sf HF}_{\mathrm{odd},c}^{0,1}$ is one of the main novelties of this work. We hope that this argument would be of further use and outline here an additional consequence that may be obtained with it.

The height function of the square-ice model (the case $a=b=c$ of the six-vertex model) has been shown to delocalize~\cite{ChaPelSheTas18} with an argument relying on the seminal results of Sheffield~\cite{She05}. The method that we use for showing that ${\sf HF}_{\mathrm{even}, c}^{0,1}={\sf HF}_{\mathrm{odd},c}^{0,1}$ can give an alternative proof of delocalization, at least with zero boundary conditions, which does not rely on~\cite{She05}.

Consider the height function of the square-ice model in a sequence of \emph{odd} domains with zero boundary conditions which increases to $\Z^2$. Let $\mathcal{L}_{\text{even}}$ be the set of parity-preserving translations of $\Z^2$. A simple consequence of the FKG inequality for the absolute value of the model~\cite{BenHagMos00} is the following dichotomy~\cite[Theorem 1.1]{ChaPelSheTas18}: either the height at the origin is not tight in this limit (that is, the height delocalizes) or a limiting Gibbs measure, denoted ${\sf HF}^{0}$, exists and is $\mathcal{L}_{\text{even}}$-invariant. Assume, in order to obtain a contradiction, that the latter alternative occurs. By symmetry, this also implies the existence of the limiting Gibbs measures ${\sf HF}^{k}$ for each integer $k$ (as the thermodynamic limit of odd/even domains with boundary value $k$ according to whether $k$ is even/odd). We will show that
\begin{equation}\label{eq:measure equality square ice}
{\sf HF}^{-1}={\sf HF}^{1}
\end{equation}
which yields a contradiction since samples of ${\sf HF}^{1}$ can be obtained by adding $2$ to samples of ${\sf HF}^{-1}$.

Positive association of the heights (in finite volume) implies that ${\sf HF}^{1}\succeq{\sf HF}^{0}$ (writing $\succeq$ for stochastic domination). We will show that in fact
\begin{equation}\label{eq:measure equality square ice2}
{\sf HF}^{1}={\sf HF}^{0}.
\end{equation}
This implies~\eqref{eq:measure equality square ice} (thus yielding the contradiction) since ${\sf HF}^{-1}$ is obtained from ${\sf HF}^{1}$ by negating the sign of its samples while ${\sf HF}^{0}$ is invariant to this operation.

The measure ${\sf HF}^{0}$ is $\mathcal{L}_{\text{even}}$-invariant and has positive association (since it is obtained in the thermodynamic limit). To highlight the main parts of the argument first, we now assume that ${\sf HF}^{0}$ is $\mathcal{L}_{\text{even}}$-ergodic. This will allow us to conclude, via the method of $\mathbb{T}$-connectivity, after which we will revisit the ergodicity assumption.

We let $h$ be sampled from ${\sf HF}^{0}$ and study the set of odd faces $u$ where $h(u)\ge 1$ and the set where $h(u)\le -1$ where we endow both sets with the $\mathbb{T}^{\circ}$-connectivity (Definition \ref{def-t-circuits}). As in the proof of Theorem~\ref{thm:heights-gibbs} for the case $a+b<c$, each of the sets may have at most one infinite cluster ({\em Burton--Keane argument}) and, moreover, the two sets may not have an infinite cluster simultaneously ({\em non-coexistence}). By the symmetry of ${\sf HF}^{0}$ under sign flip, we conclude that, in fact, neither has an infinite cluster. This allows to apply the $\mathbb{T}$-circuit argument (as overviewed in Section~\ref{sec:overview of the proof}) to deduce that ${\sf HF}^{0}\succeq {\sf HF}^{1}$, yielding~\eqref{eq:measure equality square ice2}, and thus finishing the proof of delocalization.

We now prove that ${\sf HF}^{0}$ is $\mathcal{L}_{\text{even}}$-ergodic. We first observe that if there exists an $\mathcal{L}_{\text{even}}$-ergodic measure $\mu$ which is also extremal, then $\mu={\sf HF}^{k}$ for some $k$, whence all the ${\sf HF}^{k}$ are $\mathcal{L}_{\text{even}}$-ergodic and extremal. To see this, let $h$ be sampled from $\mu$. For each integer $k$, let $I_{\ge k}$ be the event that $h\ge k$ has an infinite cluster (in the standard connectivity on all faces of $\Z^2$) and define $I_{<k}$ analogously with $h<k$. The standard percolation arguments imply that for each integer $k$, either $\mu(I_{\ge k}) = 0$ or $\mu(I_{<k}) = 0$. In the first case, there are infinitely many circuits surrounding the origin where $h<k$, from which it follows that $\mu\preceq {\sf HF}^{k-1}$. Thus it is impossible that $\mu(I_{\ge k})=0$ for all $k$. Similarly, by the second case, it is impossible that $\mu(I_{<k})=0$ for all $k$. The above facts imply that there is a $k_0$ such that $\mu(I_{\ge k_0}) = 1$ and $\mu(I_{\ge k_0+1})=\mu(I_{<k_0})=0$, whence there are infinitely many circuits surrounding the origin where $h = k_0$. It follows that $\mu = {\sf HF}^{k_0}$.

It remains to show that there exists an $\mathcal{L}_{\text{even}}$-ergodic and extremal measure $\mu$. This follows from the general results of~\cite{She05} (which show that every $\mathcal{L}_{\text{even}}$-ergodic measure is extremal) but may also be argued directly for the square-ice model as we now explain. Let $h$ be sampled from ${\sf HF}^{0}$. There is at most one infinite cluster where $h>0$ and at most one infinite cluster where $h<0$ (the argument of Burton--Keane may be invoked for this as in the proof of Theorem~\ref{thm:heights-gibbs}). This further implies that there is zero probability that both $h>0$ and $h<0$ have infinite clusters since the construction of ${\sf HF}^{0}$ implies that the signs of distinct infinite clusters of $h\neq 0$ are uniform and independent. It follows that $|h|$ is a Gibbs measure for the absolute value specification. Moreover, using the FKG for absolute values, the distribution of $|h|$ is extremal as it is minimal in the absolute value specification. Since the signs of $h$ on clusters where $h$ is non-zero are uniform and independent, it follows that the extremal components of ${\sf HF}^{0}$ differ only in the sign assigned to the \emph{infinite} cluster where $h$ is non-zero. In particular, if $h\neq 0$ has no infinite cluster with probability one, then $ {\sf HF}^{0}$ is extremal, and otherwise $h>0$ has an infinite cluster with positive probability and conditioning on this event yields an $\mathcal{L}_{\text{even}}$-ergodic and extremal Gibbs measure, as required for the argument.




\bibliographystyle{amsalpha} 
\bibliography{bib-6v}       

\end{document}